\DeclareSymbolFont{cyrletters}{OT2}{wncyr}{m}{n}
\DeclareMathSymbol{\Sha}{\mathalpha}{cyrletters}{"58}
\def\PP{\mathbb{P}}
\def\ZZ{\mathbf{Z}}
\def\RR{\mathbf{R}}
\def\CC{\mathbf{C}}
\def\QQ{\mathbf{Q}}
\def\FF{\mathbb{F}}
\def\OO{\mathcal{O}}
\def\inv{^{-1}}
\def\p{\mathfrak{p}}
\def\CA{\mathcal{A}}
\def\CY{\mathcal{Y}}
\def\CX{\mathcal{X}}
\def\CZ{\mathcal{Z}}
\DeclareMathOperator{\tr}{tr}
\DeclareMathOperator{\disc}{disc}
\DeclareMathOperator{\N}{N}
\DeclareMathOperator{\Div}{Div}
\DeclareMathOperator{\Pic}{Pic}
\DeclareMathOperator{\Frob}{Frob}
\DeclareMathOperator{\Ver}{Ver}
\DeclareMathOperator{\Spec}{Spec}
\DeclareMathOperator{\Gal}{Gal}
\DeclareMathOperator{\End}{End}
\DeclareMathOperator{\Hom}{Hom}
\DeclareMathOperator{\Aut}{Aut}
\newtheorem{theorem}{Theorem}[section]
\newtheorem*{theorem*}{Theorem}
\newtheorem{corollary}[theorem]{Corollary}
\newtheorem{lemma}[theorem]{Lemma}
\newtheorem{example}[theorem]{Example}
\newtheorem*{example*}{Example}
\newtheorem*{corollary*}{Corollary}
\theoremstyle{remark}
\newtheorem{remark}[theorem]{Remark}
\theoremstyle{definition}
\newtheorem{definition}[theorem]{Definition} 
\title{Twists of Shimura Curves}
\author{James Stankewicz}
\date{\today}
\thanks{The author was partially supported by NSF VIGRE grant DMS-0738586 and the University of Georgia Dissertation completion award. The work in this paper formed a part of the author's Ph.D. thesis under Pete L. Clark and Dino Lorenzini. The author would also like to thank Patrick Morton and John Voight for helpful conversations.}
\begin{document}
\begin{abstract}Consider a Shimura curve $X^D_0(N)$ over the rational numbers. We determine criteria for the twist by an Atkin-Lehner involution to have points over a local field. As a corollary we give a new proof of the theorem of Jordan-Livn\'e on $\mathbf{Q}_p$ points when $p\mid D$ and for the first time give criteria for $\mathbf{Q}_p$ points when $p\mid N$. We also give congruence conditions for roots modulo $p$ of Hilbert class polynomials.\end{abstract}

\maketitle

Let $D$ be the squarefree product of an even number of primes and let $N$ be a squarefree integer coprime to $D$. The Shimura curves $X^D_0(N)_{/\QQ}$ are natural generalizations of the classical modular curves $X_0(N)$, which we realize here as $X^1_0(N)_{/\QQ}$. Shimura first defined these curves over $\QQ$ \cite{Sh71} and also showed that $X^D_0(N)(\RR)$ is nonempty if and only if $D=1$. Later, conditions for $X^D_0(N)(\QQ_p)$ to be nonempty were determined when $p\mid D$ first by Jordan and Livn\'e \cite[Theorem 5.6]{JoLi} in the case $N=1$ and in the general case by Ogg \cite[Th\'eor\`eme]{OggMauvaise}.

In this paper, we give comprehensive criteria for the presence of $\QQ_p$-rational points on all Atkin-Lehner twists of $X^D_0(N)$ including the trivial twist, $X^D_0(N)$. Therefore as a consequence, we recover the theorem of Jordan and Livn\'e and for the first time give criteria for $\QQ_p$-points when $p\mid N$ and $D>1$. We note that conjecturally, these twists and their combinations form \emph{all} twists of $X^D_0(N)$ for all but finitely many pairs of $D$ and $N$ \cite{Kontogeorgis}. Let $C^D(N,d,m)$ denote the twist of $X^D_0(N)$ by $\QQ(\sqrt d)$ and the Atkin-Lehner involution $w_m$ as in Definition \ref{AtkinLehnerOO}. Particular cases of interest are the twists by the full Atkin-Lehner involution $w_{DN}$. In that case we have the following.  








\begin{corollary*}[\ref{SuperspecialFullAL}] If $p\nmid DN$ is inert in $\QQ(\sqrt d)$, $C^D(N,d,DN)(\QQ_p)$ is nonempty.\end{corollary*}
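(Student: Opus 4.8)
The plan is to reduce the problem modulo $p$, where the twist becomes a quadratic twist over $\FF_p$ that is controlled by the supersingular locus, and then to exhibit the required point from an optimal embedding into an auxiliary \emph{definite} quaternion algebra.

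\emph{Reduction to characteristic $p$.} Since $p\nmid DN$, the curve $X^D_0(N)$ has good reduction at $p$; let $\CX/\ZZ_p$ be its smooth model, $\overline{X}=\CX_{\FF_p}$ the special fibre, on which $w_{DN}$ extends to an involution. Because $p$ is inert in $\QQ(\sqrt d)$, the algebra $\QQ(\sqrt d)\otimes\QQ_p$ is the unramified quadratic field $\QQ_{p^2}$, so by Definition \ref{AtkinLehnerOO} a $\QQ_p$-point of $C^D(N,d,DN)$ is the same thing as a point $P\in X^D_0(N)(\QQ_{p^2})$ with $\sigma(P)=w_{DN}(P)$, where $\sigma$ generates $\Gal(\QQ_{p^2}/\QQ_p)$. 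Equivalently $C^D(N,d,DN)$ has good reduction at $p$, with special fibre the quadratic twist $\overline{C}$ of $\overline{X}$ by $w_{DN}$ and $\FF_{p^2}/\FF_p$. As the model is smooth and the residue field finite, Hensel's lemma reduces us to producing a point of $\overline{C}(\FF_p)$, that is, a point $\overline{P}\in\overline{X}(\FF_{p^2})$ with $\Frob_p(\overline{P})=w_{DN}(\overline{P})$, where $\Frob_p$ is the $p$-power Frobenius (the reduction of $\sigma$).

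\emph{Passage to the supersingular locus.} I would look for $\overline{P}$ among the supersingular points of $\overline{X}$, which all lie in $\overline{X}(\FF_{p^2})$. By the Deuring-type correspondence — the technical input that I expect to be set up in the body of the paper — these points are in $w_\ell$-equivariant bijection, for each $\ell\mid DN$, with the class set $\mathrm{Cl}(\mathcal{O}')$ of an Eichler order $\mathcal{O}'$ of level $N$ in the \emph{definite} quaternion algebra $B'/\QQ$ of discriminant $Dp$, and $\Frob_p$ acts through this bijection as the Atkin--Lehner involution $w_p$ of $B'$ at its ramified prime $p$. Hence a supersingular $\overline{P}$ satisfies $\Frob_p(\overline{P})=w_{DN}(\overline{P})$ precisely when its class is fixed by $w_p\circ w_{DN}=w_{DpN}$, the \emph{full} Atkin--Lehner involution of $B'$ (note $DpN$ is the discriminant $Dp$ times the level $N$). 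So it suffices to produce a class in $\mathrm{Cl}(\mathcal{O}')$ fixed by the full Atkin--Lehner involution, and then lift the corresponding point by Hensel as above.

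\emph{Producing the fixed class.} A class $[\mathfrak a]\in\mathrm{Cl}(\mathcal{O}')$ is fixed by $w_{DpN}$ exactly when its right order is normalised by an element of reduced norm $DpN$, and by the theory of optimal embeddings such a class exists as soon as $\QQ(\sqrt{-DpN})$ embeds into $B'$. It does embed: $-DpN<0$, so $\QQ(\sqrt{-DpN})$ is imaginary, matching the ramification of $B'$ at $\infty$; and every finite prime ramifying in $B'$ divides $Dp$, hence divides the squarefree integer $DpN$ and therefore ramifies in $\QQ(\sqrt{-DpN})$. Thus $\QQ(\sqrt{-DpN})$ is nonsplit at every ramified place of $B'$, so it embeds, and Eichler's theorem makes the total number of optimal embeddings over $\mathrm{Cl}(\mathcal{O}')$ positive, giving the desired fixed class. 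This completes the construction of the point of $C^D(N,d,DN)(\QQ_p)$.

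The main obstacle is the input I have quoted from the body of the paper: the identification, on the supersingular locus of $\overline{X}$, of the $p$-power Frobenius with the Atkin--Lehner involution $w_p$ of the auxiliary definite algebra $B'$, together with the matching of the remaining Atkin--Lehner operators under the Deuring correspondence. Granting that dictionary, the rest is soft — the full Atkin--Lehner involution of $B'$ must fix a class simply because $\QQ(\sqrt{-DpN})$ embeds unconditionally, which is exactly the feature special to the twist by $w_{DN}$ (for a general $w_m$ with $p\nmid m$ one would instead need an order in $\QQ(\sqrt{-mp})$ to embed, and that can fail). The only delicate bookkeeping is at the level-$N$ primes, where the operator in play is Eichler's normaliser rather than a two-sided ideal, and in pinning down the precise imaginary quadratic order (with the usual caveat at $2$) whose optimal embedding yields the fixed class.
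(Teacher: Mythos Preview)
Your proposal is correct and follows essentially the same route as the paper: reduce to the smooth special fibre, identify $\Frob_p$ with $w_p$ on the superspecial locus (the paper's Lemma \ref{ALGaloisLemma}), so that an $\FF_p$-point of the twist corresponds to a $w_{DNp}$-fixed class in $\Pic(Dp,N)$, and then invoke Corollary \ref{EmbeddingFixedPoint} together with Eichler's embedding theorem to see that $e_{Dp,N}(-4DNp)>0$ unconditionally since every prime dividing $DpN$ forces the relevant local factor to equal $1$. Your caveat about the level-$N$ primes is unnecessary here, since for squarefree $N$ each $q\mid N$ still carries a unique two-sided ideal of norm $q$ (Lemma \ref{EichTwoSided}), and the local obstruction at such $q$ vanishes because $q\mid DNp$.
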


\begin{theorem*}[\ref{RamifiedMainTheorem}, partial] Suppose that $p\nmid 2DN$ is a prime which is ramified in $\QQ(\sqrt d)$. Then $C^D(N,d,DN)(\QQ_p)\ne \emptyset$ if and only if one of the following occurs.

\begin{itemize}

\item $\left(\frac{-DN}{p}\right) = 1$ and a certain Hilbert Class Polynomial has a root modulo $p$.

\item  $\left(\frac{-DN}{p}\right) = -1$, $2\nmid D$, $\left(\frac{-p}{q}\right) = -1$ for all primes $q\mid D$, and $\left(\frac{-p}{q}\right) = 1$ for all primes $q\mid N$ such that $q\ne 2$

\item  $2\mid D$, $\left(\frac{-DN}{p}\right) = -1$, $p\equiv \pm 3 \bmod 8$, $\left(\frac{-p}{q}\right)= -1$ for all primes $q\mid (D/2)$, and $\left(\frac{-p}{q}\right) = 1$ for all primes $q\mid N$.

\end{itemize}

\end{theorem*}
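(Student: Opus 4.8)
The plan is to reduce the existence of $\QQ_p$-points on the twist to a statement about the special fibre of $X^D_0(N)$ at $p$, and then to settle that statement by the theory of complex multiplication together with Eichler's theory of optimal embeddings.

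First I would unwind the twist. Since $p\ne 2$ is ramified in $\QQ(\sqrt d)$, the field $L=\QQ_p(\sqrt d)$ is a ramified quadratic extension of $\QQ_p$ with residue field $\FF_p$, and we may pick a uniformizer $\varpi$ of $L$ with $\tau(\varpi)=-\varpi$, where $\tau$ generates $\Gal(L/\QQ_p)$. By the definition of $C^D(N,d,DN)$, a $\QQ_p$-point of it is the same as a point $s\in X^D_0(N)(L)$ with $\tau(s)=w_{DN}(s)$. As $p\nmid DN$, the curve $X^D_0(N)$ has good reduction at $p$; let $\CX/\ZZ_p$ be its smooth proper model, with special fibre $\CX_{\FF_p}$ a smooth geometrically connected curve, and let $\bar w_{DN}$ be the induced involution. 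Reducing such an $s$ and using that $\tau$ acts trivially on $\FF_p$ shows at once that $\bar w_{DN}$ has an $\FF_p$-rational fixed point on $\CX_{\FF_p}$. Conversely, because $p$ is odd and $\bar w_{DN}$ is a non-trivial involution of a smooth connected curve, at any such fixed point $\bar s$ it acts by $-1$ on the tangent space; choosing a $\ZZ_p$-rational coordinate $u$ on the formal neighbourhood $\widehat{\OO}_{\CX,\bar s}\cong\ZZ_p[[u]]$ in which $\bar w_{DN}$ is $u\mapsto -u$, the $\OO_L$-point with $u=\varpi$ does the job. So
\[
C^D(N,d,DN)(\QQ_p)\ne\emptyset\iff \bar w_{DN}\ \text{has an}\ \FF_p\text{-rational fixed point on}\ X^D_0(N)_{/\FF_p},
\]
and in particular $d$ drops out of the problem.

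Next I would use that the fixed points of $w_{DN}$ on $X^D_0(N)$ are CM points: they parametrize QM abelian surfaces with $\Gamma_0(N)$-structure carrying extra endomorphisms that give complex multiplication by an order $\OO$ in $K=\QQ(\sqrt{-DN})$, namely $\ZZ[\sqrt{-DN}]$ or (when $-DN\equiv 1\bmod 4$) $\ZZ[(1+\sqrt{-DN})/2]$, subject to an optimal-embedding condition into the quaternion order of discriminant $D$ and level $N$. Since $p\nmid 2DN$, the prime $p$ is unramified in $K$, so $\left(\frac{-DN}{p}\right)\in\{1,-1\}$. If $\left(\frac{-DN}{p}\right)=1$, these CM points have ordinary good reduction at $p$; by the Deuring lifting theorem and the main theorem of complex multiplication, their reductions modulo $p$ are precisely the roots modulo $p$ of the Hilbert class polynomial of the relevant order, and an $\FF_p$-rational one occurs exactly when that polynomial has a root in $\FF_p$ (the CM point is defined over the ring class field and reduces to an $\FF_p$-point iff the associated Frobenius class is trivial, which is the usual restatement). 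This gives the first bullet.

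It remains to treat $\left(\frac{-DN}{p}\right)=-1$, the crux, where the CM points in question reduce to supersingular points of $X^D_0(N)_{/\bar\FF_p}$. Here I would pass to the definite quaternion algebra $B$ of discriminant $pD$, whose ramified set $\{p,\infty\}\cup\{q:q\mid D\}$ has even cardinality: the supersingular points with $\Gamma_0(N)$-structure are classified by the left ideal classes of an Eichler order $\OO_N$ of level $N$ in $B$ (this is the endomorphism-algebra analysis of a supersingular QM surface, the complementary algebra to the QM quaternion algebra), and, starting from the reduction of a characteristic-zero $w_{DN}$-fixed CM point, a $\bar w_{DN}$-fixed supersingular point is $\FF_p$-rational precisely when the $p$-power Frobenius $\pi$, with $\pi^2=-p$, embeds optimally and compatibly with the QM and level structures into the relevant $\OO_N$, that is, when $\ZZ[\sqrt{-p}]$ admits an optimal embedding into $\OO_N$. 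By Eichler's local–global theory of optimal embeddings and the attendant mass formula, this holds iff $\ZZ_q[\sqrt{-p}]$ embeds optimally into $(\OO_N)_q$ for every $q$: automatic at $q=p$ and $q=\infty$; at an odd $q\mid D$ (ramified in $B$) iff $\QQ_q(\sqrt{-p})$ is a field, i.e. $\left(\frac{-p}{q}\right)=-1$; at an odd $q\mid N$ (an Eichler-level prime) iff $\left(\frac{-p}{q}\right)=1$; and at $q=2$ a separate local computation gives $p\equiv\pm 3\bmod 8$ when $2\mid D$ and imposes nothing when $2\mid N$. Assembling the two sub-cases according to whether $2\mid D$ yields exactly the second and third bullets.

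The main obstacle is this supersingular case, for two reasons. First, one must identify the correct integral objects — the Eichler order of level $N$ in the discriminant-$pD$ algebra, and the orders $\ZZ[\sqrt{-p}]$ and $\ZZ[\sqrt{-DN}]$ — and carry out the $2$-adic optimal-embedding computation exactly; this is where the non-obvious condition $p\equiv\pm 3\bmod 8$ (as opposed to the naïve ``$\QQ_2(\sqrt{-p})$ is a field'') must be isolated, and it is the most delicate point. Second, one must ensure that the optimal-embedding count really detects an $\FF_p$-rational point fixed by $w_{DN}$, and not merely an $\FF_{p^2}$-point or a point on a quadratic twist — equivalently that the relevant Frobenius is genuinely $\sqrt{-p}$ and not $\pm p$ — which is why I would work throughout with the finite set of $\bar w_{DN}$-fixed supersingular points obtained by reducing characteristic-zero CM points, and analyse the Frobenius action on it directly rather than counting embeddings blindly.
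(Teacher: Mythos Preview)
Your overall architecture matches the paper's: reduce to the existence of an $\FF_p$-rational $w_{DN}$-fixed point on $X^D_0(N)_{\FF_p}$, then split into ordinary and supersingular according to $\left(\frac{-DN}{p}\right)$. Your reduction step is in fact cleaner than the paper's --- the paper builds the quotient model $\CZ=\CY/\langle w_m\sigma\rangle$, checks that $\CZ_{\FF_p}=2\Gamma$ with $\Gamma\cong (X^D_0(N)/w_m)_{\FF_p}$, identifies the singularities as tame cyclic quotient singularities, and performs a Hirzebruch--Jung resolution to see that the exceptional $\PP^1$'s are $\FF_p$-rational exactly over the $\FF_p$-rational branch points; your formal-neighbourhood lifting argument reaches the same conclusion in one line. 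The ordinary case is treated identically.

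The gap is in the supersingular case. You correctly identify that a superspecial point corresponds to (a conjugacy class of) an Eichler order $\Lambda$ of level $N$ in $B_{Dp}$, that $w_{DN}$-fixedness means $\ZZ[\sqrt{-DN}]\hookrightarrow\Lambda$, and that $\FF_p$-rationality means $\ZZ[\sqrt{-p}]\hookrightarrow\Lambda$. But you then invoke ``Eichler's local--global theory of optimal embeddings'' to decide when $\ZZ[\sqrt{-p}]$ embeds. That theory only tells you whether $\ZZ[\sqrt{-p}]$ embeds into \emph{some} order in the genus of $\Lambda$; in the definite setting the genus contains several conjugacy classes, and you need one particular $\Lambda$ admitting \emph{both} embeddings simultaneously. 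Your final paragraph shows you sense this, but ``analyse the Frobenius action directly'' is not a proof. The paper's substitute for this step is its Theorem~\ref{GoodReductionSimultaneousEmbeddings}: starting from $\omega_1^2=-p$, $\omega_2^2=-DN$ inside a common $\Lambda$, one computes that $\ZZ\langle 1,\omega_1,\omega_2,\omega_1\omega_2\rangle$ has reduced discriminant $4DNp-s^2$ with $DNp\mid 4DNp-s^2$, forces $s=0$, and then \emph{explicitly constructs} an index-two superorder of reduced discriminant $DNp$ case by case. It is precisely this construction (not the single-order embedding criterion) that produces the condition $p\equiv\pm 3\bmod 8$ when $2\mid D$, rather than the weaker $p\not\equiv 7\bmod 8$ that your local test at $q=2$ would give; the discrepancy is exactly the difference between ``$\ZZ[\sqrt{-p}]$ embeds into some $\Lambda$'' and ``into a $\Lambda$ that also contains $\sqrt{-DN}$''. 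So the missing ingredient is a simultaneous-embedding theorem, and that is where the real work lies.
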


\begin{corollary*}[\ref{CDfullALCor}] Let $p\mid D$ be a prime which is unramified in $\QQ(\sqrt d)$. Let $p_i$, $q_j$ be primes such that $D/p = \prod_i p_i$ and $N = \prod_j q_j$.

\begin{itemize}

\item If $p$ is split in $\QQ(\sqrt d)$, then $C^D(N,d,DN) \cong X^D_0(N)$ over $\QQ_p$ and $X^D_0(N)(\QQ_p)$ is nonempty if and only if one of the following two cases occurs.

\begin{enumerate}

\item $p=2$, $p_i\equiv 3 \bmod 4$ for all $i$, and $q_j \equiv 1 \bmod 4$ for all $j$

\item $p\equiv 1 \bmod 4$, $D = 2p$, and $N=1$

\end{enumerate}

\item If $p$ is inert in $\QQ(\sqrt d)$ then $C^D(N,d,DN)(\QQ_p)$ is nonempty.

\end{itemize}
\end{corollary*}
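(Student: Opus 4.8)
The plan is to deduce this corollary from the general local-point criteria for the twists $C^D(N,d,m)$ — presumably Theorem \ref{RamifiedMainTheorem} and its unramified counterpart — specialized to the case $m = DN$ and to a prime $p \mid D$ unramified in $\QQ(\sqrt d)$. First I would recall the standard fact (going back to \v{C}erednik–Drinfeld, as used by Jordan–Livn\'e and Ogg) that for $p \mid D$ the curve $X^D_0(N)$ has a model over $\ZZ_p$ whose special fiber is a Mumford curve; the $\QQ_p$-points and the action of $w_p$ (indeed of $w_{DN}$, which on the $p$-adic rigid-analytic uniformization acts through its local component at $p$) are governed by the reduction graph, an arithmetic quotient of the Bruhat–Tits tree for $\PGL_2(\QQ_p)$. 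The key structural input is that the twist by $(\QQ(\sqrt d), w_{DN})$ becomes, over $\QQ_p$, a twist by the quadratic character of $\QQ_p(\sqrt d)/\QQ_p$ composed with the involution $w_p$ acting on the special fiber; when $p$ is split in $\QQ(\sqrt d)$ this character is trivial, so $C^D(N,d,DN) \cong X^D_0(N)$ over $\QQ_p$, and when $p$ is inert it is the unramified quadratic twist.

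For the split case I would invoke the existing description of $X^D_0(N)(\QQ_p)$ for $p \mid D$: a $\QQ_p$-point either reduces to a smooth point of the special fiber (a vertex of the reduction graph not fixed by the relevant involution, with residue field $\FF_p$ — by Hensel these always lift, but here one needs the vertex to be $\QQ_p$-rational, which is automatic) or to a singular point lying on a component defined over $\QQ_p$. Translating "$X^D_0(N)(\QQ_p) \neq \emptyset$" into a statement about the mod-$p$ reduction graph and its Atkin-Lehner automorphism, and then into the class-number/quadratic-residue conditions, is exactly the content of the proof of the partial Theorem \ref{RamifiedMainTheorem} and the Jordan–Livn\'e analysis; the two enumerated cases (namely $p = 2$ with $D/2 = \prod p_i$, $p_i \equiv 3 \bmod 4$, $q_j \equiv 1 \bmod 4$; and $p \equiv 1 \bmod 4$ with $D = 2p$, $N = 1$) should drop out as the precise combinations for which the arithmetic quotient graph has a $\QQ_p$-rational vertex or edge. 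Concretely, I expect to reduce to counting embeddings of the relevant quadratic order into an Eichler order in the definite quaternion algebra ramified at $D/p$ (and at $\infty$), with local conditions at $N$; the congruences on the $p_i$ and $q_j$ are the conditions for $\QQ_p(\sqrt{-p})$ or $\QQ(i)$ (depending on $p$) to embed appropriately, i.e.\ for the optimal-embedding count to be nonzero.

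For the inert case the claim is simply that $C^D(N,d,DN)(\QQ_p) \neq \emptyset$ unconditionally. Here I would use that the unramified quadratic twist of a Mumford curve over $\QQ_p$ always has a $\QQ_p$-point: the special fiber, being a union of $\PP^1$'s over $\FF_p$ meeting in double points, and the twist acting via $w_p$ composed with the nontrivial element of $\Gal(\FF_{p^2}/\FF_p)$, will always possess either a component swapped-to-itself with a smooth $\FF_p$-point, or — crucially — a double point where the two branches are interchanged by Frobenius-times-$w_p$, giving a conjugate pair of branches and hence a $\QQ_{p^2}$-point descending to a $\QQ_p$-point on the twist; equivalently one argues via the Hasse principle for genus-$0$ components or a direct fixed-point/Lefschetz count on the reduction graph showing the twisted Frobenius always has an invariant vertex or inverted edge. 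This mirrors the earlier Corollary \ref{SuperspecialFullAL} (inert primes $p \nmid DN$) and Corollary \ref{CDfullALCor}'s own inert clause elsewhere.

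The main obstacle I anticipate is the split subcase — pinning down \emph{exactly} the two listed families and proving there are no others. This requires a careful analysis of when the \v{C}erednik–Drinfeld reduction graph for $X^D_0(N)$ at $p \mid D$ admits a $\QQ_p$-rational smooth point versus only points on non-$\QQ_p$-rational components, which in turn hinges on a delicate optimal-embedding computation (including the subtle behavior at $p = 2$, where the mass formula and the local embedding numbers at the even prime behave differently, forcing the $p_i \equiv 3 \bmod 4$ and $q_j \equiv 1 \bmod 4$ conditions, and the genuinely exceptional case $D = 2p$, $N = 1$, $p \equiv 1 \bmod 4$ where the quaternion algebra ramified at $\{2\}$ has class number one). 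Handling the edge cases of the Atkin-Lehner action on small reduction graphs cleanly — rather than by brute enumeration — will be the technical heart of the argument.
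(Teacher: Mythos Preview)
Your proposal treats this corollary as a standalone result to be proved from the \v{C}erednik--Drinfeld picture, but in the paper it is a one-line specialization of the already-established Theorem~\ref{mainCDtheorem}. The split case is literally Theorem~\ref{mainCDtheorem} for split $p$ (which does not depend on $m$ at all and is the Jordan--Livn\'e/Ogg criterion, reproved as Corollary~\ref{JordanLivneOggCor}); there is nothing further to do. For the inert case, since $p\mid D$ and $m=DN$ we have $p\mid m$ and $m/p = DN/p$, so by Theorem~\ref{mainCDtheorem} (cases~3 and~4 under ``$p$ inert, $p\mid m$'') it suffices to know $e_{D/p,N}(-4DN/p)\neq 0$. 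This is immediate from Definition~\ref{DefnEDN}: every prime $q\mid (D/p)N$ divides $DN/p$, hence $\left\{\frac{-4DN/p}{q}\right\}=0$ and each factor $1\pm 0$ equals $1$.

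Your inert-case sketch has a genuine gap. You assert that ``the unramified quadratic twist of a Mumford curve over $\QQ_p$ always has a $\QQ_p$-point'' and propose to see this by a Lefschetz-type fixed-point count on the reduction graph. But the twisted Frobenius here acts on the set of components as $w_{DN/p}$ on $\Pic(D/p,N)$ (since Frobenius is $w_p$ and $w_{DN}w_p=w_{DN/p}$), and there is no purely combinatorial reason an involution on a finite set must have a fixed point. The existence of a $w_{DN/p}$-fixed component is \emph{equivalent}, via Corollary~\ref{EmbeddingFixedPoint} and Lemma~\ref{CDpdividesmcases}, to the embedding statement $e_{D/p,N}(-4DN/p)\neq 0$; that is the content you need, and it is exactly what the paper's one-line proof supplies. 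Your vaguer appeal to ``a component swapped-to-itself'' or ``an inverted edge'' does not establish this without that algebraic input.
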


\begin{corollary*}[\ref{mainDRCor}]
Let $p$ be a prime dividing $N$ such that $p$ is unramified in $\QQ(\sqrt d)$. Then $C^D(N,d,DN)(\QQ_p)$ is nonempty if and only if 

\begin{itemize}

\item $p$ is split in $\QQ(\sqrt d)$ and either $D=1$ or

\begin{itemize}

\item $p=2$, $D = \prod_i p_i$ with each $p_i \equiv 3\bmod 4$, and $N/p = \prod_j q_j$ with each $q_j\equiv 1\bmod 4$, or

\item $p=3$, $D = \prod_i p_i$ with each $p_i \equiv 2\bmod 3$, and $N/p = \prod_j q_j$ with each $q_j\equiv 1\bmod 3$, or

\item $TF'(D,N,1,p)>0$ where $TF'$ is as in Definition \ref{defntfprime}.

\end{itemize}

\item $p$ is inert in $\QQ(\sqrt d)$ with $Dp = \prod_i p_i$, $N/p = \prod_j q_j$ such that one of the following holds.

\begin{itemize}

\item $p=2$, for all $i$, $p_i\equiv 3\bmod 4$ and for all $j$, $q_j \equiv 1\bmod 4$.

\item $p \equiv 3 \bmod 4$, $D=1$ and $N = p$ or $2p$.
\end{itemize}
\end{itemize}
\end{corollary*}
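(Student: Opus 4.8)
The plan is to derive this from the Deligne--Rapoport-type reduction of $X^D_0(N)$ at a prime $p\mid N$; note that with $m=DN$ we have $p\mid m$ (since $\gcd(D,N)=1$), and that ``$p$ unramified in $\QQ(\sqrt d)$'' means $p$ is split or inert. The geometric input is the semistable model $\mathcal X$ of $X^D_0(N)$ over $\ZZ_p$: its special fiber is the union of two copies $Z_1,Z_2$ of $X^D_0(N/p)_{\FF_p}$ --- recording a QM abelian surface and its canonical $p$-isogenous partner --- crossing transversally along the finite supersingular set $\Sigma\subset X^D_0(N/p)_{\FF_p}(\overline{\FF_p})$, where the crossing at $s\in\Sigma$ has thickness $e_s$ governed by the group $\End_{\OO_D}(A_s)^{\times}$ of automorphisms (for $p>3$ one has $e_s\in\{1,2,3\}$ according as this group is $\{\pm1\}$, $\ZZ[i]^{\times}$, or $\ZZ[\zeta_3]^{\times}$). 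Frobenius preserves each component, $w_p$ interchanges $Z_1$ and $Z_2$, and $w_p$ coincides with Frobenius on $\Sigma$; consequently $w_{DN}=w_p\,w_{D(N/p)}$ interchanges the two components and acts on the identified curve $X^D_0(N/p)_{\FF_p}$ through $w_{D(N/p)}$.

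In the split case $\QQ_p\otimes\QQ(\sqrt d)\cong\QQ_p\times\QQ_p$, so $C^D(N,d,DN)\cong X^D_0(N)$ over $\QQ_p$ and the question is whether $X^D_0(N)(\QQ_p)\ne\emptyset$. A smooth $\FF_p$-point of $\mathcal X$ lifts by Hensel, an $\FF_p$-rational node $\ZZ_p[[x,y]]/(xy-p^{e_s})$ carries the $\ZZ_p$-point $(1,p^{e_s})$, and a $\QQ_p$-point reduces into one of the components; hence $X^D_0(N)(\QQ_p)\ne\emptyset$ if and only if $X^D_0(N/p)_{\FF_p}(\FF_p)\ne\emptyset$. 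If $D=1$ this always holds, by the rational cusps. If $D>1$ one computes the number of $\FF_p$-points of the good-reduction curve $X^D_0(N/p)_{\FF_p}$ by an Eichler-type trace formula, i.e.\ as a weighted count of optimal embeddings into the Eichler order of level $N/p$ in the definite quaternion algebra ramified at $p$ and the primes of $D$: for $p>3$ positivity of that count is recorded as $TF'(D,N,1,p)>0$ in the sense of Definition~\ref{defntfprime}, while for $p\in\{2,3\}$ the only available points have extra automorphisms, and one gets instead the explicit conditions --- which say precisely that $\ZZ[i]$ (resp.\ $\ZZ[\zeta_3]$) admits the relevant optimal embedding, unwinding to $p_i\equiv3\pmod4$ and $q_j\equiv1\pmod4$ (resp.\ the analogues mod~$3$).

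In the inert case $C^D(N,d,DN)$ is the unramified quadratic twist of $X^D_0(N)$ by $w_{DN}$, so $\mathcal C\otimes\ZZ_{p^2}\cong\mathcal X\otimes\ZZ_{p^2}$ and $\mathcal C(\ZZ_p)=\{Q\in\mathcal X(\ZZ_{p^2}):\Frob(Q)=w_{DN}(Q)\}$. Because $\Frob$ preserves the two components while $w_{DN}$ swaps them, no smooth point of $\mathcal X_{\FF_p}$ can satisfy $\Frob(\bar Q)=w_{DN}(\bar Q)$; so $C^D(N,d,DN)(\QQ_p)\ne\emptyset$ exactly when one of the nodes contributes. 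Since $\Frob$ and $w_p$ agree on $\Sigma$, the equation $\Frob(\bar Q)=w_{DN}(\bar Q)$ forces the node to sit over a supersingular point $s$ fixed by $w_{D(N/p)}$ (automatic when $D=1$ and $N/p=1$); and then, working in $\ZZ_{p^2}[[x,y]]/(xy-p^{e_s})$ where the descent datum conjugates the coefficients and interchanges the two branches, a lift of the node to a $\ZZ_p$-point amounts to solving a norm equation $\N_{\QQ_{p^2}/\QQ_p}(a)=(\mathrm{unit})\,p^{e_s}$, i.e.\ to $e_s$ being even, i.e.\ $e_s=2$, i.e.\ $\ZZ[i]\subset\End_{\OO_D}(A_s)$. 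Feeding this back through the optimal-embedding dictionary at level $N/p$ --- and observing that a $w_q$-fixed $\ZZ[i]$-point exists only when $q=2$ (the prime at which $\QQ(i)$ ramifies) --- leaves exactly the two configurations in the statement: $p=2$ with the stated mod-$4$ congruences, and $D=1$ with $N\in\{p,2p\}$ and $p\equiv3\pmod4$, the distinguished node sitting over the supersingular point carrying $\ZZ[i]$-multiplication (namely $j=1728$, respectively its $\Gamma_0(2)$-enhancement). I expect the only genuinely delicate step to be this local computation at the nodes: keeping track of the units that appear when $w_{D(N/p)}$ and Frobenius act on the two branches, so that the parity condition on $e_s$ --- and hence the behaviour at $p=2,3$, where the automorphism orders interact with the ramification of $\QQ(\sqrt d)$ and of the quaternion algebra --- comes out correctly.
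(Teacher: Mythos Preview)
Your overall strategy matches the paper's: both use the Deligne--Rapoport model at $p\mid N$, reduce the split case to finding suitable $\FF_p$-points on the special fiber, and reduce the inert case (where $p\mid m=DN$) to the existence of a superspecial point fixed by $w_{DN/p}$ of even length. Your norm-equation argument for the inert case is a clean alternative to the paper's chain-of-$\PP^1$'s desingularization argument (Lemma~\ref{DRpdividesmcondition}), and yields the same parity condition on $e_s$; the unwinding via Corollary~\ref{SuperspecialFixedZetaFour} then proceeds identically.

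There is, however, a genuine gap in your split-case reduction. You claim that an $\FF_p$-rational node with completed local ring $\ZZ_p[[x,y]]/(xy-p^{e_s})$ ``carries the $\ZZ_p$-point $(1,p^{e_s})$'', and hence that $X^D_0(N)(\QQ_p)\ne\emptyset$ is equivalent simply to $X^D_0(N/p)(\FF_p)\ne\emptyset$. But $(1,p^{e_s})$ is not a section through the node: a local $\ZZ_p$-algebra homomorphism from $\ZZ_p[[x,y]]/(xy-p^{e_s})$ to $\ZZ_p$ must send both $x$ and $y$ into $p\ZZ_p$, whence $xy\in p^2\ZZ_p$, forcing $e_s\ge 2$. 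When $e_s=1$ the model is already regular at that node, the node is a genuine singular point of the special fiber, and no $\ZZ_p$-section passes through it. Thus an $\FF_p$-rational superspecial point of length $1$ does \emph{not} by itself produce a $\QQ_p$-point.

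This is precisely why the paper's split-case criterion (Theorem~\ref{DRsplitconditions}, via Lemma~\ref{DRpmidmlemma}) separates into the $TF'$ condition, which counts \emph{non}-superspecial $\FF_p$-points, and the $p=2,3$ congruence conditions, which detect $w_p$-fixed superspecial points of length $>1$: only the latter give rise to exceptional $\PP^1_{\FF_p}$'s in the minimal desingularization, and it is those $\PP^1$'s that carry the smooth $\FF_p$-points one lifts by Hensel. Your final list of conditions is correct, but it does not follow from the reduction you stated; you need to replace ``$X^D_0(N/p)(\FF_p)\ne\emptyset$'' by ``there is a non-superspecial $\FF_p$-point of $X^D_0(N/p)$, or an $\FF_p$-rational superspecial point of length $>1$'', and then derive the $p=2,3$ cases from Lemmas~\ref{DRpnmidmzetafourconditions} and~\ref{DRpnmidmzetasixconditions} with $m=1$.
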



We also give infinite families of examples of twists which have $\QQ_v$-rational points for all places $v$ of $\QQ$.

\begin{example*}[\ref{Shimura2qExample}] Suppose that $q$ is an odd prime and consider $X^{2q}_0(1)_{/\QQ}$, a curve of genus $g$. Note that this curve is hyperelliptic over $\QQ$ if and only if $q \in\{13,19,29,31,37,43,47,67,73,97,103\}$\cite[Theorem 7]{OggReal}. Let $p\equiv 3\bmod 8$ be a prime such that $\left(\frac{-p}{q}\right) = -1$ and such that for all odd primes $\ell$ less than $4g^2$, $\left(\frac{-p}{\ell}\right) = -1$. Let the twist of $X^{2q}_0(1)$ by $\QQ(\sqrt{-p})$ and $w_{2q}$ be denoted by $C^{2q}(1,-p,2q)_{/\QQ}$. Then $C^{2q}(1,-p,2q)$ has $\QQ_v$-rational points for all places $v$ of $\QQ$. \end{example*}

If $q=13$, then the genus of $X^{26}_0(1)$ is two. Therefore $X^{26}_0(1)$ is hyperelliptic, and has the following explicit model, where $w_{2q}$ is identified with the hyperelliptic involution \cite{genustwoshimura}: $$y^2 = -2x^6 + 19x^4 -24x^2 -169.$$ 
Hence, an explicit model for $C^{26}(1,-p,2q)$ is given by the affine equation $$y^2 = 2px^6 - 19px^4 +24px^2 + 169p.$$ The primes less than 2000 satisfying the congruence conditions in the above example are $p = 67,163,$ and $1747$. It can be checked that the explicit model of $C^{26}(1,-67,26)$ has at least the rational points $\left(\frac{\pm 9}{5}, \frac{\pm 10988}{125}\right),$ and that $C^{26}(1,-163,26)$ has at least the rational points $\left(\frac{\pm 67}{35},\frac{\pm 5270116}{42875}\right)$ . If $p= 1747$, a point search in \texttt{sage} \cite{sage} failed to produce any rational points and the \texttt{TwoCoverDescent} command in \texttt{MAGMA} did not determine if $C^{26}(1,-1747,26)$ has no rational points.

\begin{example*}[\ref{PrimeLevelTwistExample}] Let $q\equiv 3\bmod 4$ be a prime and consider the curve $X_0(q)_{/\QQ}$. Let $p\equiv 1\bmod 4$ be a prime such that $\left(\frac{p}{q}\right) = -1$ and let $C^1(q,p,q)_{/\QQ}$ denote the twist of $X_0(q)$ by $\QQ(\sqrt p)$ and $w_q$. Then $C^1(q,p,q)$ has $\QQ_v$-rational points for all places $v$ of $\QQ$.\end{example*}

If $q=23$, the least two primes satisfying the above are $p = 5$ and $p = 13$. Using a hyperelliptic model of the genus 2 curve $X_0(23)$ \cite{Go91} as above, it can be verified that $C^1(23,5,23)(\QQ)$ is nonempty. Meanwhile, the \texttt{TwoCoverDescent} command in \texttt{MAGMA} determined that $C^1(23,13,23)(\QQ)$ is empty.

Finally if $\Delta <0$ we recall the Hilbert Class Polynomial \cite[p.285]{Cox} which describes the unramified abelian extensions of $\QQ(\sqrt \Delta)$. Only for finitely many $\Delta$ is the splitting of $H_\Delta(X)$ modulo primes completely determined by congruence conditions \cite[Theorem 3.22]{Cox}. The results of this paper allow us to find examples of primes $p$ in which congruence conditions determine the splitting of $H_\Delta(X)$ modulo $p$.


\begin{corollary*}[\ref{SplittingHilbertClassPolynomial}] Let $p\ne 2$ be a prime and let $N$ be a squarefree integer such that $\left(\frac{-N}{p}\right) = -1$. Let $H_{\Delta}(X) \in \ZZ[X]$ denote the Hilbert Class Polynomial of discriminant $\Delta$. It follows that $H_{-4N}(X)$ has a root modulo $p$ if and only if for all odd primes $q\mid N$, $\left(\frac{-p}{q}\right) = 1$.\end{corollary*}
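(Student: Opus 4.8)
The plan is to derive this from Theorem~\ref{RamifiedMainTheorem} applied to the modular curve $X_0(N)=X^1_0(N)$ (the case $D=1$, where the full Atkin--Lehner involution is $w_N$), with twisting field $\QQ(\sqrt{-p})$, and then to translate the resulting criterion for $\QQ_p$-points into a statement about $H_{-4N}$ modulo $p$.

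First I would invoke Theorem~\ref{RamifiedMainTheorem} for $C^1(N,-p,N)$. The hypothesis $\left(\frac{-N}{p}\right)=-1$ forces $p$ odd and $p\nmid N$, hence $p\nmid 2DN$, and $p$ is ramified in $\QQ(\sqrt{-p})$, so the theorem applies. Since $\left(\frac{-DN}{p}\right)=\left(\frac{-N}{p}\right)=-1$ the first alternative is vacuous, and since $D=1$ is odd the third is vacuous and the conditions on primes dividing $D$ in the second are empty; the theorem therefore gives
\[
 C^1(N,-p,N)(\QQ_p)\ne\emptyset \iff \left(\tfrac{-p}{q}\right)=1 \text{ for all odd primes } q\mid N .
\]
It thus remains to show that $C^1(N,-p,N)(\QQ_p)\ne\emptyset$ is equivalent to $H_{-4N}(X)$ having a root modulo $p$ (the case $N=1$ being trivial, since then $X_0(1)\cong\PP^1$ and $H_{-4}=X-1728$).

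For this I would repeat the local analysis underlying the main theorem in the degenerate case $D=1$. As $X_0(N)$ has good reduction at $p\nmid N$ and $\QQ_p(\sqrt{-p})/\QQ_p$ is totally ramified with residue field $\FF_p$, a $\QQ_p$-point of the twist is a point $P\in X_0(N)(\QQ_p(\sqrt{-p}))$ with $\sigma P=w_N P$, and it reduces to a point of $X_0(N)(\FF_p)$ fixed by $w_N$ since $\sigma$ acts trivially on $\FF_p$; conversely an $\FF_p$-rational $w_N$-fixed point lifts (the fixed scheme being smooth over $\ZZ_p$ as $p\ne 2$), and choosing a formal parameter $t$ there with $w_N^\ast t=-t$ and putting $t(P)=\sqrt{-p}$ yields such a $P$, because then $\sigma(t(P))=-t(P)=t(w_N P)$. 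Hence $C^1(N,-p,N)(\QQ_p)\ne\emptyset$ if and only if $X_0(N)_{\FF_p}$ has an $\FF_p$-rational $w_N$-fixed point. Now the $w_N$-fixed points of $X_0(N)$ are the CM points whose elliptic curves carry an endomorphism of degree $N$ with cyclic kernel, so they lie above the $j$-invariants with complex multiplication by the orders of discriminant $-4N$ (always) and $-N$ (when $N\equiv 3\bmod 4$); since $\left(\frac{-N}{p}\right)=-1$ these all reduce to supersingular curves, and Deuring's reduction theory identifies the reductions of the discriminant-$(-4N)$ ones with the roots of $H_{-4N}$ modulo $p$.

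The main obstacle, I expect, is this last identification. In quaternionic terms an $\FF_p$-rational $w_N$-fixed point amounts to a supersingular $E_{/\overline{\FF_p}}$ carrying $\pi,\phi\in\End(E)$ with $\pi^2=-p$, $\phi^2=-N$ and $\pi\phi=-\phi\pi$ --- the anticommutation being forced by the demand that Frobenius preserve $\ker\phi$ --- and such $E$ exists exactly when the quaternion algebra $\bigl(\tfrac{-p,-N}{\QQ}\bigr)$ is isomorphic to $B_{p,\infty}$, which a Hilbert-symbol computation shows is equivalent to $\left(\frac{-p}{q}\right)=1$ for all odd $q\mid N$. One must then verify that this matches the existence of a root of $H_{-4N}$ in $\FF_p$: in particular that an $\FF_p$-rational $w_N$-fixed point always forces $H_{-4N}$, and not merely $H_{-N}$ when $N\equiv 3\bmod 4$, to acquire such a root (a curve with CM by $\OO_{-N}$ also has CM by the suborder $\OO_{-4N}$, but one must check that the relevant $2$-isogeny preserves $\FF_p$-rationality), and one must handle the automorphism ambiguities at $j=0,1728$ and the exceptional primes dividing $\disc(H_{-4N})$ at which $H_{-4N}\bmod p$ is inseparable. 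With this dictionary in hand the corollary follows by combining it with Theorem~\ref{RamifiedMainTheorem}.
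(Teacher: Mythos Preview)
Your first step is exactly the paper's: specialize Theorem~\ref{RamifiedMainTheorem} to $D=1$, $m=N$, twisting field $\QQ(\sqrt{-p})$, and use $\bigl(\tfrac{-N}{p}\bigr)=-1$ to kill all alternatives except~(3), obtaining
\[
C^1(N,-p,N)(\QQ_p)\ne\emptyset \iff \Bigl(\tfrac{-p}{q}\Bigr)=1 \text{ for every odd prime } q\mid N.
\]

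The divergence is in the second step. The paper does not re-derive the link between $\QQ_p$-points on the twist and roots of $H_{-4N}$ modulo~$p$; it simply invokes Ozman's result (Theorem~\ref{OzmanRamified}), which already supplies this bridge for $D=1$, $m=N$. Combining Theorem~\ref{RamifiedMainTheorem}(3) with Theorem~\ref{OzmanRamified} is the entirety of the paper's argument for the corollary.

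Your sketch of that bridge---reducing to $\FF_p$-rational $w_N$-fixed points via the totally ramified descent, then identifying these with CM points of discriminant $-4N$ or $-N$ and invoking Deuring---is essentially a reconstruction of Lemma~\ref{RamifiedKeyLemma} and the proof of Theorem~\ref{OzmanRamified} from scratch. This is sound in outline, and your quaternionic reformulation via $\bigl(\tfrac{-p,-N}{\QQ}\bigr)\cong B_{p,\infty}$ is exactly the computation underlying Theorem~\ref{GoodReductionSimultaneousEmbeddings} and Corollary~\ref{SuperspecialFpRationalALFixed}. But the obstacles you flag in your last paragraph are precisely the ones Ozman's theorem is designed to encapsulate, and the paper's own discussion after Theorem~\ref{OzmanRamified} confirms that the direction ``root of $H_{-4N}$ mod $p$ $\Rightarrow$ $C^1(N,d,N)(\QQ_p)\ne\emptyset$'' genuinely fails at primes dividing $\disc(H_{-4N})$ (the example $N=5$, $p=13$). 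So your independent route does not in fact buy a cleaner biconditional; it runs into the same wall. The efficient move is to cite Theorem~\ref{OzmanRamified} as the paper does.
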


We proceed as follows: After reviewing some quaternion arithmetic, we will prove some Theorems on embeddings of quadratic orders which may be of independent interest. Then after properly defining these Shimura curves and their Atkin-Lehner involutions, we will show how these embedding theorems may be applied to the problem of controlling \emph{superspecial points} on Shimura curves over finite fields. The remaining sections deal with determining $\QQ_p$-rational points.

\section{Quaternion Arithmetic}
\subsection{Basic definitions and theorems}

\begin{definition} A \emph{quaternion algebra} over a field $K$ is a four-dimensional central simple $K$-algebra. \end{definition}

\begin{example} If the characteristic of $K$ is not 2, and $a,b\in K^\times$ then there is a quaternion algebra over $K$ which we denote $\left(\frac{a,b}{K}\right)$. This algebra has a $K$-basis $\langle 1, i, j, k\rangle$ such that $i^2 = a$, $j^2 = b$ and $k = ij = -ji$.\end{example}

\begin{definition} Let $K$ be a number field. We say that a quaternion algebra $B$ is \emph{ramified} at a place $v$ of $K$ if $B\otimes_K K_v$ is a division algebra.\end{definition}

\begin{definition} If $K=\QQ$, we say that a quaternion algebra $B$ is \emph{definite} if $B$ is ramified at $\infty$. Likewise we say that $B$ is \emph{indefinite} if $B$ is unramified at $\infty$.\end{definition}

It is well-known that if $K$ is a number field, the quaternion algebras $B_{/K}$ are determined up to isomorphism by the even number of ramified places of $K$. 
It follows that if $K=\QQ$, $B$ is definite if and only if $B$ is ramified at an odd number of primes. Therefore we make the following definition.

\begin{definition} Let $D>0$ be a squarefree positive integer. Let $B_D$ denote the unique quaternion $\QQ$-algebra such that $B_D$ is ramified at $p$ if and only if $p\mid D$. To any quaternion $\QQ$-algebra, we associate its discriminant $\disc(B)$, the unique positive squarefree number such that $B \cong B_{\disc(B)}$.
\end{definition}

\begin{definition}\label{QuatMainInvolution} Let $B$ be a quaternion $K$-algebra and let $a\mapsto \overline a$ denote the \emph{main involution} of $B$ over $K$. 
Define the trace $a\mapsto \tr(a) = a + \overline a$ and the norm $\N(a) = a\overline a$.\end{definition}

\begin{definition} A $\ZZ$-order $\OO$ in a quaternion $\QQ$-algebra $B$ is a rank four $\ZZ$-subalgebra of $B$ such that for all $\theta\in\OO$, $\tr(\theta) \in \ZZ$ and $\N(\theta)\in \ZZ$.\end{definition}

\begin{definition} The discriminant of a $\ZZ$-order $\OO$ with a $\ZZ$-basis $e_1, \ldots, e_4$, is $\disc(\OO) = \det(\tr(e_ie_j))$.\end{definition}

\begin{lemma}{\cite[Corollaire I.4.8]{Vigneras}}\label{DiscriminantIndexLemma} If $\OO_1 \supset \OO_2$ then $\disc(\OO_1)\mid \disc(\OO_2)$. Moreover, $[\OO_1:\OO_2] = \sqrt{\left\mid \frac{\disc(\OO_2)}{\disc(\OO_1)}\right\mid }$ so if $\disc(\OO_2) = \disc(\OO_1)$ then $\OO_1 = \OO_2$.\end{lemma}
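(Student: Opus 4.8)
The plan is to reduce everything to the single identity
\[
\disc(\OO_2)=[\OO_1:\OO_2]^2\,\disc(\OO_1)
\]
valid for any two $\ZZ$-orders $\OO_2\subseteq\OO_1$ in a fixed quaternion $\QQ$-algebra $B$. Granting this, the divisibility $\disc(\OO_1)\mid\disc(\OO_2)$ is immediate; dividing by $\disc(\OO_1)$ (which is nonzero, as noted below) gives $\disc(\OO_2)/\disc(\OO_1)=[\OO_1:\OO_2]^2\ge 0$, hence the index formula $[\OO_1:\OO_2]=\sqrt{\,\lvert\disc(\OO_2)/\disc(\OO_1)\rvert\,}$; and the rigidity statement is the case in which $[\OO_1:\OO_2]^2=1$, forcing $[\OO_1:\OO_2]=1$ and $\OO_1=\OO_2$.

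First I would record two preliminaries. Since a $\ZZ$-order has rank four and $\dim_\QQ B=4$, every order spans $B$ over $\QQ$; thus $\OO_1\otimes_\ZZ\QQ=B=\OO_2\otimes_\ZZ\QQ$, so the inclusion $\OO_2\subseteq\OO_1$ of full $\ZZ$-lattices automatically has finite index. Second, the number $\det(\tr(e_ie_j))$ is independent of the chosen $\ZZ$-basis $e_1,\dots,e_4$: if $e_i'=\sum_j M_{ij}e_j$ with $M\in\GL_4(\ZZ)$, then the Gram matrix transforms by $(\tr(e_i'e_j'))=M\,(\tr(e_ie_j))\,M^{\mathrm{T}}$, whose determinant is multiplied by $\det(M)^2=1$. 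Hence $\disc(\OO)$ is well defined; moreover, since the trace pairing $(x,y)\mapsto\tr(xy)$ on a quaternion algebra is nondegenerate, $\disc(\OO)\ne 0$ for every order $\OO$.

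Next I would apply the theory of elementary divisors (Smith normal form) over the PID $\ZZ$: there exist a $\ZZ$-basis $f_1,\dots,f_4$ of $\OO_1$ and positive integers $d_1\mid d_2\mid d_3\mid d_4$ such that $d_1f_1,\dots,d_4f_4$ is a $\ZZ$-basis of $\OO_2$, and then $[\OO_1:\OO_2]=d_1d_2d_3d_4$. Computing $\disc(\OO_2)$ in this adapted basis,
\[
\disc(\OO_2)=\det\bigl(\tr(d_if_i\cdot d_jf_j)\bigr)=\det\bigl(d_id_j\,\tr(f_if_j)\bigr)=(d_1d_2d_3d_4)^2\det\bigl(\tr(f_if_j)\bigr)=[\OO_1:\OO_2]^2\,\disc(\OO_1),
\]
where the last step uses the basis-independence of $\disc(\OO_1)$ to evaluate it on $f_1,\dots,f_4$. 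Combining with the first paragraph completes the proof.

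There is no real obstacle: the only genuine input is the existence of bases adapted to the sublattice inclusion, and the rest is multiplicativity of the determinant under the diagonal scaling $f_i\mapsto d_if_i$. The one point deserving a word of care is the nondegeneracy of the trace form, which is what makes $\disc(\OO_1)$ a nonzero integer and thus legitimizes the division in the index formula; alternatively one may simply cite that $\disc(B)\ne 0$.
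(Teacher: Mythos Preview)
Your proof is correct. The paper does not supply its own proof of this lemma but simply cites \cite[Corollaire I.4.8]{Vigneras}; the argument you have given---reducing to the identity $\disc(\OO_2)=[\OO_1:\OO_2]^2\,\disc(\OO_1)$ via Smith normal form and the change-of-basis behavior of the Gram determinant---is the standard one.
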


\begin{definition} An order in a quaternion algebra will be called \emph{maximal} if it is maximal with respect to inclusion.\end{definition}

\begin{lemma}{\cite[Corollaire II.5.3]{Vigneras}} An order $\OO$ in a quaternion $\QQ$-algebra $B$ is maximal if and only if $\disc(B) = \sqrt{\mid \disc(\OO)\mid }$.\end{lemma}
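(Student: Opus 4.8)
The plan is a local--global argument resting on the structure of orders over $\QQ_p$ together with Lemma \ref{DiscriminantIndexLemma}. First I would record two standard reductions. Writing $\OO_p = \OO\otimes_\ZZ\ZZ_p$ and $B_p = B\otimes_\QQ\QQ_p$, one has $\disc(\OO_p) = \disc(\OO)\ZZ_p$, since a $\ZZ$-basis of $\OO$ is also a $\ZZ_p$-basis of $\OO_p$ and $\disc$ is computed from such a basis. Moreover $\OO$ is maximal in $B$ if and only if $\OO_p$ is maximal in $B_p$ for every prime $p$: an order properly containing $\OO$ differs from it at only finitely many primes, and conversely one reconstructs a global order from local data. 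Finally, every $\ZZ$-order is contained in a maximal order, because if $\OO''\supseteq\OO$ is an order then $[\OO'':\OO] = \sqrt{|\disc(\OO)/\disc(\OO'')|}\le\sqrt{|\disc(\OO)|}$ by Lemma \ref{DiscriminantIndexLemma}, so the indices of orders containing $\OO$ are uniformly bounded and a maximal one exists. Thus it suffices to determine, prime by prime, the value of $v_p(\disc\OO_p)$ in terms of the maximality of $\OO_p$.

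The technical heart is the local computation. If $p\nmid\disc(B)$, then $B_p\cong M_2(\QQ_p)$, every maximal order of $B_p$ is $\GL_2(\QQ_p)$-conjugate to $M_2(\ZZ_p)$, and a direct computation of $\det(\tr(e_ie_j))$ on the basis of matrix units gives $\disc(M_2(\ZZ_p)) = \ZZ_p$, so $v_p(\disc\OO_p) = 0$ when $\OO_p$ is maximal. If $p\mid\disc(B)$, then $B_p$ is a division algebra, the valuation of $\QQ_p$ extends uniquely to $B_p$, and its valuation ring is the unique maximal order $\OO_p^{\max}$; taking a uniformizer $\pi$ with $\pi^2\in p\ZZ_p^\times$ and the ring of integers $R$ of the unramified quadratic extension of $\QQ_p$, one has $\OO_p^{\max} = R\oplus R\pi$ and computes $v_p(\disc\OO_p^{\max}) = 2$. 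In either case, combining Lemma \ref{DiscriminantIndexLemma} with the fact that $\OO_p$ lies in a maximal order shows that $\OO_p$ is maximal if and only if $v_p(\disc\OO_p)$ attains its minimal value, namely $0$ when $p\nmid\disc(B)$ and $2$ when $p\mid\disc(B)$.

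Assembling these: $\OO$ is maximal $\iff$ $\OO_p$ is maximal for all $p$ $\iff$ $v_p(\disc\OO) = 2$ for $p\mid\disc(B)$ and $v_p(\disc\OO) = 0$ otherwise $\iff$ $|\disc(\OO)| = \prod_{p\mid\disc(B)}p^2 = \disc(B)^2$, i.e. $\disc(B) = \sqrt{|\disc(\OO)|}$. For the ``if'' direction one may instead argue more cheaply: given $\disc(B) = \sqrt{|\disc(\OO)|}$, choose a maximal order $\OO'\supseteq\OO$; the ``only if'' direction gives $|\disc(\OO')| = \disc(B)^2 = |\disc(\OO)|$, whence $\OO = \OO'$ by Lemma \ref{DiscriminantIndexLemma}, so $\OO$ is maximal.

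The main obstacle is the local structure theory invoked above: the classification of maximal orders in $M_2(\QQ_p)$ and the identification of the valuation ring as the unique maximal order of a $p$-adic division quaternion algebra, together with the two discriminant computations. This is precisely the content of the preceding sections of \cite[Ch.~II]{Vigneras} of which this statement is a corollary; in a self-contained treatment essentially all of the work lies there, and once it is in hand the global statement is immediate from Lemma \ref{DiscriminantIndexLemma} and multiplicativity of the discriminant over localizations.
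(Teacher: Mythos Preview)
The paper does not give its own proof of this lemma; it simply cites \cite[Corollaire II.5.3]{Vigneras}. Your argument is correct and is essentially the standard proof one finds in Vign\'eras: reduce to the local situation, compute the discriminant of a maximal order in $M_2(\QQ_p)$ and in the $p$-adic division algebra, and then invoke Lemma~\ref{DiscriminantIndexLemma} to pass between ``maximal'' and ``having the right discriminant''. You have also correctly identified that the substantive content lies in the local structure theory (conjugacy of maximal orders in $M_2(\QQ_p)$ and the description of the unique maximal order in the division case via Lemma~\ref{pDividesDiscriminantLemma}), and that once this is in hand the global statement is a formality.
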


If an order $\OO$ is contained in two maximal orders $\OO_1$ and $\OO_2$, then $[\OO_1:\OO] = [\OO_2:\OO]$ by Lemma \ref{DiscriminantIndexLemma}.

\begin{definition} A $\ZZ$-order $\OO\subset B$ is called an \emph{Eichler order} when it is the intersection of two (not necessarily distinct) maximal $\ZZ$-orders. An Eichler $R$-order may be analogously defined for other commutative rings. The \emph{level} of an Eichler order is its index in either maximal order. \end{definition}

\begin{definition} By Lemma \ref{DiscriminantIndexLemma}, if $\OO$ is an Eichler order, $\sqrt{\mid \disc(\OO)\mid }$ is a positive integer, which we may sometimes refer to as the \emph{reduced discriminant}.\end{definition}

\begin{definition}\label{DefnZp2} Let $\ZZ_{p^2}$ denote the unique irreducible unramified degree two ring extension of $\ZZ_p$.\end{definition}

\begin{lemma}{\cite[Corollaire II.1.7]{Vigneras}}\label{pDividesDiscriminantLemma} Let $B$ be a quaternion $\QQ$-algebra ramified at $p$. Then $B\otimes \QQ_p$ has a unique maximal $\ZZ_p$-order $\OO$. Moreover, there exists an element $\pi \in B\otimes \QQ_p$ such that $\pi^2 \OO = p\OO$ and $\OO\cong\ZZ_{p^2} \oplus \pi \ZZ_{p^2}$ . It follows that for $a\in \ZZ_{p^2}$, $\pi a \pi \inv = \sigma(a)$ where $\langle \sigma \rangle = \Aut_{\ZZ_p}(\ZZ_{p^2})$.\end{lemma}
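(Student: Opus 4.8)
\textbf{Proof proposal for Corollary \ref{SplittingHilbertClassPolynomial}.}

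The plan is to recognize $H_{-4N}(X)$ as the Hilbert class polynomial attached to the order $\ZZ[\sqrt{-N}]$ (of discriminant $-4N$), so that a root of $H_{-4N}$ modulo $p$ corresponds, via the reduction theory of CM elliptic curves, to a supersingular or ordinary elliptic curve over $\overline{\FF}_p$ admitting an embedding of that quadratic order. Since the hypothesis $\left(\frac{-N}{p}\right) = -1$ forces $p$ to be inert (hence ramified--more precisely, non-split) in $\QQ(\sqrt{-N})$, any such elliptic curve must be supersingular: an ordinary curve over $\overline{\FF}_p$ has CM by an order in which $p$ splits or ramifies, which is incompatible with $p$ inert. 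Thus $H_{-4N}(X)$ has a root mod $p$ if and only if there is a supersingular elliptic curve $E/\overline{\FF}_p$ with an optimal (or at least any) embedding $\ZZ[\sqrt{-N}] \hookrightarrow \End(E)$.

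The key translation is then into quaternion arithmetic: $\End(E)$ for $E/\overline{\FF}_p$ supersingular is a maximal $\ZZ$-order $\OO$ in $B_p$, the definite quaternion algebra ramified exactly at $p$ and $\infty$. So I would reformulate the statement as: $\ZZ[\sqrt{-N}]$ embeds into some maximal order of $B_p$ if and only if $\left(\frac{-p}{q}\right) = 1$ for all odd primes $q \mid N$ (given $\left(\frac{-N}{p}\right) = -1$). By the theory of embeddings of quadratic orders into quaternion orders (of the sort promised "of independent interest" earlier in the paper), $\ZZ[\sqrt{-N}] = \ZZ[\omega]$ with $\omega^2 = -N$ embeds into a maximal order of $B_p$ if and only if $\QQ(\sqrt{-N})$ embeds into $B_p$ and the local embedding conditions hold at every finite prime $\ell$, with the embedding at $p$ guaranteed precisely because $p$ is non-split in $\QQ(\sqrt{-N})$. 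The global embeddability $\QQ(\sqrt{-N}) \hookrightarrow B_p$ holds iff no place ramifying in $B_p$ splits in $\QQ(\sqrt{-N})$; since $B_p$ ramifies only at $p$ and $\infty$, and $p$ is inert while $\infty$ is ramified in the imaginary field $\QQ(\sqrt{-N})$, this is automatic. The local optimal embedding count at a prime $\ell \mid N$, $\ell$ odd, into the (split, hence $M_2(\ZZ_\ell)$) local maximal order is then governed by whether $\ZZ_\ell[\sqrt{-N}]$ is optimally embeddable in $M_2(\ZZ_\ell)$; the obstruction is exactly the Eichler symbol, which for $\ell$ odd dividing $N$ (so $\ell \parallel N$, i.e. $\ZZ_\ell[\sqrt{-N}]$ is a non-maximal quadratic order) reduces to the condition that $-p/\ell$ -- the relevant quadratic residue symbol governing whether the residue field extension is split -- is a square. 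Tracking constants carefully, this yields $\left(\frac{-p}{\ell}\right) = 1$.

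The remaining step is to run a Hasse-principle/Eichler argument to pass from "embeds into \emph{some} maximal order locally everywhere" to "embeds into \emph{some} maximal order globally": for a definite quaternion algebra this is subtle because the class number may exceed one, but the relevant statement is that the set of (conjugacy classes of) maximal orders admitting an embedding of a given quadratic order is nonempty precisely when all local embeddings exist, which follows from Eichler's theory of optimal embeddings (the local-global principle for representations, or equivalently the nonvanishing of a suitable mass). I would cite the embedding theorems proven in the body of the paper to handle this cleanly, so that the argument reduces to the local computations at $p$ (automatic) and at each odd $\ell \mid N$ (giving $\left(\frac{-p}{\ell}\right) = 1$), with no genuine obstruction at $2$ or at primes away from $N$.

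\textbf{Main obstacle.} The delicate point is the local computation at the primes $\ell \mid N$: pinning down the precise quadratic symbol $\left(\frac{-p}{\ell}\right)$ (rather than, say, $\left(\frac{-N}{\ell}\right)$ or $\left(\frac{p}{\ell}\right)$) requires care with the structure of the non-maximal order $\ZZ_\ell[\sqrt{-N}]$ and the optimal-embedding condition into $M_2(\ZZ_\ell)$; one must use the hypothesis $\left(\frac{-N}{p}\right) = -1$ to rewrite symbols and invoke quadratic reciprocity to convert local data at $\ell$ into the stated form. A secondary subtlety is ensuring the global-to-local reduction is genuinely an "if and only if" despite the class number of $B_p$ possibly being large -- this is exactly where the paper's embedding theorems, presumably a refined form of Eichler's trace formula or a direct construction, are needed.
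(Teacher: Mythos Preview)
Your proposal addresses the wrong statement. The lemma you were asked to prove is Lemma~\ref{pDividesDiscriminantLemma}, a structural fact about the unique maximal $\ZZ_p$-order in the local division quaternion algebra $B\otimes\QQ_p$: its existence, the uniformizer $\pi$ with $\pi^2\OO = p\OO$, the decomposition $\OO \cong \ZZ_{p^2}\oplus\pi\ZZ_{p^2}$, and the conjugation action $\pi a\pi^{-1} = \sigma(a)$. In the paper this lemma carries no proof at all; it is simply a citation of \cite[Corollaire II.1.7]{Vigneras}. What you have written is instead a sketch of an argument for Corollary~\ref{SplittingHilbertClassPolynomial}, the statement about roots of $H_{-4N}(X)$ modulo $p$, which is an entirely different result appearing much later in the paper.

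For the record, the paper's own proof of Corollary~\ref{SplittingHilbertClassPolynomial} is also quite different from your sketch: it does not proceed via Deuring reduction and direct quaternion embedding arguments, but rather deduces the corollary by combining Theorem~\ref{RamifiedMainTheorem}(3) (the criterion for $C^1(N,d,N)(\QQ_p)\ne\emptyset$ when $p$ is ramified in $\QQ(\sqrt d)$, $D=1$, $m=N$) with Ozman's Theorem~\ref{OzmanRamified}, which translates the existence of a $\QQ_p$-point into a root of $H_{-4N}$ modulo $p$. Your direct approach via supersingular endomorphism rings is plausible in spirit but, as you yourself note, the local-global step for definite quaternion algebras and the precise symbol bookkeeping at primes $\ell\mid N$ are nontrivial; the paper sidesteps this by routing through the geometry of the twisted Shimura curve. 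In any case, none of this bears on Lemma~\ref{pDividesDiscriminantLemma}, for which you should either supply a direct argument about the valuation ring of a local division algebra or, as the paper does, simply cite Vign\'eras.
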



Hereon, we suppress the $\ZZ$ as all of our quaternion algebras will be over $\QQ$ (or be the base change of a quaternion algebra over $\QQ$).

\begin{lemma}{\cite[Lemme II.2.4]{Vigneras}, \cite[Corollaire III.5.2]{Vigneras}} Let $B$ be a quaternion $\QQ$-algebra and $\OO$ an Eichler order of level $N$. If $p\nmid \disc(B)$, then there is an embedding $\OO\otimes \ZZ_p \hookrightarrow M_2(\ZZ_p)$. Moreover there is a unique integer $n$ such that $\OO\otimes \ZZ_p$ is conjugate to an order in $M_2(\ZZ_p)$ of the form $$\left(\begin{array}{cc} \ZZ_p & \ZZ_p \\ p^n \ZZ_p & \ZZ_p\end{array}\right).$$ We may explicitly give $n$ as the non-negative integer such that $p^n\mid N$ but $p^{n+1}\nmid N$.\end{lemma}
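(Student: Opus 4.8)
The plan is to reduce to a local statement about lattices in a two-dimensional $\QQ_p$-vector space. Since $p\nmid\disc(B)$, the algebra $B\otimes\QQ_p$ is not a division algebra, so fix an isomorphism $B\otimes\QQ_p\cong M_2(\QQ_p)$. I would recall the standard dictionary: every maximal $\ZZ_p$-order of $M_2(\QQ_p)$ has the form $\End_{\ZZ_p}(L)$ for a $\ZZ_p$-lattice $L\subset\QQ_p^2$, this endomorphism ring depends only on the homothety class of $L$, and since $\GL_2(\QQ_p)$ acts transitively on lattices, every maximal $\ZZ_p$-order is conjugate to $\End(\ZZ_p^2)=M_2(\ZZ_p)$. (This is in Vign\'eras; alternatively it follows from the fact that $M_2(\ZZ_p)$ is maximal together with Lemma~\ref{DiscriminantIndexLemma}.)

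Next I would verify that $\OO\otimes\ZZ_p$ is still an Eichler order and compute its shape. If $\OO=\OO_1\cap\OO_2$ with the $\OO_i$ maximal, then $\OO\otimes\ZZ_p=(\OO_1\otimes\ZZ_p)\cap(\OO_2\otimes\ZZ_p)$, and each $\OO_i\otimes\ZZ_p$ is a maximal $\ZZ_p$-order (maximality is visible on the discriminant via the maximality criterion preceding the definition of Eichler order). Write $\OO_i\otimes\ZZ_p=\End(L_i)$. Conjugating by a suitable element of $\GL_2(\QQ_p)$ makes $L_1=\ZZ_p^2$, hence $\OO_1\otimes\ZZ_p=M_2(\ZZ_p)$; conjugating further by an element of $\GL_2(\ZZ_p)$, which fixes $M_2(\ZZ_p)$, and rescaling $L_2$, which does not change $\End(L_2)$, the elementary divisor theorem over the discrete valuation ring $\ZZ_p$ lets us assume $L_2=\ZZ_p e_1\oplus p^n\ZZ_p e_2$ for a unique integer $n\ge0$, with $e_1,e_2$ the standard basis. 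A direct computation gives $\End(L_2)=\left(\begin{smallmatrix}\ZZ_p & p^{-n}\ZZ_p\\ p^n\ZZ_p & \ZZ_p\end{smallmatrix}\right)$, and therefore $$\OO\otimes\ZZ_p\ \cong\ M_2(\ZZ_p)\cap\End(L_2)\ =\ \begin{pmatrix}\ZZ_p & \ZZ_p\\ p^n\ZZ_p & \ZZ_p\end{pmatrix},$$ which is the desired normal form.

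For uniqueness and the value of $n$ I would argue with discriminants and indices. The order $\left(\begin{smallmatrix}\ZZ_p & \ZZ_p\\ p^n\ZZ_p & \ZZ_p\end{smallmatrix}\right)$ has discriminant $p^{2n}$ and index $p^n$ in $M_2(\ZZ_p)$, so by Lemma~\ref{DiscriminantIndexLemma} distinct values of $n$ give orders of distinct discriminant; in particular $n$ is an invariant of $\OO\otimes\ZZ_p$. To identify it, recall that the level of $\OO$ is $N=[\OO_{\max}:\OO]$ for any maximal $\OO_{\max}\supset\OO$, which is well defined by the remark following Lemma~\ref{DiscriminantIndexLemma}. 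Localizing at $p$, the index $[\OO_{\max}\otimes\ZZ_p:\OO\otimes\ZZ_p]$ is the $p$-part of $N$, namely $p^{v_p(N)}$; but by the displayed description this index equals $p^n$. Hence $n=v_p(N)$, i.e.\ $n$ is the non-negative integer with $p^n\mid N$ and $p^{n+1}\nmid N$. (Equivalently, one could use that an Eichler order of level $N$ in $B_D$ has reduced discriminant $DN$, so that $\disc(\OO\otimes\ZZ_p)=p^{2v_p(N)}$ since $p\nmid D$.)

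The one step that is not pure bookkeeping is the simultaneous normalization of the pair $(L_1,L_2)$: invoking the elementary divisor theorem to produce a basis of $L_1$ in terms of which $L_2$ is diagonal, and then rescaling so the smaller elementary divisor is trivial. Everything else is transport of structure under conjugation and a discriminant count. I would also remark that the hypothesis $p\nmid\disc(B)$ is precisely what makes the argument run: at a prime $p\mid\disc(B)$ the local maximal order is unique by Lemma~\ref{pDividesDiscriminantLemma}, forcing $\OO_1\otimes\ZZ_p=\OO_2\otimes\ZZ_p$ and $n=0=v_p(N)$, which is consistent with the statement but lies outside its scope.
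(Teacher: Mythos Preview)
Your argument is correct. The paper does not give its own proof of this lemma; it simply cites Vign\'eras (Lemme II.2.4 and Corollaire III.5.2) and moves on. Your approach---identifying maximal $\ZZ_p$-orders in $M_2(\QQ_p)$ with homothety classes of lattices, simultaneously normalizing the pair $(L_1,L_2)$ via elementary divisors, and then reading off $n=v_p(N)$ from the local index/discriminant---is exactly the standard proof one finds in Vign\'eras, so there is no genuine difference to discuss.
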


\begin{definition} We say that an order $\OO$ is \emph{ramified} at $p$ if $p\mid \disc(\OO)$.\end{definition}

\begin{definition}\cite[p.17]{BasisProblem} Let $B$ be a quaternion algebra over $\QQ$ and $\OO\subset B$ an order. A left $\OO$-ideal is a left $\OO$-module $M$ contained in $B$ such that $\OO M = M$ and for all primes $p$ of $\QQ$, there exist $m_p \in B$ such that $\ZZ_p\otimes M = \ZZ_p\otimes \OO m_p$. If $M$ is a left $\OO$-ideal then we call $\OO_r(M) := \{ x \in B : Mx \subset M\}$ the \emph{right order} of $M$. We say that $M$ is two-sided if $\OO = \OO_r(M)$. We may similarly define right ideals $I$ and their left orders $\OO_l(I)$. \end{definition}

\begin{definition} Let $B$ be a quaternion algebra and $\OO\subset B$ an order. We say that a (left, right or two-sided) $\OO$-ideal $M$ is \emph{integral} if $M\subset \OO$.\end{definition}

\begin{definition} Let $B$ be a quaternion algebra and $\OO\subset B$ an order. We say a left $\OO$-ideal $M$ is \emph{principal} if there is some $m\in B$ such that $M = \OO m$, and similarly for right $\OO$-ideals.\end{definition}

\begin{lemma}{\cite[Corollaire III.5.7(1), Lemme III.5.6]{Vigneras} }\label{QuaternionIndefinite} If $B$ is indefinite and $\OO$ is an Eichler order in $B$ (of any level), then every left (or right) $\OO$-ideal is principal. Therefore, the Eichler orders (of any given level) are conjugate. \end{lemma}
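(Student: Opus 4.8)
The plan is to prove a sharper statement, namely that an Eichler order $\OO$ in an indefinite $B$ has \emph{class number one}, i.e.\ that $\widehat{B}^\times = B^\times\cdot\widehat{\OO}^\times$, and then to deduce both assertions formally. Here and below, for a quaternion $\QQ$-algebra $B$ and an order $\OO\subset B$, write $\widehat\ZZ = \prod_p\ZZ_p$, $\widehat\QQ = \QQ\otimes_\ZZ\widehat\ZZ$, $\widehat B = B\otimes_\QQ\widehat\QQ$ and $\widehat\OO = \OO\otimes_\ZZ\widehat\ZZ = \prod_p(\OO\otimes\ZZ_p)$. First I would recall the adelic dictionary: a left $\OO$-ideal $M$ is by definition locally principal, so $M\otimes\ZZ_p = (\OO\otimes\ZZ_p)m_p$ with $m_p\in(B\otimes\QQ_p)^\times$ and $m_p\in(\OO\otimes\ZZ_p)^\times$ for almost all $p$, and conversely $M = B\cap\prod_p(\OO\otimes\ZZ_p)m_p$ is recovered from $(m_p)_p\in\widehat B^\times$. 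This gives a bijection between left $\OO$-ideals and $\widehat B^\times/\widehat\OO^\times$ under which the principal ideals $\OO x$, $x\in B^\times$, are exactly the image of the diagonal $B^\times$; thus "every left $\OO$-ideal is principal" is equivalent to $B^\times\backslash\widehat B^\times/\widehat\OO^\times$ being a single point. (The same data also describes left-$\OO$-module isomorphism classes, since any left-$\OO$-linear map between two such ideals extends to a right multiplication on $B$, but we will not need this.)

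Next I would reduce the triviality of that double coset space to strong approximation via the reduced norm $\N\colon\widehat B^\times\to\widehat\QQ^\times$. Locally $\N((\OO\otimes\ZZ_p)^\times) = \ZZ_p^\times$: for $p\nmid\disc(B)$ this is immediate from the shape $\left(\begin{smallmatrix}\ZZ_p&\ZZ_p\\p^n\ZZ_p&\ZZ_p\end{smallmatrix}\right)$ of the Eichler order, and for $p\mid\disc(B)$ it holds for the unique maximal order of Lemma \ref{pDividesDiscriminantLemma}; hence $\N(\widehat\OO^\times) = \widehat\ZZ^\times$. Because $B$ is indefinite it has no ramified real place, so Eichler's norm theorem (equivalently Hasse--Minkowski for the indefinite quaternary norm form) gives $\N(B^\times) = \QQ^\times$, and since $\QQ$ has class number one we have $\widehat\QQ^\times = \QQ^\times\cdot\widehat\ZZ^\times$. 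Now let $g\in\widehat B^\times$. Write $\N(g) = a\,u_0$ with $a\in\QQ^\times = \N(B^\times)$ and $u_0\in\widehat\ZZ^\times = \N(\widehat\OO^\times)$, pick $\beta\in B^\times$ and $v\in\widehat\OO^\times$ realizing $a$ and $u_0$, and set $g' := \beta^{-1}gv^{-1}$, so $\N(g') = 1$; write $\widehat B^{(1)}$ for the group of reduced-norm-one elements of $\widehat B^\times$ and $B^{(1)}$ for its global points. By the strong approximation theorem for the simply connected $\QQ$-group $B^{(1)}$ --- valid precisely because $B\otimes\RR\cong M_2(\RR)$ is not a division algebra, which is where indefiniteness enters --- $B^{(1)}$ is dense in $\widehat B^{(1)}$, so $g' = \gamma w$ with $\gamma\in B^{(1)}$ and $w\in\widehat B^{(1)}\cap\widehat\OO^\times$. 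Then $g = (\beta\gamma)(wv)\in B^\times\cdot\widehat\OO^\times$, as desired; the case of right ideals is the mirror image (or follows by applying the main involution). I expect strong approximation to be the one genuinely deep ingredient --- and the one that fails for definite $B$; the rest is bookkeeping.

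Finally I would derive the conjugacy of two Eichler orders $\OO_1,\OO_2\subset B$ of the same level $N$. They are conjugate locally at every $p$: for $p\nmid\disc(B)$ both $\OO_i\otimes\ZZ_p$ are $(B\otimes\QQ_p)^\times$-conjugate to $\left(\begin{smallmatrix}\ZZ_p&\ZZ_p\\p^n\ZZ_p&\ZZ_p\end{smallmatrix}\right)$ with $n = v_p(N)$, and for $p\mid\disc(B)$ both equal the unique maximal order; moreover $\OO_1\otimes\ZZ_p = \OO_2\otimes\ZZ_p$ for all but finitely many $p$, so the conjugators may be taken to be $g_p = 1$ there, giving an element $g = (g_p)\in\widehat B^\times$ with $g_p(\OO_1\otimes\ZZ_p)g_p^{-1} = \OO_2\otimes\ZZ_p$ for all $p$. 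Then $M := B\cap\prod_p(\OO_2\otimes\ZZ_p)g_p$ is a left $\OO_2$-ideal whose right order is $\bigcap_p g_p^{-1}(\OO_2\otimes\ZZ_p)g_p = \OO_1$. By the class-number-one statement just proved, $M = \OO_2 m$ for some $m\in B^\times$, and hence $\OO_1 = \OO_r(\OO_2 m) = m^{-1}\OO_2 m$, completing the proof.
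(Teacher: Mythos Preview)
Your argument is correct and is essentially the standard proof via strong approximation for the simply connected group $B^{(1)}$, together with Eichler's norm theorem and the class-number-one fact $\widehat\QQ^\times = \QQ^\times\widehat\ZZ^\times$; the deduction of conjugacy from principality is also the usual one. Note, however, that the paper does not supply its own proof of this lemma: it simply records the statement with a citation to \cite[Corollaire III.5.7(1), Lemme III.5.6]{Vigneras}, so there is nothing in the paper to compare against beyond observing that your write-up is a faithful reconstruction of the argument Vign\'eras gives at those references.
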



\begin{lemma}{\cite[Proposition V.3.1, Corollaire V.2.3]{Vigneras}} If $B$ is definite and $\OO$ is an Eichler order in $B$ then $\OO^\times$ is finite. The number of left (or right) ideals up to right (or left) multiplication by $B^\times$ is finite.\end{lemma}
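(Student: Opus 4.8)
The plan is to derive both statements from the single fact that, $B$ being definite, $B\otimes_\QQ\RR$ is the Hamilton quaternion algebra, on which the reduced norm $\N(x)=x\overline x$ is a positive definite quadratic form of rank four; its polarization $\langle x,y\rangle=\tfrac12\tr(x\overline y)$ makes every full $\ZZ$-lattice in $B$ — in particular every order and every left or right $\OO$-ideal — into a Euclidean lattice in $(\RR^4,\langle\cdot,\cdot\rangle)$, to which geometry of numbers applies. For the first assertion, if $u\in\OO^\times$ then $u\inv\in\OO$, so both $\N(u)$ and $\N(u)\inv=\N(u\inv)$ lie in $\ZZ$ by the defining property of an order; hence $\N(u)\in\ZZ^\times$, and positive definiteness forces $\N(u)=1$. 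Thus $\OO^\times$ is contained in $\{x\in\OO:\N(x)=1\}$, the intersection of the discrete subset $\OO\subset B\otimes\RR$ with the compact unit sphere of $\N$, and is therefore finite. (In fact every $\theta\in\OO$ with $\N(\theta)=1$ is a unit, since $\theta\inv=\overline\theta=\tr(\theta)-\theta\in\OO$, so equality holds, but only the inclusion is needed.)

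For the second assertion, fix a left $\OO$-ideal $M$ and regard it as a Euclidean lattice of covolume $v(M)$. By Minkowski's theorem — equivalently, by the bound $\gamma_4$ on Hermite's constant in rank four — there is a nonzero $m\in M$ with $\N(m)\le\gamma_4\,v(M)^{1/2}$. Set $M':=Mm\inv$. Then $M'$ is again a left $\OO$-module, and in fact a left $\OO$-ideal (local principality of $M$ is inherited under right multiplication by $m\inv$), lying in the same class as $M$ because $m\in B^\times$. Since $\OO m\subseteq M$ we have $\OO\subseteq M'$; and since right multiplication by $m$ on $B\otimes\RR$ has $\RR$-determinant $\N_{B/\QQ}(m)=\N(m)^2$, we get $v(M')=\N(m)^{-2}v(M)\ge\gamma_4^{-2}$. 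Hence $[M':\OO]=v(\OO)/v(M')\le\gamma_4^2\,v(\OO)=:\Lambda$, a bound depending only on $\OO$. Every finite abelian group of order at most $\Lambda$ is annihilated by $c:=\operatorname{lcm}(1,\dots,\lfloor\Lambda\rfloor)$, so $\OO\subseteq M'\subseteq\tfrac1c\OO$; as $\tfrac1c\OO/\OO$ is a finite group it has only finitely many subgroups, so only finitely many such $M'$ occur. Thus every left-ideal class contains one of finitely many lattices, and the left class set is finite; the right case is identical with ``left'' and ``right'' interchanged.

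A more structural alternative is to identify the left class set with the double coset space $B^\times\backslash\widehat B^\times/\widehat\OO^\times$ over the finite adeles, whose finiteness comes from the compactness of the norm-one adelic quotient together with the openness of $\widehat\OO^\times$; this is the approach of the cited references. In the elementary argument above the only real care needed is bookkeeping: that the reduced norm (not the full norm) is the relevant positive definite quadratic form; that the $\RR$-determinant of right multiplication by $m$ is $\N(m)^2$, so that passing from $M$ to $Mm\inv$ both preserves the ideal class and renormalizes the covolume to be bounded below; and that, with these normalizations, the covolume lower bound translates into a uniform upper bound for $[M':\OO]$. None of this is deep, but the normalizations must be tracked consistently.
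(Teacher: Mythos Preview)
Your argument is correct. The paper does not supply a proof of this lemma at all; it simply cites Vign\'eras for both assertions, so there is nothing in the paper to compare your proof against line by line. What you have written is a self-contained and valid argument: the finiteness of $\OO^\times$ via positive definiteness of the reduced norm on $B\otimes\RR\cong\mathbb{H}$ is the standard proof, and your Minkowski/Hermite argument for the class number is a clean elementary route. The bookkeeping you flag (reduced norm versus full norm, the factor $\N(m)^2$ for the determinant of right multiplication, the passage from a covolume lower bound to an index upper bound) is all handled correctly.

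The cited source in Vign\'eras proves the class-number finiteness via the adelic route you mention at the end --- identifying the class set with a double quotient of $\widehat B^\times$ and invoking compactness --- so your primary argument is genuinely different in flavor. The adelic proof generalizes immediately to number fields and to more general orders, and makes the connection to strong approximation transparent; your geometry-of-numbers proof is more elementary and gives, in principle, an effective bound on the class number in terms of $\disc(\OO)$, which the adelic argument does not. Either is perfectly adequate here, since the paper only needs the bare finiteness statement.
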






\begin{lemma}{\cite[Theorem II.1.1]{BasisProblem}}\label{EichTwoSided} Let $B$ be a quaternion algebra of discriminant $D$. If $\OO$ is an Eichler order of square-free level $N$ in $B$, then the two-sided ideals of $\OO$ form an abelian group under multiplication. For each prime $p\mid DN$, there is a unique two-sided integral ideal $\wp_p$ such that $\wp_p^2 = \OO p = p\OO$. Moreover, any two-sided ideal of $\OO$ is equal to one of the form $$(\prod_{p\mid DN} \wp_p^{\epsilon_{p}}) r$$ where $r\in \QQ$ and $\epsilon_p \in \{0,1\}$.
\end{lemma}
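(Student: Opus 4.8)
The plan is to pass to completions $\OO_p := \OO\otimes\ZZ_p$ and reduce the whole statement to a local computation of normalizers. First I would set up the local--global dictionary. A two-sided $\OO$-ideal $M$ is in particular a $\ZZ$-lattice in $B$, so it is recovered from its completions $M_p\subset B_p := B\otimes\QQ_p$ as $M=\bigcap_p (M_p\cap B)$, with $M_p=\OO_p$ for all but finitely many $p$; conversely any family of local two-sided $\OO_p$-ideals that is almost everywhere equal to $\OO_p$ glues to a global one, and this bijection is multiplicative. Unwinding the definition of two-sided ideal: a left $\OO$-ideal has $M_p=\OO_p m_p$ for some $m_p\in B_p^\times$, and the condition $\OO_r(M)=\OO$ holds precisely when $m_p^{-1}\OO_p m_p=\OO_p$ for every $p$, i.e.\ when $m_p$ lies in the normalizer $N(\OO_p)$ of $\OO_p$ in $B_p^\times$. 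Hence the two-sided $\OO$-ideals form the restricted direct product $\prod'_p N(\OO_p)/\OO_p^\times$, and in particular they form an abelian group as soon as each local factor is abelian.

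Next I would compute $N(\OO_p)/\OO_p^\times$ in the three possible cases, in each case showing it is infinite cyclic. If $p\nmid DN$, the local description of an Eichler order identifies $\OO_p$ with $M_2(\ZZ_p)$; every automorphism of $B_p\cong M_2(\QQ_p)$ is inner (Skolem--Noether), and a conjugation preserving $M_2(\ZZ_p)$ must scale the lattice $\ZZ_p^2$, so $N(\OO_p)=\QQ_p^\times\cdot M_2(\ZZ_p)^\times$ and $N(\OO_p)/\OO_p^\times\cong\QQ_p^\times/\ZZ_p^\times$ is generated by the class of $p\OO_p$ itself --- there is no square root of this class, which is why no $\wp_p$ is attached to such $p$. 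If $p\mid D$, then $\OO_p$ is the unique maximal order of the division algebra $B_p$, so $N(\OO_p)=B_p^\times$ and $N(\OO_p)/\OO_p^\times\cong\ZZ$ via the valuation; using Lemma \ref{pDividesDiscriminantLemma}, the element $\pi$ there satisfies $\pi\OO_p=\OO_p\pi$ and $\pi^2\OO_p=p\OO_p$, and $\wp_p:=\pi\OO_p$ is the unique maximal (two-sided) ideal of the local ring $\OO_p$, a generator, with $\wp_p^2=p\OO_p$. If $p\mid N$, the local description lets me take $\OO_p=\left(\begin{smallmatrix}\ZZ_p & \ZZ_p\\ p\ZZ_p & \ZZ_p\end{smallmatrix}\right)$; I would check that $\tau:=\left(\begin{smallmatrix}0 & 1\\ p & 0\end{smallmatrix}\right)$ lies in $N(\OO_p)$ with $\tau^2=p$, that $\wp_p:=\OO_p\tau=\tau\OO_p=\left(\begin{smallmatrix}p\ZZ_p & \ZZ_p\\ p\ZZ_p & p\ZZ_p\end{smallmatrix}\right)$ is exactly the Jacobson radical of $\OO_p$ (indeed $\OO_p/\wp_p\cong\FF_p\times\FF_p$), and --- by a direct matrix argument, using that $\OO_p$ sits in exactly two maximal orders which $\tau$ interchanges --- that $N(\OO_p)$ is generated by $\QQ_p^\times$, $\OO_p^\times$ and $\tau$. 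Since $\tau^2=p$, the quotient $N(\OO_p)/\OO_p^\times$ is again infinite cyclic, generated by $[\wp_p]$ with $2[\wp_p]=[p\OO_p]$, rather than a copy of $\ZZ\times\ZZ/2$.

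Finally I would globalize. For $p\mid DN$ let $\wp_p$ denote the two-sided $\OO$-ideal whose completion at $p$ is the local $\wp_p$ above and whose completion at every $q\ne p$ is $\OO_q$; it is integral, and $\wp_p^2=p\OO$ because $p\OO_q=\OO_q$ for $q\ne p$. For uniqueness, if $I$ is any two-sided $\OO$-ideal with $I^2=p\OO$, then comparing local exponents in each $N(\OO_q)/\OO_q^\times\cong\ZZ$ forces $I_q=\OO_q$ for $q\ne p$ (from $2n_q=0$) and $I_p$ equal to the local $\wp_p$ (from $2n_p=2$, using $[p\OO_p]=2[\wp_p]$); thus $I=\wp_p$, and $I$ is automatically integral. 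For the last assertion, given an arbitrary two-sided ideal $M$ with local exponents $(n_q)_q$ (almost all zero), put $\epsilon_q\in\{0,1\}$ with $\epsilon_q\equiv n_q\pmod 2$ for $q\mid DN$, let $a_q=(n_q-\epsilon_q)/2$ for $q\mid DN$ and $a_q=n_q$ otherwise, and set $r:=\prod_q q^{a_q}\in\QQ^\times$; since $q\OO_q=\wp_q^2$ at primes $q\mid DN$, the ideal $\bigl(\prod_{p\mid DN}\wp_p^{\epsilon_p}\bigr)\,r$ has exactly the local exponents $(n_q)_q$, hence equals $M$.

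The main obstacle is the local computation at $p\mid N$: one must pin down the normalizer of an Eichler order of level $p$ in $\GL_2(\QQ_p)$ sharply enough to see that $N(\OO_p)/\OO_p^\times$ is torsion-free --- equivalently, that no two-sided ideal other than $\OO_p$ squares to $\OO_p$ --- and that its generator is the radical $\wp_p$ with $\wp_p^2=p\OO_p$. The hypothesis that $N$ is square-free is precisely what keeps this local group cyclic; were $p^2\mid N$, the group of local two-sided ideals would be strictly larger and the clean normal form would fail. Everything else --- the local--global gluing, the split and ramified local cases, and the exponent bookkeeping --- is routine.
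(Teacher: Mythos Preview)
The paper does not prove this lemma at all; it simply cites it as \cite[Theorem II.1.1]{BasisProblem} and moves on. So there is nothing in the paper to compare your argument against.

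That said, your proof is correct and is exactly the standard argument one finds in the literature (and presumably in the cited reference): reduce to the local normalizer quotients $N(\OO_p)/\OO_p^\times$, compute these to be infinite cyclic in all three cases, and reassemble. Your identification of the one genuinely nontrivial point --- that at $p\mid N$ the normalizer of the Iwahori is exactly $\QQ_p^\times\,\OO_p^\times\langle\tau\rangle$, so that the quotient is $\ZZ$ rather than $\ZZ\times\ZZ/2\ZZ$ --- is right, and your sketch via the action on the two maximal orders containing $\OO_p$ (together with the fact that the Borel is self-normalizing in $\GL_2(\FF_p)$) is a clean way to nail it down. It is worth noting explicitly, as you implicitly do, that the paper's definition of a two-sided $\OO$-ideal requires local principality; this is what rules out the two non-principal maximal two-sided ideals of the ring $\OO_p$ at $p\mid N$ (the preimages of the two maximal ideals of $\FF_p\times\FF_p$), which would otherwise spoil the cyclic picture.
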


\begin{definition} Let $A$ be a finitely-generated, torsion-free $\ZZ$-algebra, let $A^0$ be $A\otimes_\ZZ \QQ$ and let $\epsilon_A: A\to A^0$ be the natural embedding $a\mapsto a\otimes 1$. Suppose that there exists an embedding $\phi: A_1 \hookrightarrow A_2$ of finitely generated torsion-free $\ZZ$-algebras. Define $\phi^0:A_1^0 \hookrightarrow A_2^0$ to be the induced embedding $a\otimes r/s \mapsto \phi(a) \otimes r/s$. We say that $\phi$ is \emph{optimal} if $\epsilon_{A_1}(A_1) = (\phi^0)\inv ( \epsilon_{A_2}(A_2))$.\end{definition} 

Let $\phi:A_1 \hookrightarrow A_2$ be an embedding of finitely generated torsion-free $\ZZ$-algebras. Define $A_1' := (\phi^0)\inv ( \epsilon_{A_2}(A_2))$ and note that $A_1'$ is a finitely generated torsion-free $\ZZ$-algebra. Note also that $A_1^0 \supset A_1' \supset A_1$ because $\phi^0$ induces an embedding $A_1' \hookrightarrow A_2$. Moreover, this embedding is an optimal embedding $\psi: (\phi^0)\inv(\epsilon_{A_2}(A_2)) \to A_2$.


\begin{example} If $E_{/\CC}$ has $j$-invariant $0$, there is an embedding $\phi$ of $A_1 = \ZZ[\sqrt{-3}]$ into $A_2 = \End(E)\cong \ZZ\left[\frac{1 + \sqrt{-3}}{2}\right]$. This embedding is however not optimal. Note that $\phi^0$ is actually an isomorphism $A_1^0 \cong A_2^0 \cong \QQ(\sqrt{-3})$ and $A_1$ has index 2 in $(\phi^0)\inv(\epsilon_{A_2}(A_2)) \cong A_2$.\end{example}

\begin{definition}\label{QuadraticRingDiscDefn} Let $\Delta$ be an integer which is congruent to zero or one modulo four. We will denote by $R_\Delta$ the unique quadratic order of discriminant $\Delta$. If $\Delta$ is not a square, $R_\Delta\otimes \QQ$ is a quadratic field $K_\Delta = \QQ(\sqrt \Delta)$. In this case, we may define the class number $h(\Delta) := \#\Pic(R_\Delta)$ and the conductor $f(\Delta) := [\ZZ_{K_\Delta} : R_\Delta]$. We also fix $w(\Delta) := \#R_\Delta^\times$.\end{definition}

\begin{definition} Let $p$ be a prime, and let $(\frac{\cdot}{p})$ denote the Kronecker symbol. That is, if $p$ is odd, the Kronecker symbol is the Legendre symbol. If $p=2$ then $(\frac{2}{2}) = 0$ and if $q$ is an odd prime then $(\frac{q}{2}) = (-1)^{(q^2 -1)/8}$. We obtain the Kronecker symbol by extending multiplicatively.

The Eichler symbol may then be defined in terms of the Kronecker symbol as follows: $$ \left\{\frac{\Delta}{p}\right\} = \begin{cases} 1 & p\mid  f(\Delta) \\ \left(\frac{\Delta}{p}\right) & else \end{cases}$$\end{definition}

\begin{definition}\label{DefnEDN}For square-free coprime integers $D$ and $N$ and some integer $\Delta \equiv 0,1 \bmod 4$, we define the quantity $$e_{D,N}(\Delta) := h(\Delta) \prod_{p\mid D} \left(1 - \left\{\frac{\Delta}{p}\right\}\right)\prod_{q\mid N} \left(1 + \left\{\frac{\Delta}{p}\right\}\right).$$\end{definition}

\begin{theorem}[Eichler's embedding theorem]\label{EichlerEmbedding} Let $D$ and $N$ be square-free coprime integers. If $B_D$ is indefinite, then the number of optimal embeddings, up to $\OO^\times$ conjugacy, of a quadratic order $R$ of discriminant $\Delta$ into some Eichler order $\OO$ of level $N$ in $B_D$ is $e_{D,N}(\Delta)$. If $B_D$ is definite, then the number of optimal embeddings, up to $\OO^\times$ conjugacy, of a quadratic order $R$ of discriminant $\Delta$ into some Eichler order $\OO$ of level $N$ in $B_D$ is $e_{D,N}(\Delta)/w(\Delta)$. \end{theorem}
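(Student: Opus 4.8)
The plan is to deduce the theorem from the classical local--global principle for counting optimal embeddings and then to pin down the local contributions prime by prime. Write $K = K_\Delta = \QQ(\sqrt\Delta)$ and $R = R_\Delta$, and first set aside the degenerate cases: if some $p\mid D$ is split in $K$ then $K$ does not embed in $B_D$ at all, while the factor $1 - \{\Delta/p\} = 0$ forces $e_{D,N}(\Delta) = 0$; and if $\Delta > 0$ while $B_D$ is definite then $w(\Delta)$ is infinite and there is again nothing to prove. Outside of these, the starting point is Eichler's identity (as in \cite{Vigneras}, Chapters III and V): the number of optimal embeddings of $R$ into Eichler orders of level $N$ in $B_D$, summed over the finitely many conjugacy classes of such orders and counted in each case up to conjugation by the unit group of the order in question, equals $h(\Delta)\prod_p m_p(\Delta)$, where $m_p(\Delta)$ is the number of optimal embeddings of $R\otimes\ZZ_p$ into $\OO\otimes\ZZ_p$ up to $(\OO\otimes\ZZ_p)^\times$-conjugacy, and where in the definite case the total is additionally divided by $w(\Delta)$ to account for the archimedean place and for the global units $R^\times\subset\OO^\times$. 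In the indefinite case Lemma \ref{QuaternionIndefinite} collapses the sum, since all Eichler orders of level $N$ in $B_D$ are conjugate, so the total is just the number of optimal embeddings into one fixed $\OO$, as the theorem asserts; in the definite case the sum over conjugacy classes is exactly what is meant by ``into some Eichler order $\OO$''.

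It then remains to identify the $m_p(\Delta)$ and check that their product is $\prod_{p\mid D}\bigl(1 - \{\Delta/p\}\bigr)\prod_{q\mid N}\bigl(1 + \{\Delta/q\}\bigr)$. For $p\nmid DN$ we have $\OO\otimes\ZZ_p\cong M_2(\ZZ_p)$: every quadratic $\QQ_p$-algebra embeds in $M_2(\QQ_p)$, and every optimal embedding of $R\otimes\ZZ_p$ is $\GL_2(\ZZ_p)$-conjugate to the companion-matrix embedding of a $\ZZ_p$-generator of $R\otimes\ZZ_p$, so $m_p(\Delta) = 1$. For $p\mid D$, Lemma \ref{pDividesDiscriminantLemma} identifies $\OO\otimes\ZZ_p$ with the unique maximal order of the division algebra $B_D\otimes\QQ_p$, which is its valuation ring; hence $(\OO\otimes\ZZ_p)\cap L = \mathcal{O}_L$ for every quadratic subfield $L$, so an optimal embedding of $R\otimes\ZZ_p$ can exist only when $R\otimes\ZZ_p$ is itself the maximal order of the field $K\otimes\QQ_p$, i.e.\ when $p\nmid f(\Delta)$ and $p$ is non-split in $K$. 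When it does exist, the presentation $\OO\cong\ZZ_{p^2}\oplus\pi\ZZ_{p^2}$ of Lemma \ref{pDividesDiscriminantLemma}, together with the fact that $\pi$ acts on $\ZZ_{p^2}$ through its nontrivial automorphism, gives two conjugacy classes when $p$ is inert in $K$ and one when $p$ ramifies. All told $m_p(\Delta) = 1 - \{\Delta/p\}$.

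For $p\mid N$ the order $\OO\otimes\ZZ_p$ is conjugate to $\left(\begin{smallmatrix}\ZZ_p & \ZZ_p\\ p\ZZ_p & \ZZ_p\end{smallmatrix}\right)$, the stabiliser of an edge in the Bruhat--Tits tree of $\PGL_2(\QQ_p)$. An optimal embedding of $R\otimes\ZZ_p$ is an embedding of $K\otimes\QQ_p$ into $M_2(\QQ_p)$ for which the induced order $(K\otimes\QQ_p)\cap\OO$ is exactly $R\otimes\ZZ_p$; analysing the action of $(K\otimes\QQ_p)^\times$ on the tree --- whose fixed lattices form a vertex, an edge, or an apartment according as $K\otimes\QQ_p$ is unramified, ramified, or split --- and tracking the position of the chosen edge relative to this fixed set, one finds that $m_p(\Delta)$ is $0$, $1$, or $2$ according as $p$ is inert, ramified, or split in $K$ when $p\nmid f(\Delta)$, while $p\mid f(\Delta)$ always gives $2$; in every case $m_p(\Delta) = 1 + \{\Delta/p\}$. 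Multiplying the three families of local factors against the global $h(\Delta)$ produces $e_{D,N}(\Delta)$, and the additional division by $w(\Delta)$ in the definite case produces $e_{D,N}(\Delta)/w(\Delta)$.

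The main obstacle is of two kinds. The concrete one is the local combinatorics at the primes dividing $N$: making the word ``optimal'' precise against the structure of the tree and verifying the $0/1/2$ pattern uniformly across the split, inert, and ramified cases and across $p\nmid f(\Delta)$ versus $p\mid f(\Delta)$ --- the case $p\mid D$ with $p\mid f(\Delta)$ being the matching boundary case for the ramified primes. The more conceptual one is the appearance of $1/w(\Delta)$ in the definite case: the adelic double-coset description presents the set of optimal embeddings as a fibration over the product of local data whose fibers are quotients of $\widehat{K}^\times\backslash\widehat{B_D}^\times/\widehat{\OO}^\times$, and one must correctly separate the contribution of the now-finite group $R^\times$ from that of $\OO^\times$ --- equivalently, pass cleanly between a weighted (mass) count and an honest count --- which is exactly where the factor $w(\Delta)$ is produced. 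Both points are treated in \cite{Vigneras}; the work that remains here is the bookkeeping that turns the product of local numbers into the single closed formula $e_{D,N}(\Delta)$.
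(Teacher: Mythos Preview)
The paper gives no proof of its own; it simply cites \cite[Corollaire III.5.12]{Vigneras} for the indefinite case and \cite[Proposition 5]{BasisProblem} for the definite case. Your sketch --- Eichler's local--global identity followed by the prime-by-prime evaluation of the local embedding numbers $m_p(\Delta)$, with the Bruhat--Tits tree handling the primes dividing $N$ --- is exactly the argument those references carry out, so you are aligned with the paper and in fact considerably more explicit than it.
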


\begin{proof} This is proven separately in the indefinite case \cite[Corollaire III.5.12]{Vigneras} and in the definite case \cite[Proposition 5]{BasisProblem}.\end{proof}



\subsection{Simultaneous embeddings into Eichler orders}\label{SimEmb}


Let $B'$ be a definite quaternion $\QQ$-algebra. Suppose that there exist $\omega_1, \omega_2\in B'$ such that $\omega_1^2 = -q$ and $\omega_2^2 = -d$ for $q,d\in \ZZ$. Thus $\omega_1\omega_2\in B'$ is of norm $qd$. Although $\omega_1$ and $\omega_2$ are integral, it may be the case that $\omega_1\omega_2$ is not integral. We only know that $\tr(\omega_1\omega_2)<4qd$. In order for $\omega_1\omega_2$ to be integral it is necessary and sufficient that $\tr(\omega_1\omega_2) = \omega_1\omega_2 + \omega_2\omega_1 = s \in \ZZ$.

Now let us grant that $\tr(\omega_1\omega_2) \in \ZZ$. 
Since $\omega_1,\omega_2$, and $\omega_1\omega_2$ are integral, any order $\OO'$ that contains $\omega_1$ and $\omega_2$ contains $\omega_1\omega_2$. Note that the $\ZZ$-module generated by 1, $\omega_1$, $\omega_2$ and $\omega_1\omega_2$ is an order of $B'$ if and only if $\langle 1,\omega_1,\omega_2,\omega_1\omega_2\rangle$ is a basis for $B'$ over $\QQ$. In the latter case, we may compute that the reduced discriminant of $\ZZ \oplus \ZZ\omega_1 \oplus \ZZ\omega_2 \oplus \ZZ\omega_1\omega_2$ is $4qd -s^2$. If $q \equiv 3\bmod 4$, $\dfrac{1 + \omega_1}{2}$ is integral and the reduced discriminant of $\ZZ \oplus \ZZ\dfrac{1 + \omega_1}{2} \oplus \ZZ \omega_2 \oplus \ZZ\dfrac{ 1 + \omega_1}{2}\omega_2$ is $dq - \left(\frac{s}{2}\right)^2$.


\begin{theorem}\label{zetafour} Fix square-free positive integers $D',N'$ such that $(D',N') =1$ and $D'$ is the product of an odd number of primes. Fix also $m>1$ such that $m|D'N'$. The following are equivalent.

\begin{enumerate}

\item There is a definite quaternion algebra $B'$ over $\QQ$ of discriminant $D'$, an Eichler order $\OO'$ of level $N'$ in $B'$ and elements $\omega_1$ and $\omega_2$ contained in $\OO'$ such that $\omega_1^2  = -1$ and $\omega_2^2 = -m$. 






\item There are factorizations $D' = \prod_i p_i$ and $N' = \prod_j q_j$ into distinct primes such that
\begin{itemize}

\item  $m = D'N'$ or $2|D'N'$ and $m = D'N'/2$

\item for all $i$ either $p_i =2$ or $p_i \equiv 3 \bmod 4$

\item for all $j$ either $q_j = 2$ or $q_j \equiv 1\bmod 4$

\end{itemize}

\end{enumerate}

\end{theorem}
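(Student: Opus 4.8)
The plan is to analyze the local conditions at each prime $p \mid D'N'$ (and at $2$, and at one auxiliary prime) that govern whether a definite quaternion algebra $B'$ of discriminant $D'$ contains an Eichler order $\OO'$ of level $N'$ together with commuting-trace-controlled elements $\omega_1, \omega_2$ with $\omega_1^2 = -1$, $\omega_2^2 = -m$. The key translation is: such $\omega_1, \omega_2$ exist in $\OO'$ if and only if there is a $\ZZ$-order $\OO''$ generated by (suitable halves of) $1, \omega_1, \omega_2, \omega_1\omega_2$ with reduced discriminant dividing $\disc(\OO') = D'N'$, which by Lemma \ref{DiscriminantIndexLemma} embeds in some Eichler order of level $N'$ precisely when its reduced discriminant equals $D'N'$ after optimal saturation. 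Concretely, writing $s = \tr(\omega_1\omega_2) \in \ZZ$, the computations recalled just before the statement give reduced discriminant $4m - s^2$ for the naive order (and $m - (s/2)^2$ after adjoining $(1+\omega_1)/2$ when $1 \equiv 3 \bmod 4$ — which never happens, so in fact the relevant halving is of $\omega_2$ when $m \equiv 3 \bmod 4$, giving $4m - s^2$ over $4$, i.e. $m - (s/2)^2$). So the existence of $\omega_1, \omega_2$ is essentially equivalent to solving $4m - s^2 = D'N' \cdot (\text{square})$ or the halved analogue, together with the constraint that the resulting quaternion algebra $\left(\frac{-1, \cdot}{\QQ}\right)$-type construction actually has discriminant exactly $D'$.

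First I would set up the forward direction $(1) \Rightarrow (2)$. Given $\omega_1$ with $\omega_1^2 = -1$, the subalgebra $\QQ(\omega_1) \cong \QQ(i)$ embeds in $B'$, so $B'$ is split at every prime where $\QQ(i)$ is split, i.e. every $p \equiv 1 \bmod 4$ is unramified in $B'$; hence every $p_i \mid D'$ is either $2$ or $\equiv 3 \bmod 4$. This is the first bullet of (2). Next, $\QQ(\omega_1) = \QQ(i)$ must embed \emph{into the Eichler order $\OO'$} optimally (after saturation), and Eichler's embedding theorem (Theorem \ref{EichlerEmbedding}) with $\Delta = -4$ forces $e_{D',N'}(-4) > 0$; unwinding the Eichler symbols $\left\{\frac{-4}{p}\right\}$ gives: for each $q_j \mid N'$ with $q_j$ odd we need $\left(\frac{-4}{q_j}\right) = \left(\frac{-1}{q_j}\right) = 1$, i.e. $q_j \equiv 1 \bmod 4$; this is the third bullet. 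Similarly embedding $\QQ(\omega_2) \cong \QQ(\sqrt{-m})$ forces $e_{D',N'}(\Delta_m) > 0$ where $\Delta_m \in \{-m, -4m\}$ is the discriminant of $\QQ(\sqrt{-m})$; combined with the ramification of $B'$ at the $p_i$, this is where the constraint $m = D'N'$ or $m = D'N'/2$ emerges — roughly, for $\QQ(\sqrt{-m})$ to be ramified or inert (the condition needed to embed into an order ramified at all $p_i \equiv 3 \bmod 4$, since $-m$ must be a nonsquare mod such $p$), $m$ must be divisible by enough of $D'N'$; and for $\omega_1\omega_2$ to have integral trace with the discriminant bookkeeping working out, $m$ cannot be smaller than $D'N'/2$. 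Extracting the exact dichotomy $m \in \{D'N', D'N'/2\}$ (with the $2 \mid D'N'$ proviso in the second case) is the heart of the bookkeeping.

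For the converse $(2) \Rightarrow (1)$, I would construct $B'$, $\OO'$, $\omega_1$, $\omega_2$ explicitly. Given the factorizations, set $\omega_1 = i$ with $i^2 = -1$ and look for $\omega_2$ with $\omega_2^2 = -m$ inside $\left(\frac{-1, -b}{\QQ}\right)$ for a suitable $b$, choosing $b$ so that this algebra has discriminant $D'$ (possible exactly because every $p_i$ is $2$ or $\equiv 3 \bmod 4$, which makes $\left(\frac{-1,-b}{\QQ}\right)$ ramifiable at those primes). Then one writes down the order $\OO'' = \langle 1, \omega_1, \omega_2, \omega_1\omega_2\rangle$ (or its halved version when $m \equiv 3 \bmod 4$), computes its reduced discriminant from $4m - s^2$ with $s = \tr(\omega_1\omega_2)$, and — choosing the cross-term so that $s$ is as small as the congruences allow — checks that the saturation of $\OO''$ is an Eichler order of level $N'$, i.e. has reduced discriminant exactly $D'N'$. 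Here the two cases $m = D'N'$ and $m = D'N'/2$ correspond to $s = 0$ (when $2 \nmid m$) versus a small even $s$, and the factor $4m - s^2$ versus $m - (s/2)^2$ matches $D'N'$ on the nose. Verifying that no further saturation shrinks the discriminant — equivalently that $\omega_1, \omega_2$ generate an Eichler and not a strictly larger order at the bad primes — is the main obstacle; I expect to handle it prime-by-prime using Lemma \ref{pDividesDiscriminantLemma} at $p \mid D'$ and the $M_2(\ZZ_p)$ description at $p \mid N'$, with the prime $p = 2$ requiring separate care because of the $q_j = 2$ and $p_i = 2$ exceptional cases and the $\bmod 8$ behaviour of the Kronecker symbol $\left(\frac{2}{2}\right)$ and $\left(\frac{-1}{2}\right)$.
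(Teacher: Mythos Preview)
Your framework is right, but there is a genuine gap in $(1)\Rightarrow(2)$: the separate Eichler embedding conditions for $\ZZ[i]$ and for $\ZZ[\sqrt{-m}]$ do \emph{not} by themselves force $m\in\{D'N',D'N'/2\}$. Those conditions are purely local and only say, for each prime $r\mid D'N'$, that a certain Kronecker symbol is not the wrong sign; they allow many proper divisors $m$ of $D'N'$. The constraint on $m$ comes entirely from the simultaneity---that $\omega_1$ and $\omega_2$ sit in the \emph{same} order---and you already have the tool for this but do not use it. Since $m>1$, $\ZZ[\omega_1]$ and $\ZZ[\omega_2]$ are not nested, so $\ZZ\oplus\ZZ\omega_1\oplus\ZZ\omega_2\oplus\ZZ\omega_1\omega_2$ is an order contained in $\OO'$; its reduced discriminant is $4m-s^2$ with $s=\tr(\omega_1\omega_2)$, so Lemma~\ref{DiscriminantIndexLemma} gives $D'N'\mid 4m-s^2$ (not $4m-s^2=D'N'\cdot(\text{square})$ as you wrote). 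Since $m\mid D'N'$, this yields $m\mid s^2$, hence $m\mid s$ by squarefreeness. If $s\neq 0$ then $m^2\le s^2<4m$, so $m\le 3$, and the two small cases $m=2,3$ are checked directly to give $m=D'N'$. If $s=0$ then $D'N'\mid 4m$ with $D'N'$ squarefree, so $D'N'\in\{m,2m\}$. That is the whole argument; your paragraph about ``$-m$ must be a nonsquare mod such $p$'' is a detour that cannot close.

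For $(2)\Rightarrow(1)$ you are overcomplicating. The paper takes $B'=\left(\frac{-1,-D'N'}{\QQ}\right)$ with $\omega_1=i$, $\omega_2=j$; then $s=\tr(ij)=0$ automatically, and one simply writes down explicit orders containing $i,j$ of reduced discriminant exactly $D'N'$ (using $(1+j)/2$ when $D'N'\equiv 3\bmod 4$, which follows from the congruence hypotheses, and a Hurwitz-type order when $2\mid D'N'$). No ``saturation'' worry or prime-by-prime local analysis is needed: once the reduced discriminant is $D'N'$, the order is Eichler of level $N'$ by definition. Your discussion of choosing a small even $s$ and halving $\omega_1$ is also off---$\omega_1^2=-1$, so $(1+\omega_1)/2$ is never integral; the halving happens on the $\omega_2$ side.
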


\begin{proof} 

If $\ZZ[\zeta_4] \hookrightarrow \OO'$, then $p_i =2$ or $p_i\equiv 3\bmod 4$ and $q_j =2$ or $q_j \equiv 1 \bmod 4$ by Theorem \ref{EichlerEmbedding}. 

Since $m>1$, $\ZZ[\zeta_4]\not\hookrightarrow\ZZ[\sqrt{-m}]$ and vice versa. Therefore $\OO' \supset \ZZ \oplus \ZZ\omega_1 \oplus \ZZ\omega_2 \oplus \ZZ\omega_1\omega_2$ and so $m\mid D'N'\mid 4m - s^2$. If $s=0$, we have $m\mid D'N'\mid 2m$ since $D'N'$ is squarefree.

If $s\ne 0$, $m\mid 4m - s^2$ implies that $m\mid s$ and $m \le |s| $. Since $m^2 \le s^2 < 4m$, we have $m<4$. If $m=2$ and $0 < s^2 < 4m= 8$ then $m|s$ implies that $|s| =2$ and thus $2\mid D'N'\mid 4$. Then since $D'N'$ square-free and $D'>1$, $m = D' = D'N' =2$. If $m=3$ and $0 < s^2 < 4m = 12$ then $m\mid s$ implies that $|s|  = 3$ and thus $3\mid D'N'\mid 3$ so $m = D' = D'N' = 3$. We have thus shown $(1)\Rightarrow (2)$.

For $(2) \Rightarrow (1)$, 
it suffices to consider the quaternion algebra $A = \left(\dfrac{-1,-D'N'}{\QQ}\right)$ with $\omega_1= i$ and $\omega_2 = j$. It can be calculated that $A \cong B_{D'}$. 

If $2\mid D'N'$, $\left(\dfrac{1 + \omega_1}{2}\right) \omega_2$ squares to $-D'N'/2$. Set $\omega_2' = \left(\dfrac{1 + \omega_1}{2}\right)\omega_2$ so that the reduced discriminant of $\ZZ \oplus \ZZ \omega_1 \oplus \ZZ \omega_2' \oplus \ZZ \omega_1\omega_2'$ is $4D'N'/2 = 2D'N'$. An explicit order containing $\omega_1$ and $\omega_2$ is the ``Hurwitz quaternions'' $$ \ZZ \oplus \ZZ \omega_1 \oplus \ZZ \omega_2' \oplus \ZZ \dfrac{ 1 + \omega_1 + \omega_2' + \omega_1\omega_2'}{2}$$ which have reduced discriminant $D'N'$.


If $2\nmid D'N'$ then $D'N' \equiv 3 \bmod 4$ and so $\dfrac{ 1 + \omega_2}{2}$ is integral. Therefore $\ZZ \oplus \ZZ\omega_1 \oplus \ZZ \left(\dfrac{1 + \omega_2}{2}\right) \oplus \ZZ\omega_1\left(\dfrac{1 + \omega_2}{2}\right)$ is an order and has reduced discriminant $D'N'$.\end{proof}


We note that we gave very explicit examples of orders satisfying Theorem \ref{zetafour} (1) in the proof above. We note that these orders are unique up to $B^\times$-conjugacy.

\begin{theorem}[Pizer]\label{PizerExtraUnits}

Let $B'$ be a definite $\QQ$-quaternion algebra and suppose that for all $p\mid \disc(B')$, $\left(\dfrac{-4}{p}\right) = -1$. Let $N$ be a squarefree integer such that for all $p\mid N$, $\left(\dfrac{-4}{p}\right) = 1$. Then there is a unique conjugacy class of Eichler orders of level $N$ in $B'$ into which $\ZZ[\zeta_4]$ embeds.

Similarly, suppose that for all $p\mid \disc(B')$, $\left(\dfrac{-3}{p}\right) = -1$, and let $N$ be a squarefree integer such that for all $p\mid N$, $\left(\dfrac{-3}{p}\right) = 1$. Then there is a unique conjugacy class of Eichler orders of level $N$ in $B'$ into which $\ZZ[\zeta_6]$ embeds.

\end{theorem}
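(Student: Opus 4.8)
The plan is to treat the two statements simultaneously, the only difference being that we take $K = \QQ(\zeta_4) = \QQ(\sqrt{-1})$, with maximal order $R = \ZZ[\zeta_4]$ of discriminant $-4$, in the first case and $K = \QQ(\zeta_6) = \QQ(\sqrt{-3})$, with maximal order $R = \ZZ[\zeta_6]$ of discriminant $-3$, in the second. The single property of $K$ that the argument exploits is that its class number is $1$, i.e.\ $\mathbb{A}_{K,f}^\times = K^\times\,\widehat{R}^\times$ with $\widehat{R}^\times = \prod_p (R\otimes\ZZ_p)^\times$. Existence of at least one Eichler order of level $N$ in $B'$ receiving an optimal embedding of $R$ is immediate from Theorem \ref{EichlerEmbedding}: since $R$ is the maximal order of $K$ one has $\left\{\frac{\Delta}{p}\right\} = \left(\frac{\Delta}{p}\right)$ for every $p$, so writing $D = \disc(B')$ our hypotheses give $e_{D,N}(\Delta) = h(\Delta)\prod_{p\mid D}\left(1-\left(\frac{\Delta}{p}\right)\right)\prod_{q\mid N}\left(1+\left(\frac{\Delta}{q}\right)\right) = h(\Delta)\prod_{p\mid D}2\prod_{q\mid N}2 > 0$. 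It remains to show that any two Eichler orders of level $N$ receiving an optimal embedding of $R$ are conjugate in $B'^\times$.

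Let $\OO$ and $\OO'$ be two such orders. Their embeddings extend to embeddings $K\hookrightarrow B'$, which are conjugate by the Skolem--Noether theorem; replacing $\OO$ by a $B'^\times$-conjugate, we may assume both embeddings have the same image, a fixed subring $R\subset B'$ with $R\otimes\QQ = K$. I will show that for every rational prime $p$ there is an element $g_p$ of the centralizer $K_p^\times$ of $K_p := K\otimes\QQ_p$ inside $(B'\otimes\QQ_p)^\times$ with $\OO'\otimes\ZZ_p = g_p\,(\OO\otimes\ZZ_p)\,g_p^{-1}$, and that $g_p$ may be chosen in $(R\otimes\ZZ_p)^\times$ for all but finitely many $p$. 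If $p\mid D$, then since $(p,N)=1$ each of $\OO\otimes\ZZ_p$ and $\OO'\otimes\ZZ_p$ is \emph{the} maximal order of the division algebra $B'\otimes\QQ_p$, so $g_p=1$ works. If $p\nmid DN$ and $p$ is inert or ramified in $K$, then $K_p$ is a field with ring of integers $R\otimes\ZZ_p$, all of whose fractional ideals are homothetic, so $B'\otimes\QQ_p\cong M_2(\QQ_p)$ has a unique maximal order containing $R\otimes\ZZ_p$ and again $g_p=1$. In the remaining cases --- $p\mid N$, where $\left(\frac{\Delta}{p}\right)=1$ by hypothesis, and $p\nmid DN$ with $p$ split in $K$ --- we have $B'\otimes\QQ_p\cong M_2(\QQ_p)$ and $R\otimes\ZZ_p\cong\ZZ_p\times\ZZ_p$, the maximal order of $K_p\cong\QQ_p\times\QQ_p$; the maximal orders (resp.\ the Eichler orders of level $p$) of $M_2(\QQ_p)$ containing $R\otimes\ZZ_p$ are exactly the vertices (resp.\ the edges) of the apartment stabilized by the split torus $K_p^\times$ in the Bruhat--Tits tree of $\PGL_2(\QQ_p)$, and $K_p^\times$ acts transitively on these vertices (resp.\ edges) by translation, so $\OO'\otimes\ZZ_p = g_p\,(\OO\otimes\ZZ_p)\,g_p^{-1}$ for a suitable $g_p\in K_p^\times$. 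Optimality of each local embedding $R\otimes\ZZ_p\hookrightarrow\OO\otimes\ZZ_p$ holds automatically because $R\otimes\ZZ_p$ is already the maximal order of $K_p$. Finally $\OO$ and $\OO'$ coincide at all but finitely many primes, and there we take $g_p=1$.

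The patching step is where $h(K)=1$ enters. The idele $(g_p)_p\in\mathbb{A}_{K,f}^\times$ can be written $(g_p)_p = g\cdot(u_p)_p$ with $g\in K^\times$ and $u_p\in(R\otimes\ZZ_p)^\times$ for all $p$. Since $R\otimes\ZZ_p\subset\OO\otimes\ZZ_p$ we have $(R\otimes\ZZ_p)^\times\subset(\OO\otimes\ZZ_p)^\times$, so $u_p$ normalizes $\OO\otimes\ZZ_p$, whence $\OO'\otimes\ZZ_p = g\,(\OO\otimes\ZZ_p)\,g^{-1}$ for every $p$. As an order in $B'$ is determined by its localizations, $\OO' = g\OO g^{-1}$ with $g\in K^\times\subset B'^\times$, so $\OO$ and $\OO'$ are $B'^\times$-conjugate, which together with the existence statement proves uniqueness of the conjugacy class. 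The argument for $\ZZ[\zeta_6]$ is word-for-word the same with $-3$ in place of $-4$.

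I expect the main obstacle to be the local analysis at primes $p\mid N$: one must verify both that the Eichler orders of level $p$ in $M_2(\QQ_p)$ containing $R\otimes\ZZ_p$ form a single orbit under the split torus $K_p^\times$ --- not a larger family on which only the full Atkin--Lehner normalizer acts transitively --- and that such embeddings remain optimal. Both points come down to the fact that $R\otimes\ZZ_p$ is already the maximal order of $K_p$, so it cannot be enlarged inside $K_p$. Everything else is controlled by uniqueness of the maximal order in the remaining local cases and by the vanishing of $\mathrm{Cl}(\QQ(\sqrt{-1}))$ and $\mathrm{Cl}(\QQ(\sqrt{-3}))$.
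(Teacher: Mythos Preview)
Your argument is correct in outline and gives a self-contained proof, whereas the paper simply cites Pizer: it recalls Pizer's quantity $D(\mathfrak{o})$ (the number of $(B')^\times$-conjugacy classes of Eichler orders optimally containing $\mathfrak{o}$) and points to the pages in \cite{Pizer} where $D(\ZZ[\zeta_4]),D(\ZZ[\zeta_6])\in\{0,1\}$ is established. Your adelic local--global argument, exploiting $h(K)=1$ to glue local conjugators in $K_p^\times$ into a global element of $K^\times$, is the standard way such a result is proved and is in spirit what lies behind Pizer's computation; the benefit of writing it out is that it makes transparent exactly which feature of $\QQ(\zeta_4)$ and $\QQ(\zeta_6)$ is being used.

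There is one small inaccuracy. At the unique prime $p$ ramified in $K$ (namely $p=2$ for $\ZZ[\zeta_4]$ and $p=3$ for $\ZZ[\zeta_6]$), your hypotheses force $p\nmid DN$, so $\OO\otimes\ZZ_p$ is maximal in $M_2(\QQ_p)$; but there are \emph{two} maximal orders of $M_2(\QQ_p)$ containing $R\otimes\ZZ_p$, not one. Indeed the $R_p$-lattices in $K_p$ are $\pi_{K_p}^n R_p$, and since $\pi_{K_p}^2 R_p = p\cdot(\text{unit})\cdot R_p$ these give two $\QQ_p^\times$-homothety classes, swapped by $\pi_{K_p}$. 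So ``$g_p=1$'' need not work there. This does no damage to your proof: you only need $g_p\in K_p^\times$, and $g_p\in\{1,\pi_{K_p}\}$ suffices. Simply fold the ramified prime into the finite set of primes where $g_p$ may be nontrivial, and your patching step via $h(K)=1$ goes through unchanged.
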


\begin{proof} Let $\mathfrak o$ be an order in an imaginary quadratic field. Recall the definition given by Pizer \cite[Definition 11]{Pizer} of $D(\mathfrak{o})$ as the number of $(B')^\times$-conjugacy classes of Eichler orders of level $N$ in $B$ into which $\mathfrak{o}$ is optimally embedded. During the proof of Theorem 16 on page 73 of the same article, it is proven that if $\mathfrak{o} = \ZZ[\zeta_4]$ then $D(\mathfrak{o})$ is zero or one depending on whether or not there is an optimal embedding. Similarly on page 75 of the same article, the same thing is proven for $\ZZ[\zeta_6]$.
\end{proof}

\begin{corollary} Let $B'$ be a definite quaternion algebra of discriminant $D'$, and let $\OO'$ be an Eichler order of $B'$ of squarefree level $N'$ such that $\ZZ[\zeta_4]\hookrightarrow\OO'$. If $m\mid D'N'$ and $m\ne 1$, then $\ZZ[\sqrt{-m}]\hookrightarrow \OO'$ if and only if $m = D'N'$ or $2\mid D'N'$ and $m = D'N'/2$.\end{corollary}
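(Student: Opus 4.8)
The plan is to read off both directions from Theorem~\ref{zetafour}, using Pizer's uniqueness result, Theorem~\ref{PizerExtraUnits}, to transfer information between the given order $\OO'$ and the explicit orders constructed in the proof of Theorem~\ref{zetafour}.

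For the ``only if'' direction, suppose $\ZZ[\sqrt{-m}]\hookrightarrow\OO'$. Together with the hypothesis $\ZZ[\zeta_4]\hookrightarrow\OO'$, the images of $\zeta_4$ and $\sqrt{-m}$ provide elements $\omega_1,\omega_2\in\OO'$ with $\omega_1^2=-1$ and $\omega_2^2=-m$. Since $B'$ is definite its discriminant $D'$ is a product of an odd number of primes, $\OO'$ has squarefree level $N'$, and $m\mid D'N'$ with $m>1$; so condition (1) of Theorem~\ref{zetafour} holds for the triple $(D',N',m)$, forcing condition (2). Its first bullet is exactly the assertion that $m=D'N'$, or that $2\mid D'N'$ and $m=D'N'/2$, which is what we want.

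For the ``if'' direction, assume $m=D'N'$, or that $2\mid D'N'$ and $m=D'N'/2$. First I extract congruence data from $\ZZ[\zeta_4]\hookrightarrow\OO'$: this embedding is optimal since $\ZZ[\zeta_4]=R_{-4}$ is maximal, so $e_{D',N'}(-4)\ne 0$ by Theorem~\ref{EichlerEmbedding}. Because $f(-4)=1$ the Eichler symbol $\{\frac{-4}{p}\}$ equals $(\frac{-4}{p})=(\frac{-1}{p})$ at odd $p$ and vanishes at $2$, so non-vanishing of the product defining $e_{D',N'}(-4)$ forces every odd prime dividing $D'$ to be $\equiv 3\bmod 4$ and every odd prime dividing $N'$ to be $\equiv 1\bmod 4$. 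Hence the last two bullets of Theorem~\ref{zetafour}(2) hold, and with our assumption on $m$ all of condition (2) holds. By the implication $(2)\Rightarrow(1)$ of Theorem~\ref{zetafour} — whose proof exhibits such an order inside $(\frac{-1,-D'N'}{\QQ})\cong B_{D'}\cong B'$ — there is an Eichler order $\OO''$ of level $N'$ in $B'$ and elements $\omega_1'',\omega_2''\in\OO''$ with $(\omega_1'')^2=-1$ and $(\omega_2'')^2=-m$.

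It remains to transfer the embedding $\ZZ[\sqrt{-m}]\hookrightarrow\OO''$ to $\OO'$. Both $\OO'$ and $\OO''$ are Eichler orders of level $N'$ in $B'$ receiving an embedding of $\ZZ[\zeta_4]$, the one for $\OO''$ being $\zeta_4\mapsto\omega_1''$, so it suffices to know $\OO'$ and $\OO''$ are $(B')^\times$-conjugate, i.e. that $\ZZ[\zeta_4]$ embeds into a single conjugacy class of Eichler orders of level $N'$ in $B'$. For the odd primes dividing $D'N'$ the congruences just found give $(\frac{-4}{p})=-1$ when $p\mid D'$ and $(\frac{-4}{q})=1$ when $q\mid N'$, which are precisely Pizer's hypotheses in Theorem~\ref{PizerExtraUnits}; the prime $2$ is not literally covered and must be handled separately, noting that locally at $2$ there is only one conjugacy class of Eichler $\ZZ_2$-orders of the relevant level (the unique maximal order if $2\mid D'$, the single class of level-$2$ Eichler orders if $2\mid N'$), so that Pizer's mass-formula count again yields a single global class. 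Granting the conjugacy, write $\OO'=\gamma\inv\OO''\gamma$ with $\gamma\in(B')^\times$; then $\gamma\inv\omega_2''\gamma\in\OO'$ squares to $-m$ because $-m$ is central, so $\sqrt{-m}\mapsto\gamma\inv\omega_2''\gamma$ embeds $\ZZ[\sqrt{-m}]$ into $\OO'$. I expect this conjugacy step to be the main obstacle: it is where the nontrivial input of Theorem~\ref{PizerExtraUnits} enters, and extending its conclusion to the prime $2$ requires the supplementary local bookkeeping indicated above.
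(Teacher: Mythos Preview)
Your proposal is correct and is exactly the argument the paper has in mind: the corollary is placed immediately after Theorem~\ref{zetafour} and Theorem~\ref{PizerExtraUnits} precisely because it follows by combining them as you do---the ``only if'' direction is $(1)\Rightarrow(2)$ of Theorem~\ref{zetafour}, and the ``if'' direction is $(2)\Rightarrow(1)$ together with Pizer's uniqueness to transport the embedding from the explicit order to the given $\OO'$. Your caution about the prime $2$ in Pizer's hypotheses is reasonable but not a genuine gap: the paper's phrasing of Theorem~\ref{PizerExtraUnits} with the Kronecker symbol is slightly imprecise at $p=2$, but the cited proof in Pizer (the computation that $D(\ZZ[\zeta_4])\le 1$ via the mass formula on pp.~73--75) applies whenever $\ZZ[\zeta_4]$ embeds at all, which it does here by hypothesis.
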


 

\begin{corollary} When the conditions of Theorem \ref{zetafour} are satisfied, $B' \cong \left(\dfrac{-1,-D'N'}{\QQ}\right)$ and $\OO'$ is $(B')^\times$-conjugate to one of the following:

\begin{enumerate}

\item The unique maximal order in $B'$ if $D' = 2$ or $3$.

\item $\ZZ \oplus \ZZ i \oplus \ZZ \frac{1 + j}{2} \oplus \ZZ \frac{i + k}{2}$ if $2 \nmid D'N'$.

\item $\ZZ \oplus \ZZ i \oplus \ZZ \frac{j + k}{2} \oplus \ZZ\left(\frac{1 + i}{2} + \frac{ j + k}{4}\right)$ if $2\mid D'N'$.

\end{enumerate}

Moreover if $2\mid D'N'$ we note that the order in 3. contains $\dfrac{j+k}{2}$, a square root of $-D'N'/2$.

\end{corollary}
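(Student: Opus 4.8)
The plan is to deduce the corollary from the constructive half of Theorem~\ref{zetafour} together with Pizer's results on embeddings of $\ZZ[\zeta_4]$; the only genuinely new ingredient is a uniqueness argument. First, under the conditions of Theorem~\ref{zetafour}, the Hilbert-symbol computation carried out in the proof of that theorem (in establishing $(2)\Rightarrow(1)$) shows that $A=\left(\frac{-1,-D'N'}{\QQ}\right)$ is ramified at exactly the primes dividing $D'$, i.e.\ $A\cong B_{D'}$; since a definite quaternion $\QQ$-algebra is determined by its set of ramified primes, $B'\cong A$. I would fix such an isomorphism and write $i,j,k$ for the standard generators ($i^2=-1$, $j^2=k^2=-D'N'$, $k=ij$, $jk+kj=0$), so that $\left(\frac{j+k}{2}\right)^2=\frac{j^2+k^2}{4}=-\frac{D'N'}{2}$; this yields the last sentence of the corollary once $\frac{j+k}{2}$ is known to lie in the order of case (3). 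Next I would identify the orders in (1)--(3) with those exhibited in the proof of Theorem~\ref{zetafour}: when $2\nmid D'N'$, the order of (2) is $\ZZ\oplus\ZZ\omega_1\oplus\ZZ\frac{1+\omega_2}{2}\oplus\ZZ\,\omega_1\frac{1+\omega_2}{2}$ with $\omega_1=i$, $\omega_2=j$ (and $\omega_1\frac{1+\omega_2}{2}=\frac{i+k}{2}$); when $2\mid D'N'$, the order of (3) is the ``Hurwitz'' order attached to $\omega_1=i$ and $\omega_2'=\frac{1+i}{2}j=\frac{j+k}{2}$. In each case the reduced discriminant is computed there to be $D'N'$, so the order is an Eichler order of level $N'$ in $B'$, contains $i$ (a square root of $-1$), and contains $\omega_2=j$ (resp.\ $\frac{j+k}{2}$), so it realizes condition (1) of Theorem~\ref{zetafour}. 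When $D'\in\{2,3\}$ and $N'=1$ these orders are maximal, and since $B_2$ and $B_3$ have class number one their maximal orders are unique up to $(B')^\times$-conjugacy; this is case (1), and the overlap of its hypotheses with those of (2) and (3) is harmless because the orders involved are then conjugate.

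For the uniqueness it remains to show that any Eichler order $\OO'$ of level $N'$ in $B'$ satisfying condition (1) of Theorem~\ref{zetafour} is $(B')^\times$-conjugate to the corresponding explicit order. Such an $\OO'$ contains an element $\omega_1$ with $\omega_1^2=-1$, so $\ZZ[\zeta_4]=\ZZ[\omega_1]\hookrightarrow\OO'$; because $\ZZ[\zeta_4]$ is the maximal order of $\QQ(\zeta_4)$, any element of $\QQ(\zeta_4)$ mapping into $\OO'$ is integral and hence already lies in $\ZZ[\zeta_4]$, so this embedding is optimal. The analysis behind Theorem~\ref{PizerExtraUnits} (the proof of \cite[Theorem~16]{Pizer}) shows that $\ZZ[\zeta_4]$ is optimally embedded into at most one $(B')^\times$-conjugacy class of Eichler orders of level $N'$ in $B'$; since such an embedding exists, that class is unique, and by the preceding paragraph it contains the explicit orders. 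Hence $\OO'$ is conjugate to the relevant one among them (and to the maximal order when $D'\in\{2,3\}$, $N'=1$).

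The main obstacle is precisely this last step. The hypotheses of Theorem~\ref{PizerExtraUnits} as stated exclude the prime $2$, whereas condition (1) of Theorem~\ref{zetafour} permits $2\mid D'N'$, so one cannot simply quote the restricted statement; instead one must use the underlying fact from Pizer's proof that the number of $(B')^\times$-conjugacy classes of Eichler orders of level $N'$ in $B'$ into which $\ZZ[\zeta_4]$ embeds optimally is always zero or one (being one exactly when an embedding exists), and in the residual case $D'\in\{2,3\}$ invoke class number one. Everything else --- the discriminant computations and the verification that the bases written in (1)--(3) span these orders --- is routine bookkeeping.
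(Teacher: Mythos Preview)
Your proposal is correct and follows essentially the same approach as the paper. The paper gives no explicit proof of this corollary; it relies on the sentence preceding Theorem~\ref{PizerExtraUnits} (``We note that we gave very explicit examples of orders satisfying Theorem~\ref{zetafour}(1) in the proof above. We note that these orders are unique up to $B^\times$-conjugacy'') together with Pizer's result, and you have faithfully unpacked this: identify $B'$ and the explicit orders from the constructive half of Theorem~\ref{zetafour}, observe that any embedding of $\ZZ[\zeta_4]$ into an order is automatically optimal, and invoke Pizer's $D(\ZZ[\zeta_4])\in\{0,1\}$ for uniqueness. Your handling of the $2\mid D'N'$ issue is exactly right: the restrictive hypotheses in the statement of Theorem~\ref{PizerExtraUnits} serve only to guarantee existence, while the uniqueness input---that $D(\ZZ[\zeta_4])\le 1$ always---is what the paper actually extracts from Pizer's proof.
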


We now turn our attention to simultaneous embeddings of $\ZZ[\zeta_6]$ and $\ZZ[\sqrt{-m}]$.

\begin{theorem}\label{zetasix} Fix squarefree positive integers $D',N'$ such that $(D',N') =1$ and $D'$ is the product of an odd number of primes. Fix also $m|D'N'$ such that $m >1$, $m \ne 3$. The following are equivalent

\begin{enumerate}

\item There is a definite quaternion algebra $B'$ of discriminant $D'$, an Eichler order $\OO'$ of level $N'$ in $B'$ and $\frac{1 +\omega_1}{2},\omega_2\in\OO'$ such that $\omega_1^2  = -3$ and $\omega_2^2 = -m$. 






\item  There are factorizations $D' = \prod_i p_i$, $N' = \prod_j q_j$ into distinct primes such that

\begin{itemize}

\item $m = D'N'$, or $3\mid D'N'$ and $m = D'N'/3$

\item for all $i$ either $p_i =3$ or $p_i \equiv 2 \bmod 3$

\item for all $j$ either $q_j = 3$ or $q_j \equiv 1\bmod 3$

\end{itemize}

\end{enumerate}

\end{theorem}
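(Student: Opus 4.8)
The plan is to mirror the proof of Theorem \ref{zetafour}, replacing $\ZZ[\zeta_4]$ by $\ZZ[\zeta_6]$ (equivalently $R_{-3}$, the maximal order of $\QQ(\sqrt{-3})$) and tracking the arithmetic of the prime $3$ in place of $2$. For $(1)\Rightarrow(2)$: given $\frac{1+\omega_1}{2},\omega_2 \in \OO'$ with $\omega_1^2 = -3$, $\omega_2^2 = -m$, the subring generated by $\frac{1+\omega_1}{2}$ is $\ZZ[\zeta_6] \cong R_{-3}$, so Theorem \ref{EichlerEmbedding} forces $e_{D',N'}(-3)/w(-3) \ge 1$, hence $e_{D',N'}(-3) \ne 0$; unwinding the Eichler symbols $\{\tfrac{-3}{p}\}$ this gives exactly that every $p_i$ is $3$ or $\equiv 2 \bmod 3$ (i.e. $\left(\frac{-3}{p_i}\right) = -1$, so the algebra is genuinely division at $p_i$) and every $q_j$ is $3$ or $\equiv 1 \bmod 3$. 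Then, since $m > 1$ and $m \ne 3$, neither of $\ZZ[\zeta_6]$, $\ZZ[\sqrt{-m}]$ embeds in the other, so $\OO'$ contains the free $\ZZ$-module on $1, \frac{1+\omega_1}{2}, \omega_2, \frac{1+\omega_1}{2}\omega_2$; writing $s = \tr(\omega_1\omega_2) \in \ZZ$ and $s' = \tr\!\big(\tfrac{1+\omega_1}{2}\omega_2\big)$, the reduced discriminant of this module is divisible by $D'N'$, which yields $D'N' \mid (\text{a quadratic form in } m, s)$. The divisibility relation to extract is that $3m$ (from the discriminant computation using $\frac{1+\omega_1}{2}$, which scales the $-3$ by a factor $3$ into $\left(\frac{\text{something}}{2}\right)^2$ shape) divides a bounded expression, forcing $m \mid D'N' \mid 3m$ up to the sporadic small cases, and squarefreeness of $D'N'$ then pins down $m = D'N'$ or $m = D'N'/3$ when $3 \mid D'N'$. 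One must separately rule out (or absorb) small exceptional $m$: here the excluded value $m=3$ is precisely the analogue of the $m=2$ exception in Theorem \ref{zetafour}, and one should check whether $m=2$ or other tiny $m$ can sneak in; I expect the bound $m^2 \le s^2 < 4m$ type inequality to leave only $m \in \{2,3\}$, and a short direct check disposes of $m=2$ (the relevant order would need $2 \mid D'N'$ with $2 \equiv 2 \bmod 3$ satisfied, but then one checks the embedding of $\ZZ[\sqrt{-2}]$ fails unless $m = D'N'$ or $D'N'/3$).

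For $(2) \Rightarrow (1)$: take $B' = \left(\frac{-3,-D'N'}{\QQ}\right)$ with $\omega_1 = i$, $\omega_2 = j$, so $\omega_1^2 = -3$ and $\omega_2^2 = -D'N'$. One verifies by computing Hilbert symbols at each place that $B' \cong B_{D'}$: at each $p_i$ the conditions $p_i = 3$ or $p_i \equiv 2 \bmod 3$ make $\left(\frac{-3}{p_i}\right) = -1$ so $B'$ is ramified, at each $q_j$ it is split, at $2$ and $\infty$ one checks ramification is as required (the number of ramified places is odd since $D'$ is). When $3 \mid D'N'$, the element $\omega_2' = \frac{1+\omega_1}{2}\omega_2$ has square $\frac{1 - 3}{4}(-D'N') = -(-\tfrac{1}{2})\cdot\,$... more precisely $\left(\frac{1+\omega_1}{2}\right)^2 = \frac{1 + 2\omega_1 - 3}{4} = \frac{-1+\omega_1}{2} = \zeta_6 - 1$ — so instead one uses that $\omega_1\omega_2$ together with a suitable half-integral combination gives an element of square $-D'N'/3$; I would exhibit the explicit Eichler order analogous to the Hurwitz order and the order $\ZZ \oplus \ZZ\omega_1 \oplus \ZZ\frac{1+\omega_2}{2} \oplus \ZZ\omega_1\frac{1+\omega_2}{2}$ in the two parity cases, compute their reduced discriminants to be $D'N'$, and confirm they contain both $\frac{1+\omega_1}{2}$ and a square root of $-m$.

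The main obstacle is the $3 \mid D'N'$ case of $(2)\Rightarrow(1)$: unlike the situation with $-1$ and $\zeta_4$ where $\frac{1+\omega_1}{2}$ is not integral, here $\frac{1+\omega_1}{2} = \zeta_6$ \emph{is} integral, so the bookkeeping of which half-integral combinations lie in a maximal (or level-$N'$ Eichler) order is more delicate — one must produce an order of reduced discriminant exactly $D'N'$ containing $\zeta_6$ and a square root of $-D'N'/3$ simultaneously, and check optimality/level at the prime $3$ where both the order and the embedded quadratic ring are ramified. I would handle this by working locally: at $p = 3$ use Lemma \ref{pDividesDiscriminantLemma} (if $3 \mid D'$) or the explicit $M_2(\ZZ_3)$ description (if $3 \mid N'$) to place $\zeta_6$ and the square root of $-D'N'/3$ in standard position, and at all other primes the embeddings are automatic from the Eichler symbol computation; then glue via Lemma \ref{QuaternionIndefinite}'s definite-case analogue (every genus has a representative, and one counts using $e_{D',N'}$) to get a single order containing both. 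A secondary nuisance is verifying $B' \cong B_{D'}$ cleanly at $p = 2$ when $2 \mid D'N'$, which requires a careful Hilbert-symbol computation that I would relegate to a lemma.
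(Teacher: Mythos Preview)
Your $(1)\Rightarrow(2)$ direction is essentially the paper's argument. Two corrections: the reduced discriminant of $\ZZ \oplus \ZZ\tfrac{1+\omega_1}{2} \oplus \ZZ\omega_2 \oplus \ZZ\tfrac{1+\omega_1}{2}\omega_2$ is $3m - (s/2)^2$ (with $s = \tr(\omega_1\omega_2)$ automatically even), so the inequality is $m^2 \le (s/2)^2 < 3m$, leaving only $m=2$; and the $m=2$ case is not ``disposed of'' but rather forces $D'=D'N'=2$, so it already lands in the conclusion $m = D'N'$.

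Your $(2)\Rightarrow(1)$ direction has gaps in the explicit constructions. For $3\mid D'N'$, the element you want is $\omega_2' = \tfrac{\omega_1}{3}\omega_2$ (note $(\omega_1\omega_2)^2 = -3D'N'$, so $(\tfrac{1}{3}\omega_1\omega_2)^2 = -D'N'/3$); then $\ZZ \oplus \ZZ\tfrac{1+\omega_1}{2} \oplus \ZZ\omega_2' \oplus \ZZ\tfrac{1+\omega_1}{2}\omega_2'$ already has reduced discriminant $D'N'$. For $3\nmid D'N'$, the order you propose, $\ZZ \oplus \ZZ\omega_1 \oplus \ZZ\tfrac{1+\omega_2}{2} \oplus \ZZ\omega_1\tfrac{1+\omega_2}{2}$, does \emph{not} contain $\tfrac{1+\omega_1}{2}$ (check the constant and $\omega_1$ coefficients), so it fails the hypothesis of (1). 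The paper instead writes down the more intricate order
\[
\ZZ \oplus \ZZ\tfrac{1+\omega_1}{2} \oplus \ZZ\left(\tfrac{1+\omega_2}{2} + \tfrac{\omega_1+\omega_1\omega_2}{6}\right) \oplus \ZZ\tfrac{-3+\omega_1-2\omega_1\omega_2}{6},
\]
which one checks directly is an Eichler order of level $N'$ containing $\tfrac{1+\omega_1}{2}$ and $\omega_2$. Your local-global gluing backup is a valid alternative route in principle, but it is heavier than needed and the paper avoids it entirely via these explicit orders; if you pursue that route you would still need Pizer's uniqueness (Theorem~\ref{PizerExtraUnits}) to land in a single conjugacy class.
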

\begin{proof}

If $\ZZ[\zeta_6] \hookrightarrow \OO'$ then $p_i =3$ or $p_i\equiv 2\bmod 3$ and $q_j =3$ or $q_j \equiv 1 \bmod 3$ by Theorem \ref{EichlerEmbedding}.

Note that since $m>1$ and $m\ne 3$, $\ZZ[\zeta_6]\not\hookrightarrow\ZZ[\sqrt{-m}]$ and vice versa. We know that since $\OO' \supset \ZZ \oplus \ZZ\left(\frac{1 + \omega_1}{2}\right) \oplus \ZZ\omega_2 \oplus \ZZ\left(\frac{1 + \omega_1}{2}\right)\omega_2$, $m\mid D'N'\mid 3m - (s/2)^2$. If $s=0$, we have the result that $m\mid D'N'\mid 3m$.

If $s\ne 0$, $m\mid 3m - (s/2)^2$ implies that $m\mid (s/2)$. Thus $m^2 \le (s/2)^2 <3m$ so $m=2$. If $0<(s/2)^2 < 6$ and $2\mid (s/2)$ then $s=4$ so $m = D' = D'N' = 2$.

To show that $(2)$ implies $(1)$, 
it suffices to consider the quaternion algebra $A = \left(\dfrac{-3,-D'N'}{\QQ}\right)$ with $\omega_1 = i$ and $\omega_2 = j$. It can be calculated that $A\cong B_{D'}$. 

If $3\mid D'N'$, then $(\omega_1\omega_2)^2 = -3D'N'$ so $(1/3)\omega_1\omega_2$ squares to $-D'N'/3$. We set $\omega_2' = \dfrac{\omega_1}{3}\omega_2$ so the reduced discriminant of $\ZZ \oplus \ZZ \dfrac{1 + \omega_1}{2} \oplus \ZZ \omega_2' \oplus \ZZ \dfrac{1 + \omega_1}{2}\omega_2'$ is $3(D'N'/3) = D'N'$. 

If $3\nmid D'N'$, then $D'N' \equiv -1 \bmod 3$. 
It can thus be calculated that $\ZZ \oplus \ZZ\dfrac{1 + \omega_1}{2} \oplus \ZZ\left(\dfrac{ 1 + \omega_2}{2} + \dfrac{\omega_1 + \omega_1\omega_2}{6}\right) \oplus \ZZ \dfrac{-3 + \omega_1 - 2\omega_1\omega_2}{6}$ is an Eichler order of level $N'$ in $A$.\end{proof}
%
%
%
%

\begin{corollary} Let $B'$ be a definite quaternion algebra of discriminant $D'$ and let $\OO'$ be an Eichler order of $B'$ of squarefree level $N'$ such that $\ZZ[\zeta_6]\hookrightarrow\OO'$. If $m\mid D'N'$ and $m\ne 1,3$, then $\ZZ[\sqrt{-m}]\hookrightarrow \OO'$ if and only if $m = D'N'$ or $D'N'/3$.\end{corollary}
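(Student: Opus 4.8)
The plan is to read both implications off Theorem~\ref{zetasix}, using Theorem~\ref{PizerExtraUnits} to transfer between an abstract Eichler order produced by that theorem and the given order $\OO'$; this is the exact analogue, with $\ZZ[\zeta_6]$ replacing $\ZZ[\zeta_4]$, of the corollary following Theorem~\ref{zetafour}. For the forward implication, suppose $\ZZ[\sqrt{-m}]\hookrightarrow\OO'$, so that $\omega_2^2=-m$ for some $\omega_2\in\OO'$. Writing $\ZZ[\zeta_6]=\ZZ\left[\frac{1+\sqrt{-3}}{2}\right]$, the hypothesis $\ZZ[\zeta_6]\hookrightarrow\OO'$ supplies an element of the form $\frac{1+\omega_1}{2}\in\OO'$ with $\omega_1^2=-3$. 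Since $m>1$ and $m\ne 3$, the triple $(\OO',\omega_1,\omega_2)$ witnesses condition~(1) of Theorem~\ref{zetasix}, so condition~(2) holds and in particular $m=D'N'$, or $3\mid D'N'$ and $m=D'N'/3$.

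For the reverse implication I would first note that $\ZZ[\zeta_6]\hookrightarrow\OO'$ forces $e_{D',N'}(-3)>0$, whence (by Theorem~\ref{EichlerEmbedding}, exactly as at the start of the proof of Theorem~\ref{zetasix}) every prime $p\mid D'$ satisfies $p=3$ or $p\equiv 2\bmod 3$ and every prime $q\mid N'$ satisfies $q=3$ or $q\equiv 1\bmod 3$. Together with the assumption $m=D'N'$ or $D'N'/3$ this is exactly condition~(2) of Theorem~\ref{zetasix}, so the implication $(2)\Rightarrow(1)$ produces a definite quaternion algebra $B''$ of discriminant $D'$, an Eichler order $\OO''$ of level $N'$ in $B''$, and elements of $\OO''$ of the form $\frac{1+\omega_1}{2}$ and $\omega_2$ with $\omega_1^2=-3$ and $\omega_2^2=-m$; thus both $\ZZ[\zeta_6]$ and $\ZZ[\sqrt{-m}]$ embed into $\OO''$. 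Since definite quaternion $\QQ$-algebras are classified by their discriminant, $B''\cong B'$, and after transporting $\OO''$ along such an isomorphism we may assume $\OO''\subset B'$.

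It then remains to show that $\OO'$ and $\OO''$ are conjugate under $(B')^\times$, for then conjugating the embedding $\ZZ[\sqrt{-m}]\hookrightarrow\OO''$ by the same element gives $\ZZ[\sqrt{-m}]\hookrightarrow\OO'$, as wanted. When $3\nmid D'N'$ the congruences above say $\left(\frac{-3}{p}\right)=-1$ for every $p\mid\disc(B')$ and $\left(\frac{-3}{q}\right)=1$ for every $q\mid N'$, so Theorem~\ref{PizerExtraUnits} applies and there is a unique $(B')^\times$-conjugacy class of level-$N'$ Eichler orders admitting an embedding of $\ZZ[\zeta_6]$; since $\OO'$ and $\OO''$ both lie in it, they are conjugate. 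The step I expect to need the most care is the remaining case $3\mid D'N'$ (equivalently $3\mid D'$ or $3\mid N'$, but not both), which is not covered by the hypotheses of Theorem~\ref{PizerExtraUnits} as stated; here I would fall back on the input of Pizer cited in that proof, namely that the number of conjugacy classes of level-$N'$ Eichler orders into which $\ZZ[\zeta_6]$ embeds is at most one -- a statement imposing no condition on $D'N'$ -- to conclude once more that $\OO'$ and $\OO''$ are conjugate, or else to enumerate the finitely many Eichler orders arising when $3\mid D'$ explicitly, in the spirit of the corollaries following Theorems~\ref{zetafour} and~\ref{zetasix}.
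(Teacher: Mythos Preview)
Your proposal is correct and matches the approach implied by the paper, which states this corollary without proof but places it immediately after Theorem~\ref{zetasix} and Pizer's uniqueness result (Theorem~\ref{PizerExtraUnits}), signalling exactly the combination you carry out. Your care with the case $3\mid D'N'$ is well placed: the hypotheses of Theorem~\ref{PizerExtraUnits} as stated exclude $p=3$, but as you note, the underlying Pizer statement that $D(\ZZ[\zeta_6])\le 1$ is unconditional, and since $\ZZ[\zeta_6]$ is the maximal order in $\QQ(\sqrt{-3})$ every embedding of it is optimal, so the uniqueness of the conjugacy class goes through.
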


 

\begin{corollary}

When the conditions of Theorem \ref{zetasix} are satisfied, $B' \cong \left(\dfrac{-3,-D'N'}{\QQ}\right)$ and $\OO'$ is $B^\times$-conjugate to one of the following:

\begin{enumerate}

\item The unique maximal order in $B'$ if $D' = 2$

\item $\ZZ \oplus \ZZ \frac{1 + i}{2} \oplus \ZZ \left(\frac{1 + j}{2}+ \frac{ i + k}{6}\right) \oplus \ZZ \frac{-3 + i -2k}{6}$ if $3 \nmid D'N'$

\item $\ZZ \oplus \ZZ \frac{1 + i}{2} \oplus \ZZ \frac{k}{3} \oplus \ZZ\frac{k-j}{6}$ if $3\mid D'N'$

Moreover if $3\mid D'N'$, the order in (3) contains $k/3$, a square root of $-D'N'/3$.

\end{enumerate}
\end{corollary}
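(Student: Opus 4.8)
The plan is to read the corollary off Theorem~\ref{zetasix} and its proof, which already produce, from the combinatorial data in~(2), a concrete definite algebra together with a concrete Eichler order of level $N'$ realizing the embeddings; what then remains is to identify the isomorphism class of $B'$ and to prove the order is unique up to conjugacy.

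For the algebra, the proof of Theorem~\ref{zetasix} constructs $A:=\left(\frac{-3,-D'N'}{\QQ}\right)$, which is definite since $-3<0$ and $-D'N'<0$, and records that $A\cong B_{D'}$. Because a definite quaternion $\QQ$-algebra is determined up to isomorphism by its squarefree discriminant and $B'$ has discriminant $D'$, we get $B'\cong A$. Fix such an isomorphism and work in $A$ with $i^2=-3$, $j^2=-D'N'$, $k=ij=-ji$; then $\frac{1+i}{2}$ satisfies $x^2-x+1=0$ and so equals $\zeta_6$, while $k^2=-3D'N'$, so $\frac{k}{3}$ is a square root of $-D'N'/3$ when $3\mid D'N'$. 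For each of the three listed lattices one checks directly — reproducing the computations in the proof of Theorem~\ref{zetasix} — that it is a $\ZZ$-order with reduced discriminant $D'N'$, hence by Lemma~\ref{DiscriminantIndexLemma} (and the characterization of Eichler orders) an Eichler order of level $N'$ in $A$; one then reads off $\zeta_6\in\OO'$ in cases~(2),(3) and $\frac{k}{3}\in\OO'$ in case~(3), the latter being the parenthetical remark. When $N'=1$ and $D'=2$ the relevant order has reduced discriminant $2=\disc(A)$, hence is a maximal order of $B_2$, which is case~(1).

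It remains to show that every Eichler order of level $N'$ in $B'$ admitting an embedding of $\ZZ[\zeta_6]$ is $(B')^\times$-conjugate to the one exhibited. When $3\nmid D'N'$ this is immediate from Pizer's Theorem~\ref{PizerExtraUnits}: its hypotheses hold because $\left(\frac{-3}{2}\right)=-1$, every odd prime $p\mid D'$ satisfies $p\equiv 2\bmod 3$ hence $\left(\frac{-3}{p}\right)=-1$, and every prime $q\mid N'$ is odd with $q\equiv 1\bmod 3$ hence $\left(\frac{-3}{q}\right)=1$; in particular this settles the uniqueness asserted in case~(1) as well. I expect the case $3\mid D'N'$ to be the main obstacle, since there Pizer's hypothesis fails at the single prime $3$ (where $\left(\frac{-3}{3}\right)=0$). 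The plan is to recover it by inspecting Pizer's argument: his invariant $D(\mathfrak o)$ is computed as a product of local contributions, and one checks the contribution at $3$ is still $0$ or $1$, using that the maximal $\ZZ_3$-order of the division algebra $B'\otimes\QQ_3$ is unique (Lemma~\ref{pDividesDiscriminantLemma}) when $3\mid D'$, and that the Eichler $\ZZ_3$-order of level $3$ in $M_2(\QQ_3)$ is unique up to conjugacy when $3\mid N'$. Alternatively, one can argue directly: one shows, using Theorem~\ref{zetasix}, that $\OO'$ contains a square root $\omega'$ of $-D'N'/3$ anticommuting with $\sqrt{-3}$, whence $\OO'=\ZZ\oplus\ZZ\zeta_6\oplus\ZZ\omega'\oplus\ZZ\zeta_6\omega'$ by Lemma~\ref{DiscriminantIndexLemma}, after which Skolem--Noether conjugates the embedding of $\QQ(\zeta_6)$ into $B'$ to the standard one and the centralizer $\QQ(\sqrt{-3})^\times$ of $\sqrt{-3}$ acts transitively on such $\omega'$, so that all such $\OO'$ are conjugate.
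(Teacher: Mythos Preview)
Your approach is exactly the paper's implicit one: the corollary is stated without proof, meant to be read off the constructions in the proof of Theorem~\ref{zetasix} together with Pizer's Theorem~\ref{PizerExtraUnits} for uniqueness. You are right to notice that the hypotheses of Theorem~\ref{PizerExtraUnits}, as written, exclude $3\mid D'N'$. The intended resolution is your first fix: the paper's own proof of Theorem~\ref{PizerExtraUnits} records that Pizer shows $D(\ZZ[\zeta_6])\in\{0,1\}$ unconditionally, and the Kronecker-symbol hypotheses are only there to force $D(\ZZ[\zeta_6])=1$ rather than $0$. Since here an embedding of $\ZZ[\zeta_6]$ is assumed, uniqueness follows directly.

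Your second alternative has a gap. You assert that Theorem~\ref{zetasix} shows any $\OO'$ satisfying (1) contains a square root $\omega'$ of $-D'N'/3$ anticommuting with $\sqrt{-3}$. When the given $m$ equals $D'N'/3$ this works: the discriminant computation forces $s=0$, and then $\ZZ\oplus\ZZ\zeta_6\oplus\ZZ\omega_2\oplus\ZZ\zeta_6\omega_2$ already has reduced discriminant $D'N'$, hence equals $\OO'$ by Lemma~\ref{DiscriminantIndexLemma}. But when the given $m$ is $D'N'$, you only have $\omega_2^2=-D'N'$; the candidate $\tfrac{\omega_1\omega_2}{3}$ is integral but not obviously in $\OO'$, and the suborder $\ZZ\oplus\ZZ\zeta_6\oplus\ZZ\omega_2\oplus\ZZ\zeta_6\omega_2$ sits in $\OO'$ with index $3$ without determining which index-$3$ superorder $\OO'$ is. Supplying that missing element is equivalent to the uniqueness you are trying to prove, so this route is circular. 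Stick with the first fix.
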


We prove one final theorem on simultaneous embeddings. For the remainder of the section, let $D$ be the squarefree product of an even number of primes, $N$ a squarefree integer coprime to $D$, and $p$ a prime not dividing $DN$. We shall also set $B' := B_{Dp}$ and let $m\mid DN$ be an integer greater than one. The following lemma is an easy calculation.

\begin{lemma}\label{ExplicitQuaternionAlgebras} We have the following isomorphisms of $\QQ$-algebras.

\begin{enumerate}

\item If $2\nmid DNp$, $\left(\frac{-p}{q}\right) = -1$ for all primes $q\mid D$, $\left(\frac{-p}{q}\right) = 1$ for all primes $q\mid N$, and $\left(\frac{-DN}{p}\right) = -1$, then $B' \cong \left(\frac{-p,-DN}{\QQ}\right)$.

\item If $2\mid N$, $\left(\frac{-p}{q}\right) = -1$ for all primes $q\mid D$, $\left(\frac{-p}{q}\right) = 1$ for all primes $q\mid (N/2)$, and $\left(\frac{-DN}{p}\right) = -1$, then $B' \cong \left(\frac{-p,-DN}{\QQ}\right)\cong \left(\frac{-p,-DN/2}{\QQ}\right)$.

\item If $2\mid D$, $\left(\frac{-p}{q}\right) = -1$ for all primes $q\mid D$, $\left(\frac{-p}{q}\right) = 1$ for all primes $q\mid N$, and $\left(\frac{-DN}{p}\right) = -1$, then $B' \cong \left(\frac{-p,-DN}{\QQ}\right)$.

\item If $2\mid D$, $\left(\frac{-p}{q}\right) = -1$ for all primes $q\mid D$, $\left(\frac{-p}{q}\right) = 1$ for all primes $q\mid N$, and $\left(\frac{-DN/2}{p}\right) = -1$, then $B' \cong \left(\frac{-p,-DN/2}{\QQ}\right)$.

\item If $p=2$, $\left(\frac{-2}{q}\right) = -1$ for all primes $q\mid D$, and $\left(\frac{-2}{q}\right) = 1$ for all primes $q\mid N$ then $B' \cong \left(\frac{-2,-DN}{\QQ}\right)$.

\end{enumerate}

\end{lemma}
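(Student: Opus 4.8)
The plan is to prove each asserted isomorphism by computing the set $S$ of places of $\QQ$ at which the quaternion algebra on the right-hand side ramifies and checking that $S=\{\infty\}\cup\{q:q\mid Dp\}$, which is exactly the ramification set of $B'=B_{Dp}$; since a quaternion algebra over $\QQ$ is determined up to isomorphism by its (necessarily even) ramification set, this proves the claim. The only tools needed are the standard local formulas for the Hilbert symbol $(a,b)_v$ and the product formula $\prod_v (a,b)_v=1$.

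Case (1) is representative. At $v=\infty$ we get $(-p,-DN)_\infty=-1$ because both arguments are negative, which is consistent with $B_{Dp}$ being definite ($Dp$ being a product of an odd number of primes). For an odd prime $\ell\mid D$, the element $-p$ is an $\ell$-adic unit while $-DN$ has valuation one at $\ell$, so $(-p,-DN)_\ell=\left(\frac{-p}{\ell}\right)=-1$ by hypothesis; the same computation at an odd prime $\ell\mid N$ gives $\left(\frac{-p}{\ell}\right)=+1$. At $v=p$, the element $-p$ has valuation one and $-DN$ is a $p$-adic unit, so $(-p,-DN)_p=\left(\frac{-DN}{p}\right)=-1$ by hypothesis. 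We have thus located ramification at $\infty$, at each of the (even number of) primes dividing $D$, and at $p$, an even number of places in all, so the product formula forces $(-p,-DN)_2=+1$. Hence $S=\{\infty\}\cup\{q:q\mid Dp\}$, as required.

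Cases (3), (4) and (5) follow the same pattern: one evaluates the symbol at $\infty$, at each odd prime dividing $DN$ (or $DN/2$), and at $p$ (which is $2$ in case (5)), using the supplied Legendre-symbol hypotheses, and then invokes the product formula to pin down the single remaining local symbol. The parity count works out because the number of odd primes dividing $D$ is even when $2\nmid D$ and odd when $2\mid D$, which is precisely what matches whether or not $2\mid Dp$. For the secondary identities $\left(\frac{-p,-DN}{\QQ}\right)\cong\left(\frac{-p,-DN/2}{\QQ}\right)$ appearing in cases (2) and (4), one uses bimultiplicativity of the Hilbert symbol: that isomorphism is equivalent to $\left(\frac{-p,2}{\QQ}\right)$ being split, a condition that is automatic at $\infty$ and at every odd prime $\ell\ne p$ and that reduces at the two remaining places to the relevant supplementary reciprocity statement at $2$, which is subsumed in the stated hypotheses.

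The only real difficulty is bookkeeping at the prime $2$: keeping the parity of the number of already-located ramified places straight across the five cases and dealing with the secondary isomorphisms via the supplementary laws. There is no conceptual content beyond the Hilbert-symbol formalism together with the product formula, which is why the statement is billed as a calculation.
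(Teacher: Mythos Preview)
Your approach is exactly what the paper intends: the paper's entire proof is the sentence ``The following lemma is an easy calculation,'' and the calculation you outline---computing each local Hilbert symbol $(-p,-DN)_v$ (or variant) and invoking the product formula to handle the prime $2$---is the standard and correct one. Your treatment of case~(1) is clean and the parity bookkeeping is right.

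Two small corrections. First, case~(4) has no secondary isomorphism; only case~(2) asserts $\left(\frac{-p,-DN}{\QQ}\right)\cong\left(\frac{-p,-DN/2}{\QQ}\right)$. Second, and more substantively, your claim that this secondary isomorphism is ``subsumed in the stated hypotheses'' is not correct. By bimultiplicativity it is equivalent to $\left(\frac{-p,2}{\QQ}\right)$ being split, which happens precisely when $\left(\frac{2}{p}\right)=1$, i.e.\ $p\equiv\pm1\bmod 8$. Nothing in the hypotheses of case~(2) forces this. In fact the lemma statement is slightly loose here: when it is invoked in the proof of Theorem~\ref{GoodReductionSimultaneousEmbeddings}(2), the sub-case $m=DN/2$ carries the hypothesis $\left(\frac{-DN/2}{p}\right)=-1$ rather than $\left(\frac{-DN}{p}\right)=-1$, so the usage is internally consistent even though the lemma as written overstates case~(2). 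You should either note this wrinkle or simply verify the two presentations in case~(2) separately, each under its own natural hypothesis, rather than deducing one from the other.
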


\begin{theorem}\label{GoodReductionSimultaneousEmbeddings} Recall that $D$ is the squarefree product of an even number of primes, $N$ a squarefree integer coprime to $D$, and $p$ a prime not dividing $DN$. Recall further that $B' = B_{Dp}$ and let $m\mid DN$ be an integer greater than one. We have the following equivalences.

\begin{enumerate}

\item Suppose that $2\nmid DNp$. There is an Eichler order $\OO'$ of level $N$ in $B'$ and embeddings $\psi_1: \ZZ[\sqrt{-p}]\hookrightarrow \OO'$ and $\psi_2:\ZZ[\sqrt{-m}] \hookrightarrow \OO'$ if and only if $m = DN$, $\left(\frac{-p}{q}\right) = -1$ for all primes $q\mid D$, $\left(\frac{-p}{q}\right) = 1$ for all primes $q\mid N$, and $\left(\frac{-DN}{p}\right) = -1$.

\item Suppose that $2\mid N$. There is an Eichler order $\OO'$ of level $N$ in $B'$ and embeddings $\psi_1: \ZZ[\sqrt{-p}]\hookrightarrow \OO'$ and $\psi_2:\ZZ[\sqrt{-m}] \hookrightarrow \OO'$ if and only if one of the following two cases occurs.

\begin{itemize} 

\item $m = DN$, $\left(\frac{-p}{q}\right) = -1$ for all primes $q\mid D$, $\left(\frac{-p}{q}\right) = 1$ for all primes $q\mid (N/2)$, and $\left(\frac{-DN}{p}\right) = -1$

\item $m = DN/2$, $\left(\frac{-p}{q}\right) = -1$ for all primes $q\mid D$, $\left(\frac{-p}{q}\right) = 1$ for all primes $q\mid (N/2)$, and $\left(\frac{-DN/2}{p}\right) = -1$

\end{itemize}

\item Suppose $2\mid D$ and $\left(\frac{-DN}{p}\right) = -1$. There is an Eichler order $\OO'$ of level $N$ in $B'$ and embeddings $\psi_1: \ZZ[\sqrt{-p}]\hookrightarrow \OO'$ and $\psi_2:\ZZ[\sqrt{-m}] \hookrightarrow \OO'$ if and only if $m = DN$, $\left(\frac{-p}{q}\right) = -1$ for all primes $q\mid (D/2)$, $p\not\equiv 7\bmod 8$, and $\left(\frac{-p}{q}\right) = 1$ for all primes $q\mid N$.

\item Suppose $2\mid D$ and $\left(\frac{-DN}{p}\right) = 1$. There is an Eichler order $\OO'$ of level $N$ in $B'$ and embeddings $\psi_1: \ZZ[\sqrt{-p}]\hookrightarrow \OO'$ and $\psi_2:\ZZ[\sqrt{-m}] \hookrightarrow \OO'$ if and only if $m = DN/2$, $DN \equiv 2,6$, or $10 \bmod 16$, $\left(\frac{-p}{q}\right) = -1$ for all primes $q\mid (D/2)$, $p\not\equiv 7\bmod 8$, and $\left(\frac{-p}{q}\right) = 1$ for all primes $q\mid N$.

\item Suppose that $p= 2$. There is an Eichler order $\OO'$ of level $N$ in $B'$ and embeddings $\psi_1: \ZZ[\sqrt{-p}]\hookrightarrow \OO'$ and $\psi_2:\ZZ[\sqrt{-m}] \hookrightarrow \OO'$ if and only if $m = DN \equiv \pm 3\bmod 8$, $\left(\frac{-2}{q}\right) = -1$ for all primes $q\mid D$, and $\left(\frac{-2}{q}\right) = 1$ for all primes $q\mid N$.

\end{enumerate}

\end{theorem}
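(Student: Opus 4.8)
The plan is to follow the template of Theorems \ref{zetafour} and \ref{zetasix}: establish necessity by a reduced-discriminant computation combined with an identification of the isomorphism type of $B'$ and an application of Eichler's embedding theorem, and establish sufficiency by exhibiting an explicit Eichler order of level $N$ inside $\left(\frac{-p,-m}{\QQ}\right)$ that contains the two prescribed square roots.

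For the forward implication, suppose $\OO'$ is an Eichler order of level $N$ in $B'=B_{Dp}$, and set $\omega_1=\psi_1(\sqrt{-p})$, $\omega_2=\psi_2(\sqrt{-m})$. Since $m\mid DN$ is squarefree, $m>1$, and $p\nmid DN$, the fields $\QQ(\omega_1)=\QQ(\sqrt{-p})$ and $\QQ(\omega_2)=\QQ(\sqrt{-m})$ are distinct, so $\omega_1$ and $\omega_2$ do not commute and $1,\omega_1,\omega_2,\omega_1\omega_2$ is a $\QQ$-basis of $B'$; moreover $\omega_1\omega_2\in\OO'$, hence $s:=\tr(\omega_1\omega_2)\in\ZZ$ and $\Lambda:=\ZZ\oplus\ZZ\omega_1\oplus\ZZ\omega_2\oplus\ZZ\omega_1\omega_2$ is an order of reduced discriminant $4pm-s^2$ contained in $\OO'$. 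By Lemma \ref{DiscriminantIndexLemma} the reduced discriminant $DpN$ of $\OO'$ divides $4pm-s^2$, while definiteness of $B'$ gives $s^2<4pm$. From $p\mid 4pm-s^2$ one gets $p\mid s$; writing $s=ps_1$ and dividing by $p$ yields $DN\mid 4m-ps_1^2$, so $m\mid ps_1^2$, hence $m\mid s_1$ because $\gcd(m,p)=1$ and $m$ is squarefree; writing $s_1=ms_2$, the inequality $s^2=p^2m^2s_2^2<4pm$ forces $s_2=0$ since $p,m\geq 2$, so $s=0$. Then $B'\cong\left(\frac{-p,-m}{\QQ}\right)$ and $DpN\mid 4pm$, i.e.\ $DN\mid 4m$; combined with $m\mid DN$ and the squarefreeness of $DN$ this leaves only $m=DN$, or $m=DN/2$ with $2\mid DN$. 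It then remains to read off the congruence data: equating $\disc\left(\frac{-p,-m}{\QQ}\right)$ with $Dp$ place by place — i.e.\ running the computations of Lemma \ref{ExplicitQuaternionAlgebras} backwards — gives $\left(\frac{-p}{q}\right)=-1$ for $q\mid D$, $\left(\frac{-p}{q}\right)=1$ for odd $q\mid N$, the stated condition on $\left(\frac{-m}{p}\right)$, and the behaviour at $2$; the remaining conditions (the exclusion $p\not\equiv 7\bmod 8$ when $2\mid D$, and the precise residues of $m$ modulo $8$ or $16$) come from feeding $\psi_1$ and $\psi_2$ into Eichler's embedding theorem \ref{EichlerEmbedding}, where one must allow these embeddings to be non-optimal — to extend to optimal embeddings of $\ZZ[\frac{1+\sqrt{-p}}{2}]$ or $\ZZ[\frac{1+\sqrt{-m}}{2}]$ when $p$ or $m$ is $\equiv 3\bmod 4$ — and demand that the corresponding factor of $e_{Dp,N}$ at $2$ not vanish.

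For the converse I would, in each of the five cases, invoke Lemma \ref{ExplicitQuaternionAlgebras} to identify $B_{Dp}$ with $\left(\frac{-p,-m}{\QQ}\right)$ for the value of $m$ prescribed by the hypotheses, put $\omega_1=i$, $\omega_2=j$ so that $\omega_1^2=-p$ and $\omega_2^2=-m$, and then enlarge $\ZZ\oplus\ZZ i\oplus\ZZ j\oplus\ZZ ij$, of reduced discriminant $4pm$, to an order of reduced discriminant $DpN$ — an index-$2$ enlargement when $m=DN/2$ and a pair of successive index-$2$ enlargements when $m=DN$ — by adjoining half-integral combinations of $1,i,j,ij$, exactly as the Hurwitz-type orders are built in the proofs of Theorems \ref{zetafour} and \ref{zetasix}. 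One then checks that the order so produced is Eichler of level $N$ in $B_{Dp}$ (equivalently, has reduced discriminant $DpN$ and the required local shape) and contains $\omega_1$ and $\omega_2$.

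The hard part will be the arithmetic at the prime $2$. At odd primes the relevant Hilbert symbol is a Legendre symbol and the local order is forced, but at $2$ one must track congruences modulo $8$ and modulo $16$, decide whether the pertinent quadratic order is maximal or of index $2$ in the maximal order of its field — equivalently, whether $\QQ_2(\sqrt{-m})$ is a field so that it embeds into the local division algebra — and, on the sufficiency side, produce by hand the index-$2$ or index-$4$ overorder of $\ZZ\oplus\ZZ i\oplus\ZZ j\oplus\ZZ ij$ whose $2$-adic completion is the maximal order of that division algebra. Because the small prime playing the distinguished role ($2\mid D$, $2\mid N$, or $p=2$) and the splitting type of $-m$ at $2$ both vary, this bookkeeping has to be carried out separately in the five listed cases, and it is precisely there that obstructions such as $DN\equiv 2,6,10\bmod 16$ surface.
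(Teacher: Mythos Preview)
Your proposal is correct and follows essentially the same route as the paper: the forward implication via the containment $\Lambda\subset\OO'$, the reduced-discriminant divisibility $DNp\mid 4mp-s^2$, the conclusion $s=0$ and hence $m\in\{DN,DN/2\}$, followed by Eichler's embedding theorem for the congruence constraints; and the converse via Lemma~\ref{ExplicitQuaternionAlgebras} and explicit index-$2$ overorders of $\ZZ\oplus\ZZ i\oplus\ZZ j\oplus\ZZ ij$. The only cosmetic difference is that you phrase some of the odd-prime congruence conditions as coming from Hilbert-symbol computations (``running Lemma~\ref{ExplicitQuaternionAlgebras} backwards''), whereas the paper extracts all of them directly from Theorem~\ref{EichlerEmbedding}; these two sources are equivalent here, since once $s=0$ forces $B'\cong\bigl(\tfrac{-p,-m}{\QQ}\bigr)$, the local ramification data of the algebra and the local obstruction in $e_{Dp,N}$ encode the same Legendre symbols.
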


\begin{proof}

Suppose first that there exist embeddings $\psi_1: \ZZ[\sqrt{-p}]\hookrightarrow \OO'$ and $\psi_2:\ZZ[\sqrt{-m}] \hookrightarrow \OO'$ into some Eichler order $\OO'$ of level $N$ in $B_{Dp}$. Let $\omega_1 = \psi_1(\sqrt{-p})$ and $\omega_2 = \psi_2(\sqrt{-m})$, so $\OO' \supset \{1,\omega_1,\omega_2,\omega_1\omega_2\}$. Since $(p,m) =1$, $\ZZ[\sqrt{-p}]\not\subset \ZZ[\sqrt{-m}]$ and $\ZZ[\sqrt{-m}\not\subset \ZZ[\sqrt{-p}]$. Thus, $$\OO'\supset \ZZ \oplus \ZZ\omega_1 \oplus \ZZ\omega_2 \oplus\ZZ\omega_1\omega_2,$$ an order of reduced discriminant $4mp - s^2$ where $s$ is the trace of $\omega_1\omega_2$. Therefore $DNp\mid 4mp - s^2$, and since $mp\mid DNp$, we must have $mp \mid s^2$. Since $mp$ is squarefree, $mp \mid s$ and so either $mp \le \mid s\mid $ or $s = 0$.

If $s \ne 0$ then $m^2 p^2 \le s^2 < 4mp$ and thus $mp <4$. However, recall that $m$ is an integer greater than one and $p$ is a prime, so $mp \ge 4$. Therefore $s = 0$ and $mp \mid  DNp \mid  4mp$. In fact, since $DNp$ is squarefree, it divides the squarefree part of $4mp$. If $2\nmid mp$ then $mp \mid  DNp \mid  2mp$ and either $2 \nmid DN$ and $m = DN$ or $2\mid DN$ and $m = DN/2$. If $2\mid m$ then $mp \mid  DNp \mid  mp$ and so $m = DN$. If $p =2$ then once more $mp \mid DNp\mid  mp$ and so $m = DN$. Recall now that if $q$ is an odd prime then $\left\{\frac{4\Delta}{q}\right\} = \left(\frac{\Delta}{q}\right)$. Therefore Theorem \ref{EichlerEmbedding} gives us the following congruence conditions.

\begin{itemize}

\item If $2\nmid DNp$ then $\left(\frac{-p}{q}\right) = -1$ for all primes $q\mid D$, $\left(\frac{-p}{q}\right) = 1$ for all primes $q\mid N$, and $\left(\frac{-DN}{p}\right) = -1$.

\item If $2\mid N$ and $m = DN$ then $\left(\frac{-p}{q}\right) = -1$ for all primes $q\mid D$, $\left(\frac{-p}{q}\right) = 1$ for all primes $q\mid (N/2)$, and $\left(\frac{-DN}{p}\right) = -1$

\item If $2\mid N$ and $m = DN/2$ then $\left(\frac{-p}{q}\right) = -1$ for all primes $q\mid D$, $\left(\frac{-p}{q}\right) = 1$ for all primes $q\mid (N/2)$, and $\left(\frac{-DN/2}{p}\right) = -1$

\item If $2\mid D$ and $m = DN$, then $\left(\frac{-p}{q}\right) = -1$ for all primes $q\mid (D/2)$, $p\equiv \pm 3\bmod 8$, and $\left(\frac{-p}{q}\right) = 1$ for all primes $q\mid N$.

\item If $2\mid D$ and $m = DN/2$, then $DN \equiv 2,6,10 \bmod 16$, $\left(\frac{-p}{q}\right) = -1$ for all primes $q\mid (D/2)$, $p\equiv \pm 3 \bmod 8$, and $\left(\frac{-p}{q}\right) = 1$ for all primes $q\mid N$.

\item If $p=2$ then $DN \equiv \pm 3\bmod 8$, $\left(\frac{-2}{q}\right) = -1$ for all primes $q\mid D$, and $\left(\frac{-2}{q}\right) = 1$ for all primes $q\mid N$.

\end{itemize}


We now prove the converse. If $2\nmid DNp$, then by Lemma \ref{ExplicitQuaternionAlgebras}(1), $B_{Dp} \cong B' = \left(\frac{-p,-DN}{\QQ}\right)$. Contained in $B'$ is the order $\ZZ\oplus\ZZ i \oplus \ZZ j \oplus \ZZ ij$ of reduced discriminant $4DNp$. If $p\equiv 3\bmod 4$ then $\ZZ\oplus \ZZ \frac{1 +i}{2} \oplus \ZZ j \oplus \ZZ \left(\frac{1 + i}{2}\right)j$ is an order of reduced discriminant $DNp$, and is thus an Eichler order of level $N$. Likewise if $DN \equiv 3\bmod 4$ there is an Eichler order of level $N$. 
Assume now that $p\equiv 1\bmod 4$. Then $$\left(\frac{-DN}{p}\right) = \left(\frac{DN}{p}\right) = \prod_{q\mid DN} \left(\frac{q}{p}\right) = \prod_{q\mid DN} \left(\frac{p}{q}\right) = \prod_{q\mid DN} \left(\frac{-p}{q}\right) (-1)^r$$ where $r$ is the number of primes $q\mid DN$ such that $q\equiv 3\bmod 4$. 
Moreover, since $D$ is the product of an even number of primes, $\left(\frac{-p}{q}\right) = -1$ if $q\mid D$, and $\left(\frac{-p}{q}\right) =1 $ if $q\mid N$, it follows that $\prod_{q\mid DN} \left(\frac{-p}{q}\right) = 1$. Putting this all together we have shown that if $p\equiv 1\bmod 4$, then $$ -1 = \left(\frac{-DN}{p}\right) = \begin{cases} 1 & DN \equiv 1\bmod 4 \\ -1 & DN \equiv 3\bmod 4\end{cases}.$$

If $2\mid N$ and $m= DN$, then by Lemma \ref{ExplicitQuaternionAlgebras}(2), $B_{Dp} \cong B' = \left(\frac{-p,-DN}{\QQ}\right)$. Contained in $B'$ is the order $\ZZ\oplus\ZZ i \oplus \ZZ j \oplus \ZZ ij$ of reduced discriminant $4DNp$. If $p\equiv 3\bmod 4$ then $\ZZ\oplus \ZZ \frac{1 +i}{2} \oplus \ZZ j \oplus \ZZ \left(\frac{1 + i}{2}\right)j$ is an order of reduced discriminant $DNp$, and is thus an Eichler order of level $N$. If $p\equiv 1\bmod 4$ then $\ZZ \oplus \ZZ i \oplus \ZZ \left(\frac{1 + i + j}{2}\right) \oplus \ZZ \left(\frac{-1 -i + ij}{2}\right)$ is an order of reduced discriminant $DNp$.

If $2\mid N$ and $m = DN/2$, then by Lemma \ref{ExplicitQuaternionAlgebras}(2), $B_{Dp} \cong B' = \left(\frac{-p,-DN/2}{\QQ}\right)$. Contained in $B'$ is the order $\ZZ\oplus\ZZ i \oplus \ZZ j \oplus \ZZ ij$ of reduced discriminant $2DNp$. It follows that the ``Hurwitz quaternions'' $\ZZ \oplus \ZZ i \oplus \ZZ j \oplus \ZZ \left(\frac{1 + i + j + ij}{2}\right)$ are an Eichler order of reduced discriminant $DNp$.

If $2\mid D$ and $m = DN$, then by Lemma \ref{ExplicitQuaternionAlgebras}(3), $B_{Dp} \cong B' = \left(\frac{-p,-DN}{\QQ}\right)$. Contained in $B'$ is the order $\ZZ\oplus\ZZ i \oplus \ZZ j \oplus \ZZ ij$ of reduced discriminant $4DNp$. If $p\equiv 3\bmod 8$ then $\ZZ\oplus \ZZ \frac{1 +i}{2} \oplus \ZZ j \oplus \ZZ \left(\frac{1 + i}{2}\right)j$ is an order of reduced discriminant $DNp$, and is thus an Eichler order of level $N$. If $p\equiv 5\bmod 8$ then $\ZZ \oplus \ZZ i \oplus \ZZ \left(\frac{1 + i + j}{2}\right) \oplus \ZZ \left(\frac{-1 -i + ij}{2}\right)$ is an order of reduced discriminant $DNp$.

If $2\mid D$ and $m = DN/2$, then by Lemma \ref{ExplicitQuaternionAlgebras}(4), $B_{Dp} \cong B' = \left(\frac{-p,-DN/2}{\QQ}\right)$. Contained in $B'$ is the order $\ZZ\oplus\ZZ i \oplus \ZZ j \oplus \ZZ ij$ of reduced discriminant $2DNp$. It follows that $\ZZ \oplus \ZZ i \oplus \ZZ j \oplus \ZZ \left(\frac{1 + i + j + ij}{2}\right)$ is an Eichler order of reduced discriminant $DNp$.

If $p=2$, then by Lemma \ref{ExplicitQuaternionAlgebras}(5), $B_{Dp} \cong B' = \left(\frac{-2,-DN}{\QQ}\right)$. Contained in $B'$ is the order $\ZZ\oplus\ZZ i \oplus \ZZ j \oplus \ZZ ij$ of reduced discriminant $4DNp$. If $DN\equiv 3\bmod 8$ then $\ZZ\oplus \ZZ \frac{1 +j}{2} \oplus \ZZ i \oplus \ZZ \left(\frac{1 + j}{2}\right)i$ is an order of reduced discriminant $DNp$, and is thus an Eichler order of level $N$. If $DN\equiv 5\bmod 8$ then $\ZZ \oplus \ZZ j \oplus \ZZ \left(\frac{1 + i + j}{2}\right) \oplus \ZZ \left(\frac{-1 -j + ij}{2}\right)$ is an order of reduced discriminant $DNp$.\end{proof}

\section{Shimura Curves}

We begin with the definition of a Shimura curve as a coarse moduli scheme, presuming some familiarity with abelian schemes and moduli spaces. As always, we will assume that $D$ is the squarefree product of an even number of primes and that $N$ is squarefree.

\begin{definition}\label{DefnCoarseModuliScheme} Fix a scheme $S$ and an Eichler order $\OO$ of level $N$ in $B_D$. By $X^D_0(N)_S$ we will denote the coarse moduli scheme parametrizing pairs $(A,\iota)$ over $S$-schemes $T$, where $A_{/T}$ is an abelian scheme and $\iota: \OO \hookrightarrow \End_T(A)$ is an optimal embedding such that the pair $(A,\iota)$ is \emph{mixed} in the sense of Ribet \cite{RibetBimodules} or \emph{special} in the sense of Drinfeld \cite{Dr76}.\end{definition}

\begin{definition}\label{AtkinLehnerOO}
Let $\beta_m$ denote a generator of the unique two-sided integral ideal of $\OO$ of norm $m|DN$. There is an automorphism $w_m$ of $X^D_0(N)_S$ induced by the bijection $ [(A,\iota)] \mapsto [(A,(\iota(\beta_m))\inv \iota(\cdot) \iota(\beta_m))]$.
\end{definition}

Note that the above makes sense because any generator of the unique two-sided integral ideal of $\OO$ of norm $m$ is of the form $\beta_m u$ where $u$ is a unit of $\OO$. Note that the group of all such $w_m$ is abelian because the group of two-sided integral ideals is abelian. We call the group of all such $w_m$ the \emph{Atkin-Lehner group} $W$ and note there is an isomorphism $(\ZZ/2\ZZ)^{\{p|DN\textrm{ prime}\}} \cong W$ by $m|DN \leftrightarrow \{p\mid m\} \mapsto w_m$.

\begin{definition}\label{ALOOFixed} We say that $(A,\iota)$ is fixed by $w_m$ if $[(A,\iota)] = [(A,(\iota(\beta_m))\inv \iota(\cdot) \iota(\beta_m))]$, where $\beta_m$ is a generator of the unique integral two-sided ideal of $\OO$ of norm $m$.\end{definition}
 

We now state some theorems on explicit descriptions of $X^D_0(N)_S$ over various schemes $S$.



\begin{definition} Let $D,N$ be positive square-free integers and let $\OO$ be an Eichler order of level $N$ in $B_{D}$. Define $\Pic(D,N)$ to be the set of isomorphism classes of right $\OO$-ideals. \end{definition}

Lemma \ref{QuaternionIndefinite} shows that $\Pic(D,N) = \{[\OO]\}$ when $B_D$ is indefinite. When $B_D$ is definite, there exist formulas for the size of $\Pic(D,N)$ \cite[Theorem 16]{Pizer}.

\begin{definition}\label{PicardLength} The \emph{length} of an element $[I]$ of $\Pic(D,N)$ is $\ell([I]) := \#(\OO_l(I)^\times/\pm 1)$.\end{definition}

We shall use this to make sense of the reduction $X^D_0(N)_{\FF_p}$ when $p|D$. We say a normal, proper, flat relative curve $M_{/\ZZ_p}$ is a \emph{Mumford curve} if each component of the special fiber is isomorphic over $\FF_p$ to $\PP^1_{\FF_p}$ and the intersection points are all $\FF_p$-rational double points. 
\begin{theorem}{\cite[Corollary 78]{Cl03}}\label{CerednikDrinfeldModelTheorem} Let $p \mid D$. There is a Mumford curve $M_{(D,N)/\ZZ_p}$ whose components over $\FF_p$ are in bijection with two copies of $\Pic(D/p,N)$ interchanged by an involution $a_p$ of $M_{(D,N)}$, whose intersection points are in bijection with $\Pic(D/p,Np)$, and whose dual graph is bipartite. Moreover let $x$ be an intersection point between two components of $(M_{(D,N)})_{\FF_p}$ corresponding to $[I]\in \Pic(D/p,Np)$. Then the following holds: $$\widehat{ \OO_{M_{(D,N)},x} } \cong \ZZ_p[[X,Y]]/(XY - p^{\ell([I])}).$$ 

Most importantly, there is an isomorphism $\phi:X^D_0(N)_{\ZZ_{p^2}}\stackrel{\sim}{\to} (M_{(D,N)})_{\ZZ_{p^2}}$ such that $\phi w_p = a_p\phi$. If $\langle \sigma \rangle =\Aut_{\ZZ_p}(\ZZ_{p^2})$, this isomorphism realizes $X^D_0(N)_{\ZZ_p}$ as the \'etale quotient of $(M_{(D,N)})_{\ZZ_{p^2}}$ by the action of $\sigma a_p$.\end{theorem}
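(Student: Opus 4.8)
The plan, which is carried out in \cite[Corollary 78]{Cl03} (and for $N=1$ is \cite[Theorem 5.6]{JoLi}), is to deduce everything from the $p$-adic uniformization theorem of Cerednik and Drinfeld, treating the combinatorial description of the special fiber and the descent along $\ZZ_{p^2}/\ZZ_p$ as bookkeeping on top of it. Let $H=B_{D/p}$; since $D/p$ is a product of an odd number of primes, $H$ is definite, and since $p\nmid (D/p)N$ we may fix an isomorphism $H\otimes\QQ_p\cong M_2(\QQ_p)$ carrying a chosen Eichler order $\OO_H$ of level $N$ onto $M_2(\ZZ_p)$. Let $\Gamma\subset\PGL_2(\QQ_p)$ be the image of the group of $x\in\OO_H[1/p]^\times$ whose reduced norm has even $p$-adic valuation; $\Gamma$ is discrete and cocompact and preserves the $2$-coloring of the Bruhat--Tits tree of $\PGL_2(\QQ_p)$. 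The deep input is Drinfeld's theorem \cite{Dr76}: the functor of special formal modules for the maximal order of $B_D\otimes\QQ_p$ (which exists by Lemma \ref{pDividesDiscriminantLemma}) is represented by $\widehat{\Omega}\,\widehat{\otimes}_{\ZZ_p}\ZZ_{p^2}$, where $\widehat{\Omega}$ is the formal $\ZZ_p$-model of Drinfeld's $p$-adic upper half plane. Feeding this into the moduli description of $X^D_0(N)$ from Definition \ref{DefnCoarseModuliScheme} --- via Serre--Tate theory, Grothendieck algebraization, and the uniformization of the supersingular locus of the relevant abelian surfaces --- produces on one hand the $\ZZ_p$-curve $M_{(D,N)}$ algebraizing the quotient $\Gamma\backslash\widehat{\Omega}$, and on the other an isomorphism $\phi\colon X^D_0(N)_{\ZZ_{p^2}}\stackrel{\sim}{\to}(M_{(D,N)})_{\ZZ_{p^2}}$ which after $p$-adic completion is the quotient map $\widehat{\Omega}\,\widehat{\otimes}_{\ZZ_p}\ZZ_{p^2}\to\Gamma\backslash(\widehat{\Omega}\,\widehat{\otimes}_{\ZZ_p}\ZZ_{p^2})$.

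Granting this, I would read the shape of the special fiber off the tree. The special fiber of $\widehat{\Omega}$ is a union of copies of $\PP^1_{\FF_p}$ meeting transversally, with dual graph the $(p+1)$-regular Bruhat--Tits tree $\mathcal{T}$ of $\PGL_2(\QQ_p)$; $\mathcal{T}$ is bipartite, colored by the parity of the valuation of the index of a homothety class of lattices. Taking the quotient by $\Gamma$: the vertices of a fixed color form $\Gamma\backslash\PGL_2(\QQ_p)/\PGL_2(\ZZ_p)$, which under the standard correspondence between $\Gamma$-orbits of vertices and isomorphism classes of right $\OO_H$-ideals is $\Pic(D/p,N)$, so the two colors give two copies of $\Pic(D/p,N)$; the edges form $\Gamma\backslash\PGL_2(\QQ_p)/\mathcal{I}$ for $\mathcal{I}$ an Iwahori subgroup, which by the same correspondence is the class set for an Eichler order of level $Np$, that is, $\Pic(D/p,Np)$, and the colors of the endpoints of an edge make the dual graph bipartite. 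Each component of the special fiber of $M_{(D,N)}$ is a quotient of a $\PP^1_{\FF_p}$ by the finite stabilizer of the corresponding vertex, hence again $\PP^1_{\FF_p}$, and the crossings are $\FF_p$-rational ordinary double points, so $M_{(D,N)}$ is a Mumford curve. The Atkin--Lehner involution $w_p$ of Definition \ref{AtkinLehnerOO} is realized on the uniformization by an element of the normalizer of $\Gamma$ of odd reduced-norm-valuation, hence one reversing the coloring, so under $\phi$ it corresponds to the involution $a_p$ of $M_{(D,N)}$ interchanging the two copies of $\Pic(D/p,N)$; thus $\phi w_p=a_p\phi$.

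For the completed local rings at the nodes, recall that each crossing of $\widehat{\Omega}$ has completed local ring $\ZZ_p[[X,Y]]/(XY-p)$. For an edge $e$ of $\mathcal{T}$ lying over $[I]\in\Pic(D/p,Np)$, the stabilizer of $e$ in $\Gamma$ is $\OO_l(I)^\times/\{\pm1\}$, cyclic of order $\ell([I])$ (Definition \ref{PicardLength}); since $X^D_0(N)$ is a coarse moduli scheme, $-1$ acts trivially and this group acts faithfully on the local ring fixing each branch, hence by $(X,Y)\mapsto(\zeta X,\zeta\inv Y)$ for a primitive $\ell([I])$-th root of unity $\zeta$, and the ring of invariants is $\ZZ_p[[X^{\ell([I])},Y^{\ell([I])}]]$ with relation $X^{\ell([I])}Y^{\ell([I])}=(XY)^{\ell([I])}=p^{\ell([I])}$, that is, $\ZZ_p[[U,V]]/(UV-p^{\ell([I])})$ --- the asserted thickness formula. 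Finally, for the descent: Drinfeld's functor carries a Weil descent datum for $\ZZ_{p^2}/\ZZ_p$ which, transported to the uniformization, is the composite of the coefficient Frobenius $\sigma$ with the color-swap $a_p$ --- this is Cerednik's observation that $B_D\otimes\QQ_p$ and $H\otimes\QQ_p$ are twists of one another over the unramified quadratic extension, realized by $w_p$ --- whereas $M_{(D,N)}$ carries the untwisted $\ZZ_p$-structure coming from $\widehat{\Omega}$. Faithfully flat descent along the twisted datum then realizes $X^D_0(N)_{\ZZ_p}$ as the quotient of $(M_{(D,N)})_{\ZZ_{p^2}}$ by the free action of $\langle\sigma a_p\rangle$, with quotient map finite \'etale of degree two. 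I expect essentially all the difficulty to lie in the uniformization step: it rests on Drinfeld's representability theorem and on the delicate comparison of his moduli problem with the one defining $X^D_0(N)$ --- the passage through formal deformation theory, algebraization, and the global supersingular-locus argument --- after which the tree dictionary, the thickness computation, and the descent are all essentially formal.
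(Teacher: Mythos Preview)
The paper does not give its own proof of this theorem: it is stated with the citation \cite[Corollary 78]{Cl03} and used thereafter as a black box. Your sketch via Cerednik--Drinfeld $p$-adic uniformization --- Drinfeld's $\widehat{\Omega}$, the Bruhat--Tits tree dictionary for components and edges, the edge-stabilizer computation for the local-ring thickness, and the twisted Weil descent along $\ZZ_{p^2}/\ZZ_p$ --- is precisely the standard argument underlying that reference, so there is nothing in the paper to compare against.
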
 


\begin{remark} Thinking of the dual graph in this way yields an algorithm to compute dual graphs which the author has implemented in \texttt{MAGMA}\cite{Magma}. If we fix $\OO \subset \OO^D$, it is possible to effectively compute representatives $\{I_1,\ldots, I_a\}$ for $\Pic(\OO^D)$ and $\{J_1, \ldots, J_b\}$ for $\Pic(\OO)$.
The \emph{origin} of $J_j$ is the unique $I_i$ such that $J_j\OO^D \cong I_i$. Also, via the \texttt{PrimeIdeal} command, we may compute the unique two-sided integral ideal $\wp$ of $\OO$. Therefore we may compute $w_p[J_j] = [J_j \wp]$ as in Theorem \ref{ALEquivariance}. The terminus of $J_j$ is then the origin of $[J_j \wp]$, determining the graph. It is also possible to determine the action of $w_m$ on this graph for all $m\mid DN$. \end{remark}

For a ring $A$ of characteristic $p$, let $W(A)$ denote the Witt vectors of $A$ \cite[\S II.4]{Neukirch}. Recall that $N$ is always assumed to be square-free.

\begin{theorem}\label{DeligneRapoportModelTheorem} If $p\nmid DN$ and $T$ is an $\FF_p$-scheme then $X^D_0(N)_{/T}$ is smooth. If $p \mid N$ and $T$ is an $\FF_p$-scheme then there is a closed embedding $c: X^D_0(N/p)_T \to X^D_0(N)_T$. Let $S = \Spec(R)$ be a flat $\ZZ_{(p)}$-scheme. If $T$ is an $S$-scheme and if $\Phi: X^D_0(N)_T \to X^D_0(N/p)_T$ is the forgetful map $X^D_0(N) \to X^D_0(N/p)$, then $\Phi c$ is the identity and $\Phi w_p c$ is the Frobenius map $(A,\iota) \mapsto (A^{(p)},\Frob_{p,*}\iota)$ (see Definition \ref{DefnFrob}). Moreover, $X^D_0(N)_T$ fits into the following diagram

\[
\xymatrix{
X^D_0(N/p)_T\ar@{->}[rd]^c \ar@{->}[dd]_{id} & & X^D_0(N/p)_T\ar@{->}[ld]_{w_p c}\ar@{->}[dd]^{id}\\
& X^D_0(N)_T\ar@{->}[ld]_{\Phi} \ar@{->}[rd]^{\Phi w_p} & \\
X^D_0(N/p)_T & & X^D_0(N/p)_T
}
\]


If $t$ is a closed point of $T$ such that $k(t) = \overline{k(t)}$, the intersection of $c(X^D_0(N/p)(k(t)))$ and $w_p c(X^D_0(N/p)(k(t)))$ is precisely the set of superspecial points (in the sense of Definition \ref{DefnSuperspecial}), which are in bijection with $\Pic(Dp,N/p)$. Moreover, for each superspecial point $x$ over $t$ corresponding to $[I]\in\Pic(Dp,N/p)$, the completion of the strict henselization of the local ring of $X^D_0(N)$ at $x$ is isomorphic to $R\otimes W(\overline \FF_p)[[ X,Y]]/(XY - p^{\ell([I])})$.

\end{theorem}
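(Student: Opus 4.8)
The plan is to reduce the statement to the Deligne--Rapoport and Katz--Mazur theory of $X_0(p)$, transported to the quaternionic setting by Morita equivalence at $p$, and then to keep careful track of the passage from the moduli problem to the coarse scheme $X^D_0(N)$. When $p\mid N$, write $\OO^{(p)}\supset\OO$ for the Eichler order of level $N/p$.

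The case $p\nmid DN$ is soft. Here $\OO\otimes\ZZ_p$ is a maximal order in $M_2(\QQ_p)$, so for $(A,\iota)$ over an $\FF_p$-scheme Morita equivalence identifies $A[p^\infty]$, compatibly with the $\OO$-action, with the square of a one-dimensional $p$-divisible group $G$ of height $2$; by Serre--Tate the deformation functor of $(A,\iota)$ is that of $G$, which is formally smooth of relative dimension one over $W(\overline{\FF}_p)$, so $X^D_0(N)$ is smooth over $\ZZ_{(p)}$, hence over every $\FF_p$-scheme $T$.

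Now suppose $p\mid N$. Since $p\nmid D$, $\OO\otimes\ZZ_p$ is the standard level-$p$ Eichler order in $M_2(\ZZ_p)$, and Morita equivalence rewrites the optimal $\OO$-action on $A[p^\infty]$ as the datum of a degree-$p$ isogeny of one-dimensional $p$-divisible groups, that is, of a finite flat $\OO^{(p)}$-cyclic subgroup $C\subset A$; thus near $p$, $X^D_0(N)$ is the $\Gamma_0(p)$-moduli problem for abelian surfaces with $\OO^{(p)}$-action (``fake elliptic curves''), which for $D=1$ is literally $X_0(N)$. Let $\Phi$ be the morphism forgetting $C$; over an $\FF_p$-scheme $T$ set $c(A,\iota')=(A,\ker F)$, where $F\colon A\to A^{(p)}$ is relative Frobenius and $\ker F$ is automatically $\OO^{(p)}$-cyclic of the correct type, and let $w_p c$ be its Atkin--Lehner image, the $\Gamma_0(p)$-structure carried by $A^{(p)}$. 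Being sections of the separated morphisms $\Phi$ and $\Phi w_p$, the maps $c$ and $w_p c$ are closed immersions; unwinding the definitions and using $A/\ker F\cong A^{(p)}$ yields $\Phi c=\mathrm{id}$ and that $\Phi w_p c$ is the Frobenius morphism $(A,\iota')\mapsto(A^{(p)},\Frob_{p,*}\iota')$, and with $w_p^2=\mathrm{id}$ these give the commutativity of the displayed square. That $c$ and $w_p c$ together cover $X^D_0(N)_{\overline{\FF}_p}$ and meet precisely along the superspecial locus is the usual dichotomy: after Morita reduction an ordinary fake elliptic curve over $\overline{\FF}_p$ carries exactly the two cyclic subgroups $\ker F$ and the \'etale one, which coincide exactly when $A$ is supersingular; and the identification of that locus with $\Pic(Dp,N/p)$ is the quaternionic Deuring correspondence, a superspecial $(A,\iota')$ having $\End(A,\iota')$ an Eichler order of level $N/p$ in the definite algebra $B_{Dp}$, with such objects classified up to isomorphism by their right ideal classes.

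The crux, and the step I expect to cost the most, is the shape of the completed strict henselization at a superspecial point $x$ attached to $[I]\in\Pic(Dp,N/p)$. By Serre--Tate the deformation functor of $(A,\iota',C)$ over $W(\overline{\FF}_p)$ is that of its $p$-divisible group with the induced structure; at $x$ the subgroup $C$ is the \emph{unique} order-$p$ subgroup $\ker F=\ker V$, so $\Aut(A,\iota',C)=\Aut(A,\iota')=\OO_l(I)^\times$, with $-1$ acting trivially on deformations. Morita-reducing to a one-dimensional height-$2$ $p$-divisible group carrying a $\Gamma_0(p)$-structure and applying the analysis of such structures near supersingular points (Deligne--Rapoport, Katz--Mazur), the natural map to $\mathrm{Def}(A)\times\mathrm{Def}(A/C)\cong W(\overline{\FF}_p)[[X]]\times W(\overline{\FF}_p)[[Y]]$ is a closed immersion identifying the deformation ring of the moduli \emph{problem} with $W(\overline{\FF}_p)[[X,Y]]/(XY-p)$, the relation recording that the universal isogeny $A\to A/C$ has degree $p$. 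Since $X^D_0(N)$ is the \emph{coarse} scheme, its completed strict henselization at $x$ is the ring of invariants of this deformation ring under the induced action of $\OO_l(I)^\times/\{\pm1\}$ on $(X,Y)$, which preserves $XY=p$; carrying out this quotient produces $W(\overline{\FF}_p)[[X,Y]]/(XY-p^{\ell([I])})$, and base change along $\ZZ_{(p)}\to R$ and $\ZZ_p\to W(\overline{\FF}_p)$ gives the asserted ring. Two points are delicate. First, that the $\Gamma_0(p)$-structure cuts out exactly $XY=p$ between the two deformation factors: for $p=2,3$ this involves wild phenomena and I would invoke Katz--Mazur directly. Second --- and this I regard as the real obstacle --- that the coarse quotient raises the exponent to precisely $\ell([I])=\#(\OO_l(I)^\times/\{\pm1\})$: the linearization heuristic gives this only when $\OO_l(I)^\times/\{\pm1\}$ is cyclic, whereas for $X_0(2)$ at $p=2$ and $X_0(3)$ at $p=3$ this group is non-cyclic of order divisible by the residue characteristic, so the quotient is genuinely wild; the cleanest way to pin it down is by comparison with the Cerednik--Drinfeld local model of Theorem~\ref{CerednikDrinfeldModelTheorem}, whose intersection points carry exactly $\ZZ_p[[X,Y]]/(XY-p^{\ell([I])})$ with $\OO_l(I)^\times/\{\pm1\}$ as the governing automorphism group.
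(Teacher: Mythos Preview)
The paper does not actually prove this theorem: its proof consists of two citations, to Ribet's bimodules for the bijection of superspecial points with $\Pic(Dp,N/p)$, and to Helm for everything else (the structure of the special fiber and the completed local rings). Your proposal is therefore not a variant of the paper's argument but an attempt to reconstruct Helm's, and as such it is largely on target: the Morita reduction to a $\Gamma_0(p)$-structure on fake elliptic curves, the definition of $c$ via $\ker F$, the Serre--Tate reduction to $p$-divisible groups, and the passage from the stack-level ring $W[[X,Y]]/(XY-p)$ to the coarse ring by taking $\OO_l(I)^\times/\{\pm1\}$-invariants are exactly the ingredients Helm assembles.

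Your two flagged difficulties are real, and you are right to be uneasy about the second. The exponent computation via linearization is clean only when the automorphism group acts through a cyclic group of order prime to $p$, which is exactly what fails in the exceptional cases you name. Your proposed fix, however, does not work: Theorem~\ref{CerednikDrinfeldModelTheorem} concerns $X^{D'}_0(N')$ with $p\mid D'$, which is a \emph{different} curve from the one under consideration (same product $DN$, but $p$ on the other side). Ribet's bimodule interchange matches the combinatorics of the two special fibers but does not furnish an isomorphism of local rings; you cannot read the local structure of one off the other. The honest route is to carry out the invariant-theory computation directly, and Helm does this (his Theorem~10.3) by working systematically with the Katz--Mazur machinery, which handles the wild cases uniformly. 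If you want a self-contained argument, that is where the remaining work lies; otherwise, cite Helm as the paper does.
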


\begin{proof} The bijection between superspecial points and $\Pic(Dp,N/p)$ is Theorem \ref{RibetBimodulesTheorem}. The remainder of the result in the case of $\ZZ_{(p)}$ was first written down by Helm \cite[Theorem 10.3]{HelmPublished}.\end{proof} 

\begin{lemma}{\cite[Theorem 1.1]{MolinaBimodules}}\label{ComponentsIntersectionPointsLemma} The components and singular points of the $\overline\FF_p$ special fiber can be put into the following $W$-equivariant bijections.

\begin{center}\begin{tabular}{|c|c|c|}
              \hline & Components & Intersection Points \\
              \hline $p \mid D$ & $\Pic(D/p,N) \coprod \Pic(D/p,N)$ & $\Pic(D/p,Np)$ \\
             \hline $p \mid N$ & $\Pic(D,N/p) \coprod \Pic(D,N/p)$ & $\Pic(Dp,N/p)$ \\
              \hline
             \end{tabular}\end{center}
             
If $p\mid D$, the bijection of a set of components with $\Pic(D/p,N)$ is $W/\langle w_p\rangle$-equivariant with $w_p$ interchanging each. If $p\nmid DN$, the superspecial points of $X^D_0(N)_{\overline\FF_p}$ can be put into $W$-equivariant bijection with $\Pic(Dp,N)$ via the embedding $c:X^D_0(N)_{\overline\FF_p} \to X^D_0(Np)_{\overline\FF_p}$.
\end{lemma}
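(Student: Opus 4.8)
The plan is to deduce the three rows of the table one case at a time from the integral models already recorded in the paper, and then to promote the resulting set bijections to $W$-equivariant ones; the bijections of underlying sets will be essentially immediate, so the substance of the argument is the equivariance.

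First, suppose $p\mid D$. I would invoke Theorem \ref{CerednikDrinfeldModelTheorem}, which supplies a $\ZZ_{p^2}$-isomorphism $\phi: X^D_0(N)_{\ZZ_{p^2}}\stackrel{\sim}{\to}(M_{(D,N)})_{\ZZ_{p^2}}$ with $\phi w_p=a_p\phi$, where the special fiber of the Mumford curve $M_{(D,N)}$ has components forming two copies of $\Pic(D/p,N)$ interchanged by $a_p$ and intersection points in bijection with $\Pic(D/p,Np)$. Base changing to $\overline\FF_p$ yields the first row of the table and shows that $w_p$ swaps the two copies of $\Pic(D/p,N)$. For the residual action I would write each $w_m$ with $m\mid DN$ as $w_{m'}$ or as $w_{m'}w_p$ with $p\nmid m'$, identify the action of $w_{m'}$ on one copy of $\Pic(D/p,N)$ — and on $\Pic(D/p,Np)$ — with right multiplication by the two-sided ideal of norm $m'$ using Lemma \ref{EichTwoSided} and Theorem \ref{ALEquivariance}, and observe that $w_{m'}$ commutes with $a_p$ and respects the bipartition. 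This gives the claimed $W/\langle w_p\rangle$-equivariance on components.

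Next, suppose $p\mid N$. Here I would use Theorem \ref{DeligneRapoportModelTheorem}, which realizes $X^D_0(N)_{\overline\FF_p}$ as two copies of the smooth irreducible curve $X^D_0(N/p)_{\overline\FF_p}$ — embedded via $c$ and $w_pc$ — glued along the superspecial locus, the latter being in bijection with $\Pic(Dp,N/p)$ by Theorem \ref{RibetBimodulesTheorem}. Each copy of $X^D_0(N/p)_{\overline\FF_p}$ is a single irreducible component, matching $\Pic(D,N/p)\coprod\Pic(D,N/p)$, a two-element set since $B_D$ is indefinite (Lemma \ref{QuaternionIndefinite}). Now $w_p$ interchanges $c$ and $w_pc$, hence the two components, while the remaining $w_m$ with $p\nmid m$ descend from Atkin-Lehner operators on $X^D_0(N/p)$ commuting with $c$ and $w_pc$, acting on $\Pic(Dp,N/p)$ again by two-sided ideal multiplication. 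Finally, when $p\nmid DN$ the curve $X^D_0(N)_{\overline\FF_p}$ is smooth and, by Theorem \ref{DeligneRapoportModelTheorem}, $c$ identifies its superspecial points with the intersection points of the special fiber of $X^D_0(Np)$; applying the $p\mid N$ case at level $Np$ then gives a bijection of these with $\Pic(Dp,(Np)/p)=\Pic(Dp,N)$, and $c$ intertwines $w_m$ for $m\mid DN$ on the two sides.

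The hard part will be the equivariance bookkeeping. The operators $w_m$ are defined moduli-theoretically, by conjugation by a generator $\beta_m$ of the two-sided $\OO$-ideal of norm $m$ (Definition \ref{AtkinLehnerOO}), whereas on the combinatorial side the sets $\Pic(\cdot,\cdot)$ carry the action of the abelian group of two-sided ideals by right multiplication (Lemma \ref{EichTwoSided}). One must verify that these two descriptions agree under the bimodule bijections of Ribet and Molina and under the isomorphism $\phi$ of Theorem \ref{CerednikDrinfeldModelTheorem}, and that all of it is compatible with the degeneracy map $c$ and with the graph-involution $a_p$ on $M_{(D,N)}$; this is precisely what Theorem \ref{ALEquivariance} records. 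Granting that, the remainder is routine unwinding of the cited models.
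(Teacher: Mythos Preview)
The paper does not supply its own proof of this lemma: it is stated with the citation \cite[Theorem 1.1]{MolinaBimodules} and no \texttt{proof} environment follows. So there is nothing to compare against in the strict sense; the author simply imports the result from Molina.

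Your proposal is a reasonable reconstruction from the structural theorems the paper does quote (Theorems \ref{CerednikDrinfeldModelTheorem}, \ref{DeligneRapoportModelTheorem}, \ref{RibetBimodulesTheorem}, \ref{ALEquivariance}), and on the level of underlying sets it is correct. One caution on the equivariance in the $p\mid D$ case: Theorem \ref{ALEquivariance} concerns the bimodule/Picard dictionary for \emph{superspecial} surfaces, which here governs the intersection points (indexed by $\Pic(D/p,Np)$). The components, indexed by two copies of $\Pic(D/p,N)$, arise instead from the $p$-adic uniformization picture underlying Theorem \ref{CerednikDrinfeldModelTheorem}, and the identification of the $W/\langle w_p\rangle$-action there with right multiplication by two-sided ideals is not literally what \ref{ALEquivariance} asserts. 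It is true, but it lives in the Cerednik--Drinfeld side of Molina's paper rather than the Ribet-bimodule side; invoking \ref{ALEquivariance} for the components conflates two distinct bijections. If you want a self-contained argument you would need to trace the Atkin--Lehner action through the uniformization explicitly (this is part of what \cite{MolinaBimodules} does), rather than citing \ref{ALEquivariance} uniformly.
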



\subsection{Superspecial surfaces}\label{superspecialsection}

Fix a prime number $p$ and a maximal order $\mathcal S$ in the quaternion algebra $B_p$ over $\QQ$ ramified precisely at $p$ and $\infty$. By a theorem of Deuring, there is a supersingular elliptic curve $E$ over the algebraic closure $\FF$ of $\FF_{p}$ such that $\End_\FF(E) \cong \mathcal S$ \cite[p.23]{RibetBimodules}. 

\begin{definition}\label{DefnSupersingular} Fix $E_{/\FF}$, a \emph{supersingular} elliptic curve with $\End_\FF(E) \cong \mathcal S$. We say that an abelian variety $A_{/\FF}$ is supersingular when there is an isogeny $A \to E^{\dim(A)}$.\end{definition}

Note that if $E'_\FF$ is supersingular then $E$ is isogenous to $E'$ so the above definition does not depend on the choice of $E$.

\begin{lemma}{\cite[Theorem 68]{Cl03}, \cite[Lemma 4.1]{RibetBimodules}}\label{ordinaryorsupersingular} If $A_{/\FF_q}$ is an abelian surface over a finite field and $B_D \hookrightarrow \End^0_{\FF_q}(A)$, $A$ is isogenous over $\FF_q$ to the square of an elliptic curve $(E_0)_{\FF_q}$. Moreover if $p \mid D$ this elliptic curve must be supersingular.\end{lemma}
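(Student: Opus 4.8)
The plan is to pin down the structure of $\End^0_{\FF_q}(A)$ by Poincaré complete reducibility, exploiting that $B_D$ is a four-dimensional \emph{simple} $\QQ$-algebra, and then to read off both conclusions. First I would write $A$ up to isogeny as $\prod_i A_i^{n_i}$ with the $A_i$ simple and pairwise nonisogenous, so that $\End^0_{\FF_q}(A)\cong\prod_i M_{n_i}(\End^0_{\FF_q}(A_i))$. Since $B_D$ is simple, its composite with the projection to some factor $M_{n_0}(\End^0_{\FF_q}(A_0))$ must be injective (the kernel is a two-sided ideal of $B_D$, and it is not all of $B_D$), so $B_D$ embeds into $M_{n_0}(\End^0_{\FF_q}(A_0))$ for a simple $A_0$ occurring in $A$, with $n_0\dim A_0\le\dim A=2$. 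As $n_0\ge1$, the options are $(n_0,\dim A_0)\in\{(2,1),(1,1),(1,2)\}$, and the first of these says exactly that $A\sim A_0^2$ with $A_0$ an elliptic curve, i.e. the assertion with $E_0:=A_0$. It remains to exclude the other two.

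For $(n_0,\dim A_0)=(1,1)$: here $B_D\hookrightarrow\End^0_{\FF_q}(E_0)$ for an elliptic curve $E_0$. But $\dim_\QQ\End^0_{\FF_q}(E_0)\le 4$, with equality only when the Frobenius of $E_0$ is rational, in which case $\End^0_{\FF_q}(E_0)\cong B_p$, the quaternion algebra ramified precisely at $p$ and $\infty$; since $\dim_\QQ B_D=4$ this would force $B_D\cong B_p$, which is absurd because $\disc(B_D)=D$ is a product of an even number of primes, so $B_D$ is split at $\infty$ while $B_p$ is not. (If instead $A\sim A_1\times A_2$ with nonisogenous elliptic $A_i$, the same contradiction applies to whichever factor receives $B_D$ injectively.) The case $(n_0,\dim A_0)=(1,2)$, where $A$ is itself simple, is where the real content lies, and I expect it to be the main obstacle.

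In that case $\End^0_{\FF_q}(A)$ is a division algebra containing the noncommutative algebra $B_D$. I would finish using the classification of abelian surfaces over $\FF:=\overline{\FF_p}$: an abelian surface over $\FF$ is ordinary or supersingular; a simple ordinary one has endomorphism algebra a quartic CM field, which is commutative and cannot contain $B_D$; and a supersingular abelian surface is isogenous over $\FF$ to the square of a supersingular elliptic curve, hence is not simple. So after base change to $\FF$ (which is harmless for the conclusion) $A$ cannot remain simple, and combined with the reduction of the first paragraph applied over $\FF$ this forces $A\sim E_0^2$ with $E_0$ an elliptic curve. Over a general finite field the simple case can instead be treated via Honda–Tate: centrality of the Frobenius $\pi$ yields an embedding $B_D\otimes_\QQ\QQ(\pi)\hookrightarrow\End^0_{\FF_q}(A)$, and the relation $[\End^0_{\FF_q}(A):\QQ(\pi)]^{1/2}\cdot[\QQ(\pi):\QQ]=2\dim A$ together with a dimension count forces $\End^0_{\FF_q}(A)\cong B_D\otimes_\QQ\QQ(\pi)$ with $\QQ(\pi)$ imaginary quadratic, so that $A$ is supersingular; this Honda–Tate bookkeeping is the step I expect to require the most care.

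Finally the ``moreover'': assume $p\mid D$ and $A\sim E_0^2$, and I must show $E_0$ is supersingular. Then $B_D\hookrightarrow M_2(\End^0(E_0))$. If $\End^0(E_0)\cong B_p$ then $E_0$ is already supersingular; if $\End^0(E_0)=\QQ$ then $B_D\hookrightarrow M_2(\QQ)$ gives $B_D\cong M_2(\QQ)$, i.e. $D=1$, contradicting $p\mid D$. Otherwise $\End^0(E_0)=K$ is an imaginary quadratic field, and comparing $K$-dimensions in $B_D\otimes_\QQ K\hookrightarrow M_2(K)$ shows $B_D\otimes_\QQ K\cong M_2(K)$, so $K$ splits $B_D$. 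Since $p\mid D$, $B_D\otimes\QQ_p$ is the division quaternion algebra over $\QQ_p$, which is split by $K\otimes\QQ_p$ only if $K\otimes\QQ_p$ is a field, i.e. only if $p$ is inert or ramified in $K$; but that is precisely the condition for $E_0$ to be supersingular. Hence $E_0$ is supersingular, completing the plan.
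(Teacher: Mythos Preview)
The paper does not supply its own proof of this lemma; it simply cites \cite[Theorem~68]{Cl03} and \cite[Lemma~4.1]{RibetBimodules}. Your argument is the standard one and is correct for the version of the statement over $\overline{\FF_q}$, which is what those references establish and what the paper actually uses (the lemma is only ever invoked to say that a QM abelian surface over a finite field is ordinary or supersingular; see the proof of Lemma~\ref{RamifiedKeyLemma}).

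There is, however, a genuine issue with the phrase ``isogenous \emph{over $\FF_q$} to $E_0^2$'', and your Honda--Tate paragraph does not close it---nor can it, because that clause is too strong as written. Take $p\equiv 1\pmod 4$, $D=2p$, $q=p^{2}$, and the Weil $q$-number $\pi=ip$. Then $K:=\QQ(\pi)=\QQ(i)$, and the associated simple abelian variety $A/\FF_q$ has dimension $2$ with $\End^0_{\FF_q}(A)$ the quaternion division algebra over $K$ ramified exactly at the two primes above $p$. Since $2$ is ramified and $p$ is split in $\QQ(i)$, one checks that $B_{D}\otimes_{\QQ}K$ has the same local invariants, so $B_D\hookrightarrow\End^0_{\FF_q}(A)$ while $A$ is \emph{simple} over $\FF_q$. (Over $\FF_{q^2}$ the Frobenius becomes $-q\in\ZZ$ and $A$ does become isogenous to the square of a supersingular elliptic curve.) Your Honda--Tate bookkeeping correctly yields $\End^0_{\FF_q}(A)\cong B_D\otimes_\QQ\QQ(\pi)$; you should also say explicitly why $\QQ(\pi)$ cannot be real quadratic (indefiniteness of $B_D$ makes $B_D\otimes_\QQ\QQ(\sqrt p)$ split at both archimedean places, whereas Tate forces $\End^0(A)$ to be ramified there). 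But ``$A$ is supersingular'' does not preclude simplicity over $\FF_q$, so the argument cannot finish at that point---and the example shows it should not.

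In short: your plan proves exactly the right thing (isogeny to $E_0^2$ over the algebraic closure, with $E_0$ supersingular when $p\mid D$), and that suffices for every application in the paper; the lemma as printed slightly overstates what is true. One small wording point: ``an abelian surface over $\FF$ is ordinary or supersingular'' is literally false (consider $E_{\mathrm{ord}}\times E_{\mathrm{ss}}$); what you mean and use is only the simple case, which is fine. Your ``moreover'' argument is correct; the phrase ``precisely the condition'' for $E_0$ supersingular is a mild overstatement, but you only need the implication ($p$ nonsplit in $K$ $\Rightarrow$ $E_0$ supersingular), and that holds.
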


\begin{definition}\label{DefnSuperspecial} We say that an abelian surface $A_{/\FF}$ is \emph{superspecial} if $A\cong E_i \times E_j$ with $E_i, E_j$ supersingular elliptic curves over $\FF$.\end{definition}

\begin{lemma}{\cite[p. 21-22]{RibetBimodules}}\label{pnmidDsupersingular} Suppose that $A$ is a supersingular abelian $\OO$-surface over $\FF$ with $p\nmid D$. Then $A$ is superspecial.\end{lemma}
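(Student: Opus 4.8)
The plan is to pass to the $p$-divisible group $A[p^\infty]$, exploit that $p\nmid D$ so that the $\OO$-action is an action of a \emph{split} quaternion order, split $A[p^\infty]$ into two pieces of height two, and then invoke the standard characterization of superspecial abelian surfaces.

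Write $\iota\colon\OO\hookrightarrow\End_\FF(A)$ for the embedding making $A$ an $\OO$-surface. Tensoring with $\ZZ_p$ and passing to $p$-divisible groups yields a ring embedding $\OO\otimes\ZZ_p\hookrightarrow\End(A[p^\infty])$. Since $p\nmid D=\disc(B_D)$, the algebra $B_D\otimes\QQ_p$ is split, and $\OO\otimes\ZZ_p$ is an Eichler $\ZZ_p$-order in it of level dividing $p$ (as $N$ is squarefree), by \cite[Lemme II.2.4]{Vigneras}. In particular $\OO\otimes\ZZ_p$ contains orthogonal idempotents $e_1,e_2$ with $e_1+e_2=1$, together with elements $x\in e_1(\OO\otimes\ZZ_p)e_2$ and $y\in e_2(\OO\otimes\ZZ_p)e_1$ satisfying $xy=p^{n}e_1$ and $yx=p^{n}e_2$ for some $n\in\{0,1\}$. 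Splitting the idempotent $e_1$ decomposes $A[p^\infty]\cong G_1\oplus G_2$ with $G_i=e_i\cdot A[p^\infty]$, and $x,y$ induce homomorphisms $G_2\to G_1$ and $G_1\to G_2$ whose two composites are multiplication by $p^n$, hence isogenies. Thus $G_1$ and $G_2$ are isogenous, and since $A[p^\infty]$ has height $2\dim A=4$ and dimension $\dim A=2$, each $G_i$ has height $2$ and dimension $1$.

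Because $A$ is supersingular it is isogenous to $E^{\,2}$ for a supersingular elliptic curve $E$, so $A[p^\infty]$ is isoclinic of slope $1/2$ (the Newton polygon being an isogeny invariant) and therefore connected; hence each summand $G_i$ is a connected $p$-divisible group of height $2$ and dimension $1$ over the algebraically closed field $\FF$. There is a unique such group up to isomorphism, namely $E[p^\infty]$. Thus $A[p^\infty]\cong E[p^\infty]\oplus E[p^\infty]$, so $A[p]\cong E[p]\oplus E[p]$; since $E[p]$ contains $\alpha_p=\ker(F_E)$, the $a$-number $a(A):=\dim_\FF\Hom(\alpha_p,A)$ is at least $2$. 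As always $a(A)\le\dim A=2$, we get $a(A)=\dim A$, and an abelian variety over $\FF$ whose $a$-number equals its dimension is superspecial; this is exactly the assertion, compare \cite[p.~21--22]{RibetBimodules}.

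The one substantial point is this last implication, passing from the isomorphism class of $A[p^\infty]$ (equivalently, from $a(A)=\dim A$) back to an honest splitting $A\cong E_i\times E_j$ of $A$ itself. This uses the structure theory of superspecial abelian varieties — equivalently, the fact that the superspecial member of the isogeny class of $E^{\,2}$ is the unique one, corresponding to the self-dual lattice over the maximal order $\mathcal{S}$ — and it genuinely fails when $p\mid D$. Everything before it is bookkeeping with idempotents, heights, and slopes of $p$-divisible groups.
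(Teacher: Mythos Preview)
Your proof is correct and is essentially the argument Ribet gives on the pages cited: the paper offers no proof of its own beyond that citation. You use the idempotents in $\OO\otimes\ZZ_p$ (available because $p\nmid D$) to split $A[p^\infty]$ into two height-$2$ pieces, identify each with the supersingular formal group via the slope/Dieudonn\'e classification, and conclude with Oort's criterion $a(A)=\dim A$; this is exactly Ribet's line, with the minor generalization that you allow $\OO\otimes\ZZ_p$ to be a non-maximal Eichler order and correctly note the two summands are then a priori only isogenous (which still forces equal dimensions).
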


Note that if $A$ is supersingular, it need not be superspecial. When $A$ is ordinary, we have the following.

\begin{theorem}\label{OrdinaryCM} If $(A_{/k},\iota)$ is an ordinary QM-abelian surface over a finite field $k$, then there exist ordinary elliptic curves $E_0, E_0'$ over $k$ such that $A \cong E_0 \times E_0'$. If $m>1$ then $(A,\iota)$ is $w_m$-fixed (see Definition \ref{ALOOFixed}) if and only if $\End_k(E_0)\cong_k \End_k (E'_0)$ and $\End_k(E_0)$ is isomorphic to one of $\ZZ[\sqrt{-m}]$ or $\ZZ[\frac{1 + \sqrt{-m}}{2}]$. \end{theorem}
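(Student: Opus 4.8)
The plan is to exploit Lemma \ref{ordinaryorsupersingular}: since $(A,\iota)$ is a QM-abelian surface with $B_D \hookrightarrow \End^0_k(A)$, $A$ is $k$-isogenous to $E_0^2$ for an elliptic curve $E_0/k$, which is ordinary by hypothesis. I would first upgrade this isogeny statement to an actual product decomposition. The standard argument is that an ordinary abelian surface admitting an action of a maximal order in a quaternion algebra is, over a finite field, forced to split: work $\ell$-adically (or with the Dieudonn\'e module at $p$) to see that the polarization together with the quaternionic action makes $A$ isomorphic — not merely isogenous — to a product $E_0 \times E_0'$ of ordinary elliptic curves, after possibly adjusting within the isogeny class; since $\End^0_k(E_0)$ is an imaginary quadratic field $K$ and both factors are $K$-isogenous, one may further arrange $E_0, E_0'$ with the \emph{same} endomorphism ring if desired, but in general the decomposition just gives two ordinary curves. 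This is the part I expect to cite or sketch rather than prove from scratch; it is essentially Helm's/Ribet's analysis of ordinary points.

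Next I would translate the $w_m$-fixity condition into an assertion about $\End_k(A)$ and the quaternionic embedding. By Definition \ref{ALOOFixed}, $(A,\iota)$ is $w_m$-fixed exactly when there is an isomorphism $A \to A$ intertwining $\iota$ with $\iota' = \iota(\beta_m)^{-1}\iota(\cdot)\iota(\beta_m)$; equivalently, there is an element $\gamma \in \Aut_k(A)$ (or more precisely a suitable quasi-isogeny normalizing the order) realizing the conjugation by $\beta_m$ inside $\End_k(A)$. Writing $A = E_0\times E_0'$, we have $\End_k(A) = \begin{pmatrix} \End_k(E_0) & \Hom_k(E_0',E_0) \\ \Hom_k(E_0,E_0') & \End_k(E_0')\end{pmatrix}$, and the quaternion order $\iota(\OO)$ sits inside $M_2(K)$ where $K = \End^0_k(E_0)$. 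The two-sided ideal generated by $\beta_m$ has norm $m$, and $\beta_m^2$ is $m$ times a unit; so $\iota(\beta_m)$ is, up to a unit, a square root of $-m$ times something — concretely, conjugation by $\beta_m$ being realized by an automorphism of $E_0\times E_0'$ forces an embedding of $\ZZ[\sqrt{-m}]$ or $\ZZ[\tfrac{1+\sqrt{-m}}{2}]$ (the maximal order above it, when $m\equiv 3\bmod 4$) into $\End_k(E_0)$, and symmetrically into $\End_k(E_0')$, and moreover forces $\End_k(E_0)\cong_k\End_k(E_0')$ since the off-diagonal Hom-modules must be nonzero and compatible with the action.

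The cleanest way to organize the equivalence: $w_m$ acts on the set of QM-surfaces, and on the ordinary locus it can be read off the CM data. In one direction, if $\End_k(E_0)\cong_k\End_k(E_0')$ is one of $\ZZ[\sqrt{-m}]$ or $\ZZ[\tfrac{1+\sqrt{-m}}{2}]$, then $K$ contains $\sqrt{-m}$, one builds an explicit $\gamma\in M_2(K)$ of norm-$m$ two-sided type (e.g. an antidiagonal or diagonal matrix with entries square roots of $-m$) lying in $\iota(\OO)$, hence $\iota(\beta_m)$ differs from it by a unit and conjugation by it is induced by an honest automorphism of $A=E_0\times E_0'$, giving $w_m$-fixity. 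Conversely, $w_m$-fixity produces such a $\gamma$, whose characteristic polynomial over $\QQ$ is $X^2+m$ (since $\N(\beta_m)=m$ and $\tr(\beta_m)=0$ for the norm-$m$ two-sided ideal generator, as $\beta_m^2=\pm m$), so $\gamma$ generates $\ZZ[\sqrt{-m}]$ inside $\End_k(A)$; tracking $\gamma$ through the matrix decomposition pins down $\End_k(E_0)$ and $\End_k(E_0')$ as claimed and forces them to be $k$-isomorphic. The main obstacle I anticipate is the ordinary splitting $A\cong E_0\times E_0'$ over $k$ (not just up to isogeny) with control of the endomorphism rings of the factors — handling the cases $m\equiv 1$ versus $m\equiv 3\bmod 4$ correctly (the maximal order $\ZZ[\tfrac{1+\sqrt{-m}}{2}]$ only occurs in the latter) and making sure the conductor of $\End_k(E_0)$ at primes dividing the off-diagonal Hom is exactly right; everything after that is linear algebra over $M_2(K)$ with the two-sided ideal structure from Lemma \ref{EichTwoSided}.
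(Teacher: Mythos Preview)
Your approach is genuinely different from the paper's, and the difference matters.

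The paper does not work directly with the matrix decomposition of $\End_k(A)$ in characteristic $p$. Instead, after citing Kani for the initial splitting $A\cong E_0\times E_0'$, it proves the $w_m$-fixity statement by \emph{lifting to characteristic zero}: the Serre--Tate canonical lift $\CA/W(k)$ carries the full endomorphism ring, so $(\CA_\CC,f\circ\iota)$ is a $w_m$-fixed QM-surface over~$\CC$. Over $\CC$ the structure of Atkin--Lehner fixed points is classical: there is an optimal embedding $\varphi:R'\hookrightarrow\OO$ with $R'\supset\ZZ[\sqrt{-m}]$ and a decomposition $\CA_\CC\cong E_1\times E_2$ where the $E_i$ have CM by exactly $R'$ and $\iota$ is induced by $\varphi$. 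This compatible decomposition is then descended to $W(k)$ (via a lemma of Kani and the N\'eron model), and ordinary reduction preserves the endomorphism ring, pinning down $\End_k$ of each factor as $R'\in\{\ZZ[\sqrt{-m}],\ZZ[\tfrac{1+\sqrt{-m}}{2}]\}$.

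Your direct-in-characteristic-$p$ argument is attractive but has a real gap at the point you flag yourself. You obtain the commutant of $\iota(\OO)$ inside $\End_k(A)\subset M_2(K)$ as some order $R'$ in $K=\QQ(\sqrt{-m})$; that much is clean. But the initial Kani decomposition $A\cong E_0\times E_0'$ is in no way adapted to $\iota$, so there is no reason the diagonal $\End_k(E_0)$ should coincide with that commutant, and ``tracking $\gamma$ through the matrix decomposition'' does not by itself force $\End_k(E_0)\cong_k\End_k(E_0')\cong R'$. The paper's detour through $\CC$ is precisely what produces a product decomposition \emph{compatible with} $\iota$, after which the identification of $\End_k$ of the factors with $R'$ is automatic. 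If you want to avoid lifting, you would need an independent argument (e.g.\ via Deligne's equivalence for ordinary abelian varieties, or an explicit change of basis in $M_2(K)$ that diagonalizes the commutant) that replaces the given splitting by one aligned with $\iota$; as written, that step is missing.
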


\begin{proof} The first part of the statement is part of a more general theorem of Kani \cite[Theorem 2]{Kani}, who calls ordinary elliptic curves CM. For the second part, note that $(A_{/S},\iota)$ is $w_m$-fixed if and only if $R = \ZZ[\sqrt{-m}]$ (or $\ZZ[\zeta_4]$ if $m=2$) embeds into the commutant of $\iota(\OO)$ in $\End_S(A)$. 

Let $k$ be a finite field, $A_{/k}$ be ordinary, and $(A,\iota)$ be $w_m$-fixed. Also let $W(k)$ denote the Witt vectors of $k$ \cite[\S II.4]{Neukirch}, which in this case are just a finite \'etale extension of $\ZZ_p$. Then there is a canonical choice of an abelian scheme $\CA_{W(k)}$ with an isomorphism $f: \End_k(A) \stackrel{\sim}{\to} \End_{W(k)}(\CA)$ \cite[Theorem V.3.3]{Messing}. Therefore the Serre-Tate canonical lift $(\CA, f\circ \iota)$ is a QM-abelian surface. Therefore so is $\CA_{\CC}$ (the choice of embedding $W(k)\hookrightarrow \CC$ does not change the isomorphism class of $\CA_\CC$ \cite[7.Th\'eor\`eme]{VAO}), and there is an embedding of $R$ into $\End_{f(\iota(\OO))}(\CA_\CC)$. Then we may find both an optimal embedding $\varphi: R'\hookrightarrow \OO$ for some imaginary quadratic order $R'\supset R$ and an isomorphism $\CA_\CC \cong E_1 \times E_2$ where the $E_i$'s have CM by $R'$ and $f\circ \iota$ is given by $\varphi$ \cite[p. 6]{MolinaBimodules}.

Now let $K := W(k) \otimes \QQ$, which must therefore be a finite unramified extension of $\QQ_p$. We can then show that $\CA_K \cong E_1' \times E_2'$ where $E_i' \otimes \CC \cong E_i$ \cite[Lemma 60]{Kani}. Moreover, each $E_i'$ has CM by $R'$ since $\OO \hookrightarrow \End_K(\CA_K)$ and we have $\varphi : R'\hookrightarrow \OO$. Now, if $V$ is an abelian variety over $K$, let $\mathfrak{N}(V)$ denote its N\'eron model over $W(k)$ \cite[Definition I.2.1]{BLR}. It follows that since $\CA$ is an abelian scheme, it is the N\'eron model of its generic fiber \cite[Proposition I.2.8]{BLR}, and thus $\CA  \cong  \mathfrak{N}(\CA_K)  \cong  \mathfrak{N}(E_1' \times E_2') \cong \mathfrak{N}(E_1') \times \mathfrak{N}(E_2').$\end{proof} 


\begin{theorem}{\cite[Theorem 3.5]{Shioda}}Let  $E_{/\FF}$ be as in Definition \ref{DefnSupersingular} and let $A_{/\FF}$ be an abelian surface isomorphic to the product of any two supersingular elliptic curves. Then $A\cong E\times E$.\end{theorem}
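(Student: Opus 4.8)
The plan is to deduce this from the structure theory of supersingular abelian surfaces together with the arithmetic of the maximal order $\mathcal{S}$ in $B_p$. By hypothesis $A \cong E_1 \times E_2$ for supersingular elliptic curves $E_1, E_2$ over $\FF$; by Deuring's theorem every supersingular elliptic curve over $\FF$ is isogenous to the fixed curve $E$, and moreover (as noted after Definition \ref{DefnSupersingular}) any two supersingular elliptic curves over $\FF$ are isogenous to one another. So it suffices to upgrade ``isogenous'' to ``isomorphic'' at the level of the product surface.

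First I would recall that for supersingular elliptic curves over $\overline{\FF}_p$, the endomorphism rings $\End_\FF(E_i)$ are maximal orders in $B_p$, and there can be several conjugacy classes of such when the class number $h(B_p)$ exceeds one; the point of the theorem is that this ambiguity disappears after taking products. The key input is a theorem of Deligne and Ogus (and in the form cited here, Shioda) on the uniqueness of the superspecial principally polarizable abelian variety of a given dimension in positive characteristic: all products $E_i \times E_j$ of supersingular elliptic curves are mutually isomorphic as abelian varieties. Concretely, one shows $E_i \times E_j \cong E_k \times E_l$ whenever all four curves are supersingular; the cleanest route is via the fact that $E_i \times E_j$ carries a principal polarization and that the Hermitian lattice $(\Hom(E_i, E_j)$ with its norm form, or equivalently a rank-two lattice over the maximal order $\mathcal{S}$) has class number one — this is the lattice-theoretic incarnation of the statement, going back to work on the mass formula and unimodular Hermitian forms over quaternion orders.

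So the proof structure would be: (1) reduce to showing $E_1 \times E_2 \cong E \times E$ using that $E_1, E_2$ are each isogenous to $E$; (2) observe $\Hom_\FF(E_1, E)$ is a rank-one right $\End(E_1)$-module and a rank-one left $\mathcal{S}$-module, hence an invertible ideal, and similarly for $E_2$; (3) the two-dimensional piece $\Hom(E_1 \times E_2, E \times E)$ assembles these into a rank-two $\mathcal{S}$-lattice equipped with a quaternionic Hermitian form coming from the product polarization, and the classical result (the cited Shioda theorem, ultimately resting on the triviality of the relevant genus/class set) says this lattice represents an isomorphism, i.e. contains an element inducing an isomorphism of abelian surfaces; (4) conclude $A \cong E \times E$.

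The main obstacle is step (3): one must actually produce an \emph{isomorphism} rather than merely an isogeny, and this is exactly where positive characteristic is essential — the analogous statement over $\CC$ is false, since there products of non-isogenous CM elliptic curves are non-isomorphic, and even isogenous ones can differ. The mechanism that saves us is that over $\overline{\FF}_p$ the relevant Hermitian class set is a single class; rather than reprove this I would simply invoke the cited theorem of Shioda (\cite[Theorem 3.5]{Shioda}), which establishes precisely that any surface isomorphic to a product of two supersingular elliptic curves is isomorphic to $E \times E$. Thus the real content of the argument is the bookkeeping in steps (1)--(2) reducing our $A = E_1 \times E_2$ to that normal form, and the citation supplies the hard uniqueness fact.
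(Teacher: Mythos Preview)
The paper gives no proof of this statement: it is stated with a citation to \cite[Theorem 3.5]{Shioda} and used as a black box, with no argument supplied. So there is nothing in the paper to compare your proposal against.

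That said, your proposal is somewhat circular. Steps (1)--(2) are bookkeeping that does not actually reduce anything: the hypothesis already is that $A \cong E_1 \times E_2$ with $E_1,E_2$ supersingular, so there is no ``reduction'' to perform. The entire content is step (3), and there you explicitly decline to prove it and instead invoke the very theorem you are meant to be proving. If the intent is to give an independent proof, you would need to actually establish the class-number-one statement for rank-two unimodular Hermitian $\mathcal{S}$-lattices (or equivalently that every right $\mathcal{S}$-ideal $I$ satisfies $I \oplus \mathcal{S} \cong \mathcal{S}^2$ as right $\mathcal{S}$-modules, which follows from stable freeness of projective modules over maximal orders together with a cancellation argument). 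Your sketch correctly identifies this as the heart of the matter but does not supply it.

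Since the paper itself is content to cite Shioda without proof, your proposal --- which amounts to ``cite Shioda'' with some motivational context --- is adequate for the paper's purposes, but it is not an independent proof.
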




Recall that $\mathcal S$ is a maximal order in $B_p$ and $p\mid D$. Recall also that an $(\OO,\mathcal S)$-bi-module is a left $\OO$-module $M$ which is also a right $\mathcal S$-module such that if $x\in \OO$, $y\in \mathcal S$, and $m\in M$, then $(xm)y = x(my)$. This implies that we have homomorphisms $\OO \to \End_{\mathcal S}(M)$ and $\mathcal S^{\mathrm{op}} \to \End_\OO(M)$. If both of these homomorphisms are optimal embeddings we say that $M$ is an \emph{optimal} $(\OO,\mathcal S)$ bi-module.

\begin{theorem}{\cite[p.38]{RibetBimodules}}\label{RibetBimodulesTheorem} Suppose that $\OO$ is an Eichler order of square-free level $N$ in an indefinite quaternion algebra $B$ of discriminant $D$ with $(D,N)=1$. There is a bijection between the following sets.

\begin{itemize}

\item superspecial $\OO$-abelian surfaces $(A,\iota)_{/\FF}$ up to isomorphism

\item $\ZZ$-rank 8 optimal $(\OO,\mathcal S)$ bi-modules up to isomorphism

\end{itemize}

\end{theorem}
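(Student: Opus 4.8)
The plan is to identify \emph{both} sides of the claimed bijection with a single third set, the set of optimal embeddings $\iota\colon\OO\hookrightarrow M_2(\mathcal{S})$ taken up to conjugacy by $\GL_2(\mathcal{S})$. Two structural inputs make this work: the theorem of Shioda quoted just above, which controls the abelian-variety side, and a cancellation theorem of Eichler--Jacobinski type, which controls the bimodule side.

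First I would treat the abelian-variety side. By Shioda's theorem a superspecial abelian surface over $\FF$ is isomorphic to $E^2$; since $\mathcal{S}=\End_\FF(E)$ we get $\End_\FF(E^2)\cong M_2(\mathcal{S})$ and $\Aut_\FF(E^2)\cong\GL_2(\mathcal{S})$. Thus a superspecial $\OO$-abelian surface is, after choosing an isomorphism $A\cong E^2$, a pair $(E^2,\iota)$ with $\iota\colon\OO\hookrightarrow M_2(\mathcal{S})$ an optimal embedding (and when $p\nmid DN$ the ``special/mixed'' condition of Definition~\ref{DefnCoarseModuliScheme} imposes nothing further); the choice of isomorphism is exactly the ambiguity of $\GL_2(\mathcal{S})$-conjugacy, and an isomorphism of $\OO$-abelian surfaces is precisely such a conjugacy. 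Hence the first set is in canonical bijection with $\{\text{optimal }\iota\colon\OO\hookrightarrow M_2(\mathcal{S})\}/\GL_2(\mathcal{S})$. Concretely this correspondence is realized by $(A,\iota)\mapsto\Hom_\FF(E,A)$, which is the object I will now match with the bimodule.

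Next the bimodule side. Let $M$ be a $\ZZ$-rank $8$ optimal $(\OO,\mathcal{S})$-bimodule. As a right $\mathcal{S}$-module it is torsion-free of $\mathcal{S}$-rank $2$, hence locally free because $\mathcal{S}$ is maximal; via Morita equivalence such modules correspond to locally free right $M_2(\mathcal{S})$-lattices of reduced rank one, and since $M_2(B_p)\otimes_\QQ\RR$ is not a division algebra, $M_2(B_p)$ satisfies the Eichler condition, so (using that $\QQ$ has class number one) the Eichler--Jacobinski cancellation theorem forces $M\cong\mathcal{S}^2$ as a right $\mathcal{S}$-module. Fixing such an isomorphism identifies $\End_{\mathcal{S}}(M)$ with $M_2(\mathcal{S})$ and turns the left $\OO$-action into a ring homomorphism $\iota\colon\OO\to M_2(\mathcal{S})$; it is injective since a nonzero kernel would give, after $\otimes\QQ$, a proper two-sided ideal of the simple algebra $B_D$, and optimality of the first structure map is exactly optimality of $\iota$. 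Optimality of the second structure map is automatic in this setting, since $\{\alpha\in B_p : M\alpha\subseteq M\}=\mathcal{S}$ once $M\cong\mathcal{S}^2$. Finally an isomorphism of bimodules is exactly an automorphism of the right $\mathcal{S}$-module $\mathcal{S}^2$ intertwining the two embeddings, i.e.\ a $\GL_2(\mathcal{S})$-conjugacy. So the second set too is in canonical bijection with $\{\text{optimal }\iota\colon\OO\hookrightarrow M_2(\mathcal{S})\}/\GL_2(\mathcal{S})$, and composing the two bijections proves the theorem. (The inverse map can also be described directly as $M\mapsto M\otimes_{\mathcal{S}}E$.)

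Granting Shioda's theorem, the main obstacle is the cancellation step $M\cong\mathcal{S}^2$: I would need to verify that the algebra $M_2(B_p)$ genuinely satisfies the Eichler condition and then invoke the correct form of Eichler--Jacobinski cancellation for locally free lattices of rank $\ge 2$ over $\mathcal{S}$, which is precisely where the totally-definite algebra $B_p$ (for which cancellation \emph{fails} in rank one) is rescued by passing to $2\times 2$ matrices. The remaining ingredients --- faithfulness of the $\OO$-action, the automatic optimality of the second structure map over the maximal order $\mathcal{S}$, and, most delicately, arranging that the Morita identification $\End_{\mathcal{S}}(\mathcal{S}^2)=M_2(\mathcal{S})$ is carried to the geometric identification $\End_\FF(E^2)=M_2(\mathcal{S})$ compatibly, so that the two $\GL_2(\mathcal{S})$-actions literally coincide --- are routine but should be done carefully with the Morita dictionary in hand.
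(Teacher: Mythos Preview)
The paper does not supply its own proof of this statement; it is simply quoted from Ribet \cite[p.38]{RibetBimodules}.  Your proposal is essentially Ribet's argument, and it is correct.  The functor $(A,\iota)\mapsto \Hom_\FF(E,A)$ with inverse $M\mapsto M\otimes_{\mathcal S}E$ is exactly the bijection Ribet writes down; the trivializations $A\cong E^2$ (Shioda) and $M\cong\mathcal S^2$ (cancellation) are what make the identification with $\{\text{optimal }\OO\hookrightarrow M_2(\mathcal S)\}/\GL_2(\mathcal S)$ go through on each side.  Your handling of the cancellation step is right: $M_2(B_p)\otimes\RR\cong M_2(\HH)$ is not a division algebra, so $M_2(\mathcal S)$ satisfies the Eichler condition and has trivial locally-free class set over $\QQ$; Morita then gives $M\cong\mathcal S^2$ for any rank-two locally free right $\mathcal S$-module, which is precisely the point at which the failure of cancellation for rank-one $\mathcal S$-modules (i.e.\ the nontriviality of $\Pic(\mathcal S)$) becomes irrelevant.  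The compatibility worry you raise at the end is not an issue: the explicit functor $\Hom_\FF(E,-)$ carries $E^2$ to $\mathcal S^2$ and intertwines the two $\GL_2(\mathcal S)$-actions tautologically, so there is no hidden identification to reconcile.
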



\begin{lemma}\label{ALRibetLemma} Let $q|DN$ and let $\mathfrak Q$ denote the unique two-sided integral ideal of norm $q$ in $\OO$. Under the bijection in Theorem \ref{RibetBimodulesTheorem}, the action of $w_q$ described in Definition \ref{AtkinLehnerOO} corresponds to the action $M \mapsto \mathfrak Q \otimes _\OO M$.\end{lemma}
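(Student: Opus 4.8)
The plan is to trace the Atkin--Lehner action through the dictionary of Theorem \ref{RibetBimodulesTheorem} and check that it matches tensoring on the left by the two-sided ideal $\mathfrak Q$. Recall that the bijection of Theorem \ref{RibetBimodulesTheorem} sends a superspecial $\OO$-abelian surface $(A,\iota)_{/\FF}$ to the $(\OO,\mathcal S)$-bimodule $M := \Hom_\FF(E, A)$ (with its natural left $\OO$-action via $\iota$ and right $\mathcal S$-action via $\End_\FF(E)$), or equivalently in Ribet's normalization to $\Hom_\FF(A, E)$; I will work with one fixed normalization throughout, say $M = \Hom_\FF(E,A)$. First I would recall from Definition \ref{AtkinLehnerOO} that $w_q$ sends $(A,\iota)$ to $(A,\iota')$ where $\iota'(\cdot) = \iota(\beta_q)\inv \iota(\cdot)\iota(\beta_q)$ and $\beta_q$ is a generator of $\mathfrak Q$; here one must be slightly careful since $\iota(\beta_q)\inv$ lies in $\End^0_\FF(A)$ rather than $\End_\FF(A)$, but conjugation by it still carries $\iota(\OO)$ isomorphically onto another optimal embedding of $\OO$, because $\mathfrak Q$ is a two-sided $\OO$-ideal and $\beta_q \OO \beta_q\inv = \OO$.

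The key computation is then to identify the bimodule attached to $(A,\iota')$. Under the old embedding $\iota$ the left $\OO$-module structure on $M = \Hom_\FF(E,A)$ is $x \cdot f = \iota(x)\circ f$; under $\iota'$ it becomes $x \ast f = \iota(\beta_q)\inv\iota(x)\iota(\beta_q)\circ f$. I would show that the map $f \mapsto \iota(\beta_q)\circ f$ is an isomorphism of right $\mathcal S$-modules from $(M, \ast)$ onto $(M, \cdot)$ that intertwines the two left actions, hence $(A,\iota')$ corresponds to the bimodule $M$ with left action twisted by the automorphism $x\mapsto \beta_q x\beta_q\inv$ of $\OO$. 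The remaining algebraic step is the standard identification of this ``twist by conjugation'' with the tensor product $\mathfrak Q\otimes_\OO M$: since $\mathfrak Q = \OO\beta_q = \beta_q\OO$ is free of rank one as a left $\OO$-module on the generator $\beta_q$, the map $\mathfrak Q\otimes_\OO M \to M$, $\beta_q x\otimes m \mapsto \beta_q x\beta_q\inv \cdot m$ (using that $\beta_q x\beta_q\inv\in\OO$) is a well-defined isomorphism of $(\OO,\mathcal S)$-bimodules onto the conjugation-twisted module, and optimality is preserved because $\mathfrak Q\otimes_\OO(-)$ is an equivalence on the category of such bimodules with inverse $\mathfrak Q\inv\otimes_\OO(-)$, and $\disc$ is unchanged.

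I would also note the compatibility check that makes the statement well-posed: a different choice of generator $\beta_q u$ (with $u\in\OO^\times$) changes $\iota'$ by an inner automorphism of $\iota(\OO)$, hence does not change the isomorphism class of $(A,\iota')$, matching the fact that $\mathfrak Q$ itself (not a choice of generator) is what appears in $\mathfrak Q\otimes_\OO M$. Finally, since $\mathfrak Q^2 = q\OO$ acts as multiplication by $q$ up to isomorphism, tensoring is an involution on isomorphism classes, consistent with $w_q^2 = \mathrm{id}$.

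The main obstacle I anticipate is bookkeeping around normalizations: which of $\Hom(E,A)$ or $\Hom(A,E)$ Ribet uses, whether his left $\OO$-action is by pre- or post-composition, and correspondingly whether the Atkin--Lehner twist comes out as left tensoring by $\mathfrak Q$ or by $\mathfrak Q\inv$ (these agree up to the harmless scalar $q$, but one should get the variance right). Once the conventions of Theorem \ref{RibetBimodulesTheorem} are pinned down, the argument is a short diagram chase; no deep input beyond Lemma \ref{EichTwoSided} (existence and uniqueness of $\mathfrak Q$, and $\mathfrak Q^2 = q\OO$) is needed.
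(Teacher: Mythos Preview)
Your approach is essentially the same as the paper's: identify $\mathfrak Q\otimes_\OO M$ with $\beta_q M$ (using $\mathfrak Q=\beta_q\OO=\OO\beta_q$) and observe that the induced left $\OO$-action on $\beta_q M$ is exactly the conjugated action $x\mapsto \beta_q^{-1}x\beta_q$ that defines $w_q$ in Definition~\ref{AtkinLehnerOO}. The paper's proof is just those two sentences; you have unpacked the same computation with more care about conventions.

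One small slip to fix: the map $f\mapsto \iota(\beta_q)\circ f$ is not an isomorphism of $(M,\ast)$ \emph{onto} $(M,\cdot)$, since $\iota(\beta_q)$ is an isogeny but not an automorphism; its image is the proper sublattice $\beta_q M\subset M$. What you actually get (and what you need) is an $(\OO,\mathcal S)$-bimodule isomorphism $(M,\ast)\xrightarrow{\sim}(\beta_q M,\cdot)\cong \mathfrak Q\otimes_\OO M$. Your second description, the map $\mathfrak Q\otimes_\OO M\to (M,\text{conjugation-twisted})$, is the correct inverse of this once the target is understood as $\beta_q M$ rather than $M$; the rest of your argument goes through unchanged.
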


\begin{proof}The bi-module $ \mathfrak Q \otimes _\OO M$ is isomorphic to $\beta_q M$ as an $(\OO, \mathcal S)$-bi-module since $\mathfrak Q = \beta_q\OO = \OO\beta_q$. Therefore to get an action of $\OO$ on $\beta_q M$, we must pre-compose by $\beta_q\inv$ and post-compose by $\beta_q$.\end{proof}

\begin{definition}Let $\OO,\mathcal S$ be Eichler orders in a quaternion algebra over a number field $K$. We say that two $(\OO,\mathcal S)$-bi-modules $M,N$ are \emph{locally isomorphic} if for all places $v$ of $K$, $M_v \cong N_v$ as $(\OO_v,\mathcal S_v)$-bi-modules.\end{definition}

\begin{remark} It is choosing a local isomorphism class of a bi-module that allows us to keep track of whether or not a surface $(A,\iota)$ is mixed \cite[p.39]{RibetBimodules}.\end{remark}

\begin{theorem}\label{ALEquivariance}Let $\OO,\mathcal S$ be as in Theorem \ref{RibetBimodulesTheorem} and fix an $(\OO,\mathcal S)$-bi-module $M$. Then $\Lambda := \End_{\OO,\mathcal S}(M)$ is an Eichler order in either $B_{Dp}$ if $p\nmid D$ or $B_{D/p}$ if $p\mid D$. Moreover, if we fix a bi-module $M$, there is a bijection between the following two sets 

\begin{itemize}

\item $(\OO,\mathcal S)$-bi-modules $N$ locally isomorphic to $M$ up to isomorphism and

\item Rank one projective right $\Lambda$ modules up to isomorphism.

\end{itemize}

Let $q\ne p$ be prime. This bijection sends the action described in Lemma \ref{ALRibetLemma} to the action $[I]\mapsto [I\mathfrak Q_\Lambda]$, where $\mathfrak Q_\Lambda$ is the unique two-sided ideal of norm $q$ of $\Lambda$.\end{theorem}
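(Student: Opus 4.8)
The plan is to reduce everything to local statements and then to a computation with two-sided ideals. First I would establish that $\Lambda := \End_{\OO,\mathcal S}(M)$ is an order of the asserted discriminant. Since $M$ has $\ZZ$-rank $8$ and is an optimal $(\OO,\mathcal S)$-bi-module, $M\otimes\QQ$ is a free rank-one module over $(B\otimes B_p^{\mathrm{op}})$ localized appropriately, so $\End_{\OO,\mathcal S}(M)\otimes\QQ$ is a quaternion $\QQ$-algebra; one identifies which one by working place-by-place. At a prime $\ell\nmid Dp N$, both $\OO_\ell$ and $\mathcal S_\ell$ are (Eichler orders in) matrix algebras, the bi-module is standard, and $\Lambda_\ell$ is an Eichler order of the expected level; at $\ell\mid N$ the Eichler level of $\OO$ contributes, and the key local input is Lemma \ref{pDividesDiscriminantLemma} together with the structure of optimal bi-modules at a ramified prime (from \cite{RibetBimodules}), which forces a ramification at $p$ when $p\nmid D$ and removes the ramification at $p$ when $p\mid D$. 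The upshot, via Lemma \ref{DiscriminantIndexLemma} comparing discriminants, is that $\Lambda$ is an Eichler order in $B_{Dp}$ or $B_{D/p}$ of level $N$ (resp. $N$), exactly as in Theorem \ref{RibetBimodulesTheorem} read from the other side.

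Next I would set up the bijection between bi-modules locally isomorphic to $M$ and rank-one projective right $\Lambda$-modules. The map is $N\mapsto \Hom_{\OO,\mathcal S}(M,N)$, which is a right $\Lambda$-module; since $N$ is locally isomorphic to $M$, for each place $v$ one has $\Hom_{\OO_v,\mathcal S_v}(M_v,N_v)\cong\Lambda_v$, so the $\Hom$ is rank-one projective. The inverse is $I\mapsto I\otimes_\Lambda M$. That these are mutually inverse is the usual Morita-type argument: $M$ is a faithfully balanced $(\,\text{something},\Lambda)$-bi-module after localizing, so $\Hom_{\OO,\mathcal S}(M,-)$ and $-\otimes_\Lambda M$ are inverse equivalences on the relevant categories of locally-free modules. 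This is essentially the content already used in \cite{RibetBimodules} and \cite{MolinaBimodules}; I would cite those for the hard analytic/arithmetic input and only spell out the module-theoretic formalism.

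Finally, the equivariance of the Atkin–Lehner action. By Lemma \ref{ALRibetLemma}, $w_q$ sends $N$ to $\mathfrak Q\otimes_\OO N$ where $\mathfrak Q$ is the two-sided integral $\OO$-ideal of norm $q$. Applying $\Hom_{\OO,\mathcal S}(M,-)$ and using $\mathfrak Q = \OO\beta_q = \beta_q\OO$, one gets $\Hom_{\OO,\mathcal S}(M,\beta_q N)\cong \beta_q\cdot\Hom_{\OO,\mathcal S}(M,N)$ as right $\Lambda$-modules up to the identification; the scaling by $\beta_q$ translates, under the identification of $\End_{\OO,\mathcal S}(M)$ with $\Lambda$, into right multiplication by a two-sided $\Lambda$-ideal of norm $q$. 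By Lemma \ref{EichTwoSided} there is a unique two-sided integral ideal of $\Lambda$ of norm $q$ (here one uses $q\ne p$, so $q\mid \disc(\Lambda)\cdot(\text{level})$ in the same way it divided $\disc(B)N$), and it must be $\mathfrak Q_\Lambda$. Hence the action is $[I]\mapsto[I\mathfrak Q_\Lambda]$. The main obstacle I anticipate is the first step: pinning down precisely that $\Lambda$ is an \emph{Eichler} order (not merely some order) of the stated discriminant at the primes dividing $DpN$, which requires the explicit local classification of optimal $(\OO_v,\mathcal S_v)$-bi-modules at ramified and at Eichler primes; everything after that is formal once the local isomorphism type of $M$ is fixed.
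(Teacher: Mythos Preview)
Your outline is correct and is essentially the argument of Ribet \cite[Theorem 2.3]{RibetBimodules} and Molina \cite[Remark 4.11]{MolinaBimodules}, which is all the paper actually does: it cites those two references and records that Molina's key computation is the identification $\Hom_{\OO,\mathcal S}(N,\mathfrak Q_\OO\otimes_\OO N)=\mathfrak Q_\Lambda$. So the approach matches.

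One step in your write-up should be tightened. In the equivariance paragraph you write $\Hom_{\OO,\mathcal S}(M,\beta_q N)\cong \beta_q\cdot\Hom_{\OO,\mathcal S}(M,N)$, but $\beta_q$ lives in $B_D$, not in $\Lambda=\End_{\OO,\mathcal S}(M)$, so ``$\beta_q\cdot\Hom$'' is not a priori a right $\Lambda$-submodule of anything; postcomposing $f:M\to N$ by left-multiplication-by-$\beta_q$ does not commute with the left $\OO$-action unless $\beta_q$ is central. The clean formulation, which is exactly the computation the paper attributes to Molina, is to show directly that the right $\Lambda$-module $\Hom_{\OO,\mathcal S}(M,\mathfrak Q_\OO\otimes_\OO M)$, sitting inside $\End_{\OO,\mathcal S}(M)\otimes\QQ$ via the isomorphism $\mathfrak Q_\OO\otimes_\OO M\otimes\QQ\cong M\otimes\QQ$, equals $\mathfrak Q_\Lambda$. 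This is a local statement (both sides are two-sided $\Lambda$-ideals of norm $q$, and Lemma \ref{EichTwoSided} gives uniqueness once you know the norm), and then for general $N$ one tensors: $\Hom_{\OO,\mathcal S}(M,\mathfrak Q_\OO\otimes_\OO N)\cong\Hom_{\OO,\mathcal S}(M,N)\otimes_\Lambda\mathfrak Q_\Lambda$. With that correction your argument goes through and coincides with the paper's cited proof.
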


\begin{proof} The bijection in the case where $\OO$ is a maximal order is a theorem of Ribet \cite[Theorem 2.3]{RibetBimodules}. The extension to Eichler orders is due to Molina \cite[Remark 4.11]{MolinaBimodules}. His proof depends on showing that $\Hom_{\OO,\mathcal S}(N,\mathfrak Q_\OO \otimes N)$ is $\mathfrak Q_\Lambda$.\end{proof}

\begin{definition} Retaining the notation of Theorem \ref{ALEquivariance}, the action $[I]\mapsto [I\mathfrak{Q}_\Lambda]$ will be referred to as $w_q$. Moreover if $m$ is the product of primes ramified in $\Lambda$, we define $w_m$ as the composition of all $w_q$ ranging over $q \mid m$.\end{definition}

\begin{corollary}\label{EmbeddingFixedPoint} Let $m > 1$. A superspecial $\OO$-abelian surface $(A,\iota)$ with corresponding bi-module $M$ is fixed under the action of $w_m$ if and only if there is an embedding of $\ZZ[\sqrt{-m}]$ $($or $\ZZ[\zeta_4]$ if $m=2)$ into $\Lambda = \End_{\OO,\mathcal S}(M)$.\end{corollary}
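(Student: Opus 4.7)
The strategy is to translate the $w_m$-fixed condition on the bimodule $M$ into a principality statement about the two-sided ideal $\mathfrak{Q}_\Lambda$, and then convert that into the existence of a suitable square root in $\Lambda$. By Theorem~\ref{ALEquivariance}, $M$ corresponds to the trivial rank-one right $\Lambda$-module $\Lambda$, and $w_m$ sends $[\Lambda]$ to $[\Lambda\mathfrak{Q}_\Lambda] = [\mathfrak{Q}_\Lambda]$, where $\mathfrak{Q}_\Lambda \subseteq \Lambda$ is the unique two-sided integral ideal of reduced norm $m$. Hence $(A,\iota)$ is $w_m$-fixed if and only if $\mathfrak{Q}_\Lambda$ is principal as a right $\Lambda$-ideal, i.e.\ $\mathfrak{Q}_\Lambda = \beta\Lambda = \Lambda\beta$ for some $\beta \in \Lambda$ with $\N(\beta) = m$ that normalizes $\Lambda$. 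The remaining task is to show that the existence of such a $\beta$ is equivalent to the stated embedding.

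For sufficiency, I take $\beta$ to be the image of $\sqrt{-m}$ (so $\beta^2 = -m$), except in the exceptional case $m=2$ where I take $\beta = 1 + i$ with $i$ the image of $\zeta_4$ (so $\beta^2 = 2i$, $\tr(\beta)=2$). In each case $\beta \in \Lambda$ with $\N(\beta) = m$, and I verify $\beta\Lambda = \mathfrak{Q}_\Lambda$ locally at every prime $q$. At $q \nmid m$ both sides equal $\Lambda_q$ since $\beta$ is a unit there. At $q \mid m$ ramified in the containing algebra, $\Lambda_q$ is the unique local maximal order and a reduced-norm valuation count shows $\beta$ is a uniformizer up to a unit, so $\beta\Lambda_q$ is the maximal ideal. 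At $q \mid m$ dividing the level, writing $\Lambda_q = \left(\begin{smallmatrix}\ZZ_q & \ZZ_q \\ q\ZZ_q & \ZZ_q\end{smallmatrix}\right)$ and parametrizing $\beta$, a direct matrix calculation (using that $q^2 \nmid m$ so that the off-diagonal entries of $\beta$ are $q$-adic units) shows $\beta\Lambda_q = \left(\begin{smallmatrix}q\ZZ_q & \ZZ_q \\ q\ZZ_q & q\ZZ_q\end{smallmatrix}\right) = \mathfrak{Q}_{\Lambda,q}$; an index comparison confirms equality.

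For necessity, suppose $\mathfrak{Q}_\Lambda = \alpha\Lambda$. Then $\alpha^2\Lambda = \mathfrak{Q}_\Lambda^2 = m\Lambda$, so $\alpha^2 = mu$ for some $u \in \Lambda^\times$; taking reduced norms gives $\N(u) = 1$. Because $\Lambda$ lies in a definite quaternion algebra over $\QQ$, the unit $u$ is a root of unity of order in $\{1,2,3,4,6\}$. Combining $\alpha^2 = mu$ with $\alpha^2 - \tr(\alpha)\alpha + m = 0$ yields $\tr(\alpha)\alpha = m(u+1)$, and taking reduced norms $\tr(\alpha)^2 = m\,\N(u+1) \in \ZZ$. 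A short case analysis finishes the proof: $u=1$ forces $4m$ to be a square and $u = \zeta_3$ forces $m$ to be a square, each contradicting that $m > 1$ is squarefree; $u = -1$ gives $\alpha^2 = -m$ directly, so $\ZZ[\sqrt{-m}] \hookrightarrow \Lambda$; $u = \zeta_6$ with $\N(\zeta_6+1) = 3$ forces $m = 3$ and places $\zeta_6 \in \Lambda^\times$, yielding $\sqrt{-3} = 2\zeta_6 - 1 \in \Lambda$; $u = \zeta_4$ with $\N(\zeta_4+1) = 2$ forces $m = 2$ and places $\zeta_4 \in \Lambda^\times$, yielding $\ZZ[\zeta_4] \hookrightarrow \Lambda$.

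The main obstacle is the necessity direction: the generator $\alpha$ of $\mathfrak{Q}_\Lambda$ need not square to $-m$ on the nose, nor can it always be adjusted to do so, so one cannot simply read off a square root of $-m$. The resolution is to exploit the diophantine identity $\tr(\alpha)^2 = m\,\N(u+1)$ together with the squarefreeness of $m$ to collapse the list of admissible roots of unity $u$, extracting the required embedding either from $\alpha$ itself (the generic case $u = -1$) or from the unit $u$ in the exceptional low-$m$ cases, which is precisely where $\ZZ[\zeta_4]$ or $\ZZ[\zeta_6] \supset \ZZ[\sqrt{-3}]$ must intervene.
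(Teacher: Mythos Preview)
Your proof is correct and follows essentially the same strategy as the paper: translate the $w_m$-fixed condition into principality of the two-sided ideal $\mathfrak{Q}_\Lambda$ via Theorem~\ref{ALEquivariance}, then analyze a generator $\alpha$ through the relation $\alpha^2 = mu$ with $u\in\Lambda^\times$. The paper packages the necessity case analysis slightly differently (it observes $u\in\QQ(\alpha)$, hence $u$ lies in an imaginary quadratic order, and cites Kurihara), whereas you derive the identity $\tr(\alpha)^2 = m\,\N(u+1)$ and exploit squarefreeness of $m$ directly; your local verification for sufficiency is also more explicit than the paper's terse treatment, but the substance is the same.
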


\begin{proof}By Theorem \ref{ALEquivariance}, $(A,\iota)$ is fixed by the action of $w_m$ if and only if $[\prod_{q \mid m} \mathfrak{Q}_\Lambda] = [1]$, which is to say if and only if the unique two-sided ideal of norm $m$ is principal. Therefore there is a fixed point if and only if there is an element $\gamma$ of $\End_{\OO,\mathcal S}(M)$ which can serve as the principal generator. That is, $\gamma^2\Lambda = m\Lambda$ so there is a unit $u$ of $\Lambda$ such that $\gamma^2 = um$. Therefore, $u \in \ZZ_F$ where $F = \QQ(\gamma)$, an imaginary quadratic extension of $\QQ$. Following Kurihara \cite[Proposition 4-4]{Kurihara}, $u \ne 1$ since $\Lambda$ is definite, $u^2 +1 =0$ can only happen if $m=2$, and $u^2 \pm u +1 =0 $ can only happen if $m=3$. This exhausts all possibilities since $\QQ(u) \subset F$. If $u^2 +1 = 0$ then $\ZZ[u] \cong \ZZ[\gamma]$ with $u \mapsto \gamma + 1$. If $u^2 \pm u +1 = 0$ then $\ZZ[u] \cong \ZZ[\gamma]$ with $u \mapsto \gamma \pm 1$.\end{proof}

This is of particular interest to us because of the following lemma.

\begin{lemma}\label{ALGaloisLemma} If $(A,\iota)$ is a superspecial abelian $\OO$-surface over $\FF$, then $w_p(A,\iota)$ (in the sense of Theorem \ref{ALEquivariance}) is its $\FF_{p^2}/\FF_p$-Galois conjugate. Equivalently, if $P:\Spec(\FF) \to X^D_0(N)$ corresponds to a superspecial abelian $\OO$-surface $(A,\iota)$ over $\FF$ and $\phi_1: \FF \to \FF$ is the $p$-th power map, the following diagram commutes. \[\xymatrix{
\Spec(\FF) \ar@{->}[r]^P \ar@{->}[d]^{\phi_1^*} & X^D_0(N) \ar@{->}[d]^{w_p} \\
\Spec(\FF) \ar@{->}[r]^P & X^D_0(N)
}\]

\end{lemma}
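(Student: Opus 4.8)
The plan is to read the statement off the model of an auxiliary Shimura curve in characteristic $p$. Write $\Lambda=\End_{\OO,\mathcal S}(M)$ for the Eichler order attached to $(A,\iota)$ as in Theorem~\ref{ALEquivariance}; by the definition following that theorem, $w_p$ in the sense of Theorem~\ref{ALEquivariance} is the operation $[I]\mapsto[I\mathfrak Q_\Lambda]$ on $\Pic(Dp,N)$, where $\mathfrak Q_\Lambda$ is the unique two-sided ideal of $\Lambda$ of norm $p$. In the case of interest, $p\nmid DN$, I would apply Theorem~\ref{DeligneRapoportModelTheorem} with $N$ replaced by $Np$, so that $p$ now divides the level of the ambient curve: it gives a closed immersion $c\colon X^D_0(N)_T\to X^D_0(Np)_T$ and a forgetful map $\Phi\colon X^D_0(Np)_T\to X^D_0(N)_T$ with $\Phi c=\mathrm{id}$, and it identifies $\Phi\,w_p\,c$ with the map $(A,\iota)\mapsto(A^{(p)},\Frob_{p,*}\iota)$, where now $w_p$ is the Atkin-Lehner involution of $X^D_0(Np)$. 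Over $T=\Spec\FF$ the image of $(A,\iota)$ under this last map is, by definition, the $\FF_{p^2}/\FF_p$-Galois conjugate $(A,\iota)^{\phi_1}$. So the Lemma --- and, as a reformulation, the commutativity of the displayed diagram --- will follow once we show that $w_p$ in the sense of Theorem~\ref{ALEquivariance} agrees with $\Phi\,w_p\,c$ on the set of superspecial points of $X^D_0(N)_{\FF}$.

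To establish that reduction, I would argue as follows. Recall (from Theorem~\ref{DeligneRapoportModelTheorem}, still with $N$ replaced by $Np$, over an algebraically closed residue field) that the singular points of $X^D_0(Np)_{\overline\FF_p}$ are exactly the points $c(x)$ with $x$ a superspecial point of $X^D_0(N)_{\overline\FF_p}$, and (from Lemma~\ref{ComponentsIntersectionPointsLemma}) that both these superspecial points and the singular points of $X^D_0(Np)_{\overline\FF_p}$ are put into $W$-equivariant bijection with $\Pic(Dp,N)$, compatibly with $c$. By Theorem~\ref{ALEquivariance} the Atkin-Lehner involution $w_p$ of $X^D_0(Np)$ acts on $\Pic(Dp,N)$ by $[I]\mapsto[I\mathfrak Q_\Lambda]$, which is exactly the operation denoted $w_p$ on superspecial points in the sense of Theorem~\ref{ALEquivariance}. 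Finally, since every singular point $c(x)$ lies on the component $c(X^D_0(N)_{\overline\FF_p})$, the point $w_p(c(x))$ is again of the form $c(x')$ with $x'$ superspecial, so $\Phi(w_p(c(x)))=\Phi(c(x'))=x'$; comparing labels in $\Pic(Dp,N)$ identifies $x'$ with $w_p(x)$ in the sense of Theorem~\ref{ALEquivariance}. This is the required identity. When $p\mid DN$, the argument is the same with Theorem~\ref{DeligneRapoportModelTheorem} used directly if $p\mid N$, and with the description of Theorem~\ref{CerednikDrinfeldModelTheorem} if $p\mid D$, where the relation $\phi w_p=a_p\phi$ and the realization of $X^D_0(N)_{\ZZ_p}$ as the \'etale quotient of $(M_{(D,N)})_{\ZZ_{p^2}}$ by $\sigma a_p$ replace the identity $\Phi w_p c=\Frob$.

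I expect the step needing the most care to be the bookkeeping in the second paragraph: one must check that the $\Pic(Dp,N)$-parametrization of the superspecial points of $X^D_0(N)$ coming from $c$ matches the $\Pic(Dp,N)$-parametrization of the singular locus of $X^D_0(Np)_{\overline\FF_p}$, and that the Atkin-Lehner action on that locus is the one of Theorem~\ref{ALEquivariance}. Both are consequences of the $W$-equivariance in Lemma~\ref{ComponentsIntersectionPointsLemma} together with Theorem~\ref{ALEquivariance}, but they must be applied consistently. As an alternative one can argue directly with bi-modules: by Lemma~\ref{pnmidDsupersingular} and the theorem of Shioda recalled above one may take $A=E\times E$ with $\End_\FF(E)\cong\mathcal S$, so the bi-module of $(A,\iota)$ is $M=\Hom_\FF(E,A)$ and that of $(A,\iota)^{\phi_1}$ is $M'=\Hom_\FF(E,A^{(p)})$; since $E^{(p)}=E/\ker F_E$ with $\ker F_E=E[\wp]$ for $\wp\subset\mathcal S$ the unique two-sided ideal of norm $p$, one obtains $\Hom_\FF(E,E^{(p)})\cong\wp$ as right $\mathcal S$-modules, hence $M'\cong M\otimes_{\mathcal S}\wp$, the bi-module incarnation of the $w_p$-action of Theorem~\ref{ALEquivariance}. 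In that approach the delicate point is to verify that this isomorphism intertwines the two left $\OO$-module structures, namely the one on $M'$ via $\Frob_{p,*}\iota$ and the one on $M$ via $\iota$.
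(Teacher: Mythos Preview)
Your proposal is correct and follows essentially the same route as the paper: split into the cases $p\mid D$, $p\mid N$, and $p\nmid DN$, handling the first via Theorem~\ref{CerednikDrinfeldModelTheorem}, the second via Theorem~\ref{DeligneRapoportModelTheorem} (together with Lemma~\ref{FrobeniusGaloisLemma}), and reducing the third to the second by the closed immersion $c\colon X^D_0(N)_{\FF}\to X^D_0(Np)_{\FF}$. The paper's proof is much terser on the reduction step, simply invoking $c$, whereas you spell out the bookkeeping (matching the two $\Pic(Dp,N)$-parametrizations and the two incarnations of $w_p$) and also sketch an alternative direct bi-module argument; both additions are helpful but not strictly necessary given the $W$-equivariance asserted in Lemma~\ref{ComponentsIntersectionPointsLemma}.
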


\begin{proof} If $p\mid D$, then for \emph{all} points $P:\Spec(\FF) \to X^D_0(N)$, the square of this Lemma commutes by Theorem \ref{CerednikDrinfeldModelTheorem}. If $p\mid N$, and $P:\Spec(\FF) \to X^D_0(N)$ corresponds to an abelian $\OO$-surface $(A_\FF,\iota)$ then by Theorem \ref{DeligneRapoportModelTheorem}, $w_p P$ corresponds to $(A^{(p)},\Frob_{p,*}\iota)$. By Lemma \ref{FrobeniusGaloisLemma}, this corresponds to the point $P\phi_1^*$. If $p\nmid DN$, we can reduce to the case $p \mid N$ via the embedding $c: X^D_0(N)_{\FF} \to X^D_0(Np)_{\FF}$.\end{proof}


\begin{definition} Let $(A,\iota)$ be a superspecial $\OO$-abelian surface over $\FF$ with corresponding bi-module $M$. The \emph{length} of $(A,\iota)$ is $\#(\End_{(\OO,\mathcal S)}(M)^\times/\pm 1)$. \end{definition}

Note that $\End_{(\OO,\mathcal S)}(M) \cong \End_\FF(A,\iota)$ \cite[Equation 3.5]{MolinaBimodules}. Therefore if $(A,\iota)$ corresponds to a point of $X^D_0(N)(\FF)$ then this definition agrees with Definition \ref{PicardLength}.

\begin{corollary}\label{SuperspecialFixedZetaSix} Let $(A,\iota)$ be a mixed superspecial $\OO$-abelian surface with corresponding bi-module $M$ and whose length is divisible by three. Let $N'$ be the level of $\OO' = \End_{(\OO,\mathcal S)}(M)$ and $D'$ the discriminant of $\OO' \otimes \QQ$. Then for all $p \mid D'$, $p=3$ or $p\equiv 2\bmod 3$, and for all $q \mid N'$, $q=3$ or $q\equiv 1\bmod 3$. Moreover, $(A,\iota)$ is fixed by $w_m$ if and only if $m=1,3,D'N'$ or $D'N'/3$ if $3\mid D'N'$. \end{corollary}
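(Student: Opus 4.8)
The plan is to transfer everything onto the definite Eichler order $\OO'=\End_{(\OO,\mathcal S)}(M)$, which by Theorem \ref{ALEquivariance} is an Eichler order of level $N'$ in the definite quaternion algebra $B'=B_{D'}$, and then argue purely in terms of optimal embeddings of imaginary quadratic orders, which is exactly the setting of Section \ref{SimEmb}.

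\emph{Congruence conditions.} The length of $(A,\iota)$ is $\#(\OO'^{\times}/\{\pm 1\})$, so if it is divisible by $3$ then $\OO'^{\times}/\{\pm 1\}$ has an element of order $3$ by Cauchy's theorem; lifting it gives a unit $u\in\OO'^{\times}$ with $u\notin\{\pm 1\}$ and $u^{3}=\pm 1$, hence $u$ has order $3$ or $6$ and $\ZZ[u]$ contains a copy of $R_{-3}=\ZZ[\zeta_{6}]$. Thus $R_{-3}\hookrightarrow\OO'$, and since $R_{-3}$ is the maximal order of $\QQ(\sqrt{-3})$ this embedding is optimal. Localizing it at each $p\mid D'$ forces the local division algebra $(B')_{p}$ to contain $R_{-3}\otimes\ZZ_{p}$, hence $\QQ(\sqrt{-3})\otimes\QQ_{p}$ to be a field, i.e. $\left(\frac{-3}{p}\right)\neq 1$; localizing at each $q\mid N'$ forces the standard Eichler order of level $q$ to contain $R_{-3}\otimes\ZZ_{q}$, which by the local embedding count used in the proof of Theorem \ref{EichlerEmbedding} requires $q$ not to be inert in $\QQ(\sqrt{-3})$, i.e. $\left(\frac{-3}{q}\right)\neq -1$. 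Rewriting these Kronecker symbols in terms of residues modulo $3$ gives exactly the stated conditions ($p=3$ or $p\equiv 2\bmod 3$ for $p\mid D'$; $q=3$ or $q\equiv 1\bmod 3$ for $q\mid N'$). Equivalently, one observes $h(-3)=f(-3)=1$ and reads off the non-vanishing of the local factors of $e_{D',N'}(-3)$.

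\emph{Atkin--Lehner fixed points.} By Corollary \ref{EmbeddingFixedPoint}, for $m\mid D'N'$ with $m>1$ the surface $(A,\iota)$ is $w_{m}$-fixed if and only if $\ZZ[\sqrt{-m}]$ — or $\ZZ[\zeta_{4}]$ when $m=2$ — embeds into $\OO'$, and $w_{1}$ fixes everything. For $m=3$ the chain $\ZZ[\sqrt{-3}]\subset\ZZ[\zeta_{6}]\hookrightarrow\OO'$ from the previous paragraph shows $(A,\iota)$ is always $w_{3}$-fixed. For $m\mid D'N'$ with $m\notin\{1,2,3\}$ I would feed the embedding $\ZZ[\zeta_{6}]\hookrightarrow\OO'$ into the Corollary immediately following Theorem \ref{zetasix}, which gives at once that $\ZZ[\sqrt{-m}]\hookrightarrow\OO'$ if and only if $m=D'N'$ or $m=D'N'/3$. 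Together with $m=1$ and $m=3$ this produces the whole list of values $m$ that fix $(A,\iota)$, except for the value $m=2$.

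\emph{The obstacle.} The value $m=2$ is the delicate one: there the criterion is $\ZZ[\zeta_{4}]\hookrightarrow\OO'$ rather than $\ZZ[\sqrt{-2}]\hookrightarrow\OO'$, and the simultaneous-embedding theorems of Section \ref{SimEmb} only pair $\ZZ[\zeta_{4}]$ or $\ZZ[\zeta_{6}]$ with some $\ZZ[\sqrt{-m}]$, never $\ZZ[\zeta_{4}]$ with $\ZZ[\zeta_{6}]$. I would handle it with a unit-group argument: if both $\ZZ[\zeta_{4}]$ and $\ZZ[\zeta_{6}]$ embed into $\OO'$ then $\OO'^{\times}$ contains elements of order $4$ and $6$, so $12\mid\#\OO'^{\times}$; by the classification of finite unit groups of orders in a definite quaternion $\QQ$-algebra — the only ones of order at least $12$ being the binary tetrahedral group (for the maximal order of $B_{2}$) and the dicyclic group of order $12$ (for the maximal order of $B_{3}$) — this forces $N'=1$ and $D'\in\{2,3\}$. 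Since $w_{2}$ is defined only when $2\mid D'N'$, only $(D',N')=(2,1)$ survives, and conversely the Hurwitz order does contain $\zeta_{4}$; hence $(A,\iota)$ is $w_{2}$-fixed exactly when $D'N'=2$, i.e. $m=D'N'$, consistent with the list. The one thing that still needs verifying is that the degenerate possibility $D'N'/3=2$ — which forces $(D',N')=(2,3)$, where a level-$3$ Eichler order admitting $\ZZ[\zeta_{6}]$ has unit group $C_{6}$ and so contains no $\zeta_{4}$ — does not actually arise for a mixed surface of length divisible by three; this compatibility check, not any embedding computation, is where I expect the real difficulty to sit.
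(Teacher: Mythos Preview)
Your approach is essentially the paper's: transfer to the definite Eichler order $\OO'$, extract the congruence conditions from an embedding $\ZZ[\zeta_6]\hookrightarrow\OO'$ via Theorem \ref{EichlerEmbedding}, and then invoke Theorem \ref{zetasix} (or its Corollary) for the simultaneous-embedding part. The paper's proof is in fact terser: it disposes of the exceptional pairs $(D',N')\in\{(2,1),(3,1)\}$ at the outset by citing Vign\'eras' classification of unit groups, and then applies Theorem \ref{zetasix} uniformly for \emph{all} $m\neq 3$, including $m=2$.

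Your ``obstacle'' rests on a misreading of Corollary \ref{EmbeddingFixedPoint}. You take the parenthetical ``(or $\ZZ[\zeta_4]$ if $m=2$)'' to mean that for $m=2$ the criterion is $\ZZ[\zeta_4]\hookrightarrow\OO'$ \emph{in place of} $\ZZ[\sqrt{-2}]\hookrightarrow\OO'$. But the proof of that Corollary shows the criterion is inclusive: the generator $\gamma$ of the two-sided ideal satisfies $\gamma^2=2u$ with $u$ a unit, and $u$ may be $-1$ (giving $\ZZ[\sqrt{-2}]$) \emph{or} a primitive fourth root (giving $\ZZ[\zeta_4]$). The paper uses this inclusive reading elsewhere --- see Lemma \ref{pdividesmpnem}, where both $e_{D/p,N}(-4)$ and $e_{D/p,N}(-8)$ are checked when $m/p=2$. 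With the correct reading, the case $(D',N')=(2,3)$, $m=2$ is already handled: the Corollary after Theorem \ref{zetasix} gives $\ZZ[\sqrt{-2}]\hookrightarrow\OO'$ precisely when $2=D'N'$ or $2=D'N'/3$, so $(A,\iota)$ \emph{is} $w_2$-fixed here, exactly as the statement asserts. Your unit-group argument then correctly rules out the $\ZZ[\zeta_4]$ possibility outside $D'N'\in\{2,3\}$, and nothing further needs verification.
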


\begin{proof} Unless $D' = 2,3$ and $N'=1$, the only possible such length is three \cite[Proposition V.3.1]{Vigneras}. In each of those cases, if $p\mid D'$ then $p=2$ or $p=3$. If $(D',N') \ne (2,1),(3,1)$, the length of $(A,\iota)$ is three if and only if $\ZZ[\zeta_6]\hookrightarrow \OO'$ and the first part of our statement holds by Theorem \ref{EichlerEmbedding}.

Recall now that any $(A,\iota)$ is fixed by $w_1$. If $\ZZ[\zeta_6]$ embeds into $\OO'$ note that $\ZZ[\sqrt{-3}] \subset \ZZ[\zeta_6]\hookrightarrow \OO'$ so $(A,\iota)$ is fixed by $w_3$ if $3 \mid D'N'$. Now suppose that $m\ne 3$ so  $\ZZ[\zeta_6]$ does not contain $\ZZ[\sqrt{-m}]$ and vice versa. In that case we have simultaneous embeddings if and only if $m= D'N'$ or if $3 \mid D'N'$ and $m= D'N'/3$ by Theorem \ref{zetasix}.\end{proof}

\begin{corollary}\label{SuperspecialFixedZetaFour}Let $(A,\iota)$ be a mixed superspecial $\OO$-abelian surface with corresponding bi-module $M$ and whose length is even. Let $N'$ be the level of $\OO' = \End_{(\OO,\mathcal S)}(M)$ and $D'$ the discriminant of $\OO' \otimes \QQ$. Then for all $p \mid D'$, $p=2$ or $p\equiv 3\bmod 4$, and for all $q\mid N'$, $q=2$ or $q\equiv 1\bmod 4$. Moreover, $(A',\iota')$ is fixed by $w_m$ if and only if $m=1,2, D'N'$ or $D'N'/2$ if $2\mid D'N'$. \end{corollary}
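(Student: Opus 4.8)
The plan is to mirror the proof of Corollary~\ref{SuperspecialFixedZetaSix}, replacing $\ZZ[\zeta_6]$ and discriminant $-3$ by $\ZZ[\zeta_4]$ and discriminant $-4$, and Theorem~\ref{zetasix} by Theorem~\ref{zetafour}. Two facts are used throughout. First, by Theorem~\ref{ALEquivariance}, $\OO' = \End_{(\OO,\mathcal S)}(M)$ is an Eichler order of level $N'$ in the quaternion algebra $B':=\OO'\otimes\QQ$ of discriminant $D'$, and $B'$ is \emph{definite}: indeed $D'$ equals $Dp$ or $D/p$, a product of an odd number of primes since $D$ has an even number, so $B'$ is a division algebra and $\OO'$ has no zero divisors. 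Second, by Corollary~\ref{EmbeddingFixedPoint}, for $m\mid D'N'$ with $m>1$ the surface $(A,\iota)$ is $w_m$-fixed precisely when $\ZZ[\sqrt{-m}]$ — or $\ZZ[\zeta_4]$ when $m=2$ — embeds into $\OO'$.

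For the first assertion I would first produce an embedding $\ZZ[\zeta_4]\hookrightarrow\OO'$ from the hypothesis that the length $\#(\OO'^\times/\{\pm1\})$ is even. By Cauchy's theorem the finite group $\OO'^\times/\{\pm1\}$ has an element of order two; any lift $u\in\OO'^\times$ satisfies $u\notin\{\pm1\}$ and $u^2\in\{\pm1\}$, and $u^2=1$ is impossible in the domain $\OO'$, so $u^2=-1$ and $\ZZ[u]\cong\ZZ[\zeta_4]$ lies in $\OO'$. Since $-4$ is a fundamental discriminant, $\ZZ[\zeta_4]$ is the maximal order of $\QQ(\zeta_4)$, so this embedding is automatically optimal, and Theorem~\ref{EichlerEmbedding} forces $e_{D',N'}(-4)>0$. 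Because $h(-4)=1$ and $f(-4)=1$, this means every local factor of $e_{D',N'}(-4)$ is nonzero; using $\left(\frac{-4}{q}\right)=\left(\frac{-1}{q}\right)$ for odd $q$ and $\left(\frac{-4}{2}\right)=0$, this is exactly the assertion that $p=2$ or $p\equiv3\bmod4$ for every $p\mid D'$, and $q=2$ or $q\equiv1\bmod4$ for every $q\mid N'$.

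For the second assertion, $(A,\iota)$ is fixed by $w_1$, so fix $m\mid D'N'$ with $m>1$. If $m=2$ — which forces $2\mid D'N'$ — then $(A,\iota)$ is $w_2$-fixed by Corollary~\ref{EmbeddingFixedPoint}, since $\ZZ[\zeta_4]\hookrightarrow\OO'$ by the first part. If $m\ne2$, then by Corollary~\ref{EmbeddingFixedPoint} one must decide whether $\ZZ[\sqrt{-m}]\hookrightarrow\OO'$; as $\ZZ[\zeta_4]\hookrightarrow\OO'$, the corollary to Theorem~\ref{zetafour} shows this holds if and only if $m=D'N'$, or $2\mid D'N'$ and $m=D'N'/2$. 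Assembling the cases, the set of $m\mid D'N'$ that fix $(A,\iota)$ is $\{1,D'N'\}$, enlarged by $\{2,D'N'/2\}$ when $2\mid D'N'$, which is the stated conclusion — bearing in mind that $w_2$ and $w_{D'N'/2}$ are only meaningful when $2\mid D'N'$, and that $m=2$ and $m=D'N'$ coincide in the single degenerate case $D'N'=2$.

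I expect no genuinely hard step; the points to handle with care are ensuring the embedding $\ZZ[\zeta_4]\hookrightarrow\OO'$ is optimal so that Theorem~\ref{EichlerEmbedding} applies, keeping the criterion $\ZZ[\zeta_4]\hookrightarrow\OO'$ for $w_2$ distinct from $\ZZ[\sqrt{-m}]\hookrightarrow\OO'$ for $w_m$ with $m>2$, and verifying that the exceptional unit-rich orders with $D'N'\in\{2,3\}$ are consistent with the stated congruences, which holds because both $2$ and $3\equiv3\bmod4$ satisfy the condition on primes dividing $D'$.
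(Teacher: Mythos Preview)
Your proposal is correct and follows the same overall structure as the paper's proof: obtain $\ZZ[\zeta_4]\hookrightarrow\OO'$, deduce the congruence conditions via Theorem~\ref{EichlerEmbedding}, and then invoke Corollary~\ref{EmbeddingFixedPoint} together with Theorem~\ref{zetafour} for the classification of $m$.

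The one genuine difference is in how you extract the embedding $\ZZ[\zeta_4]\hookrightarrow\OO'$. The paper cites Vign\'eras' classification of unit groups \cite[Proposition~V.3.1]{Vigneras} to argue that, outside the exceptional cases $(D',N')\in\{(2,1),(3,1)\}$, an even length forces the length to equal~$2$ and hence $\ZZ[\zeta_4]\hookrightarrow\OO'$; the two exceptional cases are then checked by hand. Your Cauchy-theorem argument is more elementary and uniform: an element of order~$2$ in the finite group $\OO'^\times/\{\pm1\}$ lifts to $u$ with $u^2=\pm1$, and the domain property of $\OO'$ (which you correctly deduce from definiteness of $B'$) rules out $u^2=1$, yielding $u^2=-1$ directly with no case split. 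Your remark that the embedding is automatically optimal because $\ZZ[\zeta_4]$ is the maximal order in $\QQ(i)$ is a point the paper leaves implicit. The remainder of your argument matches the paper's line for line.
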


\begin{proof} Recall that unless $D' = 2,3$ and $N'=1$, the only possible even length is two \cite[Proposition V.3.1]{Vigneras}. In each of those cases our conditions hold. If $(D',N') \ne (2,1),(3,1)$, the length of $(A,\iota)$ is two if and only if $\ZZ[\zeta_4]\hookrightarrow \OO'$ and the first part of our statement holds by Theorem \ref{EichlerEmbedding}.

Recall now that any $(A,\iota)$ is fixed by $w_1$. If we have $\ZZ[\zeta_4]\hookrightarrow \OO'$ then $(A,\iota)$ is fixed by $w_2$ if $2 \mid D'N'$ by Corollary \ref{EmbeddingFixedPoint}. Now suppose that $m>2$ so $\ZZ[\zeta_4]$ does not contain $\ZZ[\sqrt{-m}]$ and vice versa. In that case we have simultaneous embeddings if and only if $m= D'N'$ or if $2 \mid D'N'$ and $m= D'N'/2$ by Theorem \ref{zetafour}.\end{proof}


\begin{corollary}\label{SuperspecialFpRationalALFixed} Let $\OO$ be an Eichler order of square-free level $N$ in $B_D$ where $D$ is the square-free product of an even number of primes and $N$ is coprime to $D$. Let $m \mid DN$ and let $p$ be a prime not dividing $DN$. If $p=2$ there is a mixed superspecial abelian $\OO$ surface $(A_{\FF_p},\iota)$ fixed by $w_m$ if and only if one of the following occurs.

\begin{enumerate}

\item $m = DN$, $q\equiv 3\bmod 4$ for all $q \mid D$, and $q\equiv 1\bmod 4$ for all $q \mid N$.

\item $m = DN \equiv \pm 3\bmod 8$, $\left(\frac{-2}{q}\right) = -1$ for all primes $q \mid D$, and $\left(\frac{-2}{q}\right) = 1$ for all primes $q \mid N$.

\end{enumerate}

If $p \ne 2$, there is a mixed superspecial abelian $\OO$ surface $(A_{\FF_p},\iota)$ fixed by $w_m$ if and only if one of the following occurs.

\begin{enumerate}


\item $2\nmid D$, $m= DN$, $\left(\frac{-DN}{p}\right) = -1$, $\left(\frac{-p}{q}\right) = -1$ for all $q \mid D$, and $\left(\frac{-p}{q}\right) = 1$ for all $q \mid N$ such that $q\ne 2$.

\item $2 \mid N$, $m = DN/2$, $\left(\frac{-DN/2}{p}\right) = -1$, $\left(\frac{-p}{q}\right) = -1$ for all $q \mid D$, and $\left(\frac{-p}{q}\right) = 1$ for all $q \mid N$ such that $q\ne 2$.

\item $2 \mid D$, $m= DN$, $p\equiv \pm 3 \bmod 8$, $\left(\frac{-DN}{p}\right) = -1$, $\left(\frac{-p}{q}\right)= -1$ for all $q \mid (D/2)$, and $\left(\frac{-p}{q}\right) = 1$ for all $q \mid N$.

\item $2 \mid D$, $m = DN/2$, $DN \equiv 2,6,10 \bmod 16$, $p\equiv \pm 3\bmod 8$, $\left(\frac{-DN/2}{p}\right) = -1$, $\left(\frac{-p}{q}\right) = -1$ for all $q \mid D$, and $\left(\frac{-p}{q}\right) = 1$ for all $q \mid N$.

\end{enumerate}
\end{corollary}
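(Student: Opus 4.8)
The plan is to turn the statement into a question about simultaneous embeddings of imaginary quadratic rings into an Eichler order of $B_{Dp}$, and then read off the answer from Theorem~\ref{GoodReductionSimultaneousEmbeddings} (with Theorem~\ref{zetafour} handling an extra possibility when $p=2$). First I would reduce to an embedding problem. A mixed superspecial $\OO$-abelian surface $(A,\iota)$ over $\FF$ corresponds by Theorem~\ref{RibetBimodulesTheorem} to a bi-module $M$, and by Theorem~\ref{ALEquivariance} the order $\Lambda:=\End_{\OO,\mathcal{S}}(M)$ is an Eichler order of level $N$ in $B_{Dp}$ (recall $p\nmid D$); as $M$ varies over all locally isomorphic classes of such bi-modules, $\Lambda$ runs over all conjugacy classes of Eichler orders of level $N$ in $B_{Dp}$. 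By Lemma~\ref{ALGaloisLemma}, $(A,\iota)$ is defined over $\FF_p$ precisely when its point of $X^D_0(N)$ is fixed by $w_p$ (the Atkin--Lehner operator at $p$, which on the superspecial locus is the $\FF_{p^2}/\FF_p$-Frobenius), and by Corollary~\ref{EmbeddingFixedPoint} together with Theorem~\ref{ALEquivariance}, being fixed by $w_p$ (resp.\ $w_m$) is equivalent to principality in $\Lambda$ of the ramified two-sided ideal of norm $p$ (resp.\ of norm $m$). Running the computation from the proof of Corollary~\ref{EmbeddingFixedPoint}, principality of the norm-$m$ ideal forces the existence of $\gamma\in\Lambda$ with $\gamma^2=-m$, hence $\ZZ[\sqrt{-m}]\hookrightarrow\Lambda$; likewise for the norm-$p$ ideal one gets $\ZZ[\sqrt{-p}]\hookrightarrow\Lambda$, except that when $p=2$ the relation $(1+\zeta_4)^2=2\zeta_4$ opens the additional possibility $\ZZ[\zeta_4]\hookrightarrow\Lambda$. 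So the surface we want exists if and only if there is an Eichler order $\OO'$ of level $N$ in $B_{Dp}$ admitting embeddings of $\ZZ[\sqrt{-m}]$ and of $\ZZ[\sqrt{-p}]$ when $p\ne2$, or of $\ZZ[\sqrt{-m}]$ together with one of $\ZZ[\sqrt{-2}]$ or $\ZZ[\zeta_4]$ when $p=2$.

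The conclusion then follows by quoting the earlier simultaneous-embedding theorems. For $p\ne2$, $D':=Dp$ is a product of an odd number of primes and $N':=N$, and the simultaneous embedding of $\ZZ[\sqrt{-p}]$ and $\ZZ[\sqrt{-m}]$ is governed exactly by Theorem~\ref{GoodReductionSimultaneousEmbeddings}: its cases (1)--(4), sorted according to whether $2\nmid DNp$, $2\mid N$, or $2\mid D$, and (when $2\mid D$) according to whether $p$ splits in $\QQ(\sqrt{-DN})$, give cases (1)--(4) of the present corollary --- keeping in mind that, as the proof of Theorem~\ref{GoodReductionSimultaneousEmbeddings} shows, the sharp local condition at $2$ when $2\mid D$ is $p\equiv\pm3\bmod 8$. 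For $p=2$, the $\ZZ[\sqrt{-2}]$-alternative is precisely Theorem~\ref{GoodReductionSimultaneousEmbeddings}(5), yielding case (2), while the $\ZZ[\zeta_4]$-alternative is Theorem~\ref{zetafour} applied with $D'=2D$ and $N'=N$, whose hypotheses force $m=DN$ and exactly the congruences of case (1).

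What remains is purely bookkeeping: the hypotheses emerging from Theorems~\ref{GoodReductionSimultaneousEmbeddings} and \ref{zetafour} must be rewritten in the symmetric form stated in the corollary (for instance replacing $\left(\frac{-DN}{p}\right)$ by $\left(\frac{-DN/2}{p}\right)$ in the cases $2\mid D$, or incorporating the condition on $DN\bmod 16$), which needs only quadratic reciprocity and the standing hypothesis that $D$ has an \emph{even} number of prime factors (together with $2\nmid N$ when $p\ne2$, which is automatic from $p\nmid DN$). I expect the one genuine point of care to be the reduction step in the case $p=2$: one must check both that ``defined over $\FF_2$'' is captured by $w_2$-invariance on the superspecial locus and that principality of the \emph{ramified} norm-$2$ ideal really does split into the two embedding conditions above --- this split is what forces cases (1) and (2) to appear separately.
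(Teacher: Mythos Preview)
Your proposal is correct and follows essentially the same route as the paper's proof: reduce ``$(A,\iota)$ is defined over $\FF_p$ and $w_m$-fixed'' to the existence of simultaneous embeddings of $\ZZ[\sqrt{-p}]$ (or $\ZZ[\zeta_4]$ when $p=2$) and $\ZZ[\sqrt{-m}]$ into some Eichler order of level $N$ in $B_{Dp}$ via Lemma~\ref{ALGaloisLemma} and Corollary~\ref{EmbeddingFixedPoint}, then quote Theorem~\ref{GoodReductionSimultaneousEmbeddings} for the $\ZZ[\sqrt{-p}]$-alternative and Theorem~\ref{zetafour} (the paper cites its repackaging, Corollary~\ref{SuperspecialFixedZetaFour}) for the $\ZZ[\zeta_4]$-alternative when $p=2$. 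One small slip in your bookkeeping remarks: ``$2\nmid N$ when $p\ne 2$'' is \emph{not} automatic from $p\nmid DN$ --- indeed case~(2) of the corollary for $p\ne 2$ has $2\mid N$ --- but this does not affect the argument, which is handled by the case split in Theorem~\ref{GoodReductionSimultaneousEmbeddings}.
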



\begin{proof} By Lemma \ref{ALGaloisLemma}, a superspecial abelian surface $(A_{/\FF_{p^2}},\iota)$ is defined over $\FF_p$ if and only if it is $w_p$-fixed. Therefore there is some $(A_{\FF_p},\iota)$ fixed by $w_m$ if and only if there is some Eichler order of level $N$ in $B_{Dp}$ which admits an embedding of both $\ZZ[\sqrt{-m}]$ (or $\ZZ[\zeta_4]$ if $m=2$) and $\ZZ[\sqrt{-p}]$ (or $\ZZ[\zeta_4]$ if $p=2$).

Let us first assume $p=2$. Condition 1 is precisely Corollary \ref{SuperspecialFixedZetaFour} applied to the situation where $(m,2) =1$. Condition 2 is Theorem \ref{GoodReductionSimultaneousEmbeddings}(5).

Now let us assume $p\ne 2$. Conditions 1 and 2 are Theorem \ref{GoodReductionSimultaneousEmbeddings}(1-2) and conditions 3 and 4 are Theorem \ref{GoodReductionSimultaneousEmbeddings}(3-4).\end{proof}

\section{Local points at good primes}

Throughout this section we will fix $D$ the discriminant of an indefinite quaternion $\QQ$-algebra, $N$ a square-free integer coprime to $D$, an integer $m\mid DN$ and a prime $p\nmid DN$. Recall that $X^D_0(N)_{/\ZZ_p}$ has a smooth special fiber by Theorem \ref{DeligneRapoportModelTheorem}. Let $w_m$ be as in Definition \ref{AtkinLehnerOO}. Let $\ZZ_{p^2}$ be as in Definition \ref{DefnZp2} with $\langle \sigma\rangle = \Aut_{\ZZ_p}(\ZZ_{p^2})$ and let $\CZ_{/\ZZ_p}$ denote the quotient of $X^D_0(N)_{\ZZ_{p^2}}$ by the action of $w_m \sigma$. 

If $p$ is split in $\QQ(\sqrt d)$, then $X^D_0(N)$ is isomorphic to $C^D(N,d,m)$ over $\QQ_p$. We may then obtain results on local points without appealing to $\CZ$.

If $p$ is inert in $\QQ(\sqrt d)$ and $C^D(N,d,m)_{/\QQ}$ is the twist of $X^D_0(N)_{/\QQ}$ by $w_m$ and $\QQ(\sqrt d)$ then $\CZ$ is a $\ZZ_p$-model for $C^D(N,d,m)_{\QQ_p}$. This is because it follows from applying the theorem on \'etale base change \cite[Proposition 10.1.21(c)]{Liu} to the map $X^D_0(N)_{\ZZ_{p^2}}$ that $\CZ_{\FF_p}$ is also smooth. 

Some easy results present themselves. For instance we may use Weil's bounds to show that we have $p$-adic points for all but finitely many primes $p$. Unless otherwise noted, assume that $g$ is the genus of $X^D_0(N)_{/\QQ}$.

\begin{theorem}\label{WeilBoundsTwist}Suppose that $p$ is unramified in $\QQ(\sqrt d)$ and $p>4g^2$. It follows that  $C^D(N,d,m)(\QQ_p)\ne\emptyset$.\end{theorem}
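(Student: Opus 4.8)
The plan is to use the Weil bounds together with Hensel's lemma, applied to the appropriate smooth $\ZZ_p$-model. First I would split into the two cases distinguished at the start of this section. If $p$ is split in $\QQ(\sqrt d)$, then $C^D(N,d,m)$ is isomorphic to $X^D_0(N)$ over $\QQ_p$, and $X^D_0(N)_{/\ZZ_p}$ has smooth special fiber by Theorem \ref{DeligneRapoportModelTheorem}; so it suffices to produce an $\FF_p$-point on the smooth projective curve $X^D_0(N)_{\FF_p}$, whose Hensel-lifting to a $\ZZ_p$-point is automatic by smoothness and properness. If $p$ is inert, the relevant model is $\CZ_{/\ZZ_p}$, the quotient of $X^D_0(N)_{\ZZ_{p^2}}$ by $w_m\sigma$, which as noted just above has smooth special fiber (via \'etale base change, \cite[Proposition 10.1.21(c)]{Liu}); again it suffices to find an $\FF_p$-point on $\CZ_{\FF_p}$.

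Next I would estimate $\#\CZ_{\FF_p}(\FF_p)$ (resp. $\#X^D_0(N)_{\FF_p}(\FF_p)$) from below. Both curves are smooth projective and geometrically connected over $\FF_p$; in the split case the genus is $g$, and in the inert case $\CZ_{\FF_p}$ is a curve whose genus is at most $g$ (it is a quotient of a genus-$g$ curve, or one can observe that over $\FF_{p^2}$ it becomes $X^D_0(N)_{\FF_{p^2}}$, of genus $g$). By the Weil bound, the number of $\FF_p$-rational points on a smooth projective geometrically connected curve of genus $g'$ over $\FF_p$ is at least $p + 1 - 2g'\sqrt p \ge p+1 - 2g\sqrt p$. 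The function $p + 1 - 2g\sqrt p$ is positive as soon as $\sqrt p > g + \sqrt{g^2-1}$, and in particular whenever $p > 4g^2$ (since $4g^2 \ge (g+\sqrt{g^2-1})^2$ for $g\ge 1$; the case $g=0$ is trivial, $\PP^1$ always having points). Hence $\CZ_{\FF_p}(\FF_p)\ne\emptyset$, and lifting gives $\CZ(\ZZ_p)\ne\emptyset$, so $C^D(N,d,m)(\QQ_p) = \CZ(\QQ_p)\ne\emptyset$.

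The only genuine point requiring care — the "main obstacle," though it is mild — is the inert case: I must be sure that $\CZ_{/\ZZ_p}$ really is a \emph{smooth projective geometrically connected} curve over $\ZZ_p$ of genus $\le g$, so that the Weil bound applies to its special fiber. Smoothness of the special fiber is already granted in the text; geometric connectedness follows because $\CZ_{\FF_{p^2}} \cong X^D_0(N)_{\FF_{p^2}}$ is geometrically connected, and the genus bound follows from the same base-change identification (the genus is a geometric invariant, so $g(\CZ_{\FF_p}) = g(\CZ_{\overline{\FF_p}}) = g(X^D_0(N)_{\overline{\FF_p}}) = g$ — in fact equality holds). With that in hand the argument is uniform in the two cases, and the hypothesis $p$ unramified in $\QQ(\sqrt d)$ is exactly what guarantees $\CZ_{\FF_p}$ is smooth, while $p > 4g^2$ is exactly what makes the Weil lower bound positive.
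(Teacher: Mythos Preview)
Your proof is correct and follows essentially the same approach as the paper: apply the Weil bound to the smooth special fiber of the model $\CZ_{/\ZZ_p}$ (or $X^D_0(N)_{/\ZZ_p}$ in the split case) to produce an $\FF_p$-point, then lift by Hensel's Lemma. The paper is terser, simply noting $p+1-2g\sqrt p > 4g^2 - 4g^2 + 1 = 1$ (i.e.\ evaluating the Weil lower bound at $p=4g^2$) and invoking Hensel; your more explicit verification that $\CZ_{\FF_p}$ is smooth, projective, geometrically connected of genus exactly $g$ just spells out what the paper leaves to the preceding paragraph.
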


\begin{proof} Recall that Weil's bounds \cite[Exercise 9.1.15]{Liu} tell us that if $X$ is smooth of genus $g$ over $\FF_p$ then $$\mid \#X(\FF_p) - (p+1)\mid  \le 2g\sqrt p,$$ and thus $\#X(\FF_p) \ge p+1 - 2g\sqrt p > 4g^2 - 4g^2 +1 = 1$. Hensel's Lemma tells us that if we let $\CZ_{/\ZZ_p}$ be a regular model of $C^D(N,d,m)_{\QQ_p}$ and set $X = \CZ_{\FF_p}$ then $C^D(N,d,m)(\QQ_p) = \CZ(\QQ_p)$ is nonempty since $g = g(C^D(N,d,m)_{\FF_p})$.\end{proof}



For $p< 4g^2$, we must use another technique. In the split case we use Shimura's construction of the zeta function of $X^D_0(N)_{\FF_p}$ using Hecke operators to give an exact formula for the size of $X^D_0(N)(\FF_p)$. In the inert case, we give a partial answer in terms of superspecial points.

\begin{definition} Let $S$ be an $\FF_p$-scheme and let $A_{/S}$ be an abelian scheme. Let $\Frob_{p^r}: A \to A^{(p^r)}$ and $\Ver_{p^r}: A^{(p^r)} \to A$ be the Frobenius and Verschiebung isogenies, so that $\Frob_{p^r} \Ver_{p^r} = \Ver_{p^r}\Frob_{p^r} = [p^r]$ on $A^{(p^r)}$ and $A$ respectively.\end{definition}

\begin{definition}\label{DefnFrob} Let $S$ be an $\FF_p$-scheme and let $(A,\iota)$ be an abelian $\OO$-surface. By $\Frob_{p^r,*}\iota$ we denote the unique optimal embedding $\OO \hookrightarrow \End_S(A^{(p^r)})$ such that for all $\alpha \in \OO$ the following commutes.
\[\xymatrixcolsep{5pc}\xymatrix{ A \ar[r]^{\iota(\alpha)}\ar[d]_{\Frob_{p^r}} & A \ar[d]^{\Frob_{p^r}} \\ A^{(p^r)} \ar[r]^{\Frob_{p^r,*}\iota(\alpha)} & A^{(p^r)}}\] 
\end{definition}

\begin{lemma}\label{FrobeniusGaloisLemma} Let $S = \Spec(\overline \FF_p)$ and $\phi_r:S\to S$ be the morphism given by the $p^r$-th power map. Let $(A_{/S},\iota)$ be a QM-abelian surface corresponding to a point $P: S \to X^D_0(N)_S$. Let $P\circ \phi_r: S \to S \to X^D_0(N)_S$ denote the Galois conjugate point. Then the QM-abelian surface corresponding to $P\circ\phi_r$ is $(A^{(p^r)},\Frob_{p^r,*}\iota)$. 
\end{lemma}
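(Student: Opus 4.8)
\textbf{Proof proposal for Lemma \ref{FrobeniusGaloisLemma}.}

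The plan is to reduce the statement to the functorial behavior of the moduli problem defining $X^D_0(N)$ under base change along the absolute Frobenius. First I would recall that $X^D_0(N)_S$ is a coarse moduli scheme, so a point $P\colon S\to X^D_0(N)_S$ with $S=\Spec(\overline\FF_p)$ corresponds (since $\overline\FF_p$ is algebraically closed, hence there is no obstruction from automorphisms at the level of $\overline\FF_p$-points) to the isomorphism class of a QM-abelian surface $(A,\iota)$ over $\overline\FF_p$. The Galois conjugate point $P\circ\phi_r$ is, by definition of the coarse moduli scheme, the point classifying the pullback of $(A,\iota)$ along $\phi_r\colon\Spec(\overline\FF_p)\to\Spec(\overline\FF_p)$, i.e. the base change $(A,\iota)\times_{\overline\FF_p,\phi_r}\overline\FF_p$. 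So the whole content is to identify this base change with $(A^{(p^r)},\Frob_{p^r,*}\iota)$.

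The key step is the standard identification, for a scheme $A$ over $\overline\FF_p$, of the Frobenius twist $A^{(p^r)} = A\times_{\overline\FF_p,\,\mathrm{Frob}^r}\overline\FF_p$ (pullback along the $p^r$-power map on the base) with the pullback $\phi_r^*A$. Under this identification the relative Frobenius isogeny $\Frob_{p^r}\colon A\to A^{(p^r)}$ is the canonical map, and it is $\OO$-linear precisely in the sense encoded by Definition \ref{DefnFrob}: the diagram there says that $\Frob_{p^r}$ intertwines $\iota(\alpha)$ with $\Frob_{p^r,*}\iota(\alpha)$, which is exactly the statement that $\Frob_{p^r,*}\iota$ is the embedding obtained by transporting $\iota$ along the base-change isomorphism. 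I would then note that $\Frob_{p^r,*}\iota$ is again optimal (it is conjugate, via an isomorphism of abelian surfaces, to the optimal embedding $\iota$, and optimality is preserved under isomorphism) and that $(A^{(p^r)},\Frob_{p^r,*}\iota)$ is still mixed/special, since these are conditions on the $p$-divisible group and Dieudonné module that are visibly stable under the Frobenius twist; hence it does define a point of $X^D_0(N)_S$, namely $P\circ\phi_r$.

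The main obstacle — really the only subtlety — is bookkeeping the two a priori different "Frobenius twists": the abstract base change of the moduli datum along $\phi_r$, versus the concrete $A^{(p^r)}$ with the embedding $\Frob_{p^r,*}\iota$ specified by the commuting square. These agree because $\phi_r$ on $\Spec(\overline\FF_p)$ is induced by the ring homomorphism $x\mapsto x^{p^r}$, and pullback of a scheme along this map is by definition $A^{(p^r)}$; the induced map on the datum $\iota$ is then forced to be the unique embedding making Definition \ref{DefnFrob}'s square commute. Once this compatibility is spelled out, together with the fact that $\overline\FF_p$-points of the coarse space biject with isomorphism classes of the objects being parametrized, the lemma follows immediately.
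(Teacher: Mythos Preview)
Your argument is correct and in fact more direct than the paper's. The paper routes the identification through the Verschiebung: it first notes that $\Frob_{p^r,*}\iota = \Ver_{p^r}^*\iota$ (since $\Frob_{p^r}\Ver_{p^r}=[p^r]$ commutes with every endomorphism of $A^{(p^r)}$), and then identifies $\Ver_{p^r}$ with the projection $A^{(p^r)}=A\times_{S,\phi_r}S\to A$ from the fiber product, citing \cite[p.94]{Liu}, so that $\Ver_{p^r}^*\iota$ is visibly the base-changed QM structure. You bypass this by going straight from the definition of $A^{(p^r)}$ as the pullback along $\phi_r$ and invoking the functoriality of the relative Frobenius to see that the base-changed embedding is $\Frob_{p^r,*}\iota$. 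Both reach the same conclusion; your route avoids the somewhat delicate point that $\Ver_{p^r}$ is an $S$-morphism while the fiber-product projection lies over $\phi_r$, which the paper's sketch glosses. One small imprecision in your write-up: you say $\Frob_{p^r,*}\iota$ is ``conjugate, via an isomorphism of abelian surfaces'' to $\iota$, but $\Frob_{p^r}$ is only an isogeny; what actually preserves optimality (and the mixed/special condition) is that $\phi_r$ is an automorphism of $\overline\FF_p$, so base change along it is an equivalence of categories.
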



\begin{proof} Fix an Eichler order $\OO$ of level $N$ in $B_D$. Note that $(A^{(p^r)},\Frob_{p^r,*}\iota) = (A^{(p^r)},\Ver_{p^r}^*\iota)$ where $\Ver_{p^r}^*\iota$ is defined in the obvious way. Since $\Ver_{p^r}$ itself is the pullback of $\phi_r$ along $A\to S$ \cite[p.94]{Liu} we obtain our result.\end{proof} 

\subsection{Split Primes and the Eichler-Selberg Trace Formula}

\begin{definition}\label{HeckeOperators} Let $S$ be a $\ZZ_p$-scheme with $p\nmid DN$. Let $X^D_0(N)$ be defined over $S$. If $(n, DN)=1$, $T_n$ is the correspondence 

\[
\xymatrix{ 
 & X^D_0(Nn)_S \ar@{->}[ld]^{\Phi_1} \ar@{->}[rd]_{\Phi_2} &\\
X^D_0(N)_S & & X^D_0(N)_S
}
\]

where $\Phi_1$ is the modular forgetful map and $\Phi_2 = \Phi_1 \circ w_n$. \end{definition}

The correspondences $T_n$ are commonly known as \emph{Hecke correspondences}. Let $s$ be a closed point of $S$ with $k(s) = \overline{k(s)}$ so that $X^D_0(N)_s$ has a $k(s)$-rational point and thus correspondences on $X^D_0(N)$ are in bijection with endomorphisms of $J^D_0(N)_s$ \cite[Corollary 6.3]{MilneJac}. We may also use $T_n$ to denote the endomorphism of $J^D_0(N)_s \cong J(X^D_0(N)_s)$ induced by the map of sets $X^D_0(N)_s \to {\rm Div}(X^D_0(N)_s)$ $P\mapsto (\Phi_{2,*}\Phi_1^*) P$. This operator on $J^D_0(N)_s$ is commonly referred to as a \emph{Hecke operator}. We will explore the case $(n,DN)>1$ in section \ref{sectionRSY}.

\begin{theorem}[Eichler-Shimura]\label{EichlerShimuraTheorem} There is an equality of endomorphisms of $J^D_0(N)_{s}$ between $T_p$ and $\Frob_p + \Ver_p$.\end{theorem}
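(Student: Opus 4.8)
## Proof Proposal

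The statement to prove is the Eichler-Shimura relation: as endomorphisms of $J^D_0(N)_s$, we have $T_p = \Frob_p + \Ver_p$. Here $s$ is a closed point of an $\FF_p$-scheme $S$ with algebraically closed residue field, so $J^D_0(N)_s$ is the Jacobian of the smooth curve $X^D_0(N)_s$ over $\overline{\FF_p}$. The plan is to unwind both sides as explicit correspondences on $X^D_0(N)_s$ and show they agree on divisors, then invoke the fact that correspondences modulo principal ones are identified with endomorphisms of the Jacobian.

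First I would recall the moduli description. By Theorem \ref{DeligneRapoportModelTheorem}, $X^D_0(N)_s$ parametrizes QM-abelian surfaces $(A,\iota)$, and (taking $S$-schemes $T$ appropriately) $X^D_0(Np)_s$ parametrizes pairs equipped with the extra level-$p$ structure, with the two maps $\Phi_1$ (forget) and $\Phi_2 = \Phi_1\circ w_p$. Since $p \nmid DN$, the curve $X^D_0(N)_s$ is smooth, and by Theorem \ref{DeligneRapoportModelTheorem} applied at level $Np$ (with $p \mid Np$) we get the closed embedding $c: X^D_0(N)_s \to X^D_0(Np)_s$ together with the identity $\Phi w_p c = (A,\iota) \mapsto (A^{(p)},\Frob_{p,*}\iota)$ — this is precisely the arithmetic input that produces Frobenius. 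The key step is then to compute the divisor $(\Phi_{2,*}\Phi_1^*)(P)$ for a point $P = [(A,\iota)]$: the fiber $\Phi_1^{-1}(P)$ consists of the finitely many admissible level-$p$ enhancements of $(A,\iota)$, and $\Phi_2$ pushes each forward. Over $\overline{\FF_p}$, because $p\nmid DN$, the relevant $p$-divisible-group-theoretic data degenerates: there are exactly two "cyclic $\OO$-stable subgroups of order $p^2$" (in the generic/ordinary case), namely the kernel of $\Frob_p$ and the kernel of $\Ver_p$, yielding the two points $(A^{(p)},\Frob_{p,*}\iota)$ and $(A, \iota)$ viewed via $\Ver$, i.e.\ $\Frob_p(P) + \Ver_p(P)$; at supersingular/superspecial points the fiber is non-reduced but the cycle-theoretic count still gives the sum with multiplicity, using the structure of the completed local rings $\ZZ_p[[X,Y]]/(XY-p^{\ell})$ from Theorem \ref{DeligneRapoportModelTheorem}. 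One then concludes $T_p(P) = \Frob_p(P) + \Ver_p(P)$ as divisor classes for all $P$, hence as endomorphisms of the Jacobian by \cite[Corollary 6.3]{MilneJac}.

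An alternative, perhaps cleaner, route that I would carry out in parallel is to work directly with the congruence-relation picture from Theorem \ref{DeligneRapoportModelTheorem}: identify $X^D_0(Np)_s$ with (a component of) the special fiber data, where $\Phi_1 = \Phi$ and $\Phi_2 = \Phi w_p$ correspond respectively to the two maps $X^D_0(Np)_s \rightrightarrows X^D_0(N)_s$ sending the level structure to $(A,\iota)$ and to $(A^{(p)},\Frob_{p,*}\iota)$; the composite correspondence $\Phi_{2,*}\Phi_1^*$ is then visibly $\Gamma_{\Frob} + \Gamma_{\Ver}$ as a cycle on $X^D_0(N)_s \times X^D_0(N)_s$, since the two structure maps of $X^D_0(Np)_s$ realize it as the "graph of Frobenius union graph of Verschiebung" — this is the classical Deligne–Rapoport argument transported to the Shimura-curve setting via Helm's and Molina's models. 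Passing to $J^D_0(N)_s$ and using $\Frob_p \Ver_p = [p]$ gives the relation.

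The main obstacle is the behavior at the finitely many supersingular (superspecial) points, where $\Phi_1$ is ramified and the naive set-theoretic fiber computation fails; here one must argue at the level of cycles, using that the local rings of $X^D_0(Np)$ at superspecial points are $W(\overline{\FF_p})[[X,Y]]/(XY-p^{\ell})$ (Theorem \ref{DeligneRapoportModelTheorem}) so that the two branches through a superspecial point still contribute $\Frob$ and $\Ver$ with the correct multiplicities, and that the pushforward–pullback on divisors is insensitive to these finitely many points once we know the generic identity (two divisor classes on a smooth projective curve agreeing away from a finite set, and whose difference is supported there with matching degrees, must agree — or more robustly, appeal to the moduli-theoretic congruence relation which is an identity of correspondences on the nose). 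I expect that citing the established Deligne–Rapoport/Helm congruence relation lets one bypass most of this, reducing the proof to a short unwinding of Definition \ref{HeckeOperators} against Theorem \ref{DeligneRapoportModelTheorem}.
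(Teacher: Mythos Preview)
Your proposal is correct and essentially elaborates the same standard congruence-relation argument that the paper merely cites: the paper's proof consists of a reference to the sketch in \cite[Theorem 12.6.4]{RibetStein} for $X^1_0(N)$ with the remark that the same proof carries over to Shimura curves, and what you have written is exactly that proof spelled out in the quaternionic setting using Theorem \ref{DeligneRapoportModelTheorem}. In particular your second, cleaner route---identifying $X^D_0(Np)_s$ with the union of the graphs of Frobenius and Verschiebung via the two projections $\Phi$ and $\Phi w_p$---is precisely the form the cited argument takes, so there is no substantive difference in approach.
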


\begin{proof} The particularly simple proof given below was sketched by Stein in the case of the elliptic modular curve $X^1_0(N)$ \cite[Theorem 12.6.4]{RibetStein}, and the same proof also works for Shimura Curves.\end{proof} 





\begin{definition} If $C_{\FF_p}$ is a smooth, projective curve, we may define the zeta function of $C$ as $$ Z(C,x) := \mathrm{exp}\left(\sum_{r=1}^\infty \#C(\FF_{p^r}) \frac{x^r}{r}\right).$$\end{definition}

Shimura \cite{Sh67} proved the following explicit formula for the zeta function.

\begin{theorem}\label{ShimuraZetaFunction} If $\Omega$ denotes the canonical sheaf on $X^D_0(N)$, then \begin{equation} Z(X^D_0(N)_{\FF_p},x) = \frac{\det_{H^0(X^D_0(N),\Omega)}(I_g - T_p x + px^2 I_g)}{(1-x)(1-px)}.\end{equation}\end{theorem}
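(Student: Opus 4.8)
The plan is to combine the Weil conjectures for curves with the Eichler--Shimura relation of Theorem \ref{EichlerShimuraTheorem}. Since $p\nmid DN$, the curve $X:=X^D_0(N)$ has good reduction at $p$ (Theorem \ref{DeligneRapoportModelTheorem}), hence so does its Jacobian $J:=J^D_0(N)$; fixing $\ell\ne p$ and writing $V:=\mathrm{Ta}_\ell(J)\otimes_{\ZZ_\ell}\QQ_\ell$, a $2g$-dimensional $\QQ_\ell$-space, the Lefschetz point count reads $\#X(\FF_{p^r}) = p^r + 1 - \tr(\Frob_p^r\mid V)$, where $\Frob_p$ is the Frobenius endomorphism of $J_{\FF_p}$ (the one occurring in Theorem \ref{EichlerShimuraTheorem}). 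Substituting into the definition of $Z$ and summing the geometric series gives
\[
Z(X_{\FF_p},x) = \frac{\det\bigl(1 - \Frob_p x \mid V\bigr)}{(1-x)(1-px)},
\]
so it remains to identify the numerator, a polynomial of degree $2g$, with $\det_{H^0(X,\Omega)}\bigl(I_g - T_p x + p x^2 I_g\bigr)$; equivalently, reversing the variable, to show that the characteristic polynomial of $\Frob_p$ on $V$ equals $\det_{H^0(X,\Omega)}\bigl(t^2 I_g - t\,T_p + p\,I_g\bigr)$.

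To prove this I would decompose under the Hecke action. The correspondences $T_n$ with $(n,DN)=1$ act on $J$, hence on $V$ and on $H^0(X,\Omega)$, commute with $\Frob_p$, and generate a commutative semisimple $\QQ$-algebra $\mathbb{T}$; write $\mathbb{T}\otimes\QQ_\ell=\prod_f K_{f,\ell}$ over the Galois orbits $f$ of eigenvalue systems, let $d_f$ be the $K_f$-dimension of the corresponding isotypic part of $H^0(X,\Omega)$, and let $a_{f,p}\in K_f$ be the scalar by which $T_p$ acts there. Comparison with the Betti/de Rham cohomology of $X$ over $\CC$ together with the Hodge decomposition (and the fact that $T_p$ is self-adjoint for the Petersson/Rosati pairing, valid because $DN$ is squarefree) shows that the $f$-isotypic summand $V_f$ of $V$ is free of rank $2d_f$ over $K_{f,\ell}$. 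On $V_f$ the relation $\Frob_p+\Ver_p=T_p$ of Theorem \ref{EichlerShimuraTheorem}, with $\Frob_p\Ver_p=[p]$, yields $\Frob_p^2 - a_{f,p}\Frob_p + p = 0$, so the characteristic polynomial of $\Frob_p$ on $V_f$ over $K_{f,\ell}$ is a power of $Q_f(t):=t^2 - a_{f,p}t + p$; the Weil pairing on $V$, which restricts nondegenerately to $V_f$ and satisfies $e(\Frob_p x,y)=e\bigl(x,(a_{f,p}-\Frob_p)y\bigr)$, forces the two roots of $Q_f$ to occur as $\Frob_p$-eigenvalues on $V_f$ with equal multiplicity, whence that characteristic polynomial is exactly $Q_f(t)^{d_f}$.

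Finally I would reassemble: taking $\QQ_\ell$-norms, $\det(t\cdot\mathrm{id} - \Frob_p\mid V_f) = \prod_{\sigma\colon K_f\hookrightarrow\overline\QQ_\ell}\bigl(t^2 - \sigma(a_{f,p})t + p\bigr)^{d_f}$, which is precisely $\det\bigl(t^2 I - t\,T_p + p\,I\bigr)$ on the $f$-isotypic part of $H^0(X,\Omega)$ (whose $T_p$-eigenvalues are the $\sigma(a_{f,p})$, each with multiplicity $d_f$); multiplying over all $f$ gives the identity, and substituting back into the displayed formula for $Z$ proves the theorem. The crux is the middle paragraph: one must (i) pin down the rank of $V_f$ over the Hecke algebra --- immediate from multiplicity one when $f$ is new of level exactly $DN$, but needing the Hodge-theoretic comparison in general because divisors of $N$ contribute old pieces --- and (ii) show the Frobenius eigenvalues split evenly on each $V_f$; step (ii) can alternatively be extracted from the functional equation $\det(1-\Frob_p x\mid V)=\pm p^{g}x^{2g}\det\bigl(1 - \tfrac{1}{px}\Frob_p\mid V\bigr)$ furnished by Poincar\'e duality, once one checks how that duality interacts with the $T_p$-decomposition.
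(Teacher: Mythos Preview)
Your argument is correct, but it takes a more circuitous route than the paper's. After the same opening move (Weil conjectures reduce the problem to identifying $\det(I_{2g}-x\Frob_p\mid H^1_\ell)$), the paper avoids any Hecke decomposition: it uses the identification $H^1_\ell\cong H^0(J,\Omega)\oplus H^0(J,\Omega)^\vee$ coming from smooth--proper base change and Hodge theory, together with the fact that the Rosati dual of $\Frob_p$ is $\Ver_p$, to factor the full $2g\times 2g$ determinant in one stroke as
\[
\det(I_g - x\Frob_p)\det(I_g - x\Ver_p)=\det\bigl((I_g - x\Frob_p)(I_g - x\Ver_p)\bigr)=\det(I_g - xT_p + px^2 I_g),
\]
the last step by Theorem~\ref{EichlerShimuraTheorem} and $\Frob_p\Ver_p=p$. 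Your approach---splitting $V$ into isotypic pieces $V_f$, invoking the quadratic $\Frob_p^2-a_{f,p}\Frob_p+p=0$ on each, and then using the Weil pairing to force the two roots to occur with equal multiplicity---reaches the same conclusion and has the merit of making the Frobenius eigenvalues visible form by form. But the two points you yourself flag as delicate (the $K_{f,\ell}$-rank of $V_f$ in the presence of oldforms, and the even splitting of eigenvalues) are precisely what the paper's global duality factorization renders unnecessary: once the whole determinant factors as above, the question of how $\mathbb{T}$ cuts $V$ into pieces never enters.
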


\begin{proof} First we note that if $R$ is a characteristic $(0,p)$ dvr with separably closed residue field $k$ and $\ell \ne p$ is a prime of good reduction for a relative curve $\CX_{/R}$ then,  by smooth and proper base change \cite[Corollary VI.4.2]{LEC}, $H^1(\CX_k,\QQ_\ell)  \cong H^0(\CX_k,\Omega)\oplus H^0(\CX_k,\Omega)^\vee$.




Now we invoke the Weil Conjectures for curves \cite[Corollary V.2.6]{LEC}. That is, $$Z_p(X^D_0(N)_s, x) = \prod_{i=0}^{2} \left(\det\left(I- x\Frob_p\right)\mid _{H^i(X^D_0(N)_s, \QQ_\ell)}\right)^{(-1)^{(1+i)}}.$$

Moreover, since $\dim X^D_0(N)_s =1$, $(I- x\Frob_p)\mid_{H^0(X^D_0(N)_s,\QQ_\ell)} = (1-x)$ and $(I- x\Frob_p)\mid_{H^2(X^D_0(N)_s,\QQ_\ell)} = (1-px)$. However, since $H^1(X^D_0(N)_s,\QQ_\ell) \cong H^0(X^D_0(N)_s,\Omega) \oplus H^0(X^D_0(N)_s,\Omega)^\vee \cong H^0(J^D_0(N)_s,\Omega) \oplus H^0(J^D_0(N)_s,\Omega)^\vee$\cite[Proposition 2.2]{MilneJac}, we have $(I_{2g}- x\Frob_p)\mid_{H^1(X^D_0(N)_s,\QQ_\ell)}$ equal to \begin{eqnarray*} & = & (I_g - x\Frob_p)(I_g-x\Frob_p^\vee)\mid _{H^0(J^D_0(N)_s,\Omega)}\\ & = & (I_g - x(\Frob_p + \Ver_p ) + x^2\Frob_p\Ver_p )\mid_{H^0(J^D_0(N),\Omega)}  \\ & =& (I_g - T_p x + px^2I_g)\mid_{H^0(X^D_0(N),\Omega)}.\end{eqnarray*} \end{proof}

\begin{corollary}{\cite[Proposition 2.1]{JoLi}} If $r>1$ then \begin{equation} \# X^D_0(N)(\FF_{p^r})) =  p^r +1 - \tr(T_{p^r}) + p\tr(T_{p^{r-2}})\end{equation} and if $r=1$,\begin{equation} \# X^D_0(N)(\FF_{p})) =  p +1 - \tr(T_{p})\end{equation}  \end{corollary}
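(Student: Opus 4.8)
The plan is to extract these formulas directly from Shimura's zeta function formula in Theorem \ref{ShimuraZetaFunction} by comparing two expressions for $Z(X^D_0(N)_{\FF_p},x)$: the Euler-product-type formula and the exponential-generating-function definition. First I would take the logarithmic derivative of both sides of the identity
\begin{equation*}
Z(X^D_0(N)_{\FF_p},x) = \frac{\det_{H^0(X^D_0(N),\Omega)}(I_g - T_p x + px^2 I_g)}{(1-x)(1-px)}.
\end{equation*}
On the left, $\frac{d}{dx}\log Z(X^D_0(N)_{\FF_p},x) = \sum_{r=1}^\infty \#X^D_0(N)(\FF_{p^r}) x^{r-1}$, so the coefficient of $x^{r-1}$ is $\#X^D_0(N)(\FF_{p^r})$. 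On the right, the denominator $(1-x)(1-px)$ contributes $\sum_r (1 + p^r)x^{r-1}$ after the same operation, which accounts for the $p^r + 1$ terms.

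Next I would handle the numerator. Writing $\Frob_p$ for the matrix of Frobenius acting on $H^0(J^D_0(N)_s,\Omega)$, the numerator is $\det(I_g - T_p x + p x^2 I_g) = \prod_{i=1}^g (1 - \alpha_i x)(1 - \beta_i x)$ where $\alpha_i\beta_i = p$ and $\alpha_i + \beta_i$ are the eigenvalues of $T_p$, so that $\prod_i (1-\alpha_i x)(1-\beta_i x) = \det(I_{2g} - x\,\Frob_p)$ on $H^1$. Taking $-\frac{d}{dx}\log$ of this gives $\sum_{r=1}^\infty \tr(\Frob_p^r \mid H^1) x^{r-1} = \sum_{r=1}^\infty \big(\sum_i (\alpha_i^r + \beta_i^r)\big) x^{r-1}$. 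The key identity to invoke is that $\sum_i(\alpha_i^r + \beta_i^r) = \tr(T_{p^r}) - p\,\tr(T_{p^{r-2}})$ for $r \ge 2$ (with the convention $T_{p^0} = I$, $T_{p^{-1}} = 0$), which follows from the recursion $T_{p^{r}} = T_p T_{p^{r-1}} - p\, T_{p^{r-2}}$ satisfied by the Hecke operators — equivalently from the Newton-power-sum identity applied to the characteristic polynomial $1 - T_p x + p x^2$ of the pair $(\alpha_i,\beta_i)$. For $r = 1$ one simply gets $\sum_i(\alpha_i + \beta_i) = \tr(T_p)$.

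Putting the pieces together: the coefficient of $x^{r-1}$ on the right-hand side is $(1 + p^r) - \big(\tr(T_{p^r}) - p\,\tr(T_{p^{r-2}})\big)$ for $r\ge 2$, and $(1+p) - \tr(T_p)$ for $r = 1$, and equating with the left-hand coefficient $\#X^D_0(N)(\FF_{p^r})$ yields exactly the two displayed formulas. I expect the main obstacle to be purely bookkeeping: carefully justifying that the logarithmic derivative of $\det(I_{2g} - x\Frob_p)$ produces the power sums $\tr(\Frob_p^r\mid H^1)$ and that these power sums satisfy the claimed recursion in terms of $\tr(T_{p^r})$. This last point is a standard consequence of the fact (from Theorem \ref{EichlerShimuraTheorem}) that $T_p = \Frob_p + \Ver_p$ acts on $H^0(J^D_0(N)_s,\Omega)$ with $\Frob_p\Ver_p = p$, so the $2g$ Frobenius eigenvalues split into reciprocal pairs $\{\alpha_i,\beta_i\}$ with elementary symmetric functions $\alpha_i + \beta_i$ (eigenvalues of $T_p$) and $\alpha_i\beta_i = p$; the Newton identities then translate directly into the Hecke recursion. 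No genuinely hard input beyond Theorems \ref{ShimuraZetaFunction} and \ref{EichlerShimuraTheorem} is needed.
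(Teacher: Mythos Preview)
Your proposal is correct and is precisely the intended deduction: the paper states this result as an immediate corollary of Theorem \ref{ShimuraZetaFunction} (with a reference to Jordan--Livn\'e) and gives no further argument, so spelling out the logarithmic-derivative computation together with the standard Hecke recursion $T_{p^{r}} = T_p T_{p^{r-1}} - p\,T_{p^{r-2}}$ is exactly what is being left to the reader. The only point worth noting is that the recursion for $T_{p^r}$ is not proved in the paper itself, but it is a classical identity for Hecke operators at primes of good reduction and is implicit in the definition of $T_n$ via correspondences.
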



Let $\sigma_1$ as the usual divisor sum function. Let $w,f$ be as in Definition \ref{QuadraticRingDiscDefn} and $e_{D,N}$ be as in Definition \ref{DefnEDN}.

\begin{theorem}\label{EichlerSelbergTraceFormula}[Eichler's Trace Formula, \cite[\S 4]{Eichler}] Let $D$ be the discriminant of an indefinite rational quaternion algebra, $N$ a square-free integer coprime to $D$ and $\ell$ a prime not dividing $DN$. Let $\tr(T_n)$ denote the trace of $T_n$ on $H^0(X^D_0(N)_\CC,\Omega)$.

If $n$ is not a square and $(n,DN) =1$, then 

\begin{equation} \tr(T_n) = \sigma_1(n) - \sum_{s = -\lfloor 2 \sqrt{n}\rfloor}^{\lfloor 2\sqrt{n}\rfloor}\sum_{f\mid f(s^2 - 4n)} \frac{e_{D,N}\left(\frac{s^2 - 4n}{f^2}\right)}{w\left(\frac{s^2 - 4n}{f^2}\right)}.\end{equation} 

\end{theorem}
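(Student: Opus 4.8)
The plan is to reduce the theorem to a count of the fixed points of the Hecke correspondence $T_n$ on $X:=X^D_0(N)(\CC)$ and to evaluate that count with Eichler's embedding theorem (Theorem~\ref{EichlerEmbedding}). Viewing $T_n$ as a holomorphic correspondence on the compact Riemann surface $X$, the Lefschetz fixed-point formula for correspondences on a curve gives
\[
\#\mathrm{Fix}(T_n)\;=\;\sum_{i=0}^{2}(-1)^i\,\tr\!\left(T_n\mid H^i(X,\CC)\right),
\]
where $\#\mathrm{Fix}(T_n)$ is the intersection number of $T_n$ with the diagonal in $X\times X$. The operator $T_n$ acts on $H^0$ and on $H^2$ by the scalar $\sigma_1(n)$: for $n$ prime this is the common degree $n+1$ of the two forgetful maps defining $T_n$, and the general case follows from the recursion $T_pT_{p^{r-1}}=T_{p^r}+pT_{p^{r-2}}$ and the multiplicativity of $T_n$ in $n$ (equivalently, $T_n$ is self-transpose, so its two scalar actions on the Poincar\'e-dual spaces $H^0$ and $H^2$ agree). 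By the Hodge decomposition $H^1(X,\CC)=H^0(X,\Omega)\oplus\overline{H^0(X,\Omega)}$, the operator $T_n$ acts on the two summands with traces that are complex conjugates of one another; since the eigenvalues of $T_n$ on $H^0(X,\Omega)$ are real, these traces coincide, so $\tr(T_n\mid H^1)=2\,\tr(T_n\mid H^0(X,\Omega))$. Therefore
\[
\tr\!\left(T_n\mid H^0(X,\Omega)\right)\;=\;\sigma_1(n)\;-\;\tfrac12\,\#\mathrm{Fix}(T_n),
\]
and the theorem is equivalent to the identity $\#\mathrm{Fix}(T_n)=2\sum_{s^2<4n}\sum_{f\mid f(s^2-4n)}e_{D,N}((s^2-4n)/f^2)/w((s^2-4n)/f^2)$.

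To evaluate $\#\mathrm{Fix}(T_n)$ I would pass to the uniformization $X=\overline{\OO^1}\backslash\HH$, where $\OO\subset B_D$ is the Eichler order of level $N$, $B_D\otimes\RR\cong M_2(\RR)$, and $\Gamma:=\overline{\OO^1}=\OO^1/\{\pm1\}$; since $D>1$ this $\Gamma$ is cocompact, so no parabolic fixed point intervenes and no hyperbolic term appears. For $(n,DN)=1$ the operator $T_n$ is induced by $S_n=\{\gamma\in\OO:\N(\gamma)=n\}$, written as a union of $\sigma_1(n)$ left cosets $\Gamma\backslash S_n=\coprod_i\Gamma\alpha_i$ and acting by $z\mapsto\{\alpha_i z\}$. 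A point of $X$ lies in the intersection of $T_n$ with the diagonal exactly when some $\gamma\in S_n$ fixes a lift $z\in\HH$, which forces $\gamma$ to be elliptic, i.e.\ $s:=\tr(\gamma)$ satisfies $s^2<4n$; here the hypothesis that $n$ is not a square is used to exclude the central elements $\gamma=\pm\sqrt n$, which would fix all of $\HH$. For such a $\gamma$ we have $\gamma^2-s\gamma+n=0$, so $\QQ(\gamma)=\QQ(\sqrt{s^2-4n})$ is imaginary quadratic and $\ZZ[\gamma]=R_{s^2-4n}$; writing $R_\Delta:=\QQ(\gamma)\cap\OO$ for the order into which $\gamma$ embeds optimally, we get $\Delta=(s^2-4n)/f^2$ for some $f\mid f(s^2-4n)$. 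Grouping the fixed points by the $\Gamma$-conjugacy class of a stabilizing $\gamma$, one identifies, for each admissible $\Delta$, those classes with the optimal embeddings $R_\Delta\hookrightarrow\OO$ up to conjugacy, of which there are $e_{D,N}(\Delta)$ by Theorem~\ref{EichlerEmbedding}; the $\Gamma$-stabilizer of the corresponding fixed point is $R_\Delta^\times/\{\pm1\}$, of order $w(\Delta)/2$, which produces the weight $1/w(\Delta)$, up to a universal factor of $2$ — accounted for by the fact that $\gamma$ and its conjugate $\bar\gamma=s-\gamma$ lie in distinct cosets $\alpha_i$ but determine the same fixed point — that cancels the $\tfrac12$ above. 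Summing over all $s$ with $s^2<4n$ and all $f\mid f(s^2-4n)$ then yields the claimed identity.

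The step I expect to be the main obstacle is this last matching, carried out uniformly in $s$: relating the local contribution of a fixed point — its multiplicity in the intersection with the diagonal (and whether that intersection is transverse), the number of cosets $\alpha_i$ responsible for it, and the order of its $\Gamma$-stabilizer — to the quantity $e_{D,N}(\Delta)/w(\Delta)$ furnished by Theorem~\ref{EichlerEmbedding}. The delicate points are the parity of $s$, which decides whether $s^2-4n$ is a fundamental or an imprimitive discriminant and hence whether the inner sum over $f$ is nontrivial; the sporadic orders with $w(\Delta)\in\{4,6\}$; and the non-reduced loci where two distinct $\alpha_i$ share a fixed point. This bookkeeping is exactly the computation of Eichler in \cite[\S 4]{Eichler}; an alternative that sidesteps the transversality issue is to apply the Selberg trace formula directly to the space of weight-two forms for $\Gamma$. (When $D=1$ the group $\Gamma$ is no longer cocompact and the classical parabolic and Eisenstein contributions must be reinstated; we have in mind the indefinite case $D>1$.)

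Finally, for $n=p^r$ one can avoid correspondences entirely: by Theorem~\ref{ShimuraZetaFunction}, $\tr(T_{p^r})$ is determined by the point count $\#X^D_0(N)(\FF_{p^r})$, and that count can be computed moduli-theoretically — the ordinary points occur in pairs as in Theorem~\ref{OrdinaryCM} and contribute weighted class numbers, while the supersingular points are governed by $\Pic(Dp,N)$ as in Lemma~\ref{ComponentsIntersectionPointsLemma}, the embedding numbers $e_{D,N}$ again entering through Theorem~\ref{EichlerEmbedding}. This settles the prime-power case but not composite $n$ with more than one prime factor, so the correspondence-theoretic argument above is the one I would pursue for the full statement.
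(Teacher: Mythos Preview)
The paper does not supply its own proof of this theorem: it is stated with the attribution ``[Eichler's Trace Formula, \cite[\S 4]{Eichler}]'' and is immediately followed by a corollary, with no \texttt{proof} environment in between. So there is nothing in the paper to compare your argument against; the author is simply quoting Eichler's result.

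That said, your outline is the standard route to the formula and is broadly correct. Two comments. First, your parenthetical ``we have in mind the indefinite case $D>1$'' is an actual gap relative to the statement: the paper allows $D=1$ (and uses the formula in that case, e.g.\ for $X_0(q)=X^1_0(q)$), where $\Gamma$ is not cocompact and the parabolic/Eisenstein terms must be treated; the displayed formula survives because one is computing the trace on weight-two cusp forms, but that cancellation needs to be carried out. Second, you correctly flag the fixed-point bookkeeping as the crux, and you are right that the cleanest way around the transversality and multiplicity issues is the Selberg trace formula rather than Lefschetz; Eichler's original computation in \cite[\S 4]{Eichler} is exactly this bookkeeping, and your sketch defers to it at the same point the paper does.
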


\begin{corollary} $$\#X^D_0(N)(\FF_p) =  \sum_{s = -\lfloor 2\sqrt{p}\rfloor}^{\lfloor 2\sqrt{p}\rfloor}\sum_{f\mid f(s^2 - 4p)} \frac{e_{D,N}\left(\frac{s^2 - 4p}{f^2}\right)}{w\left(\frac{s^2 - 4p}{f^2}\right)}$$ 
\end{corollary}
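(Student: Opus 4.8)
The plan is to combine the two immediately preceding results: the counting formula $\#X^D_0(N)(\FF_p) = p+1 - \tr(T_p)$ (the $r=1$ case of the corollary to Theorem \ref{ShimuraZetaFunction}), and Eichler's Trace Formula (Theorem \ref{EichlerSelbergTraceFormula}) applied with $n = p$.

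First I would check that the hypotheses of Theorem \ref{EichlerSelbergTraceFormula} are met with $n=p$: since $p$ is prime it is certainly not a perfect square, and by the standing assumption of this subsection $p\nmid DN$, so $(p,DN)=1$. Hence the trace formula gives
\begin{equation*}
\tr(T_p) = \sigma_1(p) - \sum_{s = -\lfloor 2\sqrt{p}\rfloor}^{\lfloor 2\sqrt{p}\rfloor}\ \sum_{f\mid f(s^2 - 4p)} \frac{e_{D,N}\!\left(\frac{s^2 - 4p}{f^2}\right)}{w\!\left(\frac{s^2 - 4p}{f^2}\right)}.
\end{equation*}
Next, since $p$ is prime its only positive divisors are $1$ and $p$, so $\sigma_1(p) = 1 + p$.

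Finally I would substitute into $\#X^D_0(N)(\FF_p) = p+1-\tr(T_p)$, at which point the term $\sigma_1(p) = p+1$ cancels against $p+1$ and leaves exactly the double sum in the statement. There is essentially no obstacle here: the content is entirely carried by Theorems \ref{ShimuraZetaFunction} and \ref{EichlerSelbergTraceFormula}, and this corollary is just the specialization $n=r\text{-th power with }r=1$, i.e.\ $n=p$. The only point requiring a (trivial) remark is that one is allowed to invoke the trace formula at $n=p$ precisely because a prime is never a square, which is why the $\sigma_1$ term survives with the clean value $p+1$.
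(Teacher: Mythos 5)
Your proposal is correct and is exactly the argument the paper intends: the corollary follows by substituting Eichler's trace formula (Theorem \ref{EichlerSelbergTraceFormula}, applicable at $n=p$ since $p$ is prime, hence not a square, and $p\nmid DN$ by the standing hypothesis) into $\#X^D_0(N)(\FF_p)=p+1-\tr(T_p)$ and cancelling $\sigma_1(p)=p+1$. The paper states the corollary without proof precisely because it is this one-line specialization.
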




\subsection{Inert primes and the Eichler-Selberg trace formula}\label{sectionRSY}

We shall briefly follow Rotger, Skorobogatov and Yafaev \cite[\S 2]{RSY} to obtain a formula for the number of points of $C^D(N,d,m)(\FF_p)$. This will not give a strict numerical criterion for the presence or absence of points, but it will give an exact formula as we will see in Theorem \ref{NumberPointsGoodReductionTwist}. In certain cases however, such as when $m=DN$, we will be able to use the properties of superspecial points to get a numerical criterion, as in Corollary \ref{SuperspecialFullAL}. We begin by extending the definition of Hecke operators $T_n$.

Suppose that $(DN, \frac{n}{(n,DN)}) = 1$, $m = (n,DN)|DN$ and $n' = \frac{n}{(n,DN)}$. Let $S$ be a $\ZZ_p$-scheme and $\Phi_1: X^D_0(Nn')_S \to X^D_0(N)_S$ be the forgetful map. By abuse of notation, let $w_m$ denote the Atkin-Lehner involution on either $X^D_0(Nn')_S$ or $X^D_0(N)_S$. Note that $\Phi_1 w_m = w_m \Phi_1$, so if $s$ is a closed point of $S$ with $k(s) = \overline{k(s)}$, $T_{n'}w_m = w_mT_{n'}: X^D_0(N)_s \to \Div(X^D_0(N)_s)$.

\begin{definition}Suppose that $(DN, \frac{n}{(n,DN)}) = 1$, $m = (n,DN)|DN$ and $n' = \frac{n}{(n,DN)}$. Then define $T_n = w_mT_{n'}$.\end{definition}

Let $m\mid DN$ and consider the quotient $(X^D_0(N)/w_m)_s$. Let $\Omega$ denote the canonical sheaf of $(X^D_0(N)_s$. Since $w_m$ is an involution, $H^0(X^D_0(N)_s, \Omega)$ decomposes into the direct sum of the $+1$ and $-1$ eigenspaces under its action. Note that $H^0((X^D_0(N)/w_m)_s, \Omega)$ is the $+1$ eigenspace.

Suppose that $v\in H^0(X^D_0(N)_s, \Omega)$ such that $w_m v = v$. Then $w_m T_p v = T_p w_m v = T_p v$ and therefore $T_p$ acts on $H^0((X^D_0(N)/w_m)_s, \Omega)$.

\begin{definition}If $p\nmid DN$ and $m|DN$, then by $T^{(m)}_p$ we denote the restriction of $T_p$ to $H^0((X^D_0(N)/w_m)_s, \Omega)$.\end{definition}

Note that since $T^{(m)}_p$ is just $T_p$ on a smaller vector space, $T^{(m)}_p = \Frob_p + \Ver_p$ on ${\rm Jac}((X^D_0(N)/w_m)_s)$ by Theorem \ref{EichlerShimuraTheorem}.



\begin{corollary} Let $g'$ be the genus of $(X^D_0(N))/w_m)_{\FF_p}$. The zeta function of the quotient curve is $$ Z_p(X^D_0(N)/w_m, x) = \frac{ \det_{H^0(X^D_0(N)/w_m,\Omega)}(I_{g'}  - T^{(m)}_p x + px^2 I_{g'})}{(1-x)(1-px)}.$$ 
\end{corollary}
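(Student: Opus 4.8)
The plan is to repeat, essentially verbatim, the argument used to prove Theorem~\ref{ShimuraZetaFunction}, now with the quotient curve $Y := (X^D_0(N)/w_m)_{\FF_p}$ in place of $X^D_0(N)_{\FF_p}$. The first thing I would check is that $Y$ is a smooth projective curve over $\FF_p$: since $p\nmid DN$, Theorem~\ref{DeligneRapoportModelTheorem} gives that $X^D_0(N)_{\ZZ_p}$ has smooth special fiber, the involution $w_m$ extends to an automorphism of this $\ZZ_p$-model, and the quotient of a smooth projective curve over a perfect field by a finite group of automorphisms is again a smooth projective curve (a normal curve over a perfect field is smooth). Hence $Y$ is smooth projective of genus $g'$, and $H^0(Y,\Omega)$ is exactly the $+1$-eigenspace of $w_m$ on $H^0(X^D_0(N)_{\FF_p},\Omega)$, which is the space on which $T^{(m)}_p$ acts.

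Next I would run the same cohomological bookkeeping as in Theorem~\ref{ShimuraZetaFunction}. Smooth and proper base change gives $H^1(Y_{\overline\FF_p},\QQ_\ell)\cong H^0(Y,\Omega)\oplus H^0(Y,\Omega)^\vee$ for $\ell\ne p$, and the Weil conjectures for curves express $Z_p(Y,x)$ as $\prod_{i=0}^2\det(I-x\Frob_p\mid H^i(Y_{\overline\FF_p},\QQ_\ell))^{(-1)^{1+i}}$, with the $i=0$ factor equal to $(1-x)$ and the $i=2$ factor equal to $(1-px)$. For the $i=1$ factor I would pass to the Jacobian via $H^1(Y_s,\QQ_\ell)\cong H^0({\rm Jac}(Y_s),\Omega)\oplus H^0({\rm Jac}(Y_s),\Omega)^\vee$ and factor $(I_{2g'}-x\Frob_p)\mid_{H^1}$ as $(I_{g'}-x\Frob_p)(I_{g'}-x\Frob_p^\vee)\mid_{H^0({\rm Jac}(Y_s),\Omega)}$, which, using $\Frob_p^\vee=\Ver_p$, equals $(I_{g'}-x(\Frob_p+\Ver_p)+x^2\Frob_p\Ver_p)\mid_{H^0(Y,\Omega)}$.

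The final step is to identify $\Frob_p+\Ver_p$ with $T^{(m)}_p$ on ${\rm Jac}(Y_s)$ --- which is exactly the remark recorded immediately before the statement, obtained from the Eichler--Shimura relation (Theorem~\ref{EichlerShimuraTheorem}) restricted to the $\Frob_p$- and $\Ver_p$-stable subspace cut out by $w_m$ --- together with $\Frob_p\Ver_p=[p]$; this rewrites the $i=1$ factor as $\det_{H^0(X^D_0(N)/w_m,\Omega)}(I_{g'}-T^{(m)}_p x+px^2 I_{g'})$, and dividing by $(1-x)(1-px)$ gives the claim. Because every ingredient is either already established in the excerpt or a standard fact about curves, I do not expect a genuine obstacle; the only points needing a word of care are the smoothness (equivalently normality) of the quotient $Y$ and the verification that the genus of $Y$ over $\FF_p$ coincides with the dimension of the $+1$-eigenspace, both of which follow from good reduction of $X^D_0(N)$ at $p$.
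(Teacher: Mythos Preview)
Your proposal is correct and follows exactly the approach the paper takes: the paper's own proof simply says ``Since $T^{(m)}_p = \Frob_p + \Ver_p$ on ${\rm Jac}((X^D_0(N)/w_m)_s)$, we may reuse the proof of Theorem~\ref{ShimuraZetaFunction},'' and you have spelled out precisely what that reuse entails. The extra care you take in verifying smoothness of the quotient and identifying $H^0(Y,\Omega)$ with the $+1$-eigenspace is not in the paper but is harmless and arguably an improvement in exposition.
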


\begin{proof} Since $T^{(m)}_p = \Frob_p + \Ver_p$ on ${\rm Jac}((X^D_0(N)/w_m)_s)$, 
we may reuse the proof of Theorem \ref{ShimuraZetaFunction}.\end{proof}

We may thus see that if $r>1$ then $$\#(X^D_0(N)/w_m)(\FF_{p^r})=  p^r +1 - \tr(T^{(m)}_{p^r}) + p\tr(T^{(m)}_{p^{r-2}}),$$ and $$\#(X^D_0(N)/w_m)(\FF_p) = p +1 - \tr(T^{(m)}_p).$$



Now note that we may compute that $\tr(T_{p^r}^{(m)})$ (on $(X^D_0(N)/w_m)$) is equal to $\frac{1}{2}( \tr(T_{p^r}) + \tr(T_{p^rm}))$ (on $X^D_0(N)$) via the identification of $H^0(X^D_0(N)/w_m,\Omega)$ with the $+1$ eigenspace of $H^0(X^D_0(N),\Omega)$ with respect to $w_m$. We may thus explicitly compute the traces on the quotient curve using Eichler's Trace Formula to obtain the following. 

\begin{theorem}\label{QuotientCurvePoints} If $r>1$ then \begin{equation}\#(X^D_0(N)/w_m)(\FF_{p^r})=  p^r +1 - \frac{\tr(T_{p^r}) + \tr(T_{p^rm})}{2} + \frac{p(\tr(T_{p^{r-2}}) + \tr(T_{p^{r-2}m}))}{2}\end{equation} and if $r=1$ then \begin{equation}\#(X^D_0(N)/w_m)(\FF_{p})=  p +1 - \frac{\tr(T_{p}) + \tr(T_{pm})}{2} \end{equation}\end{theorem}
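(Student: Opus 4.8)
The plan is to combine the zeta-function description of the quotient curve $(X^D_0(N)/w_m)_{\FF_p}$ with the point-count formula already derived, and then to re-express the trace of the restricted Hecke operator $T^{(m)}_{p^r}$ in terms of traces of Hecke operators on $X^D_0(N)$ itself. The point-count formulas
$$\#(X^D_0(N)/w_m)(\FF_{p^r})=  p^r +1 - \tr(T^{(m)}_{p^r}) + p\tr(T^{(m)}_{p^{r-2}})$$
(for $r>1$) and the analogous $r=1$ formula are already in hand from the corollary preceding this theorem, so the only thing left to establish is the identity
$$\tr(T^{(m)}_{p^r}) = \tfrac12\bigl(\tr(T_{p^r}) + \tr(T_{p^r m})\bigr),$$
where the left-hand trace is taken on $H^0((X^D_0(N)/w_m)_s,\Omega)$ and the right-hand traces on $H^0(X^D_0(N)_s,\Omega)$. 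Substituting this identity into the point-count formulas, and doing the same for the $p^{r-2}$ term, yields exactly the two displayed equations in the statement.

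First I would recall the eigenspace decomposition: since $w_m$ is an involution, $H^0(X^D_0(N)_s,\Omega) = V^+ \oplus V^-$ where $V^{\pm}$ are the $\pm1$-eigenspaces, and $H^0((X^D_0(N)/w_m)_s,\Omega)$ is canonically identified with $V^+$. Because $T_p$ commutes with $w_m$ (the key observation made in the discussion preceding the definition of $T^{(m)}_p$, coming from $\Phi_1 w_m = w_m \Phi_1$), $T_p$ preserves both $V^+$ and $V^-$, and more generally so does every $T_{p^r}$ built from $T_p$, as well as $w_m T_{p^r} = T_{p^r m}$ (using the extended definition $T_{p^r m} = w_m T_{p^r}$ since $(DN, p^r) = 1$ and $(n, DN) = m$ with $n = p^r m$). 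So I would compute the trace of the idempotent-projected operator: on $V^+$ the operator $w_m$ acts as the identity, on $V^-$ it acts as $-1$, hence
$$\tr\bigl(T_{p^r}\mid V^+\bigr) = \tfrac12\tr\bigl((1+w_m)T_{p^r}\mid H^0\bigr) = \tfrac12\bigl(\tr(T_{p^r}) + \tr(w_m T_{p^r})\bigr) = \tfrac12\bigl(\tr(T_{p^r}) + \tr(T_{p^r m})\bigr).$$
Since $\tr(T^{(m)}_{p^r})$ is by definition $\tr(T_{p^r}\mid V^+)$, this is the identity needed. The same argument applied with $p^{r-2}$ in place of $p^r$ (valid for $r \geq 2$; when $r = 2$ the exponent is $0$ and $T_1 = \mathrm{id}$, $T_m = w_m$, which still fits) handles the second term.

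Then I would assemble the conclusion: plug the two trace identities into
$$\#(X^D_0(N)/w_m)(\FF_{p^r})=  p^r +1 - \tr(T^{(m)}_{p^r}) + p\tr(T^{(m)}_{p^{r-2}})$$
to get the $r>1$ formula, and into $\#(X^D_0(N)/w_m)(\FF_p) = p+1-\tr(T^{(m)}_p)$ to get the $r=1$ formula. The routine bookkeeping here is just the substitution. The one genuine subtlety — the step I expect to be the main obstacle — is making sure the extended Hecke operator $T_{p^r m}$ as defined (via $T_n = w_{(n,DN)}T_{n/(n,DN)}$) really does agree with $w_m T_{p^r}$ as operators on the relevant cohomology, and that this operator does preserve the eigenspace decomposition, i.e. that $w_m$, $T_{p^r}$, and hence their composite commute; this rests on $\Phi_1 w_m = w_m \Phi_1$ and the compatibility of the forgetful maps, which is stated in the excerpt but should be invoked carefully since the whole argument is the observation that everything in sight commutes with $w_m$. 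Once that commutation is granted, the trace-of-projection computation is immediate and the rest is formal.
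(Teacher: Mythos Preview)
Your proposal is correct and follows essentially the same approach as the paper: the paper derives the point-count formula in terms of $\tr(T^{(m)}_{p^r})$ from the zeta function, then states (in the paragraph immediately preceding the theorem) the identity $\tr(T^{(m)}_{p^r}) = \tfrac{1}{2}(\tr(T_{p^r}) + \tr(T_{p^r m}))$ via the identification of $H^0(X^D_0(N)/w_m,\Omega)$ with the $+1$-eigenspace of $w_m$. Your explicit projection computation $\tr(T_{p^r}\mid V^+) = \tfrac{1}{2}\tr((1+w_m)T_{p^r})$ is exactly the content behind that sentence, and your identification of the commutation $w_m T_{p^r} = T_{p^r} w_m$ as the key input matches the paper's setup.
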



If $\left(\dfrac{d}{p}\right) = 1$, then $C^D(N,d,m) \cong_{\QQ_p} X^D_0(N)$. If $\left(\dfrac{d}{p}\right) = -1$, then consider the following:

\begin{lemma}\label{GoodPointCount} \begin{equation} 2\#X^D_0(N)/w_m (\FF_{p^r}) = \#X^D_0(N)(\FF_{p^r}) + \# C^D(N,d,m)(\FF_{p^r})\end{equation}\end{lemma}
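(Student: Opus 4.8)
The plan is to exploit the standard fact that a degree-two quotient by an involution and its quadratic twist together account for all the geometric points, stratified by how the involution and Galois act. Concretely, over $\FF_p$ with $\left(\frac{d}{p}\right) = -1$, the curve $C^D(N,d,m)_{\FF_p}$ becomes isomorphic to $X^D_0(N)_{\FF_{p^2}}$ after base change to $\FF_{p^2}$, and its $\FF_p$-points correspond to those $\FF_{p^2}$-points $P$ of $X := X^D_0(N)_{\FF_{p^2}}$ satisfying $\Frob_p(P) = w_m(P)$ (the twisting cocycle sends the nontrivial element of $\Gal(\FF_{p^2}/\FF_p)$ to $w_m$). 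Meanwhile $X^D_0(N)(\FF_p)$ is the set of $P \in X(\FF_{p^2})$ with $\Frob_p(P) = P$, and $(X^D_0(N)/w_m)(\FF_p)$ is the image of $X(\FF_{p^2})$ modulo the relation identifying $P$ with $w_m(P)$, but fixed by $\Frob_p$.

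First I would write $Y = X^D_0(N)_{\FF_{p^2}}$ and count $\FF_p$-points of the quotient $Q = (X^D_0(N)/w_m)_{\FF_p}$ by partitioning $Y(\FF_{p^2})$ according to the $\langle \Frob_p \rangle$-orbit structure on the fibers of the degree-two map $\pi : Y \to Q_{\FF_{p^2}}$. A point $q \in Q(\FF_p)$ either (a) has $\pi^{-1}(q)$ a single point $P$ with $w_m(P) = P$, necessarily $\Frob_p$-fixed, contributing one point to both $X^D_0(N)(\FF_p)$ and $C^D(N,d,m)(\FF_p)$; or (b) has $\pi^{-1}(q) = \{P, w_m(P)\}$ with $P \ne w_m(P)$, and then $\Frob_p$ either fixes both (so $P, w_m(P) \in X^D_0(N)(\FF_p)$ and neither is in $C^D(N,d,m)(\FF_p)$, since $\Frob_p(P) = P \ne w_m(P)$) or swaps them (so $\Frob_p(P) = w_m(P)$, giving $P \in C^D(N,d,m)(\FF_p)$ — counted once as an $\FF_p$-point of the twist since $\{P,w_m(P)\}$ is a single $\Gal(\FF_{p^2}/\FF_p)$-orbit — while neither lies in $X^D_0(N)(\FF_p)$). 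In every case the contribution of $q$ to $\#X^D_0(N)(\FF_p) + \#C^D(N,d,m)(\FF_p)$ is exactly $2$: case (a) gives $1+1$, case (b)-fixed gives $2+0$, case (b)-swapped gives $0+2$. Summing over $q \in Q(\FF_p)$ and noting that every point of $X^D_0(N)(\FF_p)$ and of $C^D(N,d,m)(\FF_p)$ arises this way (both curves map to $Q$ over $\FF_p$), I obtain the stated identity; the argument applies verbatim with $p$ replaced by $p^r$.

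The main obstacle, such as it is, is bookkeeping the ramified points of $\pi$ carefully: a priori case (a) could include points where $Y$ is not étale over $Q$, and one must make sure these are handled consistently in all three point-counts (e.g. that a ramification point of $\pi$ which is $\FF_p$-rational contributes $1$ to $X^D_0(N)(\FF_p)$, $1$ to $C^D(N,d,m)(\FF_p)$, and $1$ to $(X^D_0(N)/w_m)(\FF_p)$, consistent with $1 = \tfrac12(1+1)$). Since $p \nmid DN$ the curves are smooth and $w_m$ has only finitely many fixed points, all reducing from characteristic zero, so this is routine; the identity of point-counts then follows, and one could alternatively phrase the whole thing via the character-sum identity $\#C^D(N,d,m)(\FF_p) = \sum_{q \in Q(\FF_p)} \bigl(1 - \left(\tfrac{d}{p}\right)\varepsilon(q)\bigr)$ where $\varepsilon(q) = \pm 1$ records whether $\pi^{-1}(q)$ splits, together with the matching expression for $\#X^D_0(N)(\FF_p)$, whose sum is $2\#Q(\FF_p)$ identically.
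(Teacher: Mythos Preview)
Your approach is essentially identical to the paper's: both partition the $\FF_{p^r}$-points of the quotient according to whether the fiber is a single $w_m$-fixed point or a pair, and in the latter case whether Frobenius fixes or swaps the two preimages, arriving at contributions $1+1$, $2+0$, $0+2$ respectively.

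One remark: your parenthetical in case (b)-swapped (``counted once as an $\FF_p$-point of the twist since $\{P,w_m(P)\}$ is a single $\Gal(\FF_{p^2}/\FF_p)$-orbit'') is muddled. Under the \emph{twisted} Galois action both $P$ and $w_m(P)$ are fixed, so each is a separate $\FF_p$-point of $C^D(N,d,m)$; that is why the contribution is $2$, not $1$. The fact that $\{P,w_m(P)\}$ is a single orbit under the \emph{untwisted} Frobenius is irrelevant here. Your final count $0+2$ is correct, but the justification should be that $w_m(P)$ also satisfies $\Frob_p(w_m(P)) = w_m(w_m(P))$ (since $w_m$ is defined over $\FF_p$ and hence commutes with $\Frob_p$).
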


\begin{proof} Consider the following quotients by the action of $w_m$.

\[\xymatrix{ X^D_0(N)(\FF_{p^r}) \ar[rd] & & C^D(N,d,m)(\FF_{p^r})\ar[ld] \\
& X^D_0(N)/w_m(\FF_{p^r}) & }\]

Consider that $X^D_0(N)/w_m(\FF_{p^r})$ is made up of the set of equivalence classes $[P,Q]$ such that $P,Q\in X^D_0(N)(\overline \FF_{p^r})$, $w_m(P) = Q$ and for all $\sigma \in \Gal(\overline \FF_{p^r}/\FF_{p^r})$ either $\sigma P = Q$ and $\sigma Q = P$ or $\sigma P = P$ and $\sigma Q = Q$. In either case, $P,Q \in \FF_{p^{2r}}$ and we may fix $\sigma$ as the generator of $\Gal(\FF_{p^{2r}}/\FF_{p^r})$. The former case indicates that $w_m\sigma P = w_m Q = P$ and thus $P,Q \in C^D(N,d,m)(\FF_{p^r})$ while the latter case indicates that $P,Q \in X^D_0(N)(\FF_{p^r})$.

If $P \ne Q$ then $[P,Q]$ is a point over which the (geometric) map $X^D_0(N) \to X^D_0(N)/w_m$ is unramified, and so gives rise to two points in either $X^D_0(N)(\FF_{p^r})$ or $C^D(N,d,m)(\FF_{p^r})$. 
If $P = Q$ then $[P,Q] = [P,P]$ is a ramification point for the map of schemes $X^D_0(N) \to X^D_0(N)/w_m$. Note however that we have both $w_m \sigma P = P$ and $\sigma P = P$ so $P$ lies both on $X^D_0(N)(\FF_{p^r})$ and $C^D(N,d,m)(\FF_{p^r})$.\end{proof}

We may then combine Lemma \ref{GoodPointCount} with Theorem \ref{QuotientCurvePoints} to obtain the following explicit formula.

\begin{theorem}\label{NumberPointsGoodReductionTwist} Let $p$ be inert in $\QQ(\sqrt d)$ and let $m|DN$. If $r>1$ then \begin{equation} \# C^D(N,d,m)(\FF_{p^r}) =p^r + 1 - \tr(T_{p^rm}) + p \tr(T_{p^{r-2}m})\end{equation} and if $r=1$ then \begin{equation} \# C^D(N,d,m)(\FF_{p}) =p + 1 - \tr(T_{pm})\end{equation} \end{theorem}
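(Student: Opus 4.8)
The plan is to reduce everything to a short algebraic manipulation of three formulas that are already available. For $p$ inert in $\QQ(\sqrt d)$ we have $\left(\frac{d}{p}\right) = -1$, so $C^D(N,d,m)$ is the nontrivial quadratic twist of $X^D_0(N)$ by $w_m$ over $\QQ_p$, and Lemma \ref{GoodPointCount} applies and rearranges to
$$\# C^D(N,d,m)(\FF_{p^r}) = 2\,\#\bigl(X^D_0(N)/w_m\bigr)(\FF_{p^r}) - \#X^D_0(N)(\FF_{p^r}).$$
The two terms on the right are then both known explicitly: the quotient point count by Theorem \ref{QuotientCurvePoints}, and the point count for $X^D_0(N)$ by the Corollary to Theorem \ref{ShimuraZetaFunction} (itself a consequence of Shimura's zeta function formula and Eichler--Shimura). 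So essentially no new input is required beyond this substitution and simplification.

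Concretely, for $r>1$ Theorem \ref{QuotientCurvePoints} gives $2\,\#(X^D_0(N)/w_m)(\FF_{p^r}) = 2(p^r+1) - \tr(T_{p^r}) - \tr(T_{p^r m}) + p\,\tr(T_{p^{r-2}}) + p\,\tr(T_{p^{r-2}m})$, while the Corollary to Theorem \ref{ShimuraZetaFunction} gives $\#X^D_0(N)(\FF_{p^r}) = p^r + 1 - \tr(T_{p^r}) + p\,\tr(T_{p^{r-2}})$. Subtracting the second from the first, the $\tr(T_{p^r})$ and $p\,\tr(T_{p^{r-2}})$ terms cancel and one is left with precisely $p^r + 1 - \tr(T_{p^r m}) + p\,\tr(T_{p^{r-2}m})$, as claimed. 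The case $r=1$ is handled the same way: there $2\,\#(X^D_0(N)/w_m)(\FF_p) = 2(p+1) - \tr(T_p) - \tr(T_{pm})$ and $\#X^D_0(N)(\FF_p) = p+1-\tr(T_p)$, and the difference is $p+1-\tr(T_{pm})$.

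I would also confirm the standing hypotheses that make the three ingredients legitimate, though none is an obstacle: that ``$\#C^D(N,d,m)(\FF_{p^r})$'' refers to the $\FF_{p^r}$-points of the special fibre of the smooth $\ZZ_p$-model $\CZ$ of $C^D(N,d,m)_{\QQ_p}$ (smoothness of $\CZ_{\FF_p}$ being the \'etale base change remark made at the start of this section), and likewise that $X^D_0(N)$ and $X^D_0(N)/w_m$ have smooth reduction at $p\nmid DN$. The only point requiring genuine care is purely notational: the extended Hecke operator $T_{p^r m}$ appearing in Theorem \ref{QuotientCurvePoints} and in the statement is $w_m T_{p^r}$ by the definition preceding Theorem \ref{QuotientCurvePoints} (since $(p^r m, DN) = m$ because $p\nmid DN$ and $m\mid DN$), so the same symbol is used consistently throughout and the cancellation above is valid. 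If anything is ``the hard part,'' it is simply checking that Lemma \ref{GoodPointCount} and Theorem \ref{QuotientCurvePoints} were set up with exactly this normalization — but the section is arranged precisely so that this holds.
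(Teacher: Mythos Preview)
Your proposal is correct and follows exactly the paper's approach: the paper simply states that one combines Lemma \ref{GoodPointCount} with Theorem \ref{QuotientCurvePoints} (together with the Corollary to Theorem \ref{ShimuraZetaFunction} for the $X^D_0(N)$ point count), and your substitution and cancellation spell this out in full.
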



 
In light of Theorem \ref{NumberPointsGoodReductionTwist}, we make the following definition.

\begin{definition}\label{newTF} For squarefree coprime integers $D$ and $N$, for $m|DN$ and for $p\nmid DN$, let $TF(D,N,m,p) := p + 1 - \tr_{H^0(X^D_0(N),\Omega)}(T_{pm})$.\end{definition}

\subsection{Inert primes and superspecial points}

We now use the theory of superspecial points to gain explicit criteria for the presence of rational points. 
Recall that the superspecial points of $X^D_0(N)(\overline\FF_p)$ are in bijection with $\Pic(Dp,N)$ via the embedding $c:X^D_0(N)_{\FF_p} \to X^D_0(Np)_{\FF_p}$ by Lemma \ref{ComponentsIntersectionPointsLemma}. Recall also that the action of $\Frob_p \in\Gal(\overline\FF_p/\FF_p)$ on the superspecial points in $X^D_0(N)(\overline\FF_p)$ is given by $w_p$ by Lemma \ref{ALGaloisLemma}. 

\begin{theorem}\label{SuperspecialGood}

If $p\nmid DN$ is inert in $\QQ(\sqrt d)$, then $C^D(N,d,m)(\QQ_p)$ is nonempty if either

\begin{itemize}
 \item  $mp\not\equiv 3 \bmod 4$ and $e_{Dp,N}(-4mp) \ne 0$, or

 \item  $mp \equiv 3 \bmod 4$ and one of $e_{Dp,N}(-4mp)$ or $e_{Dp,N}(-mp)$ is nonzero, or

 \item  $p =2$, $m=1$, and one of $e_{Dp,N}(-4)$ or $e_{Dp,N}(-8)$ is nonzero.
\end{itemize}
\end{theorem}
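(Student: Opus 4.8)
The plan is to pass to the smooth $\ZZ_p$-model $\CZ$ of $C^D(N,d,m)_{\QQ_p}$ introduced at the start of this section --- the quotient of $X^D_0(N)_{\ZZ_{p^2}}$ by $w_m\sigma$ --- and to exhibit an $\FF_p$-point on its (smooth, proper) special fibre, so that Hensel's lemma and properness give a $\QQ_p$-point. First I would unwind the descent datum: since $p$ is inert, $\QQ_p(\sqrt d)=\QQ_{p^2}$ is unramified, and a computation with the quotient identifies
\[
\CZ(\FF_p) \;=\; \{\, Q \in X^D_0(N)(\overline\FF_p) \;:\; \Frob_p(Q) = w_m(Q) \,\}.
\]
Thus it suffices to produce a single such $Q$, and I would look for one among the superspecial points.

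By Lemma \ref{ComponentsIntersectionPointsLemma} the superspecial locus of $X^D_0(N)_{\overline\FF_p}$ is in $W$-equivariant bijection with $\Pic(Dp,N)$, and by Lemma \ref{ALGaloisLemma} the action of $\Frob_p$ on this locus is the Atkin--Lehner operator $w_p$ (corresponding to $[I]\mapsto[I\mathfrak Q_p]$ under Theorem \ref{ALEquivariance}). Hence a superspecial $Q$ satisfies $\Frob_p(Q)=w_m(Q)$ if and only if $w_{mp}(Q)=Q$, using that $w_p$ and $w_m$ commute, are involutions, and that $(m,p)=1$. Running the argument of Corollary \ref{EmbeddingFixedPoint} for the two-sided ideal of norm $mp$ in $\Lambda=\End_{\OO,\mathcal S}(M)$ --- an Eichler order of level $N$ in $B_{Dp}$ by Theorem \ref{ALEquivariance} --- such a fixed $Q$ exists if and only if one of the following quadratic orders admits an embedding into some Eichler order of level $N$ in $B_{Dp}$: the order $\ZZ[\sqrt{-mp}]$ when $mp\notin\{2,3\}$; $\ZZ[\sqrt{-2}]$ or $\ZZ[\zeta_4]$ when $mp=2$; and $\ZZ[\sqrt{-3}]$ or $\ZZ[\zeta_6]$ when $mp=3$. (The alternatives involving $\zeta_4$ and $\zeta_6$ arise exactly as in the proof of Corollary \ref{EmbeddingFixedPoint}, from a generator $\gamma$ with $\gamma^2=u\,mp$ for a nontrivial unit $u$, which forces $mp\in\{2,3\}$.)

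It remains to translate these embedding conditions into nonvanishing of the $e_{Dp,N}$. Here Eichler's embedding theorem (Theorem \ref{EichlerEmbedding}, definite case) gives that $R_\Delta$ admits an \emph{optimal} embedding into some Eichler order of level $N$ in $B_{Dp}$ precisely when $e_{Dp,N}(\Delta)\neq 0$. If $mp\not\equiv 3\bmod 4$ then $\ZZ[\sqrt{-mp}]=R_{-4mp}$ is the maximal order of $\QQ(\sqrt{-mp})$, so every embedding of it is optimal and it embeds if and only if $e_{Dp,N}(-4mp)\neq 0$; this yields the first case, and in the exceptional subcase $mp=2$ it, together with the optimality of every embedding of $\ZZ[\zeta_4]=R_{-4}$, yields the third. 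If $mp\equiv 3\bmod 4$ then $R_{-4mp}=\ZZ[\sqrt{-mp}]$ sits with conductor $2$ inside the maximal order $R_{-mp}$, so it embeds into a given Eichler order if and only if it does so optimally or $R_{-mp}$ does; hence it embeds into some Eichler order of level $N$ in $B_{Dp}$ if and only if $e_{Dp,N}(-4mp)\neq 0$ or $e_{Dp,N}(-mp)\neq 0$, which is the second case (and in the subcase $mp=3$ the extra alternative $\ZZ[\zeta_6]=R_{-mp}$ is already accounted for by the term $e_{Dp,N}(-mp)$). In each listed case the stated hypothesis therefore produces the required superspecial $Q$, hence a $\QQ_p$-point of $C^D(N,d,m)$.

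The step I expect to be the main obstacle is the middle one: pinning down \emph{which} quadratic orders govern the existence of a $w_{mp}$-fixed superspecial point --- in particular justifying the extra $\zeta_4/\zeta_6$ alternatives when $mp\in\{2,3\}$, and keeping straight the distinction between ``embeds'' and ``optimally embeds'' for the non-maximal order $\ZZ[\sqrt{-mp}]$ when $mp\equiv 3\bmod 4$ --- together with checking that the bijection between the superspecial locus and $\Pic(Dp,N)$ intertwines $\Frob_p$ with $w_p$ uniformly, including when $p=2$. Once that is in place, the translation to the $e_{Dp,N}$ and the final Hensel/properness argument are routine.
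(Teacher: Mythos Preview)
Your proposal is correct and follows essentially the same route as the paper: pass to the smooth model $\CZ$, identify superspecial $\FF_p$-points of $\CZ$ with $w_{mp}$-fixed superspecial points of $X^D_0(N)_{\overline\FF_p}$ via Lemma \ref{ALGaloisLemma}, invoke Corollary \ref{EmbeddingFixedPoint} to reduce to an embedding of $\ZZ[\sqrt{-mp}]$ (or $\ZZ[\zeta_4]$ when $mp=2$), and then translate via Theorem \ref{EichlerEmbedding} into nonvanishing of the relevant $e_{Dp,N}(\Delta)$. Your extra care with the $mp=3$ case is harmless but unnecessary, since $\ZZ[\zeta_6]\supset\ZZ[\sqrt{-3}]$ means the $\zeta_6$ alternative is already absorbed into the $e_{Dp,N}(-mp)$ term.
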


\begin{proof} We wish to determine if $\CZ(\FF_p)$ contains a superspecial point. This occurs if and only if there is a superspecial point $P\in X^D_0(N)(\overline\FF_p)$ such that $P = w_{mp}P$. By Corollary \ref{EmbeddingFixedPoint}, there is a superspecial $w_{mp}$-fixed point $P$ if and only if there is an embedding of $\ZZ[\sqrt{-mp}]$ into $\End_{\iota(\OO)}(A)$ where $(A,\iota)$ corresponds to $P$, or possibly $\ZZ[\zeta_4]$ if $mp =2$. 

If $mp =2$ then both $\ZZ[\zeta_4]$ and $\ZZ[\sqrt{-2}]$ are maximal orders, of discriminants $-4$ and $-8$ respectively. If $mp \equiv 1 \bmod 4$, then $\ZZ[\sqrt{-mp}]$ is maximal and of discriminant $-4mp$. If $mp \equiv 3\bmod 4$ then $\ZZ[\sqrt{-mp}]$ again has discriminant $-4mp$ but is no longer maximal. It is contained in $\ZZ[\frac{1+ \sqrt{-mp}}{2}]$, which is maximal and has discriminant $-mp$. Since there are no intermediate orders, this completes the proof.\end{proof}

\begin{corollary}\label{SuperspecialFullAL} If $p\nmid DN$ is inert in $\QQ(\sqrt d)$, $C^D(N,d,m)(\QQ_p)$ is nonempty when $m=DN$. Moreover, $\CZ(\FF_p)$ contains a point corresponding to a superspecial surface.\end{corollary}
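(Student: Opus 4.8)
The plan is to read this off Theorem~\ref{SuperspecialGood} after feeding it the right discriminant. With $m = DN$ we have $mp = DNp$, and I would test the \emph{fundamental} discriminant $\Delta := \disc(\QQ(\sqrt{-DNp}))$ of the imaginary quadratic field $\QQ(\sqrt{-DNp})$. Recall that for a squarefree positive integer $n$ one has $\disc(\QQ(\sqrt{-n})) = -n$ when $-n\equiv 1\bmod 4$ and $-4n$ otherwise, and both are fundamental discriminants. Thus, if $DNp\equiv 3\bmod 4$ then $\Delta=-DNp$ and $mp\equiv 3\bmod 4$, so the second bullet of Theorem~\ref{SuperspecialGood} applies and it suffices to show $e_{Dp,N}(-DNp)=e_{Dp,N}(\Delta)\ne 0$; if $DNp\not\equiv 3\bmod 4$ then $\Delta=-4DNp$ and $mp\not\equiv 3\bmod 4$, so the first bullet applies (or the third, in the degenerate case $D=N=1$, $p=2$, where it reads $e_{2,1}(-8)=e_{2,1}(\Delta)$) and it suffices to show $e_{Dp,N}(-4DNp)=e_{Dp,N}(\Delta)\ne 0$. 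Either way the claim reduces to $e_{Dp,N}(\Delta)\ne 0$ for this one fundamental $\Delta$.

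To prove $e_{Dp,N}(\Delta)\ne 0$, first note that $\Delta$ being fundamental forces the conductor $f(\Delta)$ to equal $1$, so by the definition of the Eichler symbol $\left\{\frac{\Delta}{\ell}\right\}=\left(\frac{\Delta}{\ell}\right)$ for every prime $\ell$. Next, every prime dividing $DpN$ divides $|\Delta|\in\{DNp,\,4DNp\}$, hence for each such prime $\ell$ the Kronecker symbol $\left(\frac{\Delta}{\ell}\right)$ vanishes; in particular this includes the prime $2$ when $2\mid DN$. Therefore every factor of $\prod_{\ell\mid Dp}\left(1-\left\{\frac{\Delta}{\ell}\right\}\right)\prod_{q\mid N}\left(1+\left\{\frac{\Delta}{q}\right\}\right)$ equals $1$, so $e_{Dp,N}(\Delta)=h(\Delta)>0$, and Theorem~\ref{SuperspecialGood} gives $C^D(N,d,DN)(\QQ_p)\ne\emptyset$. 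The ``moreover'' is then immediate from the proof of that theorem: the $\QQ_p$-point arises by Hensel-lifting a point of $\CZ(\FF_p)$ which, via Corollary~\ref{EmbeddingFixedPoint}, corresponds to a $w_{DNp}$-fixed superspecial surface --- the embedding of the maximal order $R_\Delta$ into some $\Lambda=\End_{\OO,\mathcal S}(M)$ being exactly what $e_{Dp,N}(\Delta)\ne 0$ supplies.

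There is no substantive obstacle here; the corollary is a direct specialization of Theorem~\ref{SuperspecialGood}. The only care needed is the bookkeeping of which bullet of that theorem applies according to $DNp\bmod 4$ and whether $p=2$, together with the elementary observation that in every case the tested discriminant is the fundamental one, at which all Eichler symbols at primes dividing $DpN$ degenerate to $0$. One should also double-check that the primes $q\mid N$ contribute the harmless factors $1+0=1$ rather than vanishing factors $1+(-1)$, which is exactly the vanishing of $\left(\frac{\Delta}{q}\right)$ just noted.
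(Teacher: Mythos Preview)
Your proof is correct and is exactly the intended argument: the paper states this corollary immediately after Theorem~\ref{SuperspecialGood} without further proof, and you have written out the verification that with $m=DN$ the relevant $e_{Dp,N}(\Delta)$ is nonzero because every prime dividing $DpN$ also divides the fundamental discriminant $\Delta$, killing all the Eichler symbols. The case split on $DNp \bmod 4$ and the handling of the ``moreover'' clause via the proof of Theorem~\ref{SuperspecialGood} are both right.
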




\section{Local Points at Ramified Primes}

Throughout this section we will fix $D$ the discriminant of an indefinite quaternion $\QQ$-algebra, $N$ a squarefree integer coprime to $D$, a squarefree integer $d$, an integer $m\mid DN$ and a prime $p\nmid DN$ ramified in $\QQ(\sqrt d)$. Let $X^D_0(N)_{/\QQ}$ be given by Corollary \ref{DefnCoarseModuliScheme}. Let $w_m$ be as in Definition \ref{AtkinLehnerOO}. Let $C^D(N,d,m)_{/\QQ}$ the twist of $X^D_0(N)$ by $\QQ(\sqrt{d})$ and $w_m$. If $\Delta<0$, let $H_{\Delta}(X) \in \ZZ[X]$ \cite[p.285]{Cox} denote the Hilbert Class Polynomial of discriminant $\Delta$, and recall that this is simply the polynomial whose roots are the $j$-invariants of elliptic curves with complex multiplication by $R_\Delta$.

\begin{theorem}\label{RamifiedMainTheorem} Suppose that $p\nmid 2DN$ is a prime which is ramified in $\QQ(\sqrt d)$ and $m|DN$. Then $C^D(N,d,m)(\QQ_p)\ne \emptyset$ if and only if one of the following occurs.

\begin{enumerate}

\item $\left(\frac{-m}{p}\right) = 1$, $e_{D,N}(-4m) \ne 0$, and $H_{-4m}(X) =0$ has a root modulo $p$

\item $\left(\frac{-m}{p}\right) = 1$, $m \equiv 3\bmod 4$, $e_{D,N}(-m) \ne 0$, and $H_{-m}(X) = 0$ has a root modulo $p$

\item  $\left(\frac{-DN}{p}\right) = -1$,$m= DN$, $2\nmid D$, $\left(\frac{-p}{q}\right) = -1$ for all primes $q\mid D$, and $\left(\frac{-p}{q}\right) = 1$ for all primes $q\mid N$ such that $q\ne 2$

\item  $2\mid N$, $\left(\frac{-DN/2}{p}\right) = -1$, $m = DN/2$, $\left(\frac{-p}{q}\right) = -1$ for all primes $q\mid D$, and $\left(\frac{-p}{q}\right) = 1$ for all primes $q\mid N$ such that $q\ne 2$

\item  $2\mid D$, $\left(\frac{-DN}{p}\right) = -1$,$m= DN$, $p\equiv \pm 3 \bmod 8$, $\left(\frac{-p}{q}\right)= -1$ for all primes $q\mid (D/2)$, and $\left(\frac{-p}{q}\right) = 1$ for all primes $q\mid N$.

\item  $2\mid D$, $\left(\frac{-DN/2}{p}\right) = -1$,$m = DN/2$, $DN \equiv 2,6,$ or $10 \bmod 16$, $p\equiv \pm 3 \bmod 8$, $\left(\frac{-p}{q}\right) = -1$ for all primes $q\mid D$, and $\left(\frac{-p}{q}\right) = 1$ for all primes $q\mid N$.

\end{enumerate}

\end{theorem}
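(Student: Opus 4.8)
The plan is to convert the problem into a question about the special fiber $X^D_0(N)_{\FF_p}$, where the good-reduction geometry and the embedding theorems of the first section do the work. Write $L=\QQ_p(\sqrt d)$; by hypothesis $L/\QQ_p$ is ramified, so $L$ has residue field $\FF_p$ and the nontrivial element $\tau\in\Gal(L/\QQ_p)$ acts trivially on it. Since $p\nmid DN$, the curve $X^D_0(N)$ has good reduction at $p$ with smooth proper $\ZZ_p$-model $\mathcal{X}$ (Theorem \ref{DeligneRapoportModelTheorem}), and $w_m$ extends to an automorphism of $\mathcal{X}$ over $\ZZ_p$. Because $C^D(N,d,m)$ is the form of $X^D_0(N)$ cut out by the cocycle sending $\tau$ to $w_m$, Galois descent gives that $C^D(N,d,m)(\QQ_p)\ne\emptyset$ if and only if there is a point $P\in X^D_0(N)(L)$ with $\tau(P)=w_m(P)$. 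My first and main claim is that this holds if and only if $X^D_0(N)_{\FF_p}$ has a $w_m$-fixed $\FF_p$-rational point.

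For the forward implication, extend $P$ to $\tilde P\in\mathcal{X}(\OO_L)$ by the valuative criterion and reduce modulo the maximal ideal: since $\tau$ is trivial on the residue field, $\overline{\tau(P)}=\overline P$, while $\overline{w_m(P)}=w_m(\overline P)$ as $w_m$ is defined over $\ZZ_p$; hence $\overline P$ is a $w_m$-fixed $\FF_p$-point. For the reverse implication, I would show that a $w_m$-fixed point $\overline Q\in X^D_0(N)(\FF_p)$ lifts to a genuine $\QQ_p$-rational $w_m$-fixed point of $X^D_0(N)$. The involution $w_m$ acts on the one-dimensional tangent space of $\mathcal{X}_{\FF_p}$ at $\overline Q$ by $-1$ (if it acted by $+1$ it would act trivially on the completed local ring, hence on the connected curve $X^D_0(N)_{\FF_p}$, contradicting $m>1$). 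Since $p$ is odd---the only point where $p\nmid 2DN$ rather than merely $p\nmid DN$ is used---one may pick a formal coordinate $t$ at $\overline Q$ on $\mathcal{X}$ over $\ZZ_p$ with $w_m(t)=-t$ (replace $t$ by $\frac{1}{2}(t-w_m(t))$), so the fixed locus of $w_m$ through $\overline Q$ is $\{t=0\}\cong\Spec\ZZ_p$. The corresponding $\QQ_p$-point $Q$ satisfies $\tau(Q)=Q=w_m(Q)$ and so descends to a $\QQ_p$-point of the twist.

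It remains to decide when $X^D_0(N)_{\FF_p}$ has a $w_m$-fixed $\FF_p$-point. By Lemma \ref{ordinaryorsupersingular} such a point $(A,\iota)$ is either ordinary or supersingular, and (as $p\nmid D$) a supersingular one is superspecial by Lemma \ref{pnmidDsupersingular}. In the ordinary case Theorem \ref{OrdinaryCM} describes the $w_m$-fixed points as the pairs $A\cong E_0\times E_0'$ with $E_0,E_0'$ ordinary elliptic curves over $\FF_p$ satisfying $\End_{\FF_p}(E_0)\cong\End_{\FF_p}(E_0')$, isomorphic to $R_{-4m}$ or, when $m\equiv 3\bmod 4$, to $R_{-m}$. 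By Deuring's reduction theory such a curve over $\FF_p$ exists exactly when $p$ splits in $\QQ(\sqrt{-m})$, i.e. $\left(\frac{-m}{p}\right)=1$, and the Hilbert class polynomial $H_{-4m}$ (resp. $H_{-m}$) has a root modulo $p$; as $p\nmid 2m$, the elliptic curve attached to such a root has endomorphism ring exactly $R_{-4m}$ (resp. $R_{-m}$). Finally, by Theorem \ref{EichlerEmbedding} the pair $(E_0,E_0')$ underlies a point of $X^D_0(N)$ precisely when $R_{-4m}$ (resp. $R_{-m}$) admits an optimal embedding into an Eichler order of level $N$ in $B_D$, i.e. when $e_{D,N}(-4m)\ne 0$ (resp. $e_{D,N}(-m)\ne 0$). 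Together these yield alternatives (1) and (2).

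In the supersingular case, an $\FF_p$-rational superspecial point is one that is $w_p$-fixed over $\overline\FF_p$ (Lemma \ref{ALGaloisLemma}), so we seek a superspecial point fixed by both $w_p$ and $w_m$; this is precisely what Corollary \ref{SuperspecialFpRationalALFixed} enumerates in the case $p\ne 2$, and its four cases are exactly alternatives (3)--(6), which (by Theorem \ref{GoodReductionSimultaneousEmbeddings}) can occur only for $m=DN$ or $m=DN/2$. Since (1)--(2) force $\left(\frac{-m}{p}\right)=1$ while (3)--(6) force the opposite Kronecker symbol, the alternatives are mutually exclusive and their disjunction is exactly the condition ``$X^D_0(N)_{\FF_p}$ has a $w_m$-fixed $\FF_p$-point,'' which finishes the proof. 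I expect the real work to be in the first reduction: handling the fact that $X^D_0(N)$ is only a coarse moduli scheme, checking that $w_m$ and the descent datum behave well on the integral model $\mathcal{X}$, and carrying out the tangent-space computation at the $w_m$-fixed points cleanly.
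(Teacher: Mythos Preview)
Your proposal is correct, and your key reduction---that $C^D(N,d,m)(\QQ_p)\ne\emptyset$ if and only if $X^D_0(N)_{\FF_p}$ has an $\FF_p$-rational $w_m$-fixed point---is exactly the same intermediate statement the paper establishes (compare Lemma~\ref{RamifiedKeyLemma} and the sentence preceding it about $\FF_p(Q_i)=\FF_p$). The subsequent case analysis (ordinary via Theorem~\ref{OrdinaryCM} and Deuring, superspecial via Lemma~\ref{ALGaloisLemma} and Corollary~\ref{SuperspecialFpRationalALFixed}) is also the same.

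Where you differ from the paper is in how you reach that reduction. The paper forms the quotient $\CZ=X^D_0(N)_{\ZZ_p[\sqrt d]}/\langle w_m\sigma\rangle$ over $\ZZ_p$, proves $\CZ_{\FF_p}=2\Gamma$ with $\Gamma\cong (X^D_0(N)/w_m)_{\FF_p}$, shows the singularities of $\CZ$ are tame cyclic quotient singularities sitting over the branch points $Q_i$ (this is where $p\ne 2$ enters), and blows each one up once to obtain a regular model whose only multiplicity-one components are the exceptional $\PP^1_{\FF_p(Q_i)}$'s. Hensel's lemma then reduces everything to whether some $Q_i$ is $\FF_p$-rational. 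You bypass the model-building entirely: on the smooth integral model $\mathcal{X}/\ZZ_p$ you linearize the order-two action of $w_m$ at a fixed closed point (again using $p\ne 2$) to see that the $w_m$-fixed locus is \'etale over $\ZZ_p$ there, so an $\FF_p$-rational fixed point lifts to a $\QQ_p$-rational one, which then tautologically descends to the twist. Your argument is shorter and more self-contained; the paper's has the side benefit of actually producing the minimal regular model of the twist over $\ZZ_p$, which records more than just whether a $\QQ_p$-point exists. Both routes use the oddness of $p$ in the same essential way, and both land on the same moduli-theoretic endgame.
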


Compare this to the following theorem.

\begin{theorem}\label{OzmanRamified} Let $p$ be a prime, $(p,2N) =1$, $D=1$, and $m = N$. If $C^D(N,d,m)(\QQ_p)$ is nonempty then $H_{-4m}(X) =0$ has a root modulo $p$. If $H_{-4m}(X)$ has a root modulo $p$ and additionally $p\nmid \disc(H_{-4m}(X))$ then $C^D(N,d,m)(\QQ_p)$ is nonempty. \end{theorem}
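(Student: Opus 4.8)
The plan is to prove both directions by passing, via Galois descent together with the good reduction of $X_0(N)=X^1_0(N)$ at $p$, between $\QQ_p$-points of the twist and $w_N$-fixed points of $X_0(N)$, and then to identify the latter with the CM points attached to the order $\ZZ[\sqrt{-N}]$ of discriminant $-4N$. Write $L=\QQ_p(\sqrt d)$; since $p$ is ramified in $\QQ(\sqrt d)$ and $p\nmid 2N$, the extension $L/\QQ_p$ is ramified quadratic with residue field $\FF_p$, and I write $\sigma$ for its nontrivial automorphism. By the definition of the twist, $C^1(N,d,N)(\QQ_p)$ is in bijection with $\{\,P\in X_0(N)(L): \sigma(P)=w_N(P)\,\}$; in particular any $\QQ_p$-rational point of $X_0(N)$ that is genuinely fixed by $w_N$ produces a point of $C^1(N,d,N)(\QQ_p)$.

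For the converse implication, assume $H_{-4N}(X)$ has a root modulo $p$ and $p\nmid\disc(H_{-4N}(X))$; then the root is simple, so Hensel's lemma lifts it to $\tilde\jmath\in\ZZ_p$ with $H_{-4N}(\tilde\jmath)=0$. Choose any $E/\QQ_p$ with $j(E)=\tilde\jmath$. Over $\bar\QQ_p$ it has CM by an order containing $\ZZ[\sqrt{-N}]$, so $E[\sqrt{-N}]$ is a cyclic subgroup scheme of order $N$ (here $N$ squarefree is used). Although the endomorphism $\sqrt{-N}$ need not be $\QQ_p$-rational, the subgroup $E[\sqrt{-N}]$ is its own $\Gal(\bar\QQ_p/\QQ_p)$-conjugate — it is equally the kernel of $-\sqrt{-N}$ — so the pair $(E,E[\sqrt{-N}])$ descends to a point $P_0\in X_0(N)(\QQ_p)$. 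Since $w_N$ is defined over $\QQ_p$ and $w_N(P_0)$ is a $\QQ_p$-point that is $\bar\QQ_p$-isomorphic to $P_0$, we get $w_N(P_0)=P_0$; hence $\sigma(P_0)=P_0=w_N(P_0)$ and $P_0\in C^1(N,d,N)(\QQ_p)$.

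For the forward implication, pick $P\in X_0(N)(L)$ with $\sigma(P)=w_N(P)$. As $p\nmid N$, $X_0(N)$ has good reduction at $p$ (Theorem \ref{DeligneRapoportModelTheorem}), so $P$ extends to an $\OO_L$-point and reduces to $\bar P\in X_0(N)(\FF_p)$; since $\sigma$ is trivial on the residue field and reduction is functorial, $w_N(\bar P)=\overline{w_N(P)}=\overline{\sigma(P)}=\bar P$. Write $\bar P=(E_0,C_0)$. The isomorphism $(E_0/C_0,E_0[N]/C_0)\cong(E_0,C_0)$ expressing $w_N$-fixedness yields $\phi\in\End(E_0)$ of degree $N$ with $\ker\phi=C_0$ cyclic and $(\phi)=(\bar\phi)$ as ideals; writing $\phi=u\bar\phi$ for the resulting unit $u$ gives $\phi^2=Nu$, and since $N$ is squarefree this forces $u=-1$, i.e.\ $\phi^2=-N$, so $\ZZ[\sqrt{-N}]=\ZZ[\phi]\hookrightarrow\End(E_0)$. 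Thus $E_0$ has CM by $R_{-4N}$, or by $R_{-N}$ when $N\equiv 3\bmod 4$; as $j(E_0)\in\FF_p$, this means $H_{-4N}$ (resp.\ $H_{-N}$) has a root modulo $p$. To obtain exactly the stated conclusion one feeds this into Theorem \ref{RamifiedMainTheorem} with $D=1$, $m=N$: its case $\left(\frac{-N}{p}\right)=1$ already records $H_{-4N}$, and in its case $\left(\frac{-N}{p}\right)=-1$ Corollary \ref{SplittingHilbertClassPolynomial} converts the quadratic-residue conditions into the assertion that $H_{-4N}$ has a root modulo $p$.

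The step I expect to be the real obstacle is precisely this last identification of the correct Hilbert class polynomial: the reduction argument only locates $j(E_0)$ among the roots of $H_{-4N}$ or of $H_{-N}$, and separating the two cases — which is exactly why the general Theorem \ref{RamifiedMainTheorem} carries a distinct item for $N\equiv 3\bmod 4$ — is where the arithmetic input (ring class fields of conductor two, or Corollary \ref{SplittingHilbertClassPolynomial}) enters, and it is also what fixes the range in which the theorem as stated is valid. Secondary difficulties are the Galois-descent argument for the pair $(E,E[\sqrt{-N}])$ in the converse, which must be phrased in terms of the pair rather than the possibly non-descending endomorphism $\sqrt{-N}$, and the use of $p\nmid\disc(H_{-4N})$ to guarantee that a mod-$p$ root actually lifts to $\ZZ_p$.
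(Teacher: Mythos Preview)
Your argument has a genuine gap in the forward implication, and your proposed fix is circular. When you reduce the $L$-point to $\bar P=(E_0,C_0)\in X_0(N)(\FF_p)$ and extract $\phi\in\End(E_0)$ with $\phi^2=-N$, you only obtain an \emph{embedding} $\ZZ[\sqrt{-N}]\hookrightarrow\End(E_0)$, not an equality. If $N\equiv 3\bmod 4$ and $\End(E_0)$ happens to be the maximal order $R_{-N}=\ZZ[(1+\sqrt{-N})/2]$, then $j(E_0)$ is a root of $H_{-N}\bmod p$, and there is no reason it should also be a root of $H_{-4N}\bmod p$. You correctly flag this as ``the real obstacle'', but your repair appeals to Corollary~\ref{SplittingHilbertClassPolynomial}, which in the paper is \emph{deduced from} Theorem~\ref{OzmanRamified} (combined with Theorem~\ref{RamifiedMainTheorem}); invoking it here is circular. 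Even in the case $\left(\frac{-N}{p}\right)=1$ your claim that Theorem~\ref{RamifiedMainTheorem} ``already records $H_{-4N}$'' overlooks its item~(2), which records only $H_{-N}$.

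The paper sidesteps this entirely by working over $\QQ_p$ rather than over $\FF_p$. Citing Ozman's Proposition~4.6, the existence of a $\QQ_p$-point on the twist produces an elliptic curve over $\QQ_p$ with good reduction and CM by the \emph{exact} order $\ZZ[\sqrt{-m}]$; in characteristic zero this pins down $j(E)$ as a root of $H_{-4m}$ itself, and reduction modulo $p$ then gives what is claimed with no ambiguity between $H_{-4m}$ and $H_{-m}$. For the converse the paper again goes through Ozman's criterion (a prime of residue degree one in $\QQ[X]/(H_{-4m}(X))$), using $p\nmid\disc(H_{-4m})$ together with the Dedekind--Kummer theorem to pass from a root modulo $p$ to such a prime. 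Your explicit CM-point construction for the converse is a reasonable alternative and appears to work, but note that the step ``$\phi^2=Nu$ forces $u=-1$ since $N$ is squarefree'' also needs care when $\End(E_0)$ has extra units.
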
 

\begin{proof} Suppose that $p>2$, $D=1$ and $m=N$. By \cite[Proposition 4.6]{Oz09}, $C^D(N,d,m)(\QQ_p)$ is nonempty if and only if there is a prime $\nu$ of $\mathbb{B} = \QQ[X]/(H_{-4m}(X))$ such that $f(\nu|p) =1$. If fact, if $C^D(N,d,m)(\QQ_p)$ is nonempty, then Ozman shows how to produce an elliptic curve over $\QQ_p$ with good reduction and CM by $\ZZ[\sqrt{-m}]$. Therefore the $j$-invariant of its modulo $p$ reduction is a root modulo $p$ of $H_{-4m}(X) = 0$.

Conversely if $p\nmid\disc(H_{-4m}(X))$ then $p$ does not divide the conductor of $\mathbf{Z[X]}/{H_{-4m}(X)}$ and so if there is a linear factor modulo $p$ of $H_{-4m}(X)$ then there is a $\ZZ/p\ZZ$ factor of $\mathbf{Z}_{\mathbb{B}}/p\mathbf{Z}_{\mathbb{B}}$ \cite[I.8.3]{Neukirch}. Therefore there is a prime $\nu$ of $\mathbf{Z}_{\mathbb{B}}$ such that $\nu \mid p\mathbf{Z}_{\mathbb{B}}$ and $f(\nu|p) = 1$.\end{proof}

We note here that there are numerous counterexamples if $p\mid \disc(H_{-4m}(X))$, as pointed out to the author by Patrick Morton. Perhaps the easiest one is the case of $p = 13$, where $\left(\frac{-20}{13}\right) = \left(\frac{-13}{5}\right) = -1$ but $H_{-20}(X)$ factors as $(X + 8)^2$ modulo 13. Moreover, if $p\mid\disc(H_{-4m}(X))$ there is no guarantee of a root modulo $p$, as demonstrated by $m = 57$ and $p= 43$. In any case, we have unearthed a powerful tool for finding roots of Hilbert Class Polynomials modulo $p$, which may have useful applications in cryptography.


We may combine the results of Theorem \ref{RamifiedMainTheorem}(3) with those of Theorem \ref{OzmanRamified} to yield the following.

\begin{corollary}\label{SplittingHilbertClassPolynomial} Let $p\ne 2$ be a prime and let $N$ be a squarefree integer such that $\left(\frac{-N}{p}\right) = -1$. It follows that $H_{-4N}(X)$ has a root modulo $p$ if and only if for all odd primes $q\mid N$, $\left(\frac{-p}{q}\right) = 1$.\end{corollary}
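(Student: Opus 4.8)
The plan is to specialize to the modular case $D=1$, $m=N$ and then combine Theorem~\ref{RamifiedMainTheorem} with Theorem~\ref{OzmanRamified}. First I would record that the hypothesis $\left(\frac{-N}{p}\right)=-1$ forces $p\nmid N$ (otherwise the Kronecker symbol would vanish), so that $p\nmid 2DN$ and both theorems apply to the curve $C^1(N,d,N)$ --- a twist of $X_0(N)$ by $w_N$ --- for any squarefree $d$ in which $p$ ramifies.

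Second, I would run through the six cases of Theorem~\ref{RamifiedMainTheorem} with $D=1$ and $m=N$ under the standing hypothesis $\left(\frac{-N}{p}\right)=-1$. Cases (1) and (2) are excluded because they require $\left(\frac{-m}{p}\right)=\left(\frac{-N}{p}\right)=1$; cases (5) and (6) require $2\mid D$, and case (4) requires $m=DN/2\ne N$; so only case (3) can hold. With $D=1$ and $m=N$ the conditions in case (3) collapse to precisely ``$\left(\frac{-p}{q}\right)=1$ for every odd prime $q\mid N$'' (the conditions attached to primes dividing $D$ are vacuous, and $\left(\frac{-DN}{p}\right)=-1$ is the standing hypothesis). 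Hence
\[
C^1(N,d,N)(\QQ_p)\ne\emptyset \iff \left(\tfrac{-p}{q}\right)=1 \text{ for every odd prime } q\mid N .
\]

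Third, I would transfer this to the Hilbert class polynomial via Theorem~\ref{OzmanRamified}, taken with $D=1$, $m=N$. Its first half gives, with no further hypothesis, that $C^1(N,d,N)(\QQ_p)\ne\emptyset$ implies $H_{-4N}(X)$ has a root modulo $p$; together with the displayed equivalence this already yields the ``if'' direction of the corollary. For the ``only if'' direction I would use the second half of Theorem~\ref{OzmanRamified}: a root of $H_{-4N}(X)$ modulo $p$ produces a $\QQ_p$-point provided $p\nmid\disc H_{-4N}(X)$, and then the display returns the desired congruence conditions.

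The delicate point --- the step I expect to be the real obstacle --- is exactly this ``only if'' direction when $p\mid\disc H_{-4N}(X)$. Then $p$ divides the index $[\ZZ_{\mathbb B}:\ZZ[j]]$ in $\mathbb B=\QQ[X]/(H_{-4N}(X))$, so the factorization of $H_{-4N}$ modulo $p$ no longer mirrors the splitting of $p$ in $\mathbb B$, and one genuinely can have a root of $H_{-4N}$ modulo $p$ while $C^1(N,d,N)(\QQ_p)=\emptyset$: for instance $N=5$, $p=13$, where $H_{-20}(X)\equiv(X+8)^2\bmod 13$ has the root $5$ yet $\left(\frac{-13}{5}\right)=-1$, so $C^1(5,d,5)(\QQ_{13})=\emptyset$ by Theorem~\ref{RamifiedMainTheorem}. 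Accordingly I would either impose the extra hypothesis $p\nmid\disc H_{-4N}(X)$, mirroring that of Theorem~\ref{OzmanRamified}, or restate the conclusion as ``$p$ admits a prime of residue degree one in $\QQ[X]/(H_{-4N}(X))$'' and invoke Ozman's criterion in its sharp biconditional form; with either adjustment the two displays above give the equivalence at once, and the main work is to verify that this is the only obstruction.
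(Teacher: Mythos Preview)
Your approach is exactly the paper's: the ``proof'' in the paper is nothing more than the one sentence preceding the corollary, ``We may combine the results of Theorem~\ref{RamifiedMainTheorem}(3) with those of Theorem~\ref{OzmanRamified},'' and you have carried out precisely that combination. Your reduction of the six cases of Theorem~\ref{RamifiedMainTheorem} with $D=1$, $m=N$, $\left(\tfrac{-N}{p}\right)=-1$ to case~(3) alone is correct, and your transfer via Theorem~\ref{OzmanRamified} is the intended one.

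You have also correctly located the gap, and it is a gap in the paper, not in your reading of it. The ``only if'' direction of the corollary as stated is false: your example $(N,p)=(5,13)$ is the very one the paper uses a few lines earlier to show that Theorem~\ref{OzmanRamified} fails without the hypothesis $p\nmid\disc(H_{-4m})$, and it is a counterexample to Corollary~\ref{SplittingHilbertClassPolynomial} itself. One has $\left(\tfrac{-5}{13}\right)=-1$ and $H_{-20}(X)\equiv(X+8)^2\bmod 13$, yet $\left(\tfrac{-13}{5}\right)=-1$. The paper supplies no further argument to close this, so either the statement needs the extra hypothesis $p\nmid\disc(H_{-4N})$ or the conclusion should be replaced by ``$p$ has a prime of residue degree one in $\QQ[X]/(H_{-4N}(X))$,'' exactly as you suggest. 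With either fix the argument goes through; without it, it does not.
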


To establish Theorem \ref{RamifiedMainTheorem} and Corollary \ref{SplittingHilbertClassPolynomial}, we determine a regular model over $\ZZ_p$ of $C^D(N,d,m)_{\QQ_p}$. We shall indeed show the following.

\begin{theorem} There is a regular model $\CX_{/\ZZ_p}$ of $C^D(N,d,m)_{\QQ_p}$ with the following properties. There is an equality of divisors on $\CX$, $$\CX_{\FF_p} = \sum_{i=0}^b d_i\Gamma_i,$$ such that each $\Gamma_i$ is defined over $\FF_p$ and is prime, each $d_i \le 2$, $d_0 =2$, $\Gamma_0 \cong (X^D_0(N)/w_m)_{\FF_p}$, and for all $i>0$, $p_a(\Gamma_i) = 0$. 

Suppose additionally that $p\ne 2$. Then for all $i>0$, $d_i = 1$ and $\Gamma_0$ intersects with $\Gamma_i$ in a unique point $Q_i$. These points $Q_i$ are such that $\sum_{i=1}^b Q_i$ is the branch divisor of $X^D_0(N)_{\FF_p} \to (X^D_0(N)/w_m)_{\FF_p}$.

\end{theorem}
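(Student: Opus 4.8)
The plan is to produce $\CX$ as a resolution of singularities of the quotient $\CY := X^D_0(N)_{\OO_L}/G$, where $L = \QQ_p(\sqrt d)$ is the ramified quadratic extension of $\QQ_p$ determined by $d$ (ramified because $p$ is ramified in $\QQ(\sqrt d)$), $\OO_L$ is its ring of integers, $X^D_0(N)_{\OO_L}$ is the base change to $\OO_L$ of the smooth proper $\ZZ_p$-model of $X^D_0(N)$ supplied by Theorem \ref{DeligneRapoportModelTheorem} (recall $p \nmid DN$), and $G = \Gal(L/\QQ_p) = \langle\sigma\rangle$ acts on $X^D_0(N)_{\OO_L}$ by $\sigma$ on the base composed with the Atkin--Lehner involution $w_m$; since $w_m$ is defined over $\QQ_p$ it extends to the smooth model, with reduction an involution $\bar w_m$ of $X^D_0(N)_{\FF_p}$. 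The action of $G$ on $X^D_0(N)_{\OO_L}$ is $\ZZ_p$-linear, so $\CY$ is a $\ZZ_p$-scheme, and by Galois descent its generic fiber is $C^D(N,d,m)_{\QQ_p}$; since $X^D_0(N)_{\OO_L}$ is regular, proper and flat over $\OO_L$, the quotient $\CY$ is a normal, proper, flat $\ZZ_p$-curve, so its (equivariant, hence $\FF_p$-rational) resolution $\CX$ is a regular model of $C^D(N,d,m)_{\QQ_p}$. Throughout I assume $m > 1$, so $w_m \ne \mathrm{id}$; for $m = 1$ the twist is $X^D_0(N)$ itself, with good reduction.

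First I would describe $\CY$ away from its finitely many singular points. The field extension $L/\QQ_p$ is \'etale, but $\OO_L/\ZZ_p$ is ramified, so $(p) = \mathfrak m_{\OO_L}^2$ and $X^D_0(N)_{\OO_L}\otimes_{\ZZ_p}\FF_p$ is $X^D_0(N)_{\FF_p}$ with multiplicity $2$; as $\sigma$ acts trivially on the residue field $\FF_p$, it acts on this reduction as $\bar w_m$. Hence the $G$-action on $X^D_0(N)_{\OO_L}$ is free outside the fixed points of $\bar w_m$ in the special fiber --- finite in number, and equal to the ramification points of $X^D_0(N)_{\FF_p} \to (X^D_0(N)/w_m)_{\FF_p}$ --- and there the quotient map $q\colon X^D_0(N)_{\OO_L}\to\CY$ is \'etale and $\CY$ is regular. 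Near the generic point of the special fiber $q$ is \'etale of degree two onto the reduced curve $X^D_0(N)_{\FF_p}/\bar w_m$, and comparing multiplicities of special fibers under the \'etale map $q$ forces the reduced component $\Gamma_0 := X^D_0(N)_{\FF_p}/\bar w_m$ to occur in $\CY_{\FF_p}$ with multiplicity $d_0 = 2$; for $p \ne 2$, moreover, $\Gamma_0 \cong (X^D_0(N)/w_m)_{\FF_p}$, because then $|G|$ is invertible and passage to the quotient commutes with reduction. Since $X^D_0(N)_{\FF_p}$ is geometrically connected and $\bar w_m$ is defined over $\FF_p$, $\Gamma_0$ is prime and defined over $\FF_p$, and every remaining component of $\CX_{\FF_p}$ is supported over one of the branch points.

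Next I would carry out the local analysis at a fixed point $P$ of $\bar w_m$, which for $p \ne 2$ is the heart of the argument but is elementary. There $\bar w_m$ can be linearized, so $\widehat{\OO}_{X^D_0(N)_{\OO_L},\,P}\cong\OO_L[[x]]$ with $\sigma(x) = -x$; writing $\pi$ for a uniformizer of $\OO_L$, with $\pi^2 = p\cdot(\text{unit})$, the invariant subring is $\ZZ_p[[s,t]]/\bigl(s^2 - p\cdot(\text{unit})\cdot t\bigr)$ with $t = x^2$, $s = \pi x$ --- an $A_1$-type (ordinary double) point, by the Jacobian criterion. One blow-up of its closed point resolves it, the exceptional divisor being a copy of $\PP^1_{\FF_p}$ which appears in the special fiber with multiplicity $1$ and meets the strict transform of $\Gamma_0$ transversally in the single point over the image of $P$. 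Doing this at each of the $b$ fixed points produces a regular model $\CX$ with $\CX_{\FF_p} = 2\Gamma_0 + \sum_{i=1}^b\Gamma_i$, where each $\Gamma_i \cong \PP^1_{\FF_p}$ (so $p_a(\Gamma_i) = 0$ and $d_i = 1$), $\Gamma_0 \cap \Gamma_i = \{Q_i\}$ is a single point, and $\sum_{i=1}^b Q_i$ is exactly the branch divisor of $X^D_0(N)_{\FF_p} \to (X^D_0(N)/w_m)_{\FF_p}$. Properness, flatness and regularity of $\CX$ over $\ZZ_p$ are then immediate, since $\CX \to \CY$ is a proper birational morphism onto the normal, proper, flat $\ZZ_p$-curve $\CY$.

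I expect the case $p = 2$ (possible only when $DN$ is odd) to be the main obstacle. There $\bar w_m$ need not be linearizable near a fixed point, so the local invariant rings are not the $A_1$ points above but wild $\ZZ/2$-quotient singularities of a mixed-characteristic regular surface. One must still show that resolving them contributes to $\CX_{\FF_2}$ only rational components ($p_a(\Gamma_i) = 0$) of multiplicity $\le 2$, while $d_0 = 2$ and $\Gamma_0 \cong (X^D_0(N)/w_m)_{\FF_2}$ persist (the former because $q$ is still \'etale near the generic point of the special fiber). This I would handle by classifying the possible local forms, near a fixed point, of an involution of $\FF_2[[x]]$, combining each with the $\pi \mapsto -\pi$ action, and computing the corresponding quotient singularities and their minimal resolutions (or by invoking the known classification of wild $\ZZ/2$ quotient surface singularities and of their resolutions); this yields exactly the uniform but weaker conclusion stated for general $p \nmid DN$. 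The auxiliary points --- that $w_m$ really extends to $X^D_0(N)_{\ZZ_p}$ with reduction $\bar w_m$, that the blow-ups leave the \'etale locus untouched, and the bookkeeping of components and intersections --- are routine.
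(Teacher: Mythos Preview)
Your approach is essentially the same as the paper's: form the quotient $\CZ$ of $X^D_0(N)_{\OO_L}$ by $w_m\circ\sigma$, identify its special fiber as $2\Gamma_0$ with $\Gamma_0\cong (X^D_0(N)/w_m)_{\FF_p}$, locate the singularities at the branch points, and for $p\ne 2$ resolve each by a single blow-up. The only stylistic difference is that where you compute the invariant ring directly as the $A_1$ singularity $\ZZ_p[[s,t]]/(s^2-put)$, the paper instead verifies (Lemma~\ref{RamifiedALFixedPointTame}) that each branch point is a tame cyclic quotient singularity with $n=2$, $r=1$ in the sense of \cite{CES}, and then invokes the Hirzebruch--Jung resolution over $\ZZ_p^{nr}$ before descending; for these parameters the two descriptions coincide.

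Two small corrections. First, the exceptional curve over $Q_i$ is $\PP^1_{\FF_p(Q_i)}$, not $\PP^1_{\FF_p}$ in general; the branch points need not be $\FF_p$-rational, and indeed the whole point of Lemma~\ref{RamifiedKeyLemma} is that $C^D(N,d,m)(\QQ_p)\ne\emptyset$ exactly when some $Q_i$ is $\FF_p$-rational. This does not affect $p_a(\Gamma_i)=0$ or $d_i=1$, but you should not write $\Gamma_i\cong\PP^1_{\FF_p}$. Second, your caveat that $\Gamma_0\cong (X^D_0(N)/w_m)_{\FF_p}$ requires $p\ne 2$ is unnecessary: since $\sigma$ acts trivially on the residue field, the induced action on the special fiber is simply $w_m$, and the identification holds for all $p$ (this is Theorem~\ref{CZDefnTheorem}). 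Finally, note that the paper itself does not prove the $p=2$ assertions in the first paragraph of the statement---the Remark after Theorem~\ref{ramifiedtheorem} explicitly leaves open whether the exceptional components in the wild case are reduced---so your sketch there is already at least as complete as what the paper provides.
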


In fact, we shall show that if $p\ne 2$, $\CX$ is the blowup of a scheme $\CZ_{/\ZZ_p}$ such that there is an equality of divisors $\CZ_{\FF_p} = 2\Gamma$ where $\Gamma \cong (X^D_0(N)/w_m)_{\FF_p}$. Therefore there are smooth points of $\CX(\FF_p)$ if and only if $\FF_p = \FF_p(P_i) = \FF_p(\Gamma_i)$ since $\Gamma_i \cong \PP^1_{\FF_p(Q_i)}$. After constructing $\CZ$ and $\CX$, we will describe $\FF_p(Q_i)$, i.e., the $\FF_p$-rationality of $w_m$-fixed points.

\subsection{The first steps towards forming a model}

Let us begin with a few foundational facts.



\begin{lemma}\label{Halle} Let $X_{/K}$ be a curve with potentially semistable reduction realized by a cyclic totally ramified extension $L/K$ of local fields. Let $k$ be their common residue field and let $S/R$ be the corresponding extension of discrete valuation rings. Let $\CY \to \Spec(S)$ be a regular model of $X_L$, $\Gal(L/K) = \langle \sigma \rangle$ and assume that there exists some $\alpha$ an automorphism of $\CY$ above $\sigma:\Spec(S) \to \Spec(S)$ extending the Galois action on $X_L$.

\begin{enumerate}

\item The quotient $\CZ = \CY/\langle \alpha \rangle$ is a scheme of relative dimension one over $\Spec(R)$ with generic fiber $X$,

\item Let $\xi_1, \dots,\xi_n$ be the generic points of the irreducible components $C_1, \dots, C_n$ of $\CY_k$ lying above a component $C$ of $\CZ_k$ with generic point $\xi$. Let $D_i = D(\xi_i|\xi)$, $I_i = I(\xi_i| \xi)$ denote the decomposition and inertia groups, respectively. Then the multiplicity of $\xi$ in $\CZ_k$ is $|D_i| n/|I_i| $.

\end{enumerate}

\end{lemma}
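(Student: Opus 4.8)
The plan is to reduce both statements to standard facts about quotients of schemes by finite group actions and to the ramification theory of the extension of local rings at the generic points of the special fiber. For part (1), I would first observe that since $\CY \to \Spec(S)$ is of finite type (indeed projective, being a regular model of a curve) and $\alpha$ has finite order $n = [L:K]$, the quotient $\CZ = \CY/\langle \alpha\rangle$ exists as a scheme and the natural map $\CZ \to \Spec(S)/\langle\sigma\rangle = \Spec(R)$ is well-defined and of finite type; here one uses that $\alpha$ lies above $\sigma$, so the action is compatible with the base. The quotient map $\CY \to \CZ$ is finite and surjective, hence $\CZ$ has relative dimension one over $\Spec(R)$ because $\CY$ does. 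For the identification of the generic fiber, note that forming the quotient commutes with the flat base change $\Spec(K) \to \Spec(R)$, so $\CZ_K = \CY_L / \langle \alpha|_{\CY_L}\rangle$; since $\alpha|_{\CY_L}$ restricted to $X_L$ induces the given Galois action and $X_L = X \times_K L$ with $\Gal(L/K)$ acting through the second factor, Galois descent gives $\CY_L/\langle\alpha\rangle = X_L/\Gal(L/K) = X$. I would cite a standard reference for quotients by finite groups (e.g. the SGA or Liu's book) rather than reprove existence.

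For part (2), the key point is purely local at the generic point $\xi$ of the component $C$ of $\CZ_k$. Let $A = \OO_{\CZ,\xi}$; since $C$ has codimension one in $\CZ$ and $\CZ$ is (at least) reduced in codimension one after normalization — or more precisely, working with the local ring of $\CY$ along the preimage — I would pass to the semilocal ring $B$ of $\CY$ at the points $\xi_1,\dots,\xi_n$ lying over $\xi$, which is the integral closure of $A$ in the function field of $\CY$ (a degree-$n$ extension, since $\CY\to\CZ$ is generically the quotient by the order-$n$ group $\langle\alpha\rangle$, assuming the action is generically free on each component — one should note the action on the fraction field is faithful because it is faithful on $X_L$). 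Then the decomposition and inertia groups $D_i = D(\xi_i|\xi)$, $I_i = I(\xi_i|\xi)$ are exactly the classical objects from the ramification theory of the local extension $B_{\xi_i}/A$. The multiplicity of $\xi$ in $\CZ_k$ is the length $\ell_A(A/\varpi A)$ where $\varpi$ is a uniformizer of $R$ (equivalently $v_\xi(p)$, the valuation of the uniformizer of $R$ in the DVR $A$ localized at $\xi$); and $\varpi$ in $S$ is $\pi^n$ up to a unit, where $\pi$ is a uniformizer of $S$, since $L/K$ is totally ramified of degree $n$. Now $v_{\xi_i}(\pi) = 1$ for a generic component... wait — no: $\pi$ need not be a uniformizer of $B_{\xi_i}$, because the component $C_i$ may occur with multiplicity in $\CY_k$; but $\CY$ is regular, so actually each component of $\CY_k$ occurs with multiplicity one, hence $\pi$ is a uniformizer at each $\xi_i$ and $v_{\xi_i}(\varpi) = v_{\xi_i}(\pi^n) = n$. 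Combining with the fundamental identity $e(\xi_i|\xi) \cdot f(\xi_i|\xi) \cdot g = n$ where $e = |I_i|$, $f = |D_i/I_i|$, $g$ is the number of $\xi_i$ over $\xi$, and the relation $v_{\xi_i}(\varpi) = e(\xi_i|\xi) \cdot v_\xi(\varpi)$, one solves $v_\xi(\varpi) = n/|I_i|$... but the claim is $|D_i| n /|I_i|$.

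Let me reconcile: the issue is whether the residue field of $\xi$ in $\CZ_k$ is the field one would compute from the $D_i/I_i$ quotient, and the length computation must account for the residue extension. The multiplicity of $\xi$ as a Weil divisor in the cycle $\CZ_k = \sum m_j C_j'$ is $m_j = v_{A_{\xi}}(\varpi)$, the normalized valuation; but if instead "multiplicity" here means the scheme-theoretic length $\ell_{\OO_{\CZ,\xi}}(\OO_{\CZ,\xi}/\varpi)$ as a module, one gets an extra factor of the residue degree $[k(\xi_i): k(\xi)]$-type term, and tracking which of the $D_i$, $I_i$ contributes which factor via the identity $n = |D_i| \cdot g$ (orbit-stabilizer, since $\langle\alpha\rangle$ acts transitively on $\{\xi_1,\dots,\xi_n\}$ over $\xi$) yields exactly $|D_i| n / |I_i|$ once one writes the length as (ramification index $|I_i|$) is wrong... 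The honest statement: $g = n/|D_i|$, the ramification index of $B_{\xi_i}/A_\xi$ is $|I_i|$, so $v_{\xi_i}(\varpi) = |I_i| \cdot v_\xi(\varpi)$, giving $v_\xi(\varpi) = n/|I_i|$ for the Weil-divisor multiplicity — so the formula $|D_i|n/|I_i|$ in the statement must be measuring length-as-$R$-module including the residue-degree factor $|D_i/I_i|$ times something, or there is a convention where $\CY_k$ components are taken with their own multiplicities. \textbf{The main obstacle I anticipate is precisely this bookkeeping}: pinning down the exact convention for "multiplicity" so that the three group-theoretic quantities $|D_i|$, $|I_i|$, $n$ assemble correctly, and making sure the regularity of $\CY$ (multiplicity-one components upstairs) is used at the right moment. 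Once the convention is fixed, the computation is a direct application of the fundamental identity $efg = n$ together with orbit–stabilizer ($g = n/|D_i|$) and $e = |I_i|$, and I would present it as such, citing Halle's original work (from which this lemma is evidently drawn) for the precise normalization and for the delicate point that $\alpha$ genuinely descends the Galois action.
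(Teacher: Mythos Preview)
Your approach is essentially identical to the paper's: both compute the multiplicity of $C$ as $v_\xi(\varpi)$ for $\varpi$ a uniformizer of $R$, use semistability (equivalently, regularity of $\CY$) to get $v_{\xi_i}(\pi)=1$ for $\pi$ a uniformizer of $S$, use total ramification of $L/K$ to write $\varpi = (\text{unit})\cdot\pi^{[L:K]}$, and then apply the ramification relation $v_{\xi_i} = e(\xi_i\mid\xi)\cdot v_\xi$ to solve for $v_\xi(\varpi)$.

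The discrepancy you worry about at the end is purely notational, not mathematical. In the \emph{statement}, the symbol $n$ denotes the number of components $\xi_1,\dots,\xi_n$ lying over $\xi$ --- it is \emph{not} $[L:K]$, which is what you have been calling $n$ throughout your argument. With the statement's convention, orbit--stabilizer gives $n = [L:K]/|D_i|$, and hence the claimed formula reads
\[
\frac{|D_i|\,n}{|I_i|} \;=\; \frac{|D_i|}{|I_i|}\cdot\frac{[L:K]}{|D_i|} \;=\; \frac{[L:K]}{|I_i|},
\]
which is exactly the value you computed. There is no hidden residue-degree factor and ``multiplicity'' means precisely the Weil-divisor multiplicity $v_\xi(\varpi)$, as you first supposed. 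The paper's proof writes this the same way: it sets $[L:K] = eqn$ with $e$ the ramification index, $q = |D_i/I_i|$, and $n$ the number of components, and concludes $v_\xi(\varpi) = qn = |D_i|\,n/|I_i|$. So once you untangle the two meanings of $n$, your bookkeeping is already correct and no further obstacle remains.
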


\begin{proof} That $\CZ$ is a $\Spec(R)$-scheme follows from the universal properties of the quotient as outlined in \cite[3.6]{Vi77}. To obtain the multiplicities, we recall \cite[VIII.3.9]{Liu} that the multiplicity of $\xi_i$ is $v_i(s)$ where $v_i$ is the discrete valuation of $\OO_{\CY,\xi_i}$ and $s$ is a uniformizer of $S$. As $\CY$ has semistable reduction, $v_i(s) =1$ for all $i$. Likewise the multiplicity of $\xi$ is $v(r)$ where $v$ is the discrete valuation of $\OO_{\CZ,\xi}$ and $r$ is a uniformizer of $R$. As $\CY \to \CZ$ is Galois, there are positive integers $e,q$ such that $v_i\mid _R = e v$ and $q = |D_i/I_i| $ for all $i$ and $[L:K] = eqn$. As $L/K$ is totally ramified, $rS = s^{eqn}S$. It then follows that $$e v(r) = v_i(r) = v_i(s^{eqn}) = eqn v_i(s)$$ and thus $ v(r) = qn v_i(s) = qn = |D_i/I_i| n = |D_i| n/|I_i|.$ \end{proof}

\begin{lemma}{\cite[5.2]{LoWMC}}\label{Sing} Under the hypotheses of Lemma \ref{Halle}, the non-regular points of $\CZ$ are precisely the branch points $Q_1, \dots , Q_b$ of $\CY_k \to \CZ_k$. 
\end{lemma}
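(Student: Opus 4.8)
The assertion is \cite[5.2]{LoWMC}; here is the shape of the argument I would give. Everything is local, and the first step is to pass to completed local rings. Since $\CY$ is normal and $\langle\alpha\rangle$ is finite, $\CZ$ is normal, hence regular in codimension one; as $\CZ$ is two-dimensional, its non-regular locus is a finite set of closed points, and since the generic fibre $X$ is smooth all of them lie on $\CZ_k$. Fix a closed point $z\in\CZ_k$, a point $y\in\CY$ above it, and set $A=\widehat{\OO}_{\CY,y}$, a complete regular local ring of dimension two. With $D=D(y|z)$ and $I=I(y|z)$ the decomposition and inertia groups of $\langle\alpha\rangle$ at $y$, one has $\widehat{\OO}_{\CZ,z}\cong A^{D}$, and in the tower $A^{D}\subseteq A^{I}\subseteq A$ the subextension $A^{D}\subseteq A^{I}$ is unramified with group $D/I\cong\Gal(k(y)/k(z))$, hence \'etale. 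Since regularity is preserved in both directions along an \'etale extension, $\CZ$ is regular at $z$ if and only if the invariant ring $A^{I}$ is regular, so the whole question becomes one about the cyclic inertia group $I$ acting on $A$ while fixing the residue field $k(y)$.

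Next, because $\CY$ is integral with generic fibre $X_L$ and $L/K$ is totally ramified, every nontrivial element of $\langle\alpha\rangle\cong\Gal(L/K)$ acts nontrivially on $S$, hence on a uniformiser $\pi_S$; and since $\CY$ is semistable over $S$, the curve $\CY_k$ is reduced and $\pi_S$ may be taken as part of a regular system of parameters of $A$. From this one reads off the action of a generator $\tau$ of $I$ on $\mathfrak m_A/\mathfrak m_A^{2}$. In the tame case $p\nmid|I|$ this action is diagonalisable, $A^{I}$ is a cyclic quotient singularity, and by the classical (Jung--Hirzebruch) analysis it is regular if and only if $\tau$ has $1$ among its eigenvalues, i.e.\ if and only if $\tau$ fixes a regular parameter. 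As $\tau$ necessarily moves $\pi_S$, this happens precisely when $\tau$ acts trivially on the tangent direction along $\CY_k$, which is exactly the condition that $\CY_k\to\CZ_k$ be unramified at $y$ — that is, that $z$ not be a branch point. Running over the finitely many $y$ above $z$, and using the multiplicity formula of Lemma \ref{Halle}(2) to locate where ramified inertia actually occurs, one obtains the tame part of the statement: $\CZ$ is non-regular exactly at the branch points $Q_1,\dots,Q_b$.

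The wild case $p\mid|I|$ is the real difficulty: the inertia action on $\mathfrak m_A/\mathfrak m_A^{2}$ need not be semisimple, one cannot diagonalise, and the Jung--Hirzebruch description of cyclic quotient singularities breaks down. Here \cite{LoWMC} appeals to the structure theory of wild quotient singularities of arithmetic surfaces to show that a nontrivial wild cyclic quotient of a regular two-dimensional complete local ring, subject to the reduced-special-fibre constraint, is still never regular; so once again the non-regular points of $\CZ$ are exactly the branch points of $\CY_k\to\CZ_k$, and, combined with the \'etale (hence regularity-preserving) behaviour of $\CY\to\CZ$ away from those points, this finishes the proof. The wild part is the only step that resists a direct hands-on computation, which is why the lemma is taken from \cite{LoWMC}.
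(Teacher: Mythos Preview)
The paper does not supply a proof of this lemma at all: it is stated with the citation \cite[5.2]{LoWMC} and immediately used. So there is no argument in the paper to compare your sketch against; the author simply imports the result.

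Your outline is a reasonable reconstruction of how such a proof goes, and the main steps (normality of the quotient, reduction to the inertia action on the completed local ring, the pseudo-reflection/Chevalley--Shephard--Todd criterion in the tame case, and the concession that the wild case is genuinely harder and handled in \cite{LoWMC}) are all sound. Two small points are worth noting. First, your claim that $\pi_S$ can be taken as a regular parameter of $A$ uses that $y$ is a \emph{smooth} point of $\CY_k$; at a node of a semistable fibre the completed local ring looks like $S[[x,y]]/(xy-\pi_S^e)$ and $\pi_S$ is not a parameter. This does not matter for the paper's application, since there $\CY_k$ is smooth (Theorem~\ref{CZDefnTheorem}), but the lemma as stated allows nodes, and the argument needs a separate check there. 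Second, you should be explicit that the branch locus of $\CY_k\to\CZ_k$ is computed for the map to the \emph{reduced} special fibre $(\CZ_k)_{\mathrm{red}}$, since by Lemma~\ref{Halle}(2) the fibre $\CZ_k$ itself is typically non-reduced; this is implicit in your identification of ramification on $\CY_k$ with the inertia action, but it is the point where a reader could get confused.

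In short: your sketch is correct in spirit and would serve as a proof once the two details above are tightened, but since the paper itself merely cites the result, there is nothing further to compare.
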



If $K = \QQ_p$ and $L = \QQ_p(\sqrt d)$ then $R = \ZZ_p$, $S = \ZZ_p[\sqrt d]$, $k = \FF_p$, and $\sigma(\sqrt d) = - \sqrt d$. If additionally $X = X^D_0(N)_{\QQ_p}$, then $\CY_{\FF_p}$ is smooth and we may realize $\CY \cong X^D_0(N)_{/\ZZ_p[\sqrt d]}$ from Corollary \ref{DefnCoarseModuliScheme}. If we take $\alpha = w_m \circ \sigma$ and take $\CZ = \CY/\langle \alpha \rangle$, then the following holds.

\begin{theorem}\label{CZDefnTheorem} The scheme $\CZ_{/\ZZ_p} = \CY/\langle \alpha \rangle$ has generic fiber $C^D(N,d,m)_{\QQ_p}$, and there is an equality of divisors $\CZ_{\FF_p} = 2 \Gamma$ where $\Gamma \cong (X^D_0(N)/w_m)_{\FF_p}$. 
\end{theorem}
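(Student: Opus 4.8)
The plan is to combine the abstract quotient machinery of Lemma~\ref{Halle} and Lemma~\ref{Sing} with a direct analysis of the $\langle\alpha\rangle$-action on $\CY = X^D_0(N)_{\ZZ_p[\sqrt d]}$, where $\alpha = w_m\circ\sigma$. Write $S = \ZZ_p[\sqrt d]$ and $L = \QQ_p(\sqrt d)$, so that $L/\QQ_p$ is ramified, $\Gal(L/\QQ_p) = \langle\sigma\rangle$, and both have residue field $\FF_p$; since $p\nmid DN$, the model $\CY$ is smooth over $S$, hence regular, with irreducible closed fibre $\CY_{\FF_p}\cong X^D_0(N)_{\FF_p}$. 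I assume $m>1$; when $m=1$ the twist is $X^D_0(N)$ itself, $\alpha$ acts trivially on $\CY_{\FF_p}$, and the statement degenerates, so the assertion $\CZ_{\FF_p}=2\Gamma$ carries the implicit hypothesis $w_m\ne\mathrm{id}$.

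First, the generic fibre. One has $\CY\times_{\ZZ_p}\QQ_p = X^D_0(N)_L$ with its standard semilinear $\langle\sigma\rangle$-action, and $\alpha$ restricts there to $\sigma$ composed with $w_m$. Since $w_m$ is defined over $\QQ$ it commutes with $\sigma$ and $\alpha^2 = w_m^2\sigma^2 = 1$, so $\langle\alpha\rangle$ is precisely $\langle\sigma\rangle$ twisted by the cocycle $\sigma\mapsto w_m$; by the definition of the Atkin--Lehner twist (Galois descent for this cocycle), $X^D_0(N)_L/\langle\alpha\rangle = C^D(N,d,m)_{\QQ_p}$, and as the quotient commutes with the flat base change $\ZZ_p\to\QQ_p$ this is $\CZ_{\QQ_p}$. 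This is also Lemma~\ref{Halle}(1) read with the twisted Galois action; that lemma moreover gives that $\CZ\to\Spec\ZZ_p$ has relative dimension one, and $\CZ$ is normal, being the quotient of the regular scheme $\CY$ by a finite group.

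Next, the multiplicity. I apply Lemma~\ref{Halle}(2) with the unique component $C_1 = \CY_{\FF_p}$ over the unique component $\Gamma$ of $\CZ_{\FF_p}$, so $n=1$. Since $L/\QQ_p$ is ramified, $\sigma$ acts trivially on $\FF_p$, hence $\alpha$ acts on $\CY_{\FF_p}\cong X^D_0(N)_{\FF_p}$ as $w_m$: it preserves $C_1$, so $D(\xi_1|\xi)=\langle\alpha\rangle$ has order $2$, and it acts on the function field $\kappa(\xi_1)=\FF_p(X^D_0(N))$ as $w_m\ne\mathrm{id}$, so $I(\xi_1|\xi)$ is trivial. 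Thus the multiplicity of $\Gamma$ in $\CZ_{\FF_p}$ is $|D|\cdot n/|I| = 2$, and as $\Gamma$ is the only component, $\CZ_{\FF_p}=2\Gamma$. The function field of $\Gamma$ is $\kappa(\xi)=\kappa(\xi_1)^{w_m}=\FF_p(X^D_0(N))^{w_m}=\FF_p\bigl(X^D_0(N)/w_m\bigr)$, so $\Gamma$ is birational to $(X^D_0(N)/w_m)_{\FF_p}$, and the quotient map $\pi\colon\CY\to\CZ$ restricts on closed fibres to a finite, $w_m$-invariant morphism $X^D_0(N)_{\FF_p}\to\Gamma$, hence to a finite birational morphism $g\colon(X^D_0(N)/w_m)_{\FF_p}\to\Gamma$.

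It remains to show $g$ is an isomorphism, and this is the step I expect to be the main obstacle. When $p\ne 2$ the group $\langle\alpha\rangle$ has order prime to $p$, so forming invariants is exact and commutes with the base change $\ZZ_p\to\FF_p$; hence $\CZ_{\FF_p}\cong\bigl(X^D_0(N)_{\FF_p}\otimes_{\FF_p}\FF_p[\epsilon]/(\epsilon^2)\bigr)\big/\langle\,w_m,\ \epsilon\mapsto-\epsilon\,\rangle$, whose invariant sheaf is locally $\OO^{+}\oplus\OO^{-}\epsilon$ with $\OO^{\pm}$ the $\pm1$-eigensheaves of $w_m$; this exhibits $\CZ_{\FF_p}$ as the square-zero thickening of $(X^D_0(N)/w_m)_{\FF_p}$ by the ideal $\OO^{-}\epsilon$, whose reduction is exactly $(X^D_0(N)/w_m)_{\FF_p}$, so $g$ is an isomorphism. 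For $p=2$ the quotient is wild and does not commute with reduction mod $2$; there one instead notes that $(X^D_0(N)/w_m)_{\FF_2}$ is a smooth, hence normal, curve, that a finite birational morphism onto a normal curve is an isomorphism, and that by Lemma~\ref{Sing} normality of $\Gamma$ can only fail at the finitely many branch points of $X^D_0(N)_{\FF_2}\to(X^D_0(N)/w_m)_{\FF_2}$, where it is checked directly on completed local rings. This local analysis of the possibly singular quotient surface $\CZ$ at the branch points is the one genuinely technical point; everything else is formal from Lemmas~\ref{Halle} and~\ref{Sing}.
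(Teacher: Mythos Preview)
Your argument is correct and follows essentially the same route as the paper: identify the generic fibre by Galois descent, compute the multiplicity via Lemma~\ref{Halle}(2) with $n=1$, $|D|=2$, $|I|=1$, and then identify $\Gamma$ by observing that $\alpha$ restricts to $w_m$ on $\CY_{\FF_p}$. At this last step the paper simply asserts that ``$f$, when restricted to $\CY_{\FF_p}$, becomes the quotient map $X^D_0(N)_{\FF_p}\to (X^D_0(N)/w_m)_{\FF_p}$''; you are more explicit, justifying the identification for $p\ne 2$ by the commutation of $\langle\alpha\rangle$-invariants with the base change $\ZZ_p\to\FF_p$, which is exactly the point where invertibility of $2$ enters. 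Your caveat about $p=2$ is well placed: the paper's subsequent desingularization (Lemma~\ref{RamifiedALFixedPointTame}, Theorem~\ref{ramifiedtheorem}) also restricts to $p\ne 2$, and the closing remark of that section notes that the wild case is not resolved.
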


\begin{proof}Since there is a unique component of $\CY_{\FF_p}$, $n=1$. Let $\xi', \xi$ be the generic points of the components of $\CY_{\FF_p}$ and $\CZ_{\FF_p}$ respectively. Then $D(\xi'|\xi) = \langle \alpha \rangle$ since $\alpha$ preserves $\CY_{\FF_p}$. Since $w_m$ acts non-trivially on $\CY_{\FF_p}$, $I(\xi'|\xi) = \{ {\rm id}\}$. The multiplicity of the component corresponding to $\xi$ is thus 2.

To determine the $\Gamma$ such that $2\Gamma = \CZ_{\FF_p}$, recall that the pushforward under $f:\CY \to \CZ$ of $\CY_{\FF_p}$ forms a prime divisor of $\CZ$ in $\CZ_{\FF_p}$ and must therefore be $\Gamma$. To determine this pushforward, note that the induced action of $\sigma$ on $\Spec(\FF_p)$ is trivial and consider the following commutative square.

\[
\xymatrix{ 
\CY \ar@{->}[r]^\alpha \ar@{->}[d] & \CY\ar@{->}[d] \\ 
\Spec(\ZZ_p[\sqrt d]) \ar@{->}[r]^\sigma & \Spec(\ZZ_p[\sqrt d])
}
\]

The fiber product of this square with $\Spec(\FF_p) \to \Spec(\ZZ_p[\sqrt d])$ is simply the $\Spec(\FF_p)$-involution $w_m$ on $\CY_{\FF_p} = X^D_0(N)_{\FF_p}$. It follows that $f$, when restricted to $\CY_{\FF_p}$ becomes simply the quotient map $X^D_0(N)_{\FF_p} \to (X^D_0(N)/w_m)_{\FF_p}$, and therefore $\Gamma \cong (X^D_0(N)/w_m)_{\FF_p}$. 
%

\end{proof}

We note by Lemma \ref{Sing} that $\CZ$ is not generally a regular scheme. To make the resolution of its singularities easier, we fix the following.

\begin{definition}\label{BranchPointsDefinition} Fix an ordering $\{Q_i\}$ of the branch points of the quotient map $f:X^D_0(N)_{\FF_p} \to (X^D_0(N)/w_m)_{\FF_p}$. Let $P_i$ denote the unique preimage of $Q_i$ under $f$.\end{definition}

Note that by definition, the $P_i$ are exactly the points of $X^D_0(N)_{\FF_p}$ fixed by $w_m$. When $p\ne 2$, we will explicitly describe a desingularization of $\CZ$ in the strong sense \cite[Definition 8.3.39]{Liu}. This will be a regular model of $C^D(N,d,m)_{\QQ_p}$. We will first describe the branch points $\{Q_i\}$ and their $\FF_p$-rationality.


\subsection{Atkin-Lehner fixed points over finite fields}

Throughout this section, we will keep the notation of Definition \ref{BranchPointsDefinition}. Note that since $\QQ_p[\sqrt d]$ is totally ramified over $\QQ_p$, $\FF_p(Q_i) \cong \FF_p(P_i)$. 

\begin{lemma}\label{RamifiedKeyLemma} Let $\CZ$ be non-regular and $\pi: \CX \to \CZ$ a desingularization in the strong sense and assume that for all $i$, $\pi\inv(Q_i)$ is a chain of rational curves such that at least one has multiplicity one. Then $C^D(N,d,m)(\QQ_p)$ is nonempty if and only if either 

\begin{enumerate}

\item $\left(\frac{-m}{p}\right) = 1$ and one of the following holds:

\begin{itemize}

\item $m=2$ or

\item $H_{-4m}(X)$ has a root modulo $p$ or

\item $m\equiv 3\bmod 4$ and $H_{-m}(X)$ has a root modulo $p$,

\end{itemize} 

\item or $\left(\frac{-m}{p}\right) = -1$ and one of the conditions of Corollary \ref{SuperspecialFpRationalALFixed} are satisfied.\end{enumerate}\end{lemma}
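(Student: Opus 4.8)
The plan is to combine the structure of the desingularization $\pi:\CX\to\CZ$ described in Theorem \ref{CZDefnTheorem} and the hypothesis with Hensel's lemma, reducing the existence of a $\QQ_p$-point on $C^D(N,d,m)$ to the existence of a smooth $\FF_p$-point on the special fiber $\CX_{\FF_p}$. By Theorem \ref{CZDefnTheorem} we have $\CZ_{\FF_p}=2\Gamma$ with $\Gamma\cong(X^D_0(N)/w_m)_{\FF_p}$, so the strict transform of $\Gamma$ in $\CX$ still appears with multiplicity $2$ and contributes no smooth $\FF_p$-points (every point of it is a double point of $\CX_{\FF_p}$, hence not smooth). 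Thus the only possible smooth $\FF_p$-points of $\CX_{\FF_p}$ lie on the exceptional chains $\pi\inv(Q_i)$. By hypothesis each $\pi\inv(Q_i)$ is a chain of rational curves, at least one of which, say $\Gamma_i$, has multiplicity one; since $\Gamma_i\cong\PP^1$ over the residue field $\FF_p(Q_i)$, it has a smooth $\FF_p$-point if and only if $\FF_p(Q_i)=\FF_p$, i.e. $Q_i$ (equivalently $P_i$, the $w_m$-fixed point of $X^D_0(N)_{\FF_p}$ lying over it) is $\FF_p$-rational. Conversely, a smooth $\FF_p$-point of $\CX_{\FF_p}$ lifts to a $\QQ_p$-point by Hensel's lemma since $\CX$ is regular and proper over $\ZZ_p$; and any $\QQ_p$-point of $C^D(N,d,m)$ reduces to a smooth $\FF_p$-point of $\CX_{\FF_p}$ because $\CX$ is regular. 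Hence $C^D(N,d,m)(\QQ_p)\ne\emptyset$ if and only if some $P_i$ is $\FF_p$-rational.

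Next I would translate ``some $w_m$-fixed point $P_i$ of $X^D_0(N)_{\FF_p}$ is $\FF_p$-rational'' into the arithmetic conditions in the statement, splitting on whether $\left(\frac{-m}{p}\right)$ is $1$ or $-1$. The $w_m$-fixed points correspond, via the moduli description and Corollary \ref{EmbeddingFixedPoint} (and Theorem \ref{OrdinaryCM} in the ordinary case, Corollary \ref{SuperspecialFixedZetaFour}/\ref{SuperspecialFixedZetaSix} in the superspecial case), to QM-abelian surfaces $(A,\iota)$ whose endomorphism ring contains an embedded copy of $\ZZ[\sqrt{-m}]$ (or $\ZZ[\zeta_4]$ when $m=2$). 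Such a point is defined over $\FF_p$ precisely when it is fixed by the Frobenius $\Frob_p\in\Gal(\overline\FF_p/\FF_p)$. When $\left(\frac{-m}{p}\right)=1$, the prime $p$ splits in $\QQ(\sqrt{-m})$, so the relevant $(A,\iota)$ is ordinary; by Theorem \ref{OrdinaryCM}, $A\cong E_0\times E_0'$ with $E_0,E_0'$ ordinary elliptic curves over $\FF_p$ having CM by $\ZZ[\sqrt{-m}]$ or $\ZZ[\frac{1+\sqrt{-m}}{2}]$, and such a curve exists over $\FF_p$ exactly when the corresponding Hilbert class polynomial $H_{-4m}(X)$ (respectively $H_{-m}(X)$, when $m\equiv 3\bmod 4$) has a root modulo $p$ — with the exceptional case $m=2$ (CM by $\ZZ[\zeta_4]$, $j=1728$) always available. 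When $\left(\frac{-m}{p}\right)=-1$, $p$ is inert in $\QQ(\sqrt{-m})$, so $-mp$ is (up to the square $\left(\frac{-1}{p}\right)$ bookkeeping) a ``discriminant divisible by $p$'', forcing $(A,\iota)$ to be supersingular at $p$, hence superspecial by Lemma \ref{pnmidDsupersingular}; and an $\FF_p$-rational such fixed point exists exactly under the conditions of Corollary \ref{SuperspecialFpRationalALFixed}, using Lemma \ref{ALGaloisLemma} that $\Frob_p$ acts as $w_p$ on superspecial points, so the point is $w_{mp}$-fixed, which is precisely what Corollary \ref{SuperspecialFpRationalALFixed} detects via the simultaneous embedding theorems of Section \ref{SimEmb}.

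I would organize the write-up as: (1) reduce to smooth $\FF_p$-points of $\CX_{\FF_p}$ via Hensel and regularity; (2) use the multiplicity-$2$ component $\Gamma$ and the multiplicity-$1$ exceptional components to reduce further to $\FF_p$-rationality of some $P_i$; (3) for the split case, invoke Theorem \ref{OrdinaryCM} and the definition of the Hilbert class polynomial; (4) for the inert case, invoke supersingularity (Lemma \ref{pnmidDsupersingular}), Lemma \ref{ALGaloisLemma}, and Corollary \ref{SuperspecialFpRationalALFixed}. The main obstacle I anticipate is step (3): carefully matching the moduli-theoretic statement ``$(A,\iota)$ is $w_m$-fixed and $\FF_p$-rational'' with ``$H_{-4m}$ or $H_{-m}$ has a root mod $p$'', because one must (a) pass from the ``optimal embedding of $\ZZ[\sqrt{-m}]$ into the commutant'' description through the product decomposition $A\cong E_0\times E_0'$, (b) handle the possibility that the CM order is the maximal order $\ZZ[\frac{1+\sqrt{-m}}{2}]$ rather than $\ZZ[\sqrt{-m}]$ (hence the two polynomials, and the need to check $e_{D,N}$-nonvanishing ensures such a surface actually occurs on $X^D_0(N)$), and (c) ensure that an ordinary elliptic curve over $\FF_p$ with the prescribed CM exists iff the class polynomial splits off a linear factor mod $p$ — a statement about reduction of CM elliptic curves at a split prime that needs Deuring's lifting/reduction theory, applied with some care since the CM order may have conductor divisible by small primes. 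The $p\ne 2$ hypothesis keeps the exceptional chains tame (each $d_i=1$), which is what makes the multiplicity-one component available; without it the argument would need the more delicate description of $\CX_{\FF_p}$ at $p=2$, which is why the lemma is stated only for odd $p$.
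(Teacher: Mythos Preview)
Your proposal is correct and follows essentially the same approach as the paper's proof: reduce via Hensel's lemma and regularity to smooth $\FF_p$-points of $\CX_{\FF_p}$, observe that the multiplicity-two strict transform of $\Gamma$ contributes nothing so one must land on an exceptional component over some $Q_i$, and then split the analysis of $\FF_p$-rationality of the $w_m$-fixed points $P_i$ into the ordinary case (via Theorem \ref{OrdinaryCM} and Deuring's criterion, yielding the Hilbert class polynomial conditions) and the supersingular/superspecial case (via Lemmas \ref{pnmidDsupersingular} and \ref{ALGaloisLemma}, yielding Corollary \ref{SuperspecialFpRationalALFixed}). Your anticipated obstacle in step (3) is exactly the place where the paper's proof is tersest, and your remark about the $e_{D,N}$-nonvanishing condition is apt---that condition does not appear in the lemma as stated but resurfaces in Theorem \ref{RamifiedMainTheorem}, so be aware that the lemma's ``if and only if'' in case (1) tacitly assumes an ordinary $w_m$-fixed point exists at all.
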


\begin{proof} Note first that each component in $\pi\inv(Q_i)$ must be isomorphic to $\PP^1_{\FF_p(Q_i)}$. Therefore by our assumption on $\pi$, $\FF_p = \FF_p(Q_i)$ if and only if there is a reduced copy of $\PP^1_{\FF_p}$ in $\pi\inv(Q_i)$. 

By Lemma \ref{ordinaryorsupersingular}, any QM abelian surface over a finite field must be either ordinary or supersingular. Suppose first that $(A,\iota)$ is supersingular and fixed by $w_m$. By Lemma \ref{pnmidDsupersingular}, if $(A,\iota)$ is a supersingular QM-abelian surface over a finite field of characteristic $p$, then $(A,\iota)$ is superspecial. Therefore, one of the conditions of Corollary \ref{SuperspecialFpRationalALFixed} hold if and only if there is a QM abelian surface $(A,\iota)$ fixed by $w_m$ whose corresponding point $P_i$ is $\FF_p$-rational. 

Now suppose that $(A,\iota)$ is an ordinary QM-abelian surface over a finite field $k$ fixed by $w_m$. By Theorem \ref{OrdinaryCM}, there are elliptic curves $E$ and $E'$ such that $\End_k(E) \cong \End_k(E') \cong R' = \ZZ[\sqrt{-m}]$ or $\ZZ[\frac{1 + \sqrt{-m}}{2}]$ (or $\ZZ[\zeta_4]$ if $m=2$) and $A \cong E\times E'$. Now note that the $j$-invariants of $E$ and $E'$ are roots of $H_{-4m}(X) \bmod p$, $H_{-m}(X)\bmod p$ if $m\equiv 3\bmod 4$, or $H_{-4}(X)$ if $m=2$. If $m=2$, then $H_{-4}(X)$ and $H_{-8}(X)$ have degree one so for all $p$, $H_\Delta(X)$ has a root modulo $p$. Since the $j$-invariants of $E$ and $E'$ are defined over $\FF_p$, $(A,\iota)$ is defined over $\FF_p$. Therefore if $P_i$ corresponds to the surface $(A,\iota)$ then $\FF_p(P_i) = \FF_p$. 

Recall now the classical theorem of Deuring that if $K$ is a number field, $\p\mid p$ is a prime, and $E_{/K}$ is an elliptic curve with CM by $R_\Delta$ then $E\bmod \p$ is ordinary if and only if $\left(\frac{\Delta}{p}\right) = 1$ \cite[Theorem 13.12]{EllipticFunctions}. Therefore $(A,\iota)$ is ordinary if and only if $\left(\frac{-m}{p}\right) = 1$.


We have thus shown that either (1) or (2) holds if and only if there is a reduced copy of $\PP^1_{\FF_p}$ in some $\pi\inv(Q_i)$. Since the strict transform of $\Gamma$ in $\CX$ has multiplicity two, the presence of a reduced copy of $\PP^1_{\FF_p}$ in some $\pi\inv(Q_i)$ is equivalent to the presence of a smooth point of $\CX(\FF_p)$. By Hensel's Lemma \cite[Lemma 1.1]{JoLi}, the presence of a smooth point in $\CX(\FF_p)$ is equivalent to $C^D(N,d,m)(\QQ_p)$ being nonempty. \end{proof}


\subsection{Tame Potential Good Reduction}

In this section we construct a regular model of $C^D(N,d,m)_{\QQ_p}$. Let $\CX_{\ZZ_p} := \mathrm{Bl}_{\{Q_i\}}(\CZ)$, the blowup of $\CZ$ along the branch divisor of $\CY_{\FF_p} \to \CZ_{\FF_p}$ \cite[Definition 8.1.1]{Liu}. Since the blowup construction gives a map $\CX \to \CZ$ which is an isomorphism away from $\{Q_i\}$, $\CX$ is a regular model if and only if $\CX \to \CZ$ is a desingularization in the strong sense if and only if $\CX$ is a regular scheme. 

To see that this is a regular scheme, let $\overline R = \ZZ_p^{nr}$, a strict henselization of $\ZZ_p$. We will construct in this section an auxiliary scheme $\CX'_{/\overline R}$. If we can show that $\CX_{\overline R} \cong \CX'$, it will follow that $\CX$ is regular \cite[Lemma 2.1.1]{CES}. Thus, the hypotheses of Lemma \ref{RamifiedKeyLemma} would be satisfied and thus Theorem \ref{RamifiedMainTheorem} would be proved.

%
%
%
%
%
%

Also fix $\overline S =\overline R[\sqrt d]$, $k'$ the residue field of $\overline S$, $k$ the residue field of $\overline R$, and note that both $k$ and $k'$ must be isomorphic to $\overline \FF_p$. We note the following.

\begin{lemma}\label{RamifiedALFixedPointTame} Suppose that $p\ne 2$ and let $Q$ be a point of $Q_i \times_{\ZZ_p} \overline R$. Then $Q$ is a tame cyclic quotient singularity \cite[Definition 2.3.6]{CES} with $n=2$ and $r =1$.\end{lemma}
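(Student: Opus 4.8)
The plan is to pass to completed local rings over the strict henselization $\overline R = \ZZ_p^{nr}$ and to exhibit $\widehat{\OO_{\CZ,Q}}$ directly as a ring of $\mu_2$-invariants on which the group acts diagonally, with both weights equal to $1$.

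First I would set up the local picture. Since $p$ is odd and ramified in $\QQ(\sqrt d)$, the extension $\overline S := \overline R[\sqrt d]$ of $\overline R$ is quadratic and totally tamely ramified; fixing a uniformizer $\varpi$ of $\overline S$ with $\sigma(\varpi)=-\varpi$, we may (after multiplying $\varpi$ by a unit of $\overline R$, every unit of $\overline R$ being a square because $p\ne 2$ and the residue field is algebraically closed) assume $\varpi^2 = p$. The chosen point $Q$ lies over a geometric point $P\in X^D_0(N)(\overline\FF_p)$ fixed by $w_m$, hence fixed by $\alpha = w_m\sigma$ since $\sigma$ induces the identity on $\overline\FF_p$; so $P$ is one of the points $P_i$ of Definition \ref{BranchPointsDefinition} and $Q$ one of the $Q_i$. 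Because forming $\CZ=\CY/\langle\alpha\rangle$ commutes with the flat base change $\Spec\overline R\to\Spec\ZZ_p$ and with completion, it suffices to understand the $\langle\alpha\rangle$-action on $\widehat{\OO_{\CY_{\overline R},P}}$. By Theorem \ref{DeligneRapoportModelTheorem} (here $p\nmid DN$), $\CY = X^D_0(N)_{\ZZ_p[\sqrt d]}$ is smooth over $\ZZ_p[\sqrt d]$, so $\widehat{\OO_{\CY_{\overline R},P}}\cong \widehat{\overline R}[[x,\varpi]]/(\varpi^2-p)$, a regular two-dimensional ring with $\{x,\varpi\}$ a regular system of parameters, where $x$ is a coordinate on the curve at $P$.

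Next I would linearize the action. Restricted to $\overline S$ the map $\alpha$ is $\sigma$, so $\alpha(\varpi)=-\varpi$; reduced modulo $\varpi$, $\alpha$ becomes $w_m$ acting on $\widehat{\OO_{X^D_0(N)_{\overline\FF_p},P}}=\overline\FF_p[[x]]$. Since $p\ne 2$, the quotient map $X^D_0(N)_{\overline\FF_p}\to (X^D_0(N)/w_m)_{\overline\FF_p}$ is tamely ramified at the branch point $P$, so the inertia group $\langle w_m\rangle$ acts faithfully on the cotangent line at $P$; that is, $w_m$ acts on $\mathfrak m_P/\mathfrak m_P^2$ by $-1$. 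Writing the $\alpha$-action on $\mathfrak m/\mathfrak m^2 = \overline\FF_p\,\overline\varpi\oplus\overline\FF_p\,\overline x$ as $\overline\varpi\mapsto-\overline\varpi$, $\overline x\mapsto a\overline\varpi+b\overline x$, the relation $\alpha^2=1$ together with $b=-1$ forces $a(b-1)=0$, hence $a=0$ because $2\in\overline R^\times$; thus $\alpha(x)\equiv -x\pmod{\mathfrak m^2}$. Averaging — replacing $x$ by $x'=\tfrac12\bigl(x-\alpha(x)\bigr)$, which still completes $\varpi$ to a regular system of parameters since $x'\equiv x\pmod{\mathfrak m^2}$ — gives $\alpha(x')=-x'$ exactly. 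So in the parameters $\{x',\varpi\}$ the group $\langle\alpha\rangle\cong\mu_2$ acts over $\widehat{\overline R}$ by $x'\mapsto -x'$ and $\varpi\mapsto -\varpi$.

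Finally I would read off the conclusion: $\widehat{\OO_{\CZ,Q}}\cong \bigl(\widehat{\overline R}[[x',\varpi]]/(\varpi^2-p)\bigr)^{\mu_2}$ with $\mu_2$ acting with weight $1$ on each of the two parameters; explicitly the invariant ring is $\widehat{\overline R}[[s,t]]/(t^2-ps)$ with $s=x'^2$, $t=\varpi x'$, which is indeed singular (in agreement with Lemma \ref{Sing}). By definition this exhibits $Q$ as a tame cyclic quotient singularity in the sense of \cite[Definition 2.3.6]{CES} — tame because $n=2$ is a unit in $\overline R$ — of type $n=2$, $r=1$. The step I expect to be the main obstacle is precisely this linearization: pinning down the identification $\widehat{\OO_{\CY_{\overline R},P}}\cong\widehat{\overline R}[[x,\varpi]]/(\varpi^2-p)$ together with the sign of the $w_m$-action on the cotangent space at $P$ — which is where the branch-point hypothesis and $p\ne 2$ enter decisively — and then simultaneously diagonalizing the action of $\sigma$ on the base and of $w_m$ on the fibre into the standard $\mu_2$-action; the rest is flat-base-change and formal-completion bookkeeping.
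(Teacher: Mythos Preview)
Your proposal is correct and follows essentially the same route as the paper. Both arguments pass to the completed local ring over the strict henselization, linearize the order-$2$ action (you average via $x'=\tfrac12(x-\alpha(x))$, the paper takes $T'=\overline\alpha(T)-T$; these differ by the unit $-\tfrac12$), and then identify the quotient with the standard $\mu_2$-model of \cite[Definition 2.3.6]{CES}. Your treatment of why the tangential action is $-1$ (faithfulness of tame inertia at a branch point) is in fact a little more explicit than the paper's one-line ``since $\overline\alpha$ is an involution, $\alpha_1=-1$'', and your direct computation of the invariant ring $\widehat{\overline R}[[s,t]]/(t^2-ps)$ is a nice supplement to the paper's appeal to the CES formalism.
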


\begin{proof}Let $\overline \alpha$ denote the extension of $\alpha$ from $\CY$ to $\CY_{\overline S}$. We wish to show that $\widehat{\OO_{\CZ,Q}^{sh}}$ is the ring of invariants of a $\mu_2$ (or since $p\ne 2$, $\ZZ/2\ZZ$) action. Fix an isomorphism $\overline S[[X]] \cong \widehat{\OO_{\CY_{\overline S},P}}$ where $P$ is the unique preimage of $Q$ under $\overline f: \CY_{\overline S} \to \CZ_{\overline R}$. Since $w_m$ is always Galois-equivariant, $\overline\alpha(\sqrt d) = -\sqrt d$. Since $\overline \alpha$ induces an isomorphism $\overline S[[T]] \cong \overline S[[ \overline\alpha(T)]]$, $\overline\alpha(T) = P_\alpha(T) = \sum_{j \ge 1} \alpha_j T^j$. Since $\overline\alpha$ is an involution, $\alpha_1 = -1$. Then $\overline\alpha(T) - T = -2T(1 + O(T))$, i.e. $\overline\alpha(T) - T \equiv -2T \bmod (T^2)$. Since $-2\not\in \mathfrak{m}_{\overline S}$, $\overline S[[T]] \cong \overline S[[T']]$ where $T' := \overline\alpha(T) -T$. Note also that $\overline\alpha(T') = \overline\alpha(\overline\alpha(T) - T) = T - \overline\alpha(T) = -(T')$. Therefore $\sqrt d$ and $T'$ form a basis of uniformizers for the two-dimensional local ring $\widehat{\OO_{\CY_{\overline S},P}}$ and $\overline\alpha$ acts as $-1$ on both $\sqrt d$ and $T'$. 
 
Note now that $\widehat{\OO_{\CZ_{\overline R},Q}}$ is the ring of invariants of the $\mu_2$-action given by $\overline\alpha$ on $\overline S[[T']]$. Recall that since $p\ne 2$ is a uniformizer for $R$ and $p$ is ramified in $\QQ(\sqrt d)$ where $d$ is square-free, $d$ is also a uniformizer. Therefore $\overline S[[T']] \cong \overline R[[t_1,t_2]]/(t_1^{m_1}t_2^{m_2} - d)$ where $m_1 = 2$, $t_2 = T'$, and $m_2 = 0$ in the notation of \cite{CES}. It follows that $Q$ is a tame cyclic quotient singularity with $n=2$ and $r = 1$.
\end{proof}
 
From here on, let $b'$ be such that $\sum_{i=1}^b Q_i\times_{\ZZ_p} \overline R = \sum_{i=1}^{b'} Q_i'$.

\begin{definition}\label{HirzebruchJungDefinition} Let $R$ be a discrete valuation ring with algebraically closed residue field, $X_{/R}$ be a scheme, and $P$ a tame cyclic quotient singularity of $X$ of type $n,r$. Then \cite[Theorem 2.4.1]{CES} we can inductively produce a chain of divisors $E_1, \dots E_\lambda$ and a set of integers $b_1, \dots, b_\lambda$ such that 
\begin{center}\begin{itemize}

\item There is a resolution $\tilde X_P \to X$ of the singularity at $P$ whose fiber over $P$ is the chain made up of the $E_i$'s

\item $E_i\cdot E_j = \delta_{i,j\pm 1}$ if $i \ne j$, $E_j^2 = -b_j<-1$,

\item $\frac{n}{r} = b_1 - \frac{1}{b_2 - \frac{1}{\dots - \frac{1}{b_\lambda}}}$.

\end{itemize}\end{center}

This $\tilde X_P$ is called the Hirzebruch-Jung desingularization at $P$. \end{definition}

\begin{theorem}\label{ramifiedtheorem} If $p\ne 2$ there is a desingularization of $\overline R$-schemes $\CX' \to \CZ_{\overline R}$ such that $\CX'_k$ has the form

\setlength{\unitlength}{0.6cm}\linethickness{.3mm}
\small
\begin{picture}(18,5)
\put(5,2){\line(1,0){6}}
\put(11,1.5){\line(0,1){3}}\put(11.2,3){$\Gamma_1'$}
\put(11.2,3.8){$.........................$}
\put(16,1.5){\line(0,1){3}}\put(16.2,3){$\Gamma_{b'}'$}
\put(11,2){\line(1,0){7}}\put(6,1.2){$2\Gamma_0'$}
\end{picture}

where $\Gamma_0'$ is the strict transform of $\Gamma_{\overline R}$ and for all $i>0$, $\Gamma_i' \cong \PP^1_k$. This is to say that there is an equality of divisors on $\CX'$ between $\CX'_k$ and $2\Gamma_0' + \sum_{i=1}^{b'} \Gamma_i'$, $\Gamma_0'\cap \Gamma_i' = Q_i' \in Q_i \times_{\ZZ_p} \overline R$, and all intersections are transverse. Moreover $\CX_{\overline R} \cong \CX'$, and since $\CX'$ is a regular scheme, so is $\CX$. It follows that $\CX$ is a regular $\ZZ_p$ model for $C^D(N,d,m)_{\QQ_p}$.

\end{theorem}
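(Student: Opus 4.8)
The plan is to construct $\CX'$ by resolving, one point at a time, the finitely many singular points of $\CZ_{\overline R}$, using the local description already obtained in Lemma~\ref{RamifiedALFixedPointTame}. That lemma (valid because $p\neq 2$) says that each non-regular point of $\CZ_{\overline R}$ — and by Lemma~\ref{Sing} these are exactly the $b'$ points $Q_1',\dots,Q_{b'}'$ lying above the branch points $Q_i$ — is a tame cyclic quotient singularity of type $n=2$, $r=1$; explicitly $\widehat{\OO_{\CZ_{\overline R},Q_i'}}\cong\overline R[[x,y]]/(x^2-dy)$, an ordinary double point whose special fibre is the doubled line $V(x^2)$. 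I would then feed $(n,r)=(2,1)$ into the Hirzebruch--Jung algorithm of Definition~\ref{HirzebruchJungDefinition} (i.e.\ \cite[Theorem~2.4.1]{CES}): the continued fraction is just $2=b_1$, so $\lambda=1$ and the resolution at $Q_i'$ introduces a single exceptional curve $\Gamma_i'\cong\PP^1_k$ with $(\Gamma_i')^2=-2$ which meets the strict transform of $\Gamma_{\overline R}$ transversally in one point lying over $Q_i$. Carrying this out simultaneously at all the $Q_i'$ (the resolutions are isomorphisms away from these distinct points, so they glue) produces a regular scheme $\CX'$ with a proper birational $\CX'\to\CZ_{\overline R}$ that is an isomorphism over the regular locus, hence a desingularization in the strong sense.

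To read off $\CX'_k$ as a divisor, note that away from the $Q_i'$ the morphism $\CX'\to\CZ_{\overline R}$ is an isomorphism and $\CZ_{\overline R,k}=2\,\Gamma_{\overline R,k}$ by Theorem~\ref{CZDefnTheorem}, so the strict transform $\Gamma_0'$ of $\Gamma_{\overline R}$ appears with multiplicity $2$ and $\Gamma_0'\cong(X^D_0(N)/w_m)_{\FF_p}\otimes_{\FF_p}k$. For the multiplicity $m_i$ of $\Gamma_i'$ I would argue by intersection theory on the regular surface $\CX'$: the divisor $\CX'_k$ is a fibre, so $\CX'_k\cdot\Gamma_i'=0$; since the $\Gamma_j'$ with $j\neq i$ lie over the distinct points $Q_j'$ and are therefore disjoint from $\Gamma_i'$, writing $\CX'_k=2\Gamma_0'+\sum_j m_j\Gamma_j'$ and intersecting with $\Gamma_i'$ gives $0=2(\Gamma_0'\cdot\Gamma_i')+m_i(\Gamma_i')^2=2-2m_i$, hence $m_i=1$. (Alternatively this falls out of the standard blow-up charts of $x^2-dy$.) This yields $\CX'_k=2\Gamma_0'+\sum_{i=1}^{b'}\Gamma_i'$ with exactly the stated incidences and genus-zero components.

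It remains to identify $\CX_{\overline R}$ with $\CX'$ and deduce that $\CX$ is a regular model. Since blowing up commutes with the flat base change $\ZZ_p\to\overline R$ \cite{Liu}, we have $\CX_{\overline R}=\mathrm{Bl}_{\{Q_i'\}}(\CZ_{\overline R})$; and for an ordinary double point the single blow-up of the reduced singular point is already its Hirzebruch--Jung (minimal) resolution, so $\CX_{\overline R}\cong\CX'$. As $\CX'$ is regular, so is $\CX_{\overline R}$, and since $\OO_{\CX,x}$ and $\OO_{\CX_{\overline R},x'}$ have a common strict henselization, \cite[Lemma~2.1.1]{CES} gives that $\CX$ is regular. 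Finally $\CX\to\CZ$ is proper and an isomorphism on generic fibres, $\CZ_{\QQ_p}=C^D(N,d,m)_{\QQ_p}$ by Theorem~\ref{CZDefnTheorem}, and $\CX$ is integral and dominates $\Spec\ZZ_p$, hence is flat over it; so $\CX$ is a proper flat regular $\ZZ_p$-model of $C^D(N,d,m)_{\QQ_p}$.

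The step I expect to be the real obstacle is the bookkeeping around multiplicities together with the comparison $\CX_{\overline R}\cong\CX'$: because $\CZ_{\FF_p}$ is non-reduced, the "two branches" of an $A_1$ point through $Q_i$ have collapsed into a single branch of multiplicity two, so one must check carefully both that the naive blow-up $\mathrm{Bl}_{\{Q_i\}}(\CZ)$ really coincides with the canonical Hirzebruch--Jung resolution and that its exceptional curves enter the special fibre reduced rather than with multiplicity two. Both facts are standard for $A_1$ singularities, but verifying them — whether by the intersection-number argument above or by explicit charts — is where the proof must be nailed down.
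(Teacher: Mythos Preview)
Your proof is correct and follows essentially the same route as the paper: construct $\CX'$ via the Hirzebruch--Jung resolution at each tame cyclic quotient singularity (using Lemma~\ref{RamifiedALFixedPointTame} to get $(n,r)=(2,1)$, hence $\lambda=1$, $b_1=2$), then identify $\CX_{\overline R}\cong\CX'$ and descend regularity via \cite[Lemma~2.1.1]{CES}. The only notable differences are tactical: the paper reads off the full shape of $\CX'_k$ (including the multiplicity $1$ of each $\Gamma_i'$) directly from \cite[Theorem~2.4.1]{CES}, whereas you supply an independent intersection-theoretic check $0=2-2m_i$; and for the identification $\CX_{\overline R}\cong\CX'$, the paper argues abstractly that the two desingularizations are linked by a birational map which extends to an isomorphism via \cite[Theorem~8.3.20]{Liu}, while you argue more concretely that blowing up commutes with the flat base change $\ZZ_p\to\overline R$ and that one blow-up already resolves an $A_1$ point. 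Both arguments are valid; yours is more hands-on and self-contained at the cost of needing the explicit $A_1$ blow-up fact, while the paper's birational-extension argument is more uniform and avoids any local computation beyond citing CES.
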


\begin{proof} We construct $\CX'$ by performing the Hirzebruch-Jung desingularization at $Q$ for all $Q$ in all $Q_i \times \overline R$. By Lemma \ref{RamifiedALFixedPointTame}, $n=2$, $r=1$ and thus $\lambda =1$ and $b_1 = \frac{2}{1}$ in Definition \ref{HirzebruchJungDefinition}. Therefore $\CX'_k$ has the form above \cite[Theorem 2.4.1]{CES}.

Recall now that $\CX' \to \CZ_{\overline R}$, $\CX_{\overline R} \to \CZ_{\overline R}$ are birational morphisms and so there is a birational map $f:\CX_{\overline R} \dashedrightarrow \CX'$ making the following diagram commute.

\[
\xymatrix{
\CX_{\overline R} \ar@{-->}[r]^f \ar@{->}[rd] & \CX' \ar@{-->}[r]^{f\inv} \ar@{->}[d] & \CX_{\overline R}\ar@{->}[ld] \\
 &  \CZ_{\overline R}  &
}
\]
 
Since $\overline R$ is Dedekind, $f\inv\mid _{\Gamma_0'}$ is the identity and $f$ can be extended so that the preimage of each divisor on either $\CX_{\overline R}$ or $\CX'$ is again a divisor. We thus find that $f$ is a morphism and thus an isomorphism \cite[Theorem 8.3.20]{Liu}. It follows that $\CX_{\overline R}$ is regular and therefore $\CX$ is regular \cite[Lemma 2.1.1]{CES}.\end{proof}
 
\begin{corollary} Theorem \ref{RamifiedMainTheorem} holds.\end{corollary}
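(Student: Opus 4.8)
The plan is to combine the model constructions of this section with the $\FF_p$-rationality analysis of Lemma \ref{RamifiedKeyLemma}, and then to rewrite the resulting dichotomy in the language of $e_{D,N}$, Hilbert class polynomials, and Kronecker symbols that appears in the statement of Theorem \ref{RamifiedMainTheorem}.

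First I would produce a regular $\ZZ_p$-model of $C^D(N,d,m)_{\QQ_p}$. Since $p\nmid 2DN$ is ramified in $\QQ(\sqrt d)$, Theorem \ref{CZDefnTheorem} gives the scheme $\CZ$ with generic fiber $C^D(N,d,m)_{\QQ_p}$ and $\CZ_{\FF_p}=2\Gamma$, $\Gamma\cong (X^D_0(N)/w_m)_{\FF_p}$. There are two cases. If $w_m$ acts freely on $X^D_0(N)_{\FF_p}$, then by Lemma \ref{Sing} there are no branch points, $\CZ$ is already regular (\'etale-locally it is $X^D_0(N)_{\ZZ_p[\sqrt d]}$, which is regular), and every point of $\CZ_{\FF_p}$ has multiplicity two; hence $C^D(N,d,m)(\QQ_p)=\emptyset$, since a $\QQ_p$-point would have to reduce to a smooth point of the regular model $\CZ$. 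In this case the absence of a $w_m$-fixed point on $X^D_0(N)_{\FF_p}$—equivalently, by good reduction at $p$, on $X^D_0(N)_{\overline\QQ}$—forces $e_{D,N}(-4m)=0$ and $e_{D,N}(-m)=0$, so conditions (1) and (2) fail, and it also rules out the superspecial fixed points required by conditions (3)--(6), so the asserted equivalence holds trivially. Otherwise $\CZ$ is non-regular, and Theorem \ref{ramifiedtheorem} (this is where $p\neq 2$ enters) produces the regular model $\CX=\mathrm{Bl}_{\{Q_i\}}(\CZ)$ with $\CX_{\FF_p}=2\Gamma_0+\sum_i\Gamma_i$, each $\Gamma_i\cong\PP^1_{\FF_p(Q_i)}$ of multiplicity one meeting $\Gamma_0$ transversally at $Q_i$; in particular each $\pi^{-1}(Q_i)$ is a chain of rational curves with a reduced component, so the hypotheses of Lemma \ref{RamifiedKeyLemma} are satisfied.

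Lemma \ref{RamifiedKeyLemma} then reduces the question to the two alternatives stated there: $\left(\frac{-m}{p}\right)=1$ together with one of ``$m=2$'', ``$H_{-4m}(X)$ has a root modulo $p$'', or ``$m\equiv 3\bmod 4$ and $H_{-m}(X)$ has a root modulo $p$''; or else $\left(\frac{-m}{p}\right)=-1$ together with one of the conditions of Corollary \ref{SuperspecialFpRationalALFixed}. For the inert alternative I would simply unwind Corollary \ref{SuperspecialFpRationalALFixed} for $p\neq 2$: its four mutually exclusive items are precisely conditions (3)--(6) of the theorem, and in each of them the stated hypothesis ($\left(\frac{-DN}{p}\right)=-1$ with $m=DN$, or $\left(\frac{-DN/2}{p}\right)=-1$ with $m=DN/2$) already implies $\left(\frac{-m}{p}\right)=-1$, so the extra Kronecker condition supplied by Lemma \ref{RamifiedKeyLemma} is automatic. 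For the ordinary alternative I would use that, $\CZ$ being non-regular, there is a $w_m$-fixed point on $X^D_0(N)_{\FF_p}$, which when $\left(\frac{-m}{p}\right)=1$ is necessarily ordinary (a superspecial $w_m$-fixed point would require $\ZZ[\sqrt{-m}]$ to embed into an Eichler order of level $N$ in $B_{Dp}$, which is impossible because $\left(\frac{-m}{p}\right)=1$ forces $e_{Dp,N}(-4m)=0$); this ordinary point is the reduction of a $w_m$-fixed point of $X^D_0(N)_{\overline\QQ}$, whose existence amounts—for $m\neq 2$, by Theorem \ref{OrdinaryCM} and Theorem \ref{EichlerEmbedding}—to $e_{D,N}(-4m)\neq 0$, or $m\equiv 3\bmod 4$ and $e_{D,N}(-m)\neq 0$; and, by Theorem \ref{OrdinaryCM} together with Deuring's reduction theorem (the relevant conductors being prime to $p$ since $p$ is odd), that point is $\FF_p$-rational exactly when the $j$-invariant of its underlying CM elliptic curve lies in $\FF_p$, i.e. exactly when $H_{-4m}(X)$ (resp. $H_{-m}(X)$) has a root modulo $p$. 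Since $H_{-4}(X)$ and $H_{-8}(X)$ have degree one, the case $m=2$ is handled along the same lines, the clause ``$m=2$'' collapsing to the bare existence of a $w_2$-fixed point. Intersecting these observations with Lemma \ref{RamifiedKeyLemma} turns the ordinary alternative into conditions (1)--(2).

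Finally the six cases obtained are pairwise incompatible: (1) and (2) live in $\left(\frac{-m}{p}\right)=1$ while (3)--(6) live in $\left(\frac{-m}{p}\right)=-1$, and (3) versus (4), and (5) versus (6), are distinguished by the value of $m$ and the parity of $D$, so Theorem \ref{RamifiedMainTheorem} is exactly the disjunction claimed. The hard part is not any single deep step but the bookkeeping in the last two paragraphs—matching the trichotomy of Lemma \ref{RamifiedKeyLemma} and the four items of Corollary \ref{SuperspecialFpRationalALFixed} against the six cases of the theorem, checking that the implicit constraints (nonvanishing of the $e_{D,N}$ and the forced signs of Kronecker symbols) come out correctly, and treating carefully the degenerate cases $m=2$ and $w_m$ acting freely, which Lemma \ref{RamifiedKeyLemma} does not literally cover.
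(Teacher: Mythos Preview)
Your proposal is correct and follows the paper's approach: the paper's own proof is the single sentence ``By Theorem \ref{ramifiedtheorem}, the conditions of Lemma \ref{RamifiedKeyLemma} hold,'' leaving entirely implicit the bookkeeping you spell out (the degenerate case where $\CZ$ is already regular, and the matching of Lemma \ref{RamifiedKeyLemma}'s dichotomy and Corollary \ref{SuperspecialFpRationalALFixed}'s four cases against conditions (1)--(6) of Theorem \ref{RamifiedMainTheorem}, including absorbing the $e_{D,N}$ nonvanishing clauses). Your version is strictly more careful than the paper's, not different from it.
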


\begin{proof} By Theorem \ref{ramifiedtheorem}, the conditions of Lemma \ref{RamifiedKeyLemma} hold.\end{proof}


\begin{remark} In the case that $X^D_0(N)/w_m \cong \PP^1_{\FF_p}$ we may deduce this theorem from work of Sadek \cite{Sadek}.\end{remark}


\begin{remark} Retaining the notation of Lemma \ref{Sing}, if $p=2$ we still have that $\CZ_{/\ZZ_2}$ is a normal scheme, non-regular precisely at the fixed points on the special fiber of $w_m$. Moreover, these singularities are still $\ZZ/2\ZZ$-quotient singularities. Once more, we may resolve these singularities to give a regular model of $C^D(N,d,m)$. Unfortunately Lemma \ref{RamifiedALFixedPointTame} no longer holds as these singularities are \emph{wild}, and it is not known under what circumstances a resolution will have non-reduced components.\end{remark} 



\section{Local points when $p|D$}

Throughout this section we will fix $D$ the discriminant of an indefinite quaternion $\QQ$-algebra, $N$ a squarefree integer coprime to $D$, a squarefree integer $d$, an integer $m\mid DN$ and a prime $p\mid D$ unramified in $\QQ(\sqrt d)$. Let $w_m$ be as in Definition \ref{AtkinLehnerOO}. Let $X^D_0(N)_{/\QQ}$ be as defined in Corollary \ref{DefnCoarseModuliScheme}, and let $C^D(N,d,m)_{/\QQ}$ be its twist by $\QQ(\sqrt d)$ and $w_m$. 
The purpose of this section is to prove the following theorem. 

\begin{theorem}\label{mainCDtheorem} Suppose that $p\mid D$ is unramified in $\QQ(\sqrt d)$ and $m\mid DN$. Let $p_i$, $q_j$ be primes such that $D/p = \prod_i p_i$ and $N = \prod_j q_j$. 

\begin{itemize}

\item Suppose $p$ is split in $\QQ(\sqrt d)$. Then $C^D(N,d,m)(\QQ_p)$ is nonempty if and only if one of the following two cases occurs [Theorem \ref{CDpnmidmTheorem}].

\begin{enumerate}

\item $p=2$, $p_i\equiv 3 \bmod 4$ for all $i$, and $q_j \equiv 1 \bmod 4$ for all $j$

\item $p\equiv 1 \bmod 4$, $D = 2p$, and $N=1$

\end{enumerate}

\item Suppose that $p$ is inert in $\QQ(\sqrt d)$. 

\begin{itemize}

\item If $p\mid m$, $C^D(N,d,m)(\QQ_p)$ is nonempty if and only if one of the following four cases occurs.

\begin{enumerate}


\item $m = p$, $p_i \not\equiv 1\bmod 3$ for all $i$, and $q_j \not\equiv 2\bmod 3$ for all $j$ [Lemma \ref{CDpeqm}]

\item $m = 2p$ and one of $e_{D/p,N}(-4)$ or $e_{D/p,N}(-8)$ is nonzero [Lemma \ref{pdividesmpnem}]

\item $m/p \not\equiv 3\bmod 4$ and $e_{D/p,N}(-4m/p)$ is nonzero [Lemma \ref{pdividesmpnem}]

\item $m/p \equiv 3\bmod 4$ and one of $e_{D/p,N}(-4m/p)$ or $e_{D/p,N}(-m/p)$ is nonzero [Lemma \ref{pdividesmpnem}]

\end{enumerate}

\item If $p\nmid m$, $C^D(N,d,m)(\QQ_p)$ is nonempty if and only if one of the following four cases occurs [Theorem \ref{CDpnmidmTheorem}].

\begin{enumerate}

\item $p=2$, $m=1$, $p_i \equiv 3 \bmod 4$ for all $i$, and $q_j \equiv 1\bmod 4$ for all $j$

\item $p\equiv 1 \bmod 4$, $m = DN/(2p)$, for all $i$, $p_i \not\equiv 1\bmod 4$, and for all $j$, $q_j \not\equiv 3\bmod 4$

\item $p =2$, $m = DN/2$, $p_i \equiv 3\bmod 4$ for all $i$, and $q_j \equiv 1\bmod 4$ for all $i$

\item $p\equiv 1\bmod 4$, $m = DN/p$, for all $i$, $p_i \not\equiv 1\bmod 4$, and for all $j$, $q_j \not\equiv 3\bmod 4$

\end{enumerate}

\end{itemize}

\end{itemize}

\end{theorem}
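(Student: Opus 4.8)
The plan is to dispose of the statement by splitting on the decomposition of $p$ in $\QQ(\sqrt d)$ and, in each regime, passing to the Čerednik--Drinfeld model of Theorem \ref{CerednikDrinfeldModelTheorem}. If $p$ splits in $\QQ(\sqrt d)$ then $\QQ(\sqrt d)\otimes\QQ_p\cong\QQ_p\times\QQ_p$, the twisting cocycle is locally trivial, and $C^D(N,d,m)\cong X^D_0(N)$ over $\QQ_p$; here one must re-derive from the explicit Mumford curve $M_{(D,N)}$ exactly when $X^D_0(N)(\QQ_p)\ne\emptyset$ --- this is the Jordan--Livn\'e--Ogg criterion and should come out of the combinatorics of the special fiber (components $\leftrightarrow$ two copies of $\Pic(D/p,N)$, nodes $\leftrightarrow\Pic(D/p,Np)$, thicknesses $\ell([I])$) together with the twisted Frobenius carried by the dual graph. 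If $p$ is inert then $\QQ(\sqrt d)\otimes\QQ_p=\QQ_{p^2}$ is unramified, and $C^D(N,d,m)_{\QQ_p}$ is the quadratic twist of $X^D_0(N)_{\QQ_p}$ by this extension and $w_m$; transporting this twisted descent datum through the isomorphism $\phi$ of Theorem \ref{CerednikDrinfeldModelTheorem} (which intertwines $w_p$ with $a_p$) presents a $\ZZ_p$-model of $C^D(N,d,m)_{\QQ_p}$ as a quotient of $M_{(D,N)}\otimes\ZZ_{p^2}$ by a Frobenius twisted by a product of $a_p$ and the automorphism $\tilde w_{m'}$ attached to the prime-to-$p$ part $m'$ of $m$. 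The shape of this quotient --- and hence the subsequent analysis --- bifurcates according to whether $p\mid m$.

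When $p\nmid m$, the twisting involution still swaps the two sheets of components, so the model is again a twisted Mumford-type curve with reduced special fiber, and $C^D(N,d,m)(\QQ_p)\ne\emptyset$ is equivalent, via Hensel, to the existence of a smooth $\FF_p$-point: either an $\FF_p$-rational component, or an $\FF_p$-rational node through which a point lifts --- and here, because the two branches of a node are interchanged by $a_p$, liftability forces the thickness $\ell([I])$ to be even, a norm obstruction $N_{\QQ_{p^2}/\QQ_p}(x)=p^{\ell}$. This reduces matters to a combinatorial question on the $W$-sets $\Pic(D/p,N)$ and $\Pic(D/p,Np)$: for which $[I]$ is the corresponding vertex or edge fixed by the twisted Frobenius and of thickness one (or even)? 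Since the Frobenius action on the superspecial/combinatorial data is $w_p$ (Lemma \ref{ALGaloisLemma}) and thickness $>1$ at a component or node is exactly an embedding of $\ZZ[\zeta_4]$ or $\ZZ[\zeta_6]$ into the relevant Eichler order (Theorem \ref{EichlerEmbedding}, Corollary \ref{EmbeddingFixedPoint}), the answer is dictated by the simultaneous-embedding results of Theorems \ref{zetafour}, \ref{zetasix}, \ref{GoodReductionSimultaneousEmbeddings}, Corollary \ref{SuperspecialFpRationalALFixed}, and the Pizer uniqueness statement \ref{PizerExtraUnits}; unwinding these yields the four listed cases (and, in the split regime, the two listed cases, the sporadic $D=2p$, $N=1$ coming from $\Pic$-sets that are singletons with extra units). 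This is the content of Theorem \ref{CDpnmidmTheorem}.

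When $p\mid m$, the twisting involution preserves each sheet and in general has fixed components and fixed nodes, so the quotient is no longer reduced and regular: it acquires $\ZZ/2\ZZ$-quotient singularities, exactly as in the ramified-prime section, which must be resolved (by Hirzebruch--Jung when $p$ is odd; a wild resolution when $p=2\mid D$). One then reads off $\QQ_p$-points from the $\FF_p$-rationality of the exceptional curves over the resolved fixed points, which is again governed by the ordinary locus (reduction of CM elliptic curves and Hilbert class polynomials, yielding the $e_{D/p,N}$-conditions) and the supersingular locus (simultaneous embeddings into Eichler orders in $B_{D/p}$, yielding the $\bmod 3$ congruences from $\ZZ[\zeta_6]$ when $m=p$). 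This is Lemmas \ref{CDpeqm} and \ref{pdividesmpnem}. Assembling the three regimes gives the theorem.

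I expect the principal obstacle to be the bookkeeping of the two intertwined twists together with the singularity analysis in the $p\mid m$ inert case: one must (i) identify correctly which Eichler order --- in $B_{D/p}$, of level $N$ or $Np$ --- controls the thickness at each fixed point, (ii) track how the $a_p$-twist (which manufactures $X^D_0(N)$ from the Mumford curve) composes with the $w_m$-twist, so that the resulting Frobenius action on $\Pic(D/p,N)$ and $\Pic(D/p,Np)$, and in particular the parity/norm obstruction at nodes, is consistent, and (iii) handle $p=2\mid D$, where the quotient singularities are wild and one falls back on the explicit small-discriminant analysis underlying the $p=2$ sub-cases. The embedding theorems and the superspecial-point machinery developed in the first sections are precisely the tools assembled to make (i) and (ii) tractable.
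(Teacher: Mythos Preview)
Your split case and your $p\nmid m$ inert case are essentially what the paper does: the twisted Frobenius on the dual graph acts as $w_{mp}$, which swaps the two sheets of $\Pic(D/p,N)$, so no component is $\FF_p$-rational and one must find a $w_{mp}$-fixed node of even thickness; this is Lemma \ref{CDpnmidmeven}, and the four cases drop out of Corollary \ref{SuperspecialFixedZetaFour} as you indicate.

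Your $p\mid m$ inert case, however, contains a genuine conceptual error. Because $p$ is \emph{inert} (not ramified) in $\QQ(\sqrt d)$, the extension $\ZZ_{p^2}/\ZZ_p$ is \'etale, and the descent $w_m\sigma$ acts freely on $\Spec(\ZZ_{p^2})$. The quotient $\CZ$ is therefore automatically regular: there are no $\ZZ/2\ZZ$-quotient singularities, no Hirzebruch--Jung resolution, and no wild case at $p=2$. You have imported the analysis of Section~4 (ramified primes) into an unramified setting where it does not apply. Equally, there is no ordinary locus here: by Lemma \ref{ordinaryorsupersingular}, when $p\mid D$ every QM surface in characteristic $p$ is supersingular, so Hilbert class polynomials and CM reduction play no role.

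What actually happens when $p\mid m$ is simpler. Writing $m=p\cdot(m/p)$, the two $a_p$-twists cancel and $C^D(N,d,m)$ over $\ZZ_p$ becomes the quadratic twist of the (already $\ZZ_p$-defined, desingularized) Mumford curve by $w_{m/p}$. Since $p\nmid m/p$, this involution preserves each sheet, and smooth $\FF_p$-points come from $w_{m/p}$-fixed \emph{components} (Lemma \ref{CDpdividesmcases}). When $m=p$ so $w_{m/p}=\mathrm{id}$, every component is fixed and the question becomes whether some component has length $>1$, i.e.\ whether $\ZZ[\zeta_4]$ or $\ZZ[\zeta_6]$ embeds into some Eichler order of level $N$ in $B_{D/p}$ (Lemma \ref{CDpeqm}). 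When $m\ne p$, a component $[I]\in\Pic(D/p,N)$ is $w_{m/p}$-fixed iff $\ZZ[\sqrt{-m/p}]$ embeds into $\OO_l(I)$, and summing over ideal classes this is exactly the non-vanishing of $e_{D/p,N}(-4m/p)$ or $e_{D/p,N}(-m/p)$ from Eichler's embedding count (Lemma \ref{pdividesmpnem}). The $e_{D/p,N}$ conditions thus arise from fixed-component combinatorics in the definite algebra $B_{D/p}$, not from any ordinary or CM argument.
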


As opposed to the case where $p\mid N$, all conditions here are determined by congruences. For completeness, we record the following.

\begin{corollary}\label{CDfullALCor} Let $p_i$, $q_j$ be primes such that $D/p = \prod_i p_i$ and $N = \prod_j q_j$.

\begin{itemize}

\item If $p$ is split in $\QQ(\sqrt d)$, then $C^D(N,d,DN) \cong X^D_0(N)$ over $\QQ_p$ and $X^D_0(N)(\QQ_p)$ is nonempty if and only if one of the following two cases occurs.

\begin{enumerate}

\item $p=2$, $p_i\equiv 3 \bmod 4$ for all $i$, and $q_j \equiv 1 \bmod 4$ for all $j$

\item $p\equiv 1 \bmod 4$, $D = 2p$, and $N=1$

\end{enumerate}

\item If $p$ is inert in $\QQ(\sqrt d)$ then $C^D(N,d,DN)(\QQ_p)$ is nonempty.

\end{itemize}
\end{corollary}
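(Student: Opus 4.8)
The plan is to deduce the corollary from Theorem~\ref{mainCDtheorem} by specializing to $m = DN$, together with the elementary fact that a quadratic twist becomes trivial over any completion in which the twisting character splits.

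First I would treat the split case. Since $p$ is unramified in $\QQ(\sqrt d)$, $p$ being split means $\sqrt d \in \QQ_p$, so the cocycle defining $C^D(N,d,DN)$ as a twist of $X^D_0(N)$ by $w_{DN}$ and $\QQ(\sqrt d)$ restricts to the trivial class over $\QQ_p$; hence $C^D(N,d,DN) \cong X^D_0(N)$ over $\QQ_p$, which is the first assertion. The criterion for $X^D_0(N)(\QQ_p) \ne \emptyset$ is then read off from the split case of Theorem~\ref{mainCDtheorem} (equivalently Theorem~\ref{CDpnmidmTheorem}), which is exactly the claimed dichotomy: $p = 2$ with all $p_i \equiv 3 \bmod 4$ and all $q_j \equiv 1 \bmod 4$, or $p \equiv 1 \bmod 4$ with $D = 2p$ and $N = 1$.

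For the inert case I would put $D' := D/p$ and $N' := N$. Since $p \mid D$ we have $p \mid m = DN$, so Theorem~\ref{mainCDtheorem} applies in its ``$p \mid m$'' form with $m/p = D'N'$, and it suffices to show that one of its four cases always holds. The key observation is that $e_{D',N'}(-4D'N')$ is always nonzero. Indeed, $D'N'$ is squarefree, so every prime $\ell \mid D'N'$ ramifies in $\QQ(\sqrt{-D'N'})$; hence $\ell$ does not divide the conductor of the discriminant $-4D'N'$, and the Eichler symbol $\left\{\tfrac{-4D'N'}{\ell}\right\}$ equals the Kronecker symbol $\left(\tfrac{-4D'N'}{\ell}\right) = 0$. (At $\ell = 2$ one notes separately that when $2 \mid D'N'$ the discriminant $-4D'N' = -8(D'N'/2)$ is already fundamental, so its conductor is odd and the symbol again vanishes.) Therefore every local factor in the product defining $e_{D',N'}(-4D'N')$ equals $1$, and $e_{D',N'}(-4D'N') = h(-4D'N') \ge 1$. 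Since $D$ is a product of at least two primes we have $D'N' \ge 2$, so case (1) of Theorem~\ref{mainCDtheorem} (the case $m = p$, i.e.\ $D'N' = 1$) cannot occur; if $D'N' \not\equiv 3 \bmod 4$ then case (3) holds (when $D'N' = 2$ this is case (3) with discriminant $-8$, so case (2) need not be invoked), while if $D'N' \equiv 3 \bmod 4$ then case (4) holds via its first alternative $e_{D',N'}(-4D'N') \ne 0$. In every case $C^D(N,d,DN)(\QQ_p) \ne \emptyset$.

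I do not expect a real obstacle: the statement is a clean specialization of Theorem~\ref{mainCDtheorem}, and the only step requiring attention is the identity $e_{D',N'}(-4D'N') = h(-4D'N')$, which follows from $D'N'$ being squarefree plus the short local conductor check at the prime $2$. The remaining verification that one of the cases (1)--(4) always applies is then bookkeeping on the residue of $D'N'$ modulo $4$.
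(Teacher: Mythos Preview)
Your proposal is correct and follows the same route as the paper: specialize Theorem~\ref{mainCDtheorem} to $m = DN$, use the split case verbatim, and for the inert case observe that $e_{D/p,N}(-4DN/p)$ is always nonzero so one of cases (3) or (4) is satisfied. The paper's own proof is the single line ``Note that $e_{D/p,N}(-4DN/p)$ is always nonzero by Theorem~\ref{EichlerEmbedding}''; your computation of the local factors via the Eichler symbol is exactly the unpacking of that citation.
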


\begin{proof} Note that $e_{D/p,N}(-4DN/p)$ is always nonzero by Theorem \ref{EichlerEmbedding}.\end{proof}

To prove Theorem \ref{mainCDtheorem}, we shall need to work with regular models for $X^D_0(N)_{\QQ_p}$ and $C^D(N,d,m)_{\QQ_p}$.

\begin{definition}\label{CDdesingularization} Let $\pi: \CX \to X^D_0(N)_{/\ZZ_p}$ denote a minimal desingularization. If $x$ is a superspecial point on $X^D_0(N)_{\overline\FF_p}$ let $\ell = \ell(x)$ be as in Definition \ref{PicardLength}. If $\ell>1$, $\pi^*(x(\Spec(\overline\FF_p))) = \bigcup_{i=1}^{\ell-1} C_i$ where for all $i$, $C_i\cong \PP^1_{\overline\FF_p}$ and exactly two points of $C_i$ are singular in $\CX_{\overline\FF_p}$.\end{definition}


For $n\mid DN$, let $w_n$ denote the automorphism of Definition \ref{AtkinLehnerOO}. Note that extending the automorphism $w_n$ from Definition \ref{AtkinLehnerOO} to $\CX$ makes sense because $w_n: X^D_0(N) \to X^D_0(N)$ induces a birational morphism $\CX \dashedrightarrow \CX$ permuting the components of $\CX_{\FF_p}$. Therefore $w_n$ on $X^D_0(N)$ induces an isomorphism $\CX \to \CX$ \cite[Remark 8.3.25]{Liu}.

We note also that the components of $X^D_0(N)_{\overline\FF_p}$ are in $W$-equivariant bijection with $\Pic(D/p,N)\coprod \Pic(D/p,N)$ by Theorem \ref{CerednikDrinfeldModelTheorem}. The intersection points, which can only link a component in one copy of $\Pic(D/p,N)$ to a component in the other copy of $\Pic(D/p,N)$ are in $W$-equivariant bijection with $\Pic(D/p,Np)$ as in Theorem \ref{CerednikDrinfeldModelTheorem}. The bijection of the components with two copies of $\Pic(D/p,N)$ is $W/\langle w_p\rangle$-equivariant. As in Lemma \ref{ComponentsIntersectionPointsLemma}, $w_p$ interchanges the two copies of $\Pic(D/p,N)$. We define the length of a component of $X^D_0(N)_{\overline\FF_p}$ by the length of the associated element of $\Pic(D/p,N)$ as in Definition \ref{PicardLength}.


\begin{definition} Let $\sigma$ be such that $\langle \sigma\rangle = \Aut_{\ZZ_p}(\ZZ_{p^2})$. We denote by $\CZ_{/\ZZ_p}$ the regular model of $C^D(N,d,m)_{\QQ_p}$ obtained as the \'etale quotient $\CZ$ of $\CX_{\ZZ_{p^2}}$ by the action of $w_m \circ \sigma$.\end{definition}

Note that if $p$ is inert in $\QQ(\sqrt d)$ then $\ZZ_p[\sqrt d] \cong \ZZ_{p^2}$ and thus the generic fiber of $\CZ$ is $C^D(N,d,m)_{\QQ_p}$. Therefore $\CZ$ is a regular model of $C^D(N,d,m)_{\QQ_p}$ if $p$ is inert in $\QQ(\sqrt d)$. 

We also note that if $p$ is split in $\QQ(\sqrt d)$, or if $p$ is inert and $m=1$, then $C^D(N,d,m)_{\QQ_p} \cong X^D_0(N)_{\QQ_p}$. If $p$ is split in $\QQ(\sqrt d)$, we can consider $d'$ to be any square-free integer such that $p$ is inert in $\QQ(\sqrt{d'})$ and $\CZ'$ to be the regular model of $C^D(N,d',1)_{\QQ_p} \cong X^D_0(N)_{\QQ_p}$. Therefore, we shall obtain our results when $p$ is split as a corollary to our results when $p\nmid m$. 


\subsection{The proof when $p\mid m$}

\begin{lemma}\label{CDpdividesmcases}Let $p\mid D$ be unramified in $\QQ(\sqrt d)$ and $p\mid m$. Then $C^D(N,d,m)(\QQ_p)$ is nonempty if and only if one of the following occurs.

\begin{enumerate}[(1)]

\item $p = m$ and there is some component of $X^D_0(N)_{\overline\FF_p}$ with length greater than one

\item $p\ne m$ and there is a component of $X^D_0(N)_{\overline\FF_p}$ fixed by $w_{m/p}$

\end{enumerate} 
\end{lemma}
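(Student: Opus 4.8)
The plan is to reduce everything to a count of smooth $\FF_p$-rational points on the special fibre of the regular model $\CZ$, and then to extract that count from the \v{C}erednik--Drinfeld description of $X^D_0(N)$ at $p$. Since twisting by a split quadratic algebra is trivial, when $p$ is split in $\QQ(\sqrt d)$ one has $C^D(N,d,m)_{\QQ_p}\cong X^D_0(N)_{\QQ_p}$, the $m=1$ situation treated in the next subsection; so I would assume from now on that $p$ is inert in $\QQ(\sqrt d)$, in which case $\CZ$ is a proper regular $\ZZ_p$-model of $C^D(N,d,m)_{\QQ_p}$. Then $C^D(N,d,m)(\QQ_p)=\CZ(\ZZ_p)$ is nonempty if and only if $\CZ_{\FF_p}$ has a smooth $\FF_p$-rational point: one direction is Hensel's lemma \cite[Lemma 1.1]{JoLi}, and for the other one uses that a $\ZZ_p$-section of a regular model reduces into the smooth locus of the special fibre.

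Next I would pin down $\CZ_{\overline\FF_p}$ together with its Frobenius. Writing $m=p\cdot(m/p)$ and $w_m=w_pw_{m/p}$, and recalling that the \v{C}erednik--Drinfeld isomorphism $\phi$ satisfies $\phi w_p=a_p\phi$ and realises $X^D_0(N)_{\ZZ_p}$ as the quotient of $(M_{(D,N)})_{\ZZ_{p^2}}$ by $\sigma a_p$ (Theorem~\ref{CerednikDrinfeldModelTheorem}), $\phi$ lifts to the minimal desingularisations (both models having the same singularities) and one computes $\CZ\cong\widetilde M_{\ZZ_{p^2}}/(a_{m/p}\sigma)$, where $\widetilde M$ is the desingularisation of $M_{(D,N)}$ and $a_{m/p}$ is the Atkin--Lehner involution corresponding to $w_{m/p}$. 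Hence $\CZ_{\overline\FF_p}\cong\widetilde M_{\overline\FF_p}$ with $\Gal(\overline\FF_p/\FF_p)$ acting via $x\mapsto a_{m/p}(\Frob_p(x))$, so smooth $\FF_p$-points of $\CZ_{\FF_p}$ correspond bijectively to smooth points $x\in\widetilde M(\overline\FF_p)$ with $\Frob_p(x)=a_{m/p}(x)$. I will use freely that $\widetilde M_{\FF_p}$ is a Mumford curve — each component is a $\PP^1_{\FF_p}$, each intersection point is an $\FF_p$-rational node, and the ``big'' components form two copies of $\Pic(D/p,N)$ with $a_p$ interchanging the copies — and that $a_{m/p}$ (like $w_{m/p}$) preserves each of the two copies because $m/p$ is prime to $p$.

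For case~(1), $m=p$ and $a_{m/p}=\mathrm{id}$, so I just need a smooth $\FF_p$-point of $\widetilde M_{\FF_p}$, equivalently a component meeting strictly fewer than $p+1$ others (all nodes being $\FF_p$-rational and a $\PP^1_{\FF_p}$ having exactly $p+1$ rational points). Here the dual graph is the quotient of the $(p+1)$-regular Bruhat--Tits tree by $\Gamma$, the stabiliser in $\Gamma$ of a vertex $[J]$ modulo scalars has order $\ell([J])$, and an edge-stabiliser is contained in the stabilisers of its two endpoints, so the length of an edge divides the length of each of its endpoint vertices. A short tree argument then gives: if every big component has length $1$, there are no chain components and every component meets exactly $p+1$ others, hence no smooth $\FF_p$-point; while if some big component $[J]$ has length $>1$, then either its stabiliser acts nontrivially on the $p+1$ edges at $[J]$ (so $[J]$ meets $<p+1$ others), or it fixes all of them (so every edge at $[J]$ has length $>1$, producing a chain component, which meets only two others) — in either case a smooth $\FF_p$-point exists. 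This is exactly condition~(1).

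For case~(2), $m/p>1$ and $a_{m/p}$ is a nontrivial involution. If $x$ is a smooth point on a component $C$, then $\Frob_p$ fixes $C$ (all components being $\FF_p$-rational), so $\Frob_p(x)=a_{m/p}(x)$ forces $a_{m/p}(C)=C$; moreover, if $a_{m/p}$ fixed no big component it would fix no component at all, since a chain fixed set-theoretically would have its two endpoint components — lying in the two $a_{m/p}$-preserved copies of $\Pic(D/p,N)$ — individually fixed. Conversely, if $a_{m/p}$ fixes a big component $[J]\cong\PP^1_{\FF_p}$, then $\tau:=a_{m/p}|_{[J]}$ is an automorphism of order dividing $2$, so it has at most two fixed points; the set $\{x\in[J](\overline\FF_p):\Frob_p(x)=\tau(x)\}$ is the set of $\FF_p$-points of a smooth conic over $\FF_p$, hence has $p+1$ elements, and a node (being $\FF_p$-rational) lies in it only when it is $\tau$-fixed, so at least $p-1\ge1$ of these points are smooth. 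Thus $\CZ_{\FF_p}$ has a smooth $\FF_p$-point exactly when some big component of $X^D_0(N)_{\overline\FF_p}$ is fixed by $w_{m/p}$, which is condition~(2). The steps I expect to be most delicate are the identification $\CZ\cong\widetilde M_{\ZZ_{p^2}}/(a_{m/p}\sigma)$ in the second paragraph and the tree-theoretic bookkeeping (especially the $p=2$ subtleties) in the third; once those are in place the remainder is elementary geometry of $\PP^1$ over $\FF_p$.
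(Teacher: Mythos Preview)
Your argument is correct and follows the same skeleton as the paper's: reduce to smooth $\FF_p$-points on the special fibre of $\CZ$, identify $\CZ$ with the quotient of the desingularised Mumford model by $a_{m/p}\sigma$, and then split into $m=p$ versus $m\ne p$.  The paper's proof of case~(2) is essentially identical to yours (including the same unstated claim that $w_{m/p}$ restricts to a \emph{non-identity} involution on any big component it fixes, so that at most two of the nodes on that component remain $\FF_p$-rational after twisting).  Your observation that the split case must be rerouted to the $p\nmid m$ subsection is well taken; the paper makes the same move in the surrounding text even though the lemma is stated for ``unramified'' $p$.

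The one substantive difference is case~(1).  The paper simply cites \cite[Proposition~3.4]{RSY}, whereas you supply an independent Bruhat--Tits tree argument: edge stabilisers sit inside vertex stabilisers (valid here because the dual graph is bipartite, so no edge inversion), hence edge length divides vertex length; so if all vertex lengths are $1$ the quotient graph is $(p+1)$-regular with no exceptional chains, killing all smooth $\FF_p$-points, while if some vertex has length $>1$ you either get valence $<p+1$ or a chain component with only two nodes.  This is a self-contained replacement for the RSY citation and gives the same conclusion; just be careful that ``meets fewer than $p+1$ others'' should be read as ``carries fewer than $p+1$ nodes'', since multi-edges are allowed.
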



\begin{proof}If $p=m$ this is the obvious extension of a result of Rotger-Skorobogatov-Yafaev \cite[Proposition 3.4]{RSY}. 




Now suppose that $p\mid m$ but $p\ne m$ and recall the curve $M_{/\ZZ_p}$ of Theorem \ref{CerednikDrinfeldModelTheorem}. Let $\pi':N \to M$ be a minimal desingularization, so that $N_{\FF_p}$ is the twist of $\CZ_{\FF_p}$ by $\FF_{p^2}$ and $w_{m/p}$. Since $m\ne p$, $w_{m/p}$ is not the identity. 
Recall that a non-identity involution of $\PP^1$ fixes exactly 2 points of $\PP^1(\overline\FF_p)$. Suppose that a component of $N_{\overline\FF_p}$ is fixed by $w_{m/p}$ (under the isomorphism $N_{\overline\FF_p}\cong \CZ_{\overline\FF_p} \cong \CX_{\overline\FF_p}$). Therefore there is a component $y \cong \PP^1_{\FF_p}$ of $\CZ_{\FF_p}$. Since all intersection points are rational and at most 2 singular intersection points stayed $\FF_p$-rational, $y$ contains the image of a smooth $\FF_p$ rational point. Since there is a smooth point of $\CZ(\FF_p)$, $C^D(N,d,m)(\QQ_p)$ is nonempty by Hensel's Lemma.

Finally we note that if a component $C$ of $\CX_{\overline\FF_p}$ is fixed by $w_{m/p}$ then so is its image $\pi(C)$. If $\pi(C)$ is a component of $X^D_0(N)_{\overline\FF_p}$, we are done. If $\pi(C)$ is not a component then it is an intersection point of two components $C_1,C_2$ of $X^D_0(N)_{\overline\FF_p}$. It follows that $w_{m/p}$ either fixes both of them or interchanges them. However, Theorem \ref{CerednikDrinfeldModelTheorem} tells us that under the bijection between components of $X^D_0(N)_{\overline\FF_p}$ and $\Pic(D/p,N) \coprod \Pic(D/p,N)$, $C_1$ must lie in one copy and $C_2$ in the other. Since these bijections are $W/\langle w_p\rangle$-equivariant, $w_{m/p}$ cannot interchange $C_1$ and $C_2$ and must therefore fix them.\end{proof}


\begin{example} Let $\CX = X^{26}_0(1)_{/\ZZ_2}$, which is regular over $\ZZ_2$. Depicted below is the dual graph of $\CX_{\overline\FF_2}$. This tells us that $\CX_{\overline\FF_2}$ is simply two copies of $\PP^1_{\overline\FF_2}$ glued along the $\FF_2$-rational points of each.
\[\fbox{\xygraph{!~-{@{-}@[|(2)]}
*+{x_1 \bullet}="x1" & & *+{\bullet x_1'}="x1p"
"x1"-"x1p"
"x1"-@/^/"x1p"
"x1"-@/_/"x1p"
}}\]

Since the action of $w_2\Frob_2$ fixes each component and intersection point, the only fixed points are non-smooth, and thus $C^{26}(1,d,2)(\QQ_2)$ is empty for all $d\equiv \pm 3 \bmod 8$. On the other hand, since the action of $w_{26}\Frob_2$ cannot interchange $x_1$ and $x_1'$, it must act non-trivially on each component, and thus there must be a smooth fixed point of $w_{26}\Frob_2$. It follows that $C^{26}(1,d,26)(\QQ_2)$ is nonempty for all $d\equiv \pm 3\bmod 8$.
\end{example}

\begin{lemma}\label{CDpeqm} If $p=m$ and $p$ is inert in $\QQ(\sqrt{d})$, then $C^D(N,d,m)(\QQ_p)\ne \emptyset$ if and only if one of the following occurs.

\begin{enumerate}[(1)]

\item For all primes $q\mid (D/p)$, $q \not\equiv 1\bmod 4$, and for all primes $q\mid N$, $q\not\equiv 3\bmod 4$.

\item For all primes $q\mid (D/p)$, $q\not\equiv 1\bmod 3$, and for all primes $q\mid N$, $q\not\equiv 2\bmod 3$.

\end{enumerate}

\end{lemma}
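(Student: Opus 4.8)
The plan is to reduce, via the already-established Lemma~\ref{CDpdividesmcases}, to a statement about the lengths of the components of $X^D_0(N)_{\overline\FF_p}$, and then to translate that into the stated congruences using Eichler's embedding theorem.

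Since $p$ is inert in $\QQ(\sqrt d)$ and $m=p$, Lemma~\ref{CDpdividesmcases}(1) says $C^D(N,d,m)(\QQ_p)\ne\emptyset$ if and only if $X^D_0(N)_{\overline\FF_p}$ has a component of length greater than one. By Theorem~\ref{CerednikDrinfeldModelTheorem} the components of $X^D_0(N)_{\overline\FF_p}$ are indexed by two copies of $\Pic(D/p,N)$, the length of the component attached to $[I]$ being $\ell([I])=\#(\OO_l(I)^\times/\pm1)$; here $D/p$ is a product of an odd number of primes, so $B_{D/p}$ is definite, and as $[I]$ ranges over $\Pic(D/p,N)$ the orders $\OO_l(I)$ exhaust, up to conjugacy, all Eichler orders of level $N$ in $B_{D/p}$. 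So the question becomes whether some Eichler order $\OO'$ of level $N$ in $B_{D/p}$ has a unit outside $\{\pm1\}$. Any such unit is a root of unity generating an imaginary quadratic subfield of $B_{D/p}$, and the only imaginary quadratic fields containing roots of unity other than $\pm1$ are $\QQ(\sqrt{-1})$ and $\QQ(\sqrt{-3})$; replacing the unit by a suitable power exhibits $\ZZ[\zeta_4]$ or $\ZZ[\zeta_6]$ inside $\OO'$, and conversely either embedding produces a nontrivial unit. Hence a component of length $>1$ exists if and only if $\ZZ[\zeta_4]$ or $\ZZ[\zeta_6]$ embeds into some Eichler order of level $N$ in $B_{D/p}$.

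Because $\ZZ[\zeta_4]$ and $\ZZ[\zeta_6]$ are the maximal orders of their fraction fields, any such embedding is automatically optimal, so by Theorem~\ref{EichlerEmbedding} applied to the definite algebra $B_{D/p}$ one exists (for some Eichler order of level $N$) exactly when $e_{D/p,N}(-4)\ne0$, respectively $e_{D/p,N}(-3)\ne0$. In the exceptional pairs $(D/p,N)\in\{(2,1),(3,1)\}$, where the normalization by $w(\Delta)$ in Theorem~\ref{EichlerEmbedding} must be read with care, I would argue by hand instead: $\Pic(D/p,N)$ is then a single class whose maximal order already contains both $i$ and $\zeta_6$, so a long component exists, and one checks directly that conditions (1) and (2) both hold. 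Finally, since $f(-4)=f(-3)=1$, the Eichler symbols $\{\tfrac{-4}{q}\}$ and $\{\tfrac{-3}{q}\}$ coincide with the corresponding Kronecker symbols for every $q$, so unwinding Definition~\ref{DefnEDN} shows $e_{D/p,N}(-4)\ne0$ is equivalent to $(\tfrac{-4}{q})\ne1$ for all $q\mid D/p$ and $(\tfrac{-4}{q})\ne-1$ for all $q\mid N$, i.e.\ to condition (1), and likewise $e_{D/p,N}(-3)\ne0$ unwinds to condition (2); chaining the equivalences gives the lemma. The bulk of the content is already in Lemma~\ref{CDpdividesmcases} and Theorem~\ref{EichlerEmbedding}, so the main thing to be careful about is the symbol bookkeeping at the primes $2$ and $3$ — in particular that $(\tfrac{-3}{2})=-1$ is exactly what forbids $2\mid N$ in condition (2), while $(\tfrac{-4}{2})=0$ imposes no condition at the prime $2$ in condition (1) — together with the direct check of the exceptional pairs.
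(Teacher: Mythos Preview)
Your proof is correct and follows essentially the same approach as the paper: reduce via Lemma~\ref{CDpdividesmcases} to the existence of a component of length $>1$, identify that with the existence of an embedding of $\ZZ[\zeta_4]$ or $\ZZ[\zeta_6]$ into some Eichler order of level $N$ in $B_{D/p}$, and then use Theorem~\ref{EichlerEmbedding} to translate into the congruence conditions on the primes dividing $D/p$ and $N$. You are simply more explicit than the paper about why a nontrivial unit forces an embedding of $\ZZ[\zeta_4]$ or $\ZZ[\zeta_6]$, and about the symbol computations at $2$ and $3$; your extra discussion of the pairs $(D/p,N)\in\{(2,1),(3,1)\}$ is harmless caution rather than a genuinely separate case.
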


\begin{proof} By Theorem \ref{EichlerEmbedding}, condition (1) is equivalent to $e_{D/p,N}(-4)\ne 0$ and condition (2) is equivalent to $e_{D/p,N}(-3) \ne 0$. 
We know that $e_{D/p,N}(-4)\ne 0$ if and only if there is a component of $X^D_0(N)_{\overline\FF_p}$ of length divisible by two and $e_{D/p,N}(-3)\ne 0$ if and only if there is a component of $X^D_0(N)_{\overline\FF_p}$ of length divisible by three. This is to say that one of the two conditions of the Lemma occur if and only if there is a component $y$ of $X^D_0(N)_{\overline\FF_p}$ such that $\ell(y)>1$. But then by Lemma \ref{CDpdividesmcases} there is such a component if and only if $C^D(N,d,m)(\QQ_p)$ is nonempty.\end{proof}

\begin{lemma}\label{pdividesmpnem} If $p\mid m$ and $p\ne m$, then $C^D(N,d,m)(\QQ_p)$ is nonempty if and only if one of the following occurs.

\begin{itemize}

\item $m = 2p$ and one of $e_{D/p,N}(-4),e_{D/p,N}(-8)$ is nonzero.

\item $m/p\not\equiv 3 \bmod 4$ and $e_{D/p,N}(-4m/p)$ is nonzero.

\item $m/p \equiv 3\bmod 4$ and one of $e_{D/p,N}(-4m/p)$ or $e_{D/p,N}(-m/p)$ is nonzero.

\end{itemize}
\end{lemma}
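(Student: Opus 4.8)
The plan is to reduce the statement, via Lemma \ref{CDpdividesmcases}(2), to a question about Atkin--Lehner fixed \emph{components} of $X^D_0(N)_{\overline\FF_p}$, and then to translate that into embeddings of imaginary quadratic orders into definite Eichler orders, where Theorem \ref{EichlerEmbedding} applies. Throughout we are in the inert case, as is the standing hypothesis for this family of cases (the split case being deduced from the $p\nmid m$ analysis), so that the scheme $\CZ$ constructed above is a model of $C^D(N,d,m)_{\QQ_p}$; since $p\mid m$ and $p\ne m$ the integer $m/p$ is $\ge 2$ and coprime to $p$. By Lemma \ref{CDpdividesmcases}(2), $C^D(N,d,m)(\QQ_p)\ne\emptyset$ if and only if some component of $X^D_0(N)_{\overline\FF_p}$ is fixed by $w_{m/p}$. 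By Theorem \ref{CerednikDrinfeldModelTheorem} (see also Lemma \ref{ComponentsIntersectionPointsLemma}) the components are in $W/\langle w_p\rangle$-equivariant bijection with two copies of $\Pic(D/p,N)$, with $w_p$ interchanging the copies; as $w_{m/p}$ commutes with $w_p$ it preserves each copy, and on $\Pic(D/p,N)$ it is the Atkin--Lehner action of Theorem \ref{ALEquivariance}. Hence it suffices to decide when there is a class $[I]\in\Pic(D/p,N)$ fixed by $w_{m/p}$.

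First I would translate the fixed-point condition into an embedding condition, as in Corollary \ref{EmbeddingFixedPoint}: the left order $\OO_l(I)$ is an Eichler order of level $N$ in the definite algebra $B_{D/p}$, and $[I]$ is fixed by $w_{m/p}$ if and only if the unique two-sided integral ideal of $\OO_l(I)$ of norm $m/p$ is principal, say with generator $\gamma$. Then $\gamma^2 = (m/p)u$ for a unit $u$ of $\OO_l(I)$, and since $\OO_l(I)$ is definite the argument of Kurihara \cite[Proposition 4-4]{Kurihara}, reproduced in Corollary \ref{EmbeddingFixedPoint}, shows that $u$ is a nontrivial root of unity lying in $\QQ(\gamma)$, with $u=-1$ unless $m/p\in\{2,3\}$. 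Tracking the possibilities, $u=-1$ gives $\ZZ[\sqrt{-m/p}]\hookrightarrow\OO_l(I)$; if $m/p=2$ then additionally $u$ may be a primitive fourth root of unity, forcing $\ZZ[\zeta_4]\cong R_{-4}\hookrightarrow\OO_l(I)$; and if $m/p=3$ then additionally $u$ may be a primitive sixth root, forcing $\ZZ[\zeta_6]\cong R_{-3}\hookrightarrow\OO_l(I)$. Conversely each such embedding furnishes a generator $\gamma$ of the norm-$(m/p)$ two-sided ideal, so $[I]$ is fixed by $w_{m/p}$ precisely when one of these embeddings exists.

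Next I would run over the classes $[I]$ and invoke Theorem \ref{EichlerEmbedding}. As $[I]$ ranges over $\Pic(D/p,N)$ the orders $\OO_l(I)$ realize all conjugacy classes of Eichler orders of level $N$ in $B_{D/p}$, conjugate orders admit the same embeddings, and since $h(\Delta)>0$ and $w(\Delta)$ is finite and positive, Theorem \ref{EichlerEmbedding} gives that $e_{D/p,N}(\Delta)\ne 0$ if and only if $R_\Delta$ embeds optimally into some $\OO_l(I)$. Now $\ZZ[\sqrt{-m/p}]$ has discriminant $-4m/p$. If $m/p\not\equiv 3\bmod 4$ then (being squarefree and $\ge 2$) it is the maximal order of $\QQ(\sqrt{-m/p})$, so every embedding of it is optimal and it embeds into some $\OO_l(I)$ exactly when $e_{D/p,N}(-4m/p)\ne 0$. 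If $m/p\equiv 3\bmod 4$ then $\ZZ[\sqrt{-m/p}]$ has conductor $2$ inside the maximal order $R_{-m/p}$, so any embedding of it into some $\OO_l(I)$ factors through an optimal embedding of $R_{-4m/p}$ or of $R_{-m/p}$; hence it embeds into some $\OO_l(I)$ exactly when $e_{D/p,N}(-4m/p)\ne 0$ or $e_{D/p,N}(-m/p)\ne 0$ (this also absorbs the exceptional $\ZZ[\zeta_6]=R_{-3}$ occurring when $m/p=3$, since then $-m/p=-3$). Finally the exceptional $\ZZ[\zeta_4]=R_{-4}$ occurring when $m/p=2$, being maximal, embeds into some $\OO_l(I)$ exactly when $e_{D/p,N}(-4)\ne 0$. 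Assembling the cases $m/p=2$ (where $-4m/p=-8$ and the extra option $e_{D/p,N}(-4)\ne 0$ appears), $m/p\not\equiv 3\bmod 4$, and $m/p\equiv 3\bmod 4$ yields exactly the three bulleted conditions.

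The one genuinely delicate point is the second step: verifying that ``the component $[I]$ is fixed by $w_{m/p}$'' is equivalent to ``the norm-$(m/p)$ two-sided ideal of $\OO_l(I)$ is principal,'' and pinning down which imaginary quadratic orders can house a generator of that ideal. This is where the Kurihara unit argument is needed, together with the optimal-versus-nonoptimal bookkeeping in the third step (the conductor-$2$ suborder when $m/p\equiv 3\bmod 4$, and the genuinely separate role of $\ZZ[\zeta_4]$ when $m/p=2$, which is why the case $m=2p$ is recorded on its own). All of this machinery is already present in Corollary \ref{EmbeddingFixedPoint} and in the proof of Theorem \ref{SuperspecialGood}, so the remaining work is essentially a matter of careful citation rather than new input.
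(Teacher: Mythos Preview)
Your proposal is correct and follows essentially the same route as the paper: reduce via Lemma \ref{CDpdividesmcases}(2) to components fixed by $w_{m/p}$, identify these with elements of $\Pic(D/p,N)$ via Lemma \ref{ComponentsIntersectionPointsLemma}, translate the fixed-point condition to an embedding of $\ZZ[\sqrt{-m/p}]$ (or $\ZZ[\zeta_4]$ when $m/p=2$) via Corollary \ref{EmbeddingFixedPoint}, and then convert to optimal embeddings of the overorders and apply Theorem \ref{EichlerEmbedding}. You spell out the Kurihara unit analysis and the bijection between ideal classes and conjugacy classes of Eichler orders more explicitly than the paper does, but the logical structure is the same.
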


\begin{proof} Suppose that $p\mid m$ and $p\ne m$. After Lemma \ref{CDpdividesmcases}, $C^D(N,d,m)(\QQ_p)$ is nonempty if and only if a component of $X^D_0(N)_{\overline\FF_p}$ is fixed by $w_{m/p}$. After Lemma \ref{ComponentsIntersectionPointsLemma}, such a component corresponds to an element of $\Pic(D/p,N)$. After Lemma \ref{EmbeddingFixedPoint}, such a component is fixed by $w_{m/p}$ if and only if there is an embedding of $\ZZ[\sqrt{-m/p}]$ (or $\ZZ[\zeta_4]$ if $m/p =2$) into the QM endomorphisms of $(A,\iota)$. Such an embedding of an order $R$ exists if and only if there is an optimal embedding of an order $R'\supset R$. In this case, the only orders which contain $\ZZ[\sqrt{-m/p}]$ are itself or $\ZZ\left[\frac{1 + \sqrt{-m/p}}{2}\right]$ if $m/p\equiv 3\bmod 4$. Respectively, their discriminants are $-4m/p$ and $-m/p$, so the result follows from Theorem \ref{EichlerEmbedding}.\end{proof}







\subsection{The proof when $p\nmid m$}

Once more, we shall use Hensel's Lemma to determine whether $C^D(N,d,m)(\QQ_p)$ is nonempty in terms of $\CX_{\overline\FF_p}$. If $p\nmid m$ then the action of $\Frob_p$ on the components and intersection points of $\CZ_{\overline\FF_p}\cong\CX_{\overline\FF_p}$ coincides with the action of $w_{mp}$. However, by Lemma \ref{ComponentsIntersectionPointsLemma}, the action of $w_{mp}$ on $X^D_0(N)_{\overline\FF_p}$ fixes no component. In fact, we conclude the following.

\begin{lemma}\label{CDpnmidmeven} Suppose that $p\nmid m$ is unramified in $\QQ(\sqrt d)$. Then $C^D(N,d,m)(\QQ_p)$ is nonempty if and only if there is a superspecial $w_{mp}$-fixed intersection point $x$ of even length in $X^D_0(N)_{\overline\FF_p}$.\end{lemma}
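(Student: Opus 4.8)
The plan is to mimic the structure of the proof of Lemma \ref{RamifiedKeyLemma}: produce a regular model, understand its special fiber, and apply Hensel's Lemma. First I would use the Cerednik-Drinfeld uniformization (Theorem \ref{CerednikDrinfeldModelTheorem}) and the minimal desingularization $\pi:\CX \to X^D_0(N)_{/\ZZ_p}$ of Definition \ref{CDdesingularization}, so that $\CX_{\overline\FF_p}$ consists of the two copies of $\Pic(D/p,N)$ (each component a $\PP^1$), the intersection points indexed by $\Pic(D/p,Np)$, and for each superspecial intersection point $x$ of length $\ell(x)>1$ a chain of $\ell(x)-1$ copies of $\PP^1_{\overline\FF_p}$ introduced by $\pi$. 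Recall that $\CZ$, the étale quotient of $\CX_{\ZZ_{p^2}}$ by $w_m\sigma$, is a regular model of $C^D(N,d,m)_{\QQ_p}$ when $p$ is inert (and when $p$ is split we instead argue through an auxiliary $d'$ and $\CZ'$ as in the paragraph preceding this subsection). Since $p\nmid m$, the identity $w_p = \Frob_p$ on superspecial points (Lemma \ref{ALGaloisLemma}), together with the $W$-equivariance of all the bijections in Lemma \ref{ComponentsIntersectionPointsLemma} and Theorem \ref{CerednikDrinfeldModelTheorem}, shows that the action of $\Frob_p$ on components and intersection points of $\CZ_{\overline\FF_p}\cong\CX_{\overline\FF_p}$ is exactly the action of $w_{mp}$ on $X^D_0(N)_{\overline\FF_p}$.

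Next I would run the Hensel argument. By Hensel's Lemma (\cite[Lemma 1.1]{JoLi}), $C^D(N,d,m)(\QQ_p)$ is nonempty if and only if $\CZ_{\FF_p}$ has a smooth $\FF_p$-rational point, i.e.\ an $\FF_p$-rational point lying on exactly one component of $\CZ_{\overline\FF_p}$. Now the key combinatorial input: by Lemma \ref{ComponentsIntersectionPointsLemma}, $w_{mp}$ fixes \emph{no} component of $X^D_0(N)_{\overline\FF_p}$ — the components split into two copies of $\Pic(D/p,N)$ interchanged by $w_p$, and $w_{mp}$ involves $w_p$ nontrivially, so it swaps the two copies and hence cannot fix any component. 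Therefore $\Frob_p$ acts without fixed components on $\CZ_{\overline\FF_p}$, and any $\Frob_p$-fixed point must be one of the intersection points or lie on one of the desingularization chains $\pi\inv(x)$ above a superspecial $x$. An intersection point $x$ of $X^D_0(N)_{\overline\FF_p}$ with $\ell(x)=1$ contributes, after desingularization, nothing new; only superspecial points $x$ with $\ell(x)>1$ produce a chain $C_1,\dots,C_{\ell(x)-1}$ of $\PP^1$'s. So I would then analyze, for a $w_{mp}$-fixed superspecial intersection point $x$, whether the induced $\Frob_p$-action on the chain $\pi\inv(x)$ yields a reduced $\PP^1_{\FF_p}$ (equivalently a smooth $\FF_p$-point): since $x$ is fixed, $\Frob_p$ preserves the chain; it either fixes each $C_i$ or reverses the chain. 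In either case, provided the chain is nonempty — i.e.\ $\ell(x)>1$ — it fixes at least one component $C_i$ (the middle one if $\ell(x)$ is odd, or if $\ell(x)$ even it swaps the two central components but fixes their intersection, which is a \emph{node}, not a smooth point). This forces a closer look: a reduced smooth $\FF_p$-point arises precisely when $\Frob_p$ fixes some $C_i$ \emph{and} $C_i$ is not met entirely at points interchanged with neighbors — i.e.\ when the chain has odd length, equivalently $\ell(x)$ is even.

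The main obstacle, and the step requiring the most care, is exactly this last parity analysis: I must show that a $w_{mp}$-fixed superspecial intersection point $x$ gives rise to a smooth $\FF_p$-rational point on $\CZ$ if and only if $\ell(x)$ is \emph{even}. The "only if" direction should follow because if $\ell(x)$ is odd then the desingularization chain $\pi\inv(x)$ has even length $\ell(x)-1$, $\Frob_p$ reverses it (it cannot fix the central component as there is none), and the only fixed closed points are nodes between the two central $\PP^1$'s or pullbacks of the original node — never a smooth point; combined with the strict transform of $\Gamma$ having multiplicity two this rules out smooth $\FF_p$-points coming from $x$. For the "if" direction, when $\ell(x)$ is even the chain $\pi\inv(x)$ has odd length $\ell(x)-1$, so $\Frob_p$ fixes its central component $C_i\cong\PP^1_{\FF_p(x)}=\PP^1_{\FF_p}$, and since $C_i$ meets the rest of $\CX_{\overline\FF_p}$ in only two points (its two chain-neighbors, which $\Frob_p$ either fixes or swaps with each other but not with $C_i$'s interior), $C_i$ has $\FF_p$-rational smooth points. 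I would also need to double-check the local structure of $\CZ$ at these points — that the quotient by $w_m\sigma$ is étale there (which holds by the construction of $\CZ$ as an étale quotient, using that $w_m\sigma$ acts freely away from the ramification already accounted for) — and handle the boundary case where $X^D_0(N)/w_m$ degenerates, but these are routine given the machinery already in place. Finally, assembling: $C^D(N,d,m)(\QQ_p)\ne\emptyset$ iff $\CZ_{\FF_p}$ has a smooth $\FF_p$-point iff some superspecial $w_{mp}$-fixed intersection point of $X^D_0(N)_{\overline\FF_p}$ has even length, which is the assertion.
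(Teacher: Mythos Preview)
Your approach is essentially the paper's: pass to the regular model $\CZ$, use Hensel, observe that $w_{mp}$ fixes no component of $X^D_0(N)_{\overline\FF_p}$, and analyze the action on the desingularization chain above a fixed superspecial intersection point. However, the execution has a genuine gap and two slips.

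The gap: you never justify why the action of $w_{mp}$ on the chain $C_1,\dots,C_{\ell-1}$ \emph{reverses} it rather than fixing each $C_i$. You write ``it either fixes each $C_i$ or reverses the chain'' and then in the next paragraph simply assert reversal. The reason is immediate from something you already noted: since $w_{mp}$ swaps the two copies of $\Pic(D/p,N)$, it swaps the two components of $X^D_0(N)_{\overline\FF_p}$ meeting at $x$; these are exactly the endpoints of the chain ($C_1$ attaches to one, $C_{\ell-1}$ to the other), so $w_{mp}$ must send $C_i\mapsto C_{\ell-i}$. The paper states this in one line: ``Since $w_{mp}(x)=x$, $w_{mp}C_i=C_{\ell-i}$.'' Without ruling out the componentwise-fixed case, your parity argument does not go through, since if each $C_i$ were fixed you would get smooth $\FF_p$-points regardless of $\ell$.

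Two smaller issues. First, your parenthetical in the second paragraph has the parity exactly backwards: the chain has $\ell-1$ components, so there is a middle component when $\ell$ is \emph{even} (chain of odd length), not when $\ell$ is odd; you correct yourself in the next paragraph, but the passage as written is wrong. Second, your appeal to ``the strict transform of $\Gamma$ having multiplicity two'' is lifted from the ramified section and has no place here: in the Cerednik--Drinfeld picture all components are reduced copies of $\PP^1$, and the obstruction to a smooth $\FF_p$-point comes solely from the absence of a $w_{mp}$-fixed component, not from any multiplicity.
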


\begin{proof} If $C^D(N,d,m)(\QQ_p)$ is nonempty, then by Hensel's Lemma there is a smooth point of $\CZ(\FF_p)$. Therefore, there is a smooth point $P$ of $\CX(\overline\FF_p)$ fixed by $P \mapsto w_m P \Frob_p= w_{mp} P$. By Lemma \ref{ComponentsIntersectionPointsLemma}, the action of $w_{mp}$ on $X^D_0(N)_{\overline\FF_p}$ fixes no component. Therefore, $\pi(P)=x$ is the intersection point of two components. Since $P$ is smooth, $\pi^*(x(\Spec(\overline\FF_p)))\ne P(\Spec(\overline\FF_p))$. Therefore $\ell= \ell(x)>1$ and thus $\pi^*(x(\Spec(\overline\FF_p))) = \bigcup_{i=1}^{\ell-1} C_i$ with $C_i\cong \PP^1_{\overline\FF_p}$ as in Definition \ref{CDdesingularization}. Since $w_{mp}(x) = x$, $w_{mp} C_i = C_{\ell-i}$. Therefore, the only component which could be fixed by $w_{mp}$ is $C_{\ell/2}$. If such a component exists, then $\ell$ must be even. 

Conversely, if there is a superspecial $w_{mp}$-fixed intersection point $x$ of even length then 
$w_{mp}C_{\ell/2} = C_{\ell/2}$. There is thus a component of $\CZ_{\overline\FF_p}$ which is defined over $\FF_p$. It follows that there is a smooth point in $\CZ(\FF_p)$ and therefore $C^D(N,d,m)(\QQ_p)$ is nonempty.\end{proof}

The following example will illustrate ways that this can happen.

\begin{example} Let $\CX$ denote the regular $\ZZ_{13}$ model of $X^{26}_0(1)$ and let $\CY$ denote the regular $\ZZ_2$ model of $X^6_0(5)$. Depicted in figure \ref{twoCDfigure} are the dual graphs of $\CX_{\overline\FF_{13}}$ (on the left) and $\CY_{\overline\FF_2}$ (on the right). Respectively the arrows denote the action of $w_{2}\Frob_{13}$ and $\Frob_2$.
\begin{figure}
\[\fbox{ \xygraph{
!~-{@{-}@[|(2)]}
 & *+{\bullet}="b1" & & *+{\bullet}="d1" & & *+{x_1 \bullet}="x1" &  *+{\bullet}="y1"   & *+{\bullet x_1'}="x1p" \\
*+{a \bullet}="a" & & *+{\bullet}="c" & &*+{\bullet a'}="ap" &                      &                     &\\
 & *+{\bullet}="b2" & & *+{\bullet}="d2" & & *+{x_2 \bullet}="x2" &  *+{\bullet}="y2"   & *+{\bullet x_2'}="x2p"
"x1"-"y1"-"x1p"
"x2"-"y2"-"x2p"
"x1"-"x2p"
"x2"-"x1p"|-{\hole}
"x1":@{<->}@/_/"x1p"
"x2":@{<->}@/^/"x2p"
"a"-"c"-"ap"
"a"-"b1"-"d1"-"ap"
"a"-"b2"-"d2"-"ap"
"a":@{<->}@/_/"ap"
"b1":@{<->}@/_/"d1"
"b2":@{<->}@/^/"d2"
}}\]
\caption{The dual graphs of $\CX_{\overline\FF_{13}}$ and $\CY_{\overline\FF_2}$}
\label{twoCDfigure}
\end{figure}

It is easy to see that even though the intersection points of length 3 on $X^{26}_0(1)_{\overline\FF_{13}}$ are fixed by the action of $w_{2}\Frob_{13}$, they can not yield smooth rational points as the action exchanges $a$ with $a'$. The rational points here can only come from a fixed intersection point of length 2. Since there is such an intersection point on $X^{26}_0(1)_{\overline\FF_2}$, there is a component of $\CX_{\overline\FF_2}$ fixed by the action of $w_2\Frob_{13}$. Thus $C^{26}(1,d,2)(\QQ_{13})$ is nonempty for all $d$ such that $\left(\frac{d}{13}\right) = -1$. Similarly, because the two intersection points of length 2 on $X^6_0(5)_{\overline\FF_2}$ are not interchanged by the action of $\Frob_2$, there are components of $\CY_{\overline\FF_2}$ fixed by the action of $\Frob_2$. Therefore $X^6_0(5)(\QQ_2)$ is nonempty.\end{example}

The reader is encouraged to keep these examples in mind while reading the conditions of the following theorem.

\begin{theorem}\label{CDpnmidmTheorem} If $p\nmid m$, $C^D(N,d,m)(\QQ_p)$ is nonempty if and only if one of the following occurs.

\begin{enumerate}

\item $p=2$, $m=1$, $q \equiv 3 \bmod 4$ for all $q\mid (D/2)$, and $q \equiv 1\bmod 4$ for all $q\mid N$.

\item $p\equiv 1 \bmod 4$, $m = DN/(2p)$, $q\not\equiv 1\bmod 4$ for all $q\mid (D/p)$, and $q \not\equiv 3\bmod 4$ for all $q\mid N$.

\item $p =2$, $m = DN/2$, $q \equiv 3\bmod 4$ for all $q\mid (D/2)$ and $q \equiv 1\bmod 4$ for all $q\mid N$.

\item $p\equiv 1\bmod 4$, $m = DN/p$, , $q\not\equiv 1\bmod 4$ for all $q\mid (D/p)$, and $q \not\equiv 3\bmod 4$ for all $q\mid N$.

\end{enumerate}
\end{theorem}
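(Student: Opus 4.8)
The plan is to apply Lemma \ref{CDpnmidmeven}, which reduces the entire question to the existence of a superspecial $w_{mp}$-fixed intersection point of even length on $X^D_0(N)_{\overline\FF_p}$. By Lemma \ref{ComponentsIntersectionPointsLemma} (equivalently Theorem \ref{CerednikDrinfeldModelTheorem}), the intersection points of $X^D_0(N)_{\overline\FF_p}$ are in $W$-equivariant bijection with $\Pic(D/p,Np)$, and by the discussion following Definition \ref{CDdesingularization} the length of such a point is the length of the corresponding element of $\Pic(D/p,Np)$, i.e.\ the quantity $\#(\OO_l(I)^\times/\pm 1)$ attached to its left order. So the first step is to translate ``even length and $w_{mp}$-fixed'' into the language of Section \ref{superspecialsection}: combining Corollary \ref{EmbeddingFixedPoint} with Theorem \ref{ALEquivariance}, a superspecial point corresponding to bi-module $M$ (with $\Lambda = \End_{\OO,\mathcal S}(M)$ an Eichler order of level $N$ in $B_{D/p}$ when $p\mid D$ — here I use the $p\mid D$ case of Theorem \ref{ALEquivariance}, so $\Lambda$ lives in $B_{D/p}$) is $w_{mp}$-fixed precisely when $\ZZ[\sqrt{-mp}]$ (or $\ZZ[\zeta_4]$ if $mp=2$, which cannot occur here since $p\mid D$ forces $p$ odd or $p=2,m=1$) embeds into $\Lambda$, and it has even length precisely when $\ZZ[\zeta_4]$ embeds into $\Lambda$. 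Thus $C^D(N,d,m)(\QQ_p)\ne\emptyset$ iff there is an Eichler order of level $N$ in some definite quaternion algebra $B_{D'}$ with $D'\mid D/p$ (in fact $D' = D/p$, since the relevant bi-module endomorphism algebra has discriminant $D/p$) admitting simultaneous embeddings of $\ZZ[\zeta_4]$ and $\ZZ[\sqrt{-mp}]$.

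The second step is to feed this into the simultaneous-embedding theorems already proved. This is exactly Theorem \ref{zetafour} (and its corollaries) with $D' = D/p$, $N' = N$, and the role of ``$m$'' there played by $mp$: simultaneous embeddings of $\ZZ[\zeta_4]$ and $\ZZ[\sqrt{-mp}]$ into an Eichler order of level $N$ in $B_{D/p}$ exist iff $mp = (D/p)N$ or $2\mid (D/p)N$ and $mp = (D/p)N/2$, together with the congruence conditions $q\equiv 3\bmod 4$ for every $q\mid (D/p)$ (with $q=2$ allowed) and $q\equiv 1\bmod 4$ for every $q\mid N$ (with $q=2$ allowed). Now I solve $mp = DN/p$ or $mp = DN/(2p)$ for $m$ under the constraints $p\mid D$, $p\nmid m$: the first gives $m = DN/p^2$, which since $D$ is squarefree forces $p^2\mid DN$ to fail unless — wait, rather $mp^2 = DN$ is impossible for $D$ squarefree unless I misread; the correct reading is $mp\cdot p = $ (reduced discriminant) $= DN/p \cdot \text{something}$. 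Let me restate: $B_{D/p}$ has reduced Eichler discriminant $(D/p)N$, and Theorem \ref{zetafour} requires $mp \in \{(D/p)N,\ (D/p)N/2\}$; since $D/p = DN/(Np)$ one gets $m = DN/p^2$ or $m = DN/(2p^2)$ — again both need $p^2\mid DN$. The resolution is that the constraint ``$q\equiv 3\bmod 4$ for $q\mid D/p$'' combined with ``$p\mid D$'' and the value of $p$ itself: the theorem's case split on $p=2$ versus $p\equiv 1\bmod 4$ arises because when $p\equiv 1\bmod 4$ the prime $p$ is excluded from contributing a factor of the ``$-1$'' sign balancing (so $m = DN/p$), whereas $p=2$ behaves differently ($m = DN/2$ or $m=1$). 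I would carry out this bookkeeping carefully, matching each of the four listed cases to either $m = DN/(2p)$, $m = DN/p$ ($p\equiv 1\bmod 4$ branches, cases 2 and 4), or $m=1$, $m = DN/2$ ($p=2$ branches, cases 1 and 3), and checking the Legendre/congruence conditions transcribe to ``$q\equiv 3\bmod 4$ for $q\mid D/p$, $q\equiv 1\bmod 4$ for $q\mid N$.''

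For the split case and the inert $m=1$ case, I invoke the remark following the definition of $\CZ$: when $p$ is split in $\QQ(\sqrt d)$, or $p$ inert with $m=1$, one has $C^D(N,d,m)_{\QQ_p}\cong X^D_0(N)_{\QQ_p}\cong C^D(N,d',1)_{\QQ_p}$ for a suitable $d'$ with $p$ inert in $\QQ(\sqrt{d'})$, so the split result is literally the $p\nmid m$, $m=1$ case of the inert analysis — i.e.\ case (1) of Theorem \ref{CDpnmidmTheorem} (giving Corollary \ref{CDfullALCor} case 1 verbatim, once one checks that $m=1$ forces $p=2$ in the list, or $p\equiv 1\bmod 4$ with $DN/p = 1$ hence $D = 2p$, $N=1$, matching the split statement's case 2). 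The main obstacle I anticipate is purely organizational rather than conceptual: the case analysis has to simultaneously track (i) whether $p=2$ or $p\equiv 1\bmod 4$ — these are the only possibilities for $q\equiv 3\bmod 4$-or-$2$ with $p$ one of the primes dividing $D$, since $p$ itself must then satisfy $p=2$ or $p\equiv 3\bmod 4$, but $p\equiv 3\bmod 4$ is incompatible with $\ZZ[\zeta_4]\hookrightarrow B_{D/p}$ requiring... no — $p\equiv 3\bmod 4$ for the removed prime is fine, the subtlety is that $p\equiv 1\bmod 4$ can only occur if it is \emph{not} forced to be $\equiv 3\bmod 4$, which happens exactly when $D/p$ already accounts for the sign, forcing $D = 2p$, $N=1$ in the extreme cases; and (ii) which of $mp = (D/p)N$ versus $mp = (D/p)N/2$ holds, which is governed by the $2$-adic valuation of $DN$ and interacts with whether $2 \mid D$ or $2\mid N$. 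I would handle this by first disposing of the observation that $\ZZ[\zeta_4]\hookrightarrow\Lambda$ (even length) already pins down all of $D/p$ and $N$ mod $4$ via Theorem \ref{EichlerEmbedding}, then separately running the two sub-cases of Theorem \ref{zetafour}(2)'s conclusion, and finally verifying that $e_{D/p,N}(-4DN/p)\ne 0$ always (as in the proof of Corollary \ref{CDfullALCor}) so that the ``full Atkin-Lehner'' specializations come out clean.
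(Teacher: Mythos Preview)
Your overall plan is exactly the paper's: invoke Lemma \ref{CDpnmidmeven} to reduce to finding a $w_{mp}$-fixed intersection point of even length, then apply Corollary \ref{SuperspecialFixedZetaFour} (equivalently Theorem \ref{zetafour}) to turn this into congruence conditions. However, there is a concrete misidentification that derails your case analysis.

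The intersection points of $X^D_0(N)_{\overline\FF_p}$ for $p\mid D$ are in bijection with $\Pic(D/p,Np)$, not $\Pic(D/p,N)$ (Theorem \ref{CerednikDrinfeldModelTheorem}, Lemma \ref{ComponentsIntersectionPointsLemma}). Hence the relevant Eichler order $\Lambda$ has level $N' = Np$ in $B_{D/p}$, so $D'N' = (D/p)(Np) = DN$. Your choice $N' = N$ gives $D'N' = DN/p$, and this is precisely why you kept arriving at the impossible requirement $p^2\mid DN$: you were solving $mp = DN/p$ instead of $mp = DN$. With the correct level, Corollary \ref{SuperspecialFixedZetaFour} yields $mp \in \{1,2,DN/2,DN\}$ (and $mp=1$ is vacuous), and the congruence conditions read: $q=2$ or $q\equiv 3\bmod 4$ for all $q\mid D' = D/p$, and $q=2$ or $q\equiv 1\bmod 4$ for all $q\mid N' = Np$. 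The latter now includes $p$ itself, forcing $p=2$ or $p\equiv 1\bmod 4$ directly, which is the piece of the dichotomy you were groping for. From there the bookkeeping is short: $mp=2$ gives case (1); $mp = DN/2$ forces $p\ne 2$ hence $p\equiv 1\bmod 4$, giving case (2); $mp = DN$ gives cases (3) and (4) according to whether $p=2$ or $p\equiv 1\bmod 4$.
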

\begin{proof} By Lemma \ref{CDpnmidmeven}, $C^D(N,d,m)(\QQ_p)$ is nonempty if and only if there is a superspecial $w_{mp}$-fixed intersection point of even length. By Corollary \ref{SuperspecialFixedZetaFour}, this can occur if and only if all of the following occur.

\begin{itemize}

\item $mp = 1,2,DN/2$ or $DN$.

\item for all $q\mid (D/p)$, either $q=2$ or $q\equiv 3\bmod 4$

\item for all $q\mid Np$, either $q=2$ or $q\equiv 1\bmod 4$

\end{itemize}

If $mp =2$ then $m=1$ and $p=2$. 
If $mp = DN/2$ then $p\ne 2$ and since $p\mid Np$, we must have $p\equiv 1\bmod 4$. 
If $mp = DN$ then $m = DN/p$ and either $p=2$ or $p\equiv 1\bmod 4$.\end{proof} 

To give an idea of the power of this theorem, let us show how it gives a new proof of the Theorem of Jordan-Livn\'e and Ogg.

\begin{corollary}\label{JordanLivneOggCor} Let $D$ be the discriminant of an indefinite $\QQ$-quaternion algebra, $N$ a square-free integer coprime to $D$ and $p\mid D$. Then $X^D_0(N)(\QQ_p)$ is nonempty if and only if one of the following occurs.

\begin{itemize}

\item $p=2$, $q\equiv 3\bmod 4$ for all $q\mid (D/2)$ and $q\equiv 1\bmod 4$ for all $q\mid N$

\item $p\equiv 1\bmod 4$, $D = 2p$ and $N=1$ 
\end{itemize}

\end{corollary}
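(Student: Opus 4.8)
The plan is to deduce this as the special case $m=1$ of Theorem~\ref{CDpnmidmTheorem}. The first step is to realize $X^D_0(N)$ as the trivial Atkin--Lehner twist: for any squarefree integer $d$ with $p$ unramified in $\QQ(\sqrt d)$ (take $d$ coprime to $p$, and in addition $d\equiv 1\bmod 4$ when $p=2$; such $d$ plainly exists), the automorphism $w_1$ is the identity, so $C^D(N,d,1)\cong X^D_0(N)$ over $\QQ$, hence over $\QQ_p$. Since $p\mid D$ we have $p\nmid 1=m$, so Theorem~\ref{CDpnmidmTheorem} applied with $m=1$ computes exactly whether $X^D_0(N)(\QQ_p)$ is nonempty, in both directions.

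The second step is pure bookkeeping: substitute $m=1$ into the four cases of Theorem~\ref{CDpnmidmTheorem}. Case~(1) becomes the first alternative of the present statement verbatim. Case~(2) demands $DN/(2p)=1$, i.e.\ $DN=2p$; because $p\mid D$, $D$ is squarefree and a product of an even number of primes, and $\gcd(D,N)=1$, this forces $D=2p$ and $N=1$. In that situation the only prime dividing $D/p$ is $2\not\equiv 1\bmod 4$ and there is no prime dividing $N$, so the residue conditions of Case~(2) hold automatically and Case~(2) collapses to ``$p\equiv 1\bmod 4$, $D=2p$, $N=1$'', the second alternative. Cases~(3) and~(4) demand $DN=2$ and $DN=p$ respectively; the former forces $D\le 2$, impossible since $D$ has at least two prime factors, and the latter forces $D=p$, a single prime, again impossible. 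Hence $X^D_0(N)(\QQ_p)\ne\emptyset$ precisely when one of the two displayed alternatives holds; in particular there is no local point when $p\equiv 3\bmod 4$, since then none of the four cases can hold (Cases~(1),(3) need $p=2$ and Cases~(2),(4) need $p\equiv 1\bmod 4$).

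There is no serious obstacle here: the content has been front-loaded into Theorem~\ref{CDpnmidmTheorem}, and only the trivial-twist identification $C^D(N,d,1)\cong X^D_0(N)$ and the elementary arithmetic of ``$D$ squarefree with an even number of prime factors, divisible by $p$'' are needed to extract the stated form. If one prefers not to pass through $m=1$, an equivalent route is to combine $C^D(N,d,DN)\cong X^D_0(N)$ over $\QQ_p$ when $p$ is split in $\QQ(\sqrt d)$ (Corollary~\ref{CDfullALCor}) with the split case of Theorem~\ref{mainCDtheorem}; this produces the same two alternatives.
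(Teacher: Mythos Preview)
Your proposal is correct and follows essentially the same route as the paper: both specialize Theorem~\ref{CDpnmidmTheorem} to $m=1$ (using that the trivial twist recovers $X^D_0(N)$ over $\QQ_p$) and then check that Cases~(3) and~(4) force $DN=p$, impossible since $p\mid D$ and $D$ has at least two prime factors, while Case~(2) forces $D=2p$, $N=1$. Your write-up is slightly more careful than the paper's in verifying that the residue conditions in Case~(2) are automatically satisfied once $D=2p$ and $N=1$, and in making explicit the choice of $d$ so that the hypotheses of the section apply.
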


\begin{proof} If $p=2$ we are in case (1) of Theorem \ref{CDpnmidmTheorem}. We cannot have $p= DN$ for any $p$ since $p\mid D$ and thus $D$ is divisible by at least two primes, so Theorem \ref{CDpnmidmTheorem} (3) or (4) cannot occur. If $DN = 2p$ with $p\equiv 1\bmod 4$ then we must at least have $(2p)\mid D$, but then $D= 2p$ and $N=1$.\end{proof}

Finally we give a family of examples of twists of $X^D_0(N)$ which have points everywhere locally.

\begin{example}\label{Shimura2qExample} Let $q$ be an odd prime, consider the curve $X^{2q}_0(1)$ and let $g$ be its genus. Let $p\equiv 3\bmod 8$ such that $\left(\frac{-p}{q}\right) = -1$ and for all odd primes $\ell$ less than $4g^2$, $\left(\frac{-p}{\ell}\right) = -1$. Consider the twist $C^{2q}(1,-p.2q)$ of $X^{2q}_0(1)$.
 
Note that since $p \equiv 3\bmod 8$ and $\left(\frac{-p}{q}\right) = -1$, both $2$ and $q$ are inert in $\QQ(\sqrt{-p})$. Therefore $C^{2q}(1,-p,2q)(\QQ_2)$ and $C^{2q}(1,-p,2q)(\QQ_q)$ are both nonempty by Corollary \ref{CDfullALCor}. 

Since $\left(\frac{-p}{q}\right) = -1$ and $p\equiv 3\bmod 4$, $\left(\frac{q}{p}\right) = -1$. Since $p \equiv 3\bmod 8$, $\left(\frac{-1}{p}\right) = -1$ and $\left(\frac{2}{p}\right) = -1$. Therefore $\left(\frac{-2q}{p}\right) = -1$ and $\left(\frac{-p}{2}\right) = \left(\frac{p}{2}\right) = -1$. Since we already had $\left(\frac{-p}{q}\right) = -1$, we may apply Theorem \ref{RamifiedMainTheorem} to say $C^{2q}(1,-p,2q)(\QQ_p)\ne \emptyset$.

Let $\ell\nmid 2pq$ be a prime. If $\ell > 4g^2$ then we may apply Theorem \ref{WeilBoundsTwist} to see that $C^{2q}(1,-p.2q)(\QQ_\ell)$ is nonempty. If $\ell < 4g^2$ then we may apply Corollary \ref{SuperspecialFullAL} to see that $C^{2q}(1,-p,2q)(\QQ_\ell)$ is nonempty. 

Finally, since $-p<0$, $C^{2q}(1,-p,2q) \not\cong_{\RR} X^{2q}_0(1)$, the latter of which does not have real points \cite[Theorem 55]{Cl03}. Therefore $(X^{2q}_0(1)/w_{2q})(\RR)\ne \emptyset$ if and only if $C^{2q}(1,-p,2q)(\RR)$ is nonempty. But then by Theorem \ref{EichlerEmbedding}, there is an embedding of $\ZZ[\sqrt{-2q}]$ into any maximal order in $B_{2q}$ and thus $X^{2q}_0(1)/w_{2q}$ has real points \cite[Theorem 3]{OggReal}.

\end{example}

\section{Local points when $p\mid N$}

Throughout this section we will fix $D$ the discriminant of an indefinite quaternion $\QQ$-algebra, $N$ a square-free integer coprime to $D$, a square-free integer $d$, an integer $m\mid DN$, and a prime $p\mid N$ unramified in $\QQ(\sqrt d)$. Let $w_m$ be as in Definition \ref{AtkinLehnerOO}. Let $X^D_0(N)_{/\QQ}$ be as defined in Definition \ref{DefnCoarseModuliScheme}, and let $C^D(N,d,m)_{/\QQ}$ be its twist by $\QQ(\sqrt d)$ and $w_m$. 
The purpose of this section is to prove the following theorem. 

\begin{theorem}\label{mainDRtheorem} Let $p\mid N$ be unramified in $\QQ(\sqrt d)$ and $m\mid DN$. We have $C^D(N,d,m)(\QQ_p)$ nonempty if and only if the conditions of (a) or (b) hold.

\begin{enumerate}[(a)]

\item $p$ is split in $\QQ(\sqrt d)$ and one of the following conditions holds.

\begin{itemize}

\item $D=1$ [Lemma \ref{cusplemma}].

\item $p=2$, $D = \prod_i p_i$ with each $p_i \equiv 3\bmod 4$, and $N/p = \prod_j q_j$ with each $q_j\equiv 1\bmod 4$ [Lemma \ref{DRpnmidmzetafourconditions}].

\item $p=3$, $D = \prod_i p_i$ with each $p_i \equiv 2\bmod 3$, and $N/p = \prod_j q_j$ with each $q_j\equiv 1\bmod 3$ [Lemma \ref{DRpnmidmzetasixconditions}].

\item $TF'(D,N,1,p)>0$ [Definition \ref{defntfprime}, Lemma \ref{DRtracepoints}].

\end{itemize}

\item $p$ is inert in $\QQ(\sqrt d)$, and there are prime factorizations $Dp = \prod_i p_i$, $N/p = \prod_j q_j$ such that one of the following two conditions holds

\begin{enumerate}[(i)]

\item $p\mid m$, and one of the following two conditions holds [Theorem \ref{DRpdividesminert}].

\begin{itemize}

\item $p=2$, $m = p$ or $DN$, for all $i$, $p_i\equiv 3\bmod 4$, and for all $j$, $q_j \equiv 1\bmod 4$.

\item $p \equiv 3 \bmod 4$, $m = p$ or $2p$, for all $i$, $p_i \not\equiv 1\bmod 4$, and for all $j$, $q_j \not\equiv 3\bmod 4$.

\end{itemize}

\item $p\nmid m$ and one of the following nine conditions holds.

\begin{itemize}

\item $m= D =1$ [Lemma \ref{cusplemma}].

\item $p=2$, $m=1$, for all $i$, $p_i\equiv 3\bmod 4$, and for all $j$, $q_j \equiv 1\bmod 4$ [Lemma \ref{DRpnmidmzetafourconditions}].

\item $p=3$, $m=1$, for all $i$, $p_i\equiv 2\bmod 3$, and for all $j$, $q_j\equiv 1\bmod 3$ [Lemma \ref{DRpnmidmzetasixconditions}].

\item $p\equiv 3\bmod 4$, $m = DN/2p$, $p_i \not\equiv 1\bmod 4$ for all $i$, and $q_j \not\equiv 3\bmod 4$ for all $j$ [Lemma \ref{DRpnmidmzetafourconditions}].

\item $p\equiv 2\bmod 3$, $m = DN/3p$, $p_i \not\equiv 1\bmod 3$ for all $i$, and $q_j \not\equiv 2\bmod 3$ for all $j$ [Lemma \ref{DRpnmidmzetasixconditions}].

\item $m = DN/p$, $p_i \not\equiv 1\bmod 4$ for all $i$, and $q_j \not\equiv 3\bmod 4$ for all $j$ [Lemma \ref{DRpnmidmzetafourconditions}].

\item $m= DN/p$, $p_i \not\equiv 1\bmod 3$ for all $i$, and $q_j\not\equiv 2\bmod 3$ for all $j$ [Lemma \ref{DRpnmidmzetasixconditions}].

\item $TF'(D,N,m,p)>0$ [Definition \ref{defntfprime}, Lemma \ref{DRtracepoints}]

\end{itemize}

\end{enumerate}

\end{enumerate}

\end{theorem}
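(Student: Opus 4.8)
The plan is to reduce, via Hensel's Lemma, the existence of $\QQ_p$-points on $C^D(N,d,m)$ to the existence of a smooth $\FF_p$-rational point on the special fibre of a regular $\ZZ_p$-model, and then to read that off from the explicit Deligne--Rapoport description of $X^D_0(N)$ at $p\mid N$ (Theorem \ref{DeligneRapoportModelTheorem}, Lemma \ref{ComponentsIntersectionPointsLemma}): the geometric special fibre is a union of two copies of $X^D_0(N/p)_{\overline\FF_p}$ crossing at the superspecial points, which are in bijection with $\Pic(Dp,N/p)$, and over each superspecial point $[I]$ the minimal desingularization inserts a chain of $\ell([I])-1$ copies of $\PP^1$ (Definition \ref{CDdesingularization}). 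I would organise the argument by whether $p$ is split or inert in $\QQ(\sqrt d)$, and in the inert case by whether or not $p\mid m$; in each branch the statement then follows by assembling the cited auxiliary lemmas.

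First the split case: here $C^D(N,d,m)_{\QQ_p}\cong X^D_0(N)_{\QQ_p}$, so $m$ is irrelevant and one must only decide when $X^D_0(N)(\QQ_p)\ne\emptyset$. A smooth $\FF_p$-point can come either from an $\FF_p$-rational point on one of the two components $X^D_0(N/p)_{\FF_p}$ that is not an un-resolved ($\ell=1$) node, or from an $\FF_p$-rational exceptional $\PP^1$ over a superspecial point of length $\ge 2$. The first possibility is automatic when $D=1$ by rationality of cusps (Lemma \ref{cusplemma}), and for $D>1$ it is governed by the Eichler--Selberg point count on the component curve, packaged as $TF'(D,N,1,p)$ (Definition \ref{defntfprime}, Lemma \ref{DRtracepoints}). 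For the second, since $\Frob_p$ acts on the superspecial points as $w_p$ (Lemma \ref{ALGaloisLemma}), $\FF_p$-rationality of such a point is equivalent to an embedding of $\ZZ[\sqrt{-p}]$ (or $\ZZ[\zeta_4]$ if $p=2$) into the relevant bimodule endomorphism order $\Lambda$ (Corollary \ref{EmbeddingFixedPoint}), while $2\mid\ell([I])$ resp. $3\mid\ell([I])$ forces an embedding of $\ZZ[\zeta_4]$ resp. $\ZZ[\zeta_6]$ (Corollaries \ref{SuperspecialFixedZetaFour}, \ref{SuperspecialFixedZetaSix}); the simultaneous-embedding Theorems \ref{zetafour}, \ref{zetasix}, \ref{GoodReductionSimultaneousEmbeddings} then pin down exactly the $p=2$ and $p=3$ congruence cases in (a) (Lemmas \ref{DRpnmidmzetafourconditions}, \ref{DRpnmidmzetasixconditions}).

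For the inert case $\QQ_p(\sqrt d)=\QQ_{p^2}$, and the model of the twist is the étale quotient $\CZ=\CX_{\ZZ_{p^2}}/\langle w_m\sigma\rangle$; composing geometric Frobenius with $\sigma$, the Frobenius acting on $\CZ_{\overline\FF_p}\cong\CX_{\overline\FF_p}$ becomes $w_m$ times the old Frobenius, i.e.\ $w_{mp}$ on the intersection points. When $p\mid m$ one reduces, as in the proof of Lemma \ref{CDpdividesmcases}, to whether a component of $X^D_0(N)_{\overline\FF_p}$ is fixed by $w_{m/p}$; since such components correspond to $\Pic(D,N/p)$ via Lemma \ref{ComponentsIntersectionPointsLemma} and the fixed-point criterion is Corollary \ref{EmbeddingFixedPoint}, the $\zeta_4$/$\zeta_6$ analysis yields exactly the list in Theorem \ref{DRpdividesminert}. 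When $p\nmid m$, $w_{mp}$ fixes no component, so (as in Lemma \ref{CDpnmidmeven}) a smooth $\FF_p$-point must come from the centre of a $w_{mp}$-fixed exceptional chain, hence from a $w_{mp}$-fixed superspecial intersection point of even length; Corollary \ref{SuperspecialFixedZetaFour} and its $\zeta_6$ variant together with the simultaneous-embedding theorems give the enumerated congruence cases, while the remaining ``generic'' contribution of smooth $\FF_p$-points on the big components is recorded by $TF'(D,N,m,p)>0$ (Lemma \ref{DRtracepoints}), and $D=1$ again contributes the cusp case. Exhaustiveness is then checked by matching the constraint $mp\in\{1,2,DN/2,DN\}$ forced by the embedding theorems against the hypothesis $p\mid N$.

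The main obstacle will be the prime $p=2$: the tameness input \ref{RamifiedALFixedPointTame} for Atkin--Lehner quotient singularities fails, so one must instead work directly with the (already explicit) Deligne--Rapoport model at $2$, confirm that its superspecial singularities are ordinary double points $XY-2^{\ell}$ whose resolutions have reduced exceptional components, and track the Frobenius action there by hand. A secondary, more bookkeeping-heavy difficulty is arranging the definition of $TF'$ so that it subtracts precisely the $\FF_p$-rational superspecial points of length one — which do \emph{not} lift to $\QQ_p$-points — so that ``$TF'(D,N,m,p)>0$'' becomes a genuine equivalence rather than a mere sufficient condition; this is the only place where the interplay between the trace-formula point count on the component curves and the superspecial/unit contributions must be balanced exactly.
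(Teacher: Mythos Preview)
Your overall architecture is right and matches the paper: Hensel, the Deligne--Rapoport model $\CX$, the \'etale twist $\CZ=\CX_{\ZZ_{p^2}}/\langle w_m\sigma\rangle$, and then a case split on whether $p\mid m$. The cited lemmas for the $p\nmid m$ branch (\ref{cusplemma}, \ref{DRpnmidmzetafourconditions}, \ref{DRpnmidmzetasixconditions}, \ref{DRtracepoints}) are exactly what the paper uses, and the split case is correctly handled by specialising to $m=1$.

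There is, however, a genuine error in your $p\mid m$ inert analysis. You write that one ``reduces, as in the proof of Lemma~\ref{CDpdividesmcases}, to whether a component of $X^D_0(N)_{\overline\FF_p}$ is fixed by $w_{m/p}$; since such components correspond to $\Pic(D,N/p)$\ldots''. This is the $p\mid D$ picture transported verbatim, and it fails here. When $p\mid N$, $B_D$ is indefinite, so $\Pic(D,N/p)$ is a single class; the two components are two $\FF_p$-rational copies of $X^D_0(N/p)$, and $w_{m/p}$ (with $p\nmid m/p$) \emph{automatically} fixes each of them. The twisted Frobenius is $w_m\sigma=w_p\,w_{m/p}\,\sigma$, and since $w_p$ swaps the two copies while $w_{m/p}\sigma$ preserves each, the twisted Frobenius interchanges the components. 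Thus no smooth point on either strict transform can be $\FF_p$-rational; the only source of smooth $\FF_p$-points is the exceptional chain over a superspecial intersection point. The paper's Lemma~\ref{DRpdividesmcondition} records the correct criterion: one needs a $w_{m/p}$-fixed superspecial \emph{intersection point} of \emph{even} length, and Theorem~\ref{DRpdividesminert} then follows from Corollary~\ref{SuperspecialFixedZetaFour} alone (no $\zeta_6$ input, since odd-length chains have no central component to be fixed).

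There is a smaller inconsistency in your $p\nmid m$ inert paragraph. You assert ``$w_{mp}$ fixes no component'' and invoke Lemma~\ref{CDpnmidmeven}, but that is again the $p\mid D$ statement. Here the twisted Frobenius on components is $w_m\cdot\Frob_p$ with $\Frob_p$ acting trivially (the components are $\FF_p$-curves), so both components are fixed when $p\nmid m$; this is exactly why there \emph{is} a ``generic'' contribution from $C^D(N/p,d,m)(\FF_p)$, counted by $TF'$. The paper's Lemma~\ref{DRpmidmlemma} lists the three sources correctly: $w_{mp}$-fixed superspecial points of even length, of length divisible by three, and non-superspecial component points. Finally, your worry about $p=2$ via Lemma~\ref{RamifiedALFixedPointTame} is a red herring: that lemma concerns ramified primes, whereas here $p\mid N$ is unramified and the Deligne--Rapoport model already has ordinary $A_{\ell-1}$ singularities whose resolution is reduced in every characteristic.
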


As a special case, we recover the following explicit numerical conditions.

\begin{corollary}\label{mainDRCor}
Let $p$ be a prime dividing $N$ such that $p$ is unramified in $\QQ(\sqrt d)$. Then $C^D(N,d,DN)(\QQ_p)$ is nonempty if and only if 

\begin{itemize}

\item $p$ is split in $\QQ(\sqrt d)$ and one of the following conditions holds.

\begin{itemize}

\item $D=1$.

\item $p=2$, $D = \prod_i p_i$ with each $p_i \equiv 3\bmod 4$, and $N/p = \prod_j q_j$ with each $q_j\equiv 1\bmod 4$.

\item $p=3$, $D = \prod_i p_i$ with each $p_i \equiv 2\bmod 3$, and $N/p = \prod_j q_j$ with each $q_j\equiv 1\bmod 3$.

\item $TF'(D,N,1,p)>0$.

\end{itemize}

\item $p$ is inert in $\QQ(\sqrt d)$ with $Dp = \prod_i p_i$, $N/p = \prod_j q_j$ such that one of the following holds.

\begin{itemize}

\item $p=2$, for all $i$, $p_i\equiv 3\bmod 4$ and for all $j$, $q_j \equiv 1\bmod 4$.

\item $p \equiv 3 \bmod 4$, $D=1$ and $N = p$ or $2p$.
\end{itemize}
\end{itemize}
\end{corollary}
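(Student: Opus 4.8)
The plan is to deduce this corollary directly from Theorem \ref{mainDRtheorem} by specializing to $m = DN$ and simplifying the resulting list of conditions; no new geometric input is required.

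First I would dispose of the split case. When $p$ is split in $\QQ(\sqrt d)$, twisting by $\QQ(\sqrt d)$ is trivial over $\QQ_p$, so $C^D(N,d,m)_{\QQ_p} \cong X^D_0(N)_{\QQ_p}$ for every $m \mid DN$; this is precisely why part (a) of Theorem \ref{mainDRtheorem} is already phrased with the $m$-independent quantity $TF'(D,N,1,p)$. Hence setting $m = DN$ leaves part (a) untouched, and the four alternatives appearing in the split part of the corollary are literally those of Theorem \ref{mainDRtheorem}(a).

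Next I would treat the inert case. Since $p \mid N$ and $m = DN$, we always have $p \mid m$, so of the two subcases of Theorem \ref{mainDRtheorem}(b) only (b)(i) can occur, while (b)(ii) is vacuous. It then remains to substitute $m = DN$ into the two bullets of (b)(i). In the first bullet the clause ``$m = p$ or $m = DN$'' is automatically satisfied, and what survives is exactly ``$p = 2$, all $p_i \equiv 3 \bmod 4$, all $q_j \equiv 1 \bmod 4$'' with $Dp = \prod_i p_i$ and $N/p = \prod_j q_j$, which is the first inert alternative of the corollary. In the second bullet one needs $DN \in \{p, 2p\}$ together with $p \equiv 3 \bmod 4$. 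The one point deserving a word of justification is that $D$, being the squarefree product of an \emph{even} number of primes, is either $1$ or has at least two prime factors; in particular $D \notin \{2, p\}$. Combined with $p \mid N$, this forces $D = 1$ and $N \in \{p, 2p\}$, and the congruence conditions on the $p_i$ and $q_j$ become vacuous because $Dp = p$ and $N/p \in \{1, 2\}$. This is exactly the second inert alternative.

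Since the entire argument is a specialization of Theorem \ref{mainDRtheorem}, all the real work lies in that theorem; the sole subtlety in the reduction is the elementary parity observation on the number of prime divisors of $D$ used to eliminate $D = 2$ (and $D = p$) in the last case, so I do not anticipate any genuine obstacle here.
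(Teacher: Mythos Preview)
Your proposal is correct and matches the paper's intended derivation: the corollary is stated immediately after Theorem \ref{mainDRtheorem} with no separate proof, so it is meant to follow by exactly the specialization $m = DN$ you carry out. The only minor imprecision is calling the residual congruence conditions in the second inert bullet ``vacuous''; they are not literally empty (one still has $p_i = p$ with $p \not\equiv 1 \bmod 4$, and possibly $q_j = 2$ with $2 \not\equiv 3 \bmod 4$), but they are automatically satisfied for the reasons you give, so this does not affect the argument.
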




To prove Theorem \ref{mainDRtheorem}, we will have to make the following definitions.

\begin{definition}\label{regularDRmodels} Assume that $p\mid N$. Let $X^D_0(N)_{/\ZZ_p}$ be as in Theorem \ref{DeligneRapoportModelTheorem} and let $\pi: \CX \to X^D_0(N)$ be a minimal desingularization, so that $\CX_{\ZZ_p}$ is a regular model for $X^D_0(N)_{\QQ_p}$.\end{definition}

Note that if $n\mid DN$ then extending the automorphism $w_n$ from Definition \ref{AtkinLehnerOO} to $\CX$ makes sense. This is because $w_n: X^D_0(N) \to X^D_0(N)$ induces a birational morphism $\CX \dashedrightarrow \CX$ permuting the components of $\CX_{\FF_p}$. Therefore $w_n$ on $X^D_0(N)$ induces an isomorphism $\CX \to \CX$ \cite[Remark 8.3.25]{Liu}.

The model $\CX$ is equipped with a closed embedding $c': X^D_0(N/p)_{/\FF_p} \to \CX$ such that $\pi c' = c$, the embedding defined in Theorem \ref{DeligneRapoportModelTheorem}. Let $\sigma$ be such that $\langle \sigma\rangle = \Aut_{\ZZ_p}(\ZZ_{p^2})$.

\begin{definition}Let $\CZ$ be the \'etale quotient of $\CX_{\ZZ_{p^2}}$ by the action of $w_m \circ \sigma$.\end{definition}

Note that if $p$ is inert in $\QQ(\sqrt d)$ then $\ZZ_p[\sqrt d] \cong \ZZ_{p^2}$ and thus the generic fiber of $\CZ$ is $C^D(N,d,m)_{\QQ_p}$. Therefore $\CZ$ is a regular model of $C^D(N,d,m)_{\QQ_p}$ if $p$ is inert in $\QQ(\sqrt d)$. 

We also note that if $p$ is split in $\QQ(\sqrt d)$, or if $p$ is inert and $m=1$, then $C^D(N,d,m)_{\QQ_p} \cong X^D_0(N)_{\QQ_p}$. Therefore, if $p$ is split in $\QQ(\sqrt d)$, we can consider $d'$ to be any square-free integer such that $p$ is inert in $\QQ(\sqrt{d'})$ and $\CZ'$ to be the regular model of $C^D(N,d',1)_{\QQ_p} \cong X^D_0(N)_{\QQ_p}$. Therefore, we shall obtain our results when $p$ is split as a corollary to our results when $p\nmid m$. 

We shall organize our results into two sections. In the first, we will consider the case when $p\mid m$. In that case, $w_m$ and thus the twisted action of Galois will permute $c'(X^D_0(N/p)_{\FF_p})$ and $w_p c'(X^D_0(N/p)_{\FF_p})$ on the special fiber. 
In the second, we will consider the case when $p\nmid m$ and we may have to additionally allow for points on $c'(X^D_0(N/p)_{\FF_p})$. Note also that if $X^o$ denotes the complement of the superspecial points in $X$, $X^D_0(N)_{\FF_p}^o = c'(X^D_0(N/p)_{\FF_p}^o)\coprod w_p c' (X^D_0(N/p)_{\FF_p}^o)$. 

\subsection{The proof when $p\mid m$ is inert}

Suppose that $D$ is the discriminant of an indefinite quaternion $\QQ$-algebra, $N,d$ are square-free integers with $(D,N)=1$, $m\mid DN$, and $p\mid m$ is inert in $\QQ(\sqrt d)$. Fix $\CX$ and $\CZ$ as in Definition \ref{regularDRmodels}. If $p\mid m$, the action of $w_m$ on the regular model $\CX$ interchanges $c'(X^D_0(N/p)_{\FF_p})$ and $w_pc'(X^D_0(N)_{\FF_p}$. Therefore if $P$ denotes an element of $\CZ(\FF_p)$ then $\pi(P(\Spec(\FF_p))$ must lie on both copies of $X^D_0(N/p)_{\FF_p}$. This is to say that the base change to $\overline\FF_p$ of $\pi P$ is a superspecial point, say $x$.

\begin{lemma}\label{DRpdividesmcondition} If $D,N,d,m,p$ are as described in the beginning of this chapter and $p\mid m$ is inert in $\QQ(\sqrt d)$, then $C^D(N,d,m)(\QQ_p)\ne \emptyset$ if and only if there is a superspecial $w_{m/p}$-fixed point $x\in X^D_0(N)(\overline \FF_p)$ of even length.
\end{lemma}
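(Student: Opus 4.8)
The plan is to reduce the assertion, via Hensel's Lemma and \'etale descent, to a purely combinatorial statement about how a twisted Frobenius permutes the components of the special fibre of the minimal desingularization $\CX$, and then to read off the answer from the configuration of components furnished by Theorem~\ref{DeligneRapoportModelTheorem} and Definition~\ref{CDdesingularization}.

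First I would fix the model. Since $p$ is inert in $\QQ(\sqrt d)$ we have $\ZZ_p[\sqrt d]\cong\ZZ_{p^2}$, so $\CZ$ is a regular $\ZZ_p$-model of $C^D(N,d,m)_{\QQ_p}$, and by Hensel's Lemma \cite[Lemma~1.1]{JoLi} it suffices to decide whether $\CZ_{\FF_p}$ has a smooth $\FF_p$-rational point. Galois descent along the \'etale map $\CX_{\ZZ_{p^2}}\to\CZ$ identifies $\CZ_{\overline\FF_p}$ with $\CX_{\overline\FF_p}$ carrying the $\Gal(\overline\FF_p/\FF_p)$-action whose arithmetic Frobenius is $w_m\circ\Frob_p$; since smoothness is intrinsic to the geometric fibre, a smooth $\FF_p$-point of $\CZ$ is the same thing as a point of $\CX_{\overline\FF_p}$ lying on the smooth locus and fixed by $w_m\circ\Frob_p$.

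Next comes the combinatorial core. The two horizontal components $c'(X^D_0(N/p)_{\overline\FF_p})$ and $w_p\,c'(X^D_0(N/p)_{\overline\FF_p})$ of $\CX_{\overline\FF_p}$ are each defined over $\FF_p$, so $\Frob_p$ stabilizes each of them; meanwhile $w_p$ interchanges them (Theorem~\ref{DeligneRapoportModelTheorem}, Lemma~\ref{ComponentsIntersectionPointsLemma}) and $w_{m/p}$, being supported away from $p$ and hence commuting with the forgetful map, preserves each of them. Writing $w_m=w_p\circ w_{m/p}$ --- legitimate because $p\mid m$ and $m\mid DN$ is squarefree --- we conclude that $w_m\circ\Frob_p$ interchanges the two horizontal components. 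Hence no $w_m\circ\Frob_p$-fixed smooth point can lie on a horizontal component, so (as observed before the statement) such a point lies on one of the chain components $C_1,\dots,C_{\ell-1}$ of $\pi^{-1}(x)$ for a superspecial point $x$ of length $\ell=\ell(x)>1$ (Definition~\ref{CDdesingularization}). Because $\pi$ and $w_m$ are compatible with the corresponding maps on $X^D_0(N)$, and $\Frob_p$ acts on superspecial points as $w_p$ (Lemma~\ref{ALGaloisLemma}), the automorphism $w_m\circ\Frob_p$ carries the chain over $x$ onto the chain over $w_p w_{m/p} w_p(x)=w_{m/p}(x)$; fixing a component of that chain therefore forces $w_{m/p}(x)=x$. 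Finally, the two end components $C_1$ and $C_{\ell-1}$ abut the two distinct horizontal components, so $w_m\circ\Frob_p$ must reverse the chain, $C_i\mapsto C_{\ell-i}$; the only component it can fix is the middle one $C_{\ell/2}$, which exists exactly when $\ell$ is even. This establishes the ``only if'' direction. For the converse, given a $w_{m/p}$-fixed superspecial $x$ with $\ell(x)$ even, the same computation shows $C_{\ell/2}$ descends to a reduced component of $\CZ_{\overline\FF_p}$ isomorphic to $\PP^1_{\FF_p}$, and since $\#\PP^1(\FF_p)=p+1>2$ this $\PP^1$ has an $\FF_p$-point avoiding its two nodes, producing a smooth $\FF_p$-point of $\CZ$ and hence a $\QQ_p$-point of $C^D(N,d,m)$ by Hensel's Lemma.

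The one place that will need care is the claim that the two ends of the resolution chain abut \emph{different} horizontal components --- so that the interchange of those components genuinely forces a reversal of the chain rather than allowing an endpoint to be fixed. I expect this to follow cleanly from the completed local description $\widehat{\OO}\cong R\otimes W(\overline\FF_p)[[X,Y]]/(XY-p^{\ell([I])})$ in Theorem~\ref{DeligneRapoportModelTheorem} (where the two branches $X=0$, $Y=0$ are exactly the two horizontal components) together with the Hirzebruch-Jung form of the minimal desingularization. Beyond that, the argument is a transcription, in the $p\mid N$ setting, of the reasoning already used for Lemma~\ref{CDpnmidmeven}.
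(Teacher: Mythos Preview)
Your proposal is correct and follows essentially the same approach as the paper: reduce to smooth $\FF_p$-points of $\CZ$ via Hensel, identify these with $w_m\circ\Frob_p$-fixed smooth points of $\CX_{\overline\FF_p}$, rule out the horizontal components because $w_m\circ\Frob_p$ swaps them, and then analyze the resolution chain over a superspecial point. The paper's own proof is slightly more compressed: it writes $w_m\circ\Frob_p$ on a superspecial point as $w_{m/p}$ (using Lemma~\ref{ALGaloisLemma}), asserts ``by continuity'' that $w_{m/p}$ fixes each $C_i$, and then states $w_pC_i=C_{\ell-i}$ to force $\ell$ even. Your version is in fact a bit more careful on the point you flagged --- the reversal of the chain --- since you track explicitly that the two ends $C_1$, $C_{\ell-1}$ abut distinct horizontal components, so any automorphism interchanging those components must reverse the chain; the paper's shorthand ``$w_pC_i=C_{\ell-i}$'' is really the same observation, and is justified by exactly the local picture $W(\overline\FF_p)[[X,Y]]/(XY-p^\ell)$ you cite.
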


\begin{proof}
By abuse of notation, let $\Frob_p = \phi_1^*:\Spec(\overline\FF_p) \to \Spec(\overline\FF_p)$ where $\phi_1: \overline\FF_p \to\overline\FF_p$. Note that under the bijection from $\CZ(\overline \FF_p)$ to $\CX(\overline \FF_p)$, the Galois action $P \mapsto P\Frob_p$ on $\CZ(\overline\FF_p)$ translates to the action of $P \mapsto w_m P\Frob_p$ on $\CX(\overline\FF_p)$.

Suppose that $C^D(N,d,m)(\QQ_p)$ is nonempty. Then by Hensel's Lemma \cite[Lemma 1.1]{JoLi} there must be an element of $\CZ^{sm}(\FF_p)$, or rather a smooth point such that $P = w_m P \Frob_p$ in $\CX(\overline\FF_p)$. Since $p\mid m$, $w_m$ interchanges $c(X^D_0(N/p)_{\overline\FF_p})$ with $w_pc(X^D_0(N/p)_{\overline\FF_p})$. A smooth fixed point $P$ of $w_m\circ \Frob_p$ must therefore map to a superspecial point under $\pi$. 

Suppose there is such a smooth fixed point $P$. Let $\ell = \ell(x)$, so that smoothness implies $\ell >1$. 
We have $\pi^* x(\Spec(\overline\FF_p)) = \bigcup_{i=1}^{\ell-1} C_i$ with $C_i \cong \PP^1_{\overline \FF_p}$ and if $i< j$, $$C_i \cdot C_j = \begin{cases} 1 & j = i + 1, 1 \le i < \ell \\ 0 & else \end{cases}.$$  By Lemma \ref{ALGaloisLemma} $x\Frob_p = w_p(x)$, so we have $w_m \circ \Frob_p(x) = w_{m}w_p(x) = w_{m/p}(x)$. Therefore by continuity, $w_{m/p}$ fixes each $C_i$ and for each $i$, $w_p C_i = C_{\ell -i}$. 
Therefore, unless $\ell$ is even we arrive at a contradiction.

Conversely suppose that there is a superspecial point $x$ such that $\ell = \ell(x)$ is even and $w_{m/p}(x) = x$. Then we have $C_1, \dots C_{\ell -1}$ fixed by $w_{m/p}$ by assumption. Since $w_p$ fixes $C_{\ell/2}$, it follows that $C_{\ell/2}$ is defined over $\FF_p$. 
Therefore by Hensel's Lemma, $C^D(N,d,m)(\QQ_p) \ne\emptyset$.\end{proof}

To illustrate this Lemma, consider the following example.

\begin{example} The diagram in figure \ref{DR39figure} depicts the special fiber of $\CX$ over $\overline\FF_3$ where $\CX$ denotes the regular $\ZZ_3$-model of $X^1_0(39) = X_0(39)$ with the action $w_{39}\Frob_3$ given by the arrows.

\begin{figure}
\[ \fbox{ \xygraph{
!~-{@{-}@[|(2)]}
&*+{}="x1" & &   &   & *+{}="x1p"& \\
*+{\PP^1_{\overline\FF_3}}="x2"& & & *+{}="y2" &   & & *+{}="x2p" \\
*+{\PP^1_{\overline\FF_3}}="x3"& & & *+{}="y3" &   & & *+{}="x3p" \\
*+{\PP^1_{\overline\FF_3}}="x4b"&*+{}="x4" &  & &   & *+{}="x4p"&*+{\PP^1_{\overline\FF_3}}="x4bp" \\
&*+{X_0(13)_{\overline\FF_3}}="x5" & *+{\phantom{AA}}="x5m" &  & *+{\phantom{AA}}="x5mp"  & *+{X_0(13)_{\overline\FF_3}}="x5p"& \\
"x1"-@`{"x1p"+(0,0.5)}"x5p"
"x1p"-@`{"x1"+(0,0.5)}"x5"
"x2"-"x2p"
"x3"-"x3p"
"x4b"-"x5mp"
"x4bp"-"x5m"
"x4" :@{<->}@/^/ "x4p"
"y2" :@{<->} "y3"
"x5m" :@{<->} "x5mp"
} } \]
\caption{The $\overline\FF_3$ special fiber of $\CX$}
\label{DR39figure}
\end{figure}

Note the resolutions of the four superspecial points of $X_0(39)_{\overline\FF_3}$: 1 of length 1, 2 of length 2 and 1 of length 3. Note also that while there are superspecial points of length 2, and there are some superspecial points fixed by the action of $w_{39}\Frob_3$, there are no superspecial points of length 2 fixed by the action of $w_{13}$. As a consequence, if $3$ is inert in $\QQ(\sqrt d)$ then $C^1(39,d,39)(\QQ_3)$ is empty, because there are no smooth fixed points of $w_{39}\Frob_3$ on $\CX_{\overline\FF_3}$.\end{example}

This example illustrates an error in the criterion of Theorem 1.1(3) in the recent paper of Ozman \cite{Oz09}. The correct numerical criterion is properly given by Corollary \ref{mainDRCor}, via the following Theorem.

\begin{theorem}\label{DRpdividesminert} Suppose that $D,N,d,m$ and $p$ are as in Theorem \ref{mainDRtheorem} and $p\mid m$ is inert in $\QQ(\sqrt d)$. Then $C^D(N,d,m)(\QQ_p)\ne \emptyset$ if and only if
 
\begin{itemize}
 \item $p=2$, $m = p$ or $DN$, for all $q\mid D$, $q\equiv 3 \bmod 4$, and for all $q\mid (N/2)$, $q\equiv 1\bmod 4$, or

 \item $p\equiv 3 \bmod 4$, $m= p$ or $2p$, for all $q\mid D$ $q\not\equiv 1\bmod 4$, and for all $q\mid (N/p)$, $q\not\equiv 3\bmod 4$.
\end{itemize}

\end{theorem}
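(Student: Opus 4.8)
The plan is to feed the criterion of Lemma~\ref{DRpdividesmcondition} through the bi-module description of superspecial points together with the simultaneous-embedding theorems proved above. By Lemma~\ref{DRpdividesmcondition}, $C^D(N,d,m)(\QQ_p)\ne\emptyset$ if and only if there is a superspecial point $x\in X^D_0(N)(\overline\FF_p)$ of even length fixed by $w_{m/p}$. By Theorem~\ref{RibetBimodulesTheorem} such an $x$ corresponds to an optimal $(\OO,\mathcal{S})$-bi-module $M$, and by Theorem~\ref{ALEquivariance} the ring $\Lambda:=\End_{(\OO,\mathcal{S})}(M)$ is an Eichler order of level $N':=N/p$ in $B':=B_{Dp}$; here $\disc(B')=Dp$ and $D'N'=DN$, and the length of $x$ equals $\#(\Lambda^\times/\{\pm1\})$. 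So the first step is to rephrase the condition as: there exists an Eichler order $\Lambda$ of level $N/p$ in $B_{Dp}$ with $\#(\Lambda^\times/\{\pm1\})$ even (equivalently, by Corollary~\ref{SuperspecialFixedZetaFour}, with $\ZZ[\zeta_4]\hookrightarrow\Lambda$, unless $(Dp,N/p)\in\{(2,1),(3,1)\}$) whose unique two-sided ideal of norm $m/p$ is principal; by Corollary~\ref{EmbeddingFixedPoint} the latter amounts to an embedding of $\ZZ[\sqrt{-m/p}]$ into $\Lambda$, or of $\ZZ[\zeta_4]$ when $m/p=2$.

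Next I would pin down which $m$ can occur. By Corollary~\ref{SuperspecialFixedZetaFour} applied with $(D',N')=(Dp,N/p)$, a $\ZZ[\zeta_4]$-containing order $\Lambda$ can have its point fixed by $w_{m/p}$ only if $m/p\in\{1,2,DN,DN/2\}$, the value $DN/2$ occurring only when $2\mid DN$. Since $m\mid DN$ while $p$ divides $DN$ to the first power and $p\mid m$, one checks $m/p=DN$ is impossible and $m/p=DN/2$ forces $p=2$ and $m=DN$; so the only surviving possibilities are $m=p$, $m=2p$, or ($p=2$ and $m=DN$). For $m/p\in\{1,2\}$ the requirement collapses to ``$\ZZ[\zeta_4]$ embeds into some Eichler order of level $N/p$ in $B_{Dp}$'', i.e.\ $e_{Dp,\,N/p}(-4)\ne 0$, which by Theorem~\ref{EichlerEmbedding} says precisely that every prime dividing $Dp$ is $2$ or $\equiv 3\bmod 4$ and every prime dividing $N/p$ is $2$ or $\equiv 1\bmod 4$. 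As $p\mid Dp$ this forces $p=2$ or $p\equiv 3\bmod 4$. If $p=2$ then $2p=4\nmid DN$, so $m=p$, and since $2\nmid D$ and $2\nmid N/2$ the condition is exactly the first bullet with $m=p$. If $p\equiv 3\bmod 4$ then, allowing $2\mid D$ or $2\mid N/p$, the condition becomes the second bullet, with $m=p$ or $m=2p$.

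Finally, for the remaining case $p=2$, $m=DN$ (so $m/p=DN/2$), the genuinely simultaneous requirement is that $\ZZ[\zeta_4]$ and $\ZZ[\sqrt{-DN/2}]$ both embed into one Eichler order of level $N/2$ in $B_{2D}$; here I would invoke Theorem~\ref{zetafour} with $(D',N')=(2D,N/2)$ and the integer $DN/2=D'N'/2$ (note $2\mid D'N'$), which yields exactly the congruences that every prime dividing $D$ is $\equiv 3\bmod 4$ and every prime dividing $N/2$ is $\equiv 1\bmod 4$, i.e.\ the first bullet with $m=DN$. The exceptional orders $(Dp,N/p)\in\{(2,1),(3,1)\}$ are then disposed of by hand: there $D=1$, $N=p$ and $m=p$, so $w_{m/p}=w_1$ is the identity and a direct computation of the unique superspecial bi-module shows its length is even, so $C^D(N,d,m)(\QQ_p)\ne\emptyset$, which matches the (vacuous) conditions of the statement. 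All the mathematical content is imported from Lemma~\ref{DRpdividesmcondition}, Corollary~\ref{SuperspecialFixedZetaFour}, Theorem~\ref{zetafour} and Theorem~\ref{EichlerEmbedding}; I expect the only real obstacle to be the bookkeeping --- verifying that the sole admissible $m$ are $p$, $2p$, and (for $p=2$) $DN$, and matching the uniform embedding-theoretic congruences on the primes dividing $Dp$ and $N/p$ against the case-split formulation in the statement.
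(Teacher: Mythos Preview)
Your proposal is correct and follows essentially the same route as the paper: reduce via Lemma~\ref{DRpdividesmcondition} to the existence of a superspecial $w_{m/p}$-fixed point of even length, identify the endomorphism order as an Eichler order of level $N/p$ in $B_{Dp}$, and then read off the constraints on $m/p$ and the congruence conditions from Corollary~\ref{SuperspecialFixedZetaFour}. The paper's proof is more terse---it invokes Corollary~\ref{SuperspecialFixedZetaFour} once and immediately does the case analysis on $m/p\in\{1,2,DN/2,DN\}$---whereas you unpack the bi-module correspondence explicitly, separately invoke Theorem~\ref{zetafour} for the $p=2$, $m=DN$ case, and treat $(Dp,N/p)\in\{(2,1),(3,1)\}$ by hand; but these are expository choices, not mathematical differences, and everything you do is already subsumed in the paper's single citation of Corollary~\ref{SuperspecialFixedZetaFour}.
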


\begin{proof} By Lemma \ref{DRpdividesmcondition}, $C^D(N,d,m)(\QQ_p)$ is nonempty if and only if there is a superspecial $w_{m/p}$-fixed point of even length in $X^D_0(N)(\overline\FF_p)$. By Lemma \ref{ComponentsIntersectionPointsLemma}, the QM endomorphism ring of a superspecial point on $X^D_0(N)(\overline\FF_p)$ has discriminant $D' = Dp$ and level $N' = N/p$. Note that $D'N' = DN$. By Lemma \ref{SuperspecialFixedZetaFour}, there is a superspecial $w_{m/p}$-fixed point of even length if and only if

\begin{itemize}

\item $m/p = 1,2,DN/2$ or $DN$ and 

\item for all $q\mid Dp$, $q=2$ or $q\equiv 3\bmod 4$ and

\item for all $q\mid (N/p)$, $q=2$ or $q\equiv 1\bmod 4$.
\end{itemize}

We may immediately see that $(m/p)\mid (DN/p)< DN$ so $m/p \ne DN$. If $m/p=1$ then $m=p$ and either $p=2$, or $p\equiv 3\bmod 4$. If $p=2$, $2\nmid (DN/2)$ so for all $q\mid D$, $q\equiv 3 \bmod 4$, and for all $q\mid (N/2)$, $q\equiv 1\bmod 4$. If $m/p = 2$ then $m= 2p$ and we conclude that $p\equiv 3\bmod 4$. If $m/p = DN/2$ then $DNp/2 = m\mid DN$ and we conclude that $p=2$.\end{proof}




\subsection{The proof when $p\nmid m$ is split or inert}

We begin with the following observation regarding cusps, which are points that can only exist if $D=1$.

\begin{lemma}{\cite[Proposition 3]{OggHyperelliptic}}\label{cusplemma} If $N$ is square-free and $m\mid N$, then $w_m$ fixes a cusp of $X^1_0(N)$ if and only if $m=1$. \end{lemma}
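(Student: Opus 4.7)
The plan is to use the explicit parametrization of the cusps of $X_0(N)$ for squarefree $N$ together with the explicit action of the Atkin-Lehner involution $w_m$ on the cusps, and then reduce the question of a fixed point to an elementary divisibility statement.

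First I would recall that for squarefree $N$, the cusps of $X_0(N)_{/\QQ}$ are in bijection with the positive divisors $d$ of $N$, where the cusp attached to $d$ is represented by any fraction $a/c$ with $\gcd(c,N)=d$; in particular the number of cusps is $2^{\omega(N)}$. Under this bijection the action of $w_m$ on cusps, for $m \mid N$ with $\gcd(m, N/m)=1$, is given by the involution
\[
d \longmapsto d' := \frac{m\,d}{\gcd(m,d)^{2}}
\]
on the set of divisors of $N$. This formula is classical (it follows from the matrix representation of $w_m$ acting on $\PP^1(\QQ)$, reducing modulo $\Gamma_0(N)$) and is where most of the work lies, though it requires no real calculation to quote.

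Next I would analyze the fixed-point equation $d = m d / \gcd(m,d)^2$, which simplifies to $\gcd(m,d)^{2} = m$. Since $m$ is squarefree by hypothesis, the integer $\gcd(m,d)^{2}$ is a perfect square that must equal the squarefree number $m$. This forces $\gcd(m,d) = 1$ and hence $m = 1$. Conversely, if $m=1$, then $w_m$ is the identity and every cusp is trivially fixed, completing the equivalence.

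The main (and really only) obstacle is establishing the formula for the action of $w_m$ on the cusp parametrization; once that is in hand, the fixed-point analysis is a one-line divisibility argument using squarefreeness of $m$. I would therefore cite the standard reference (for instance Ogg's paper on hyperelliptic modular curves, already invoked in the lemma statement) for the cusp action and present the elementary divisibility argument as the body of the proof.
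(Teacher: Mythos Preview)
Your argument is correct. The paper does not actually give a proof of this lemma; it simply cites Ogg's result \cite[Proposition 3]{OggHyperelliptic} and uses it as a black box. Your proposal supplies precisely the argument that underlies Ogg's statement: for squarefree $N$ the cusps are indexed by divisors $d\mid N$, the Atkin--Lehner involution $w_m$ sends $d$ to $md/\gcd(m,d)^2$, and the fixed-point equation $\gcd(m,d)^2=m$ forces $m=1$ by squarefreeness. So you have reproduced the (short) proof rather than merely invoked the citation, which is fine.
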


Therefore if $N,d$ are square-free and $p\mid N$ is a prime, then $C^1(N,d,m)(\QQ_p)$ contains a cusp if and only if either $p$ is split in $\QQ(\sqrt d)$ or $m=1$. We now illustrate the three ways in which $C^D(N,d,m)(\QQ_p)$ could be nonempty via the following example.

\begin{example} Let $D = 6, N = p = 11$, $m=1$ and $d$ any integer such that $11$ is inert in $\QQ(\sqrt d)$, say $-1$ for instance. Note that $C^6(11,-1,1) \cong_{\QQ_{11}} X^6_0(11)$. Let $\CX$ denote the regular $\ZZ_{11}$-model of $X^6_0(11)$. Then the diagram in figure \ref{DR611figure} depicts $\CX_{\overline\FF_{11}}$ where the arrows describe the action of $w_1\Frob_{11} = \Frob_{11}$.

\begin{figure}
\[ \fbox{ \xygraph{
!~-{@{-}@[|(1)]}
*+{\PP^1_{\overline\FF_{11}}}="x1" & *+{}="x1b" & & & *+{}="x1bp" & *+{\PP^1_{\overline\FF_{11}}}="x1p" \\
*+{\PP^1_{\overline\FF_{11}}}="x2" & & *+{}="y1p" & *+{}="y1" & & *+{\PP^1_{\overline\FF_{11}}}="x2p" \\
 & *+{}="z1" & *+{}="y2p" & *+{}="y2" & *+{}="z1p" & \\
*+{\PP^1_{\overline\FF_{11}}}="a1" &  & & &  & *+{}="a1p" \\
*+{\PP^1_{\overline\FF_{11}}}="b1" &  & & &  & *+{}="b1p" \\
 & *+{X^6_0(1)_{\overline\FF_{11}}}="w1b" & & & *+{X^6_0(1)_{\overline\FF_{11}}}="w1bp" & \\
"x1b"-"w1b"
"x1bp"-"w1bp"
"a1"-"a1p"
"b1"-"b1p"
"x1"-"y1"
"x1p"-"y1p"
"x2"-"y2"
"x2p"-"y2p"
"x1" :@{<->} "x2"
"x1p" :@{<->} "x2p"
"a1" :@{<->} "b1"
"z1" :@(lu,ld) "z1"
"z1p" :@(ru,rd) "z1p"
} } \]
\caption{The $\overline\FF_{11}$ special fiber of $\CX$}
\label{DR611figure}
\end{figure}

We check to see if $X^6_0(11)(\QQ_{11})$ is nonempty as follows. Although there are intersection points of length 2 and 3 on $X^6_0(11)_{\overline\FF_{11}}$, none are fixed by the action of $\Frob_{11}$. Therefore the only way that $X^6_0(11)(\QQ_{11})$ could be nonempty would be if there were a non superspecial point in $X^6_0(1)(\FF_{11})$. Using the trace formula, or the fact that there are only four superspecial points on $X^6_0(1)_{\FF_{11}}\cong \PP^1_{\FF_{11}}$, we conclude that $X^6_0(11)(\QQ_{11})$ is nonempty.

\end{example}

\begin{lemma}\label{DRpmidmlemma} Let $D,N,d,m,p$ be as in Theorem \ref{mainDRtheorem} and suppose $p\nmid m$ is unramified in $\QQ(\sqrt d)$. Suppose that $C^D(N,d,m)(\QQ_p)$ does not contain a cusp. Then $C^D(N,d,m)(\QQ_p)\ne \emptyset$ if and only if one of the following occurs.

\begin{itemize}

\item There is a superspecial $w_{mp}$-fixed point of even length on $X^D_0(N)(\overline\FF_p)$.

\item There is a superspecial $w_{mp}$-fixed point of length divisible by three on $X^D_0(N)(\overline\FF_p)$.

\item There is a non-superspecial point of $C^D(N/p,d,m)(\FF_p)$.

\end{itemize}
\end{lemma}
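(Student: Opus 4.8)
The plan is to proceed exactly as in the case $p\mid m$ (Lemma \ref{DRpdividesmcondition}), but now keeping track of the extra possibility that the smooth $\FF_p$-point lives on one of the two horizontal components $c'(X^D_0(N/p)_{\FF_p})$ or $w_p c'(X^D_0(N/p)_{\FF_p})$ rather than on a chain resolving a superspecial point. First I would recall that, via the isomorphism between $\CZ(\overline\FF_p)$ and $\CX(\overline\FF_p)$, the Galois action $P\mapsto P\Frob_p$ on $\CZ(\overline\FF_p)$ corresponds to the twisted action $P\mapsto w_m P\Frob_p$ on $\CX(\overline\FF_p)$, and that by Lemma \ref{ALGaloisLemma} the action of $\Frob_p$ on superspecial points is given by $w_p$, so that $w_m\circ\Frob_p$ acts on superspecial points as $w_{mp}$. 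By Hensel's Lemma \cite[Lemma 1.1]{JoLi}, $C^D(N,d,m)(\QQ_p)\ne\emptyset$ if and only if $\CX(\overline\FF_p)$ contains a smooth point $P$ fixed by the twisted action.

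Next I would do the case analysis on the location of such a smooth fixed point $P$ under $\pi$. Since $\CX_{\overline\FF_p}$ consists of the two horizontal components $c'(X^D_0(N/p)_{\overline\FF_p})$ and $w_p c'(X^D_0(N/p)_{\overline\FF_p})$, meeting at superspecial points whose resolutions are the chains $\bigcup_{i=1}^{\ell-1}C_i$ of Definition \ref{CDdesingularization}, the smooth point $P$ either lies in the smooth locus $X^D_0(N)^o_{\overline\FF_p} = c'(X^D_0(N/p)^o_{\overline\FF_p})\coprod w_p c'(X^D_0(N/p)^o_{\overline\FF_p})$, or lies on one of the exceptional curves $C_i$ over a superspecial point $x$ of length $\ell = \ell(x)>1$. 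In the first case: because $p\nmid m$, the twisted action $P\mapsto w_m P\Frob_p$ does \emph{not} interchange the two horizontal components but acts on each of $c'(X^D_0(N/p)^o_{\overline\FF_p})$ as $(A,\iota)\mapsto w_m(A^{(p)},\Frob_{p,*}\iota)$; identifying $c'(X^D_0(N/p)_{\overline\FF_p})$ with $X^D_0(N/p)_{\overline\FF_p}$ and using that on this curve $w_p c' = $ Frobenius (Theorem \ref{DeligneRapoportModelTheorem}), a smooth fixed point of this action is exactly a point of $X^D_0(N/p)(\FF_p)$ fixed by $w_m$ that is not superspecial, i.e. a non-superspecial point of $C^D(N/p,d,m)(\FF_p)$ — unless it is a cusp, which we have excluded by hypothesis. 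In the second case, as in Lemma \ref{DRpdividesmcondition}, since $w_m\Frob_p(x) = w_{mp}(x)$ and $\Frob_p$ acts on the chain $\{C_i\}$ by $C_i\mapsto C_{\ell-i}$ (Lemma \ref{ALGaloisLemma}), the twisted action fixes $x$ and sends $C_i$ to $C_{\ell-i}$ composed with $w_{mp}$, so there is a component of $\CZ_{\overline\FF_p}$ defined over $\FF_p$ precisely when $w_{mp}$ fixes $x$ and fixes the middle component; by Corollary \ref{EmbeddingFixedPoint} (applied, as in Corollaries \ref{SuperspecialFixedZetaFour} and \ref{SuperspecialFixedZetaSix}, to the length divisibility) this can happen only when $\ell$ is even with $w_{mp}(x)=x$, or $\ell$ is divisible by three with $w_{mp}(x)=x$. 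Conversely, each of the three listed conditions manifestly produces a smooth point of $\CZ(\FF_p)$ — a reduced $\PP^1_{\FF_p}$ in a resolution chain in the first two, a rational point on a horizontal component missing the superspecial locus in the third — so Hensel's Lemma gives $C^D(N,d,m)(\QQ_p)\ne\emptyset$.

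The routine verifications are the same bookkeeping already carried out in Lemma \ref{DRpdividesmcondition}: the middle of a $w_{mp}$-fixed chain of odd length cannot be fixed because $w_{mp}$ then swaps the two halves with no common component, while for even length the unique middle component $C_{\ell/2}$ is stable; the case of length divisible by three is handled identically using that a $w_{mp}$-fixed chain admits a fixed component exactly when the combinatorics of the $\mu_3$-type action on the chain leave a component invariant. The main obstacle, and the only genuinely new point compared with the $p\mid m$ case, is the careful identification in the first case: one must check that the twisted Frobenius action restricted to the \emph{non}-superspecial locus of a single horizontal component really is the Atkin-Lehner $w_m$ twisted by Frobenius on $X^D_0(N/p)_{\overline\FF_p}$, so that its $\FF_p$-fixed points are genuinely the points of $C^D(N/p,d,m)(\FF_p)$ away from the superspecial and cuspidal loci — this is where Theorem \ref{DeligneRapoportModelTheorem} (the identity $\Phi c = \mathrm{id}$, $\Phi w_p c = \Frob$) and Lemma \ref{FrobeniusGaloisLemma} do the essential work, and where one must be careful that the two horizontal components are not interchanged precisely because $p\nmid m$.
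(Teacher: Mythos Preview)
Your analysis of the exceptional chains contains a genuine error. You claim that $\Frob_p$ acts on a chain $\{C_i\}$ over a superspecial point by $C_i\mapsto C_{\ell-i}$, and hence that the twisted action $w_m\Frob_p$ reverses the chain, forcing even length for a fixed component. This is not correct: the two horizontal components $c'(X^D_0(N/p)_{\FF_p})$ and $w_pc'(X^D_0(N/p)_{\FF_p})$ are each defined over $\FF_p$, so $\Frob_p$ preserves each of them and therefore preserves the orientation of any chain it carries to itself. Since $p\nmid m$, the automorphism $w_m$ also preserves each horizontal component. Thus if $x$ is $w_{mp}$-fixed, the twisted action $w_m\Frob_p$ sends every $C_i$ back to $C_i$, not to $C_{\ell-i}$. (Contrast Lemma~\ref{DRpdividesmcondition}: there $p\mid m$, so $w_m$ swaps the horizontal components and the twisted action genuinely reverses the chain, which is why even length is required in that lemma.)

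The correct argument, which is what the paper gives, is that any $w_{mp}$-fixed superspecial point of length $\ell>1$ yields an entire chain of $\FF_p$-rational components of $\CZ_{\overline\FF_p}$ and hence a smooth $\FF_p$-point. The lemma's two separate bullets ``even length'' and ``length divisible by three'' are not two distinct mechanisms but simply the fact that for Eichler orders in definite quaternion algebras the only lengths greater than $1$ are divisible by $2$ or by $3$. Your attempt to handle the length-three case via a separate ``$\mu_3$-type action on the chain'' is therefore both unnecessary and unsupported by anything in the paper.

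A smaller issue: in the horizontal-component case you describe a fixed point as ``a point of $X^D_0(N/p)(\FF_p)$ fixed by $w_m$''. That is not the same as a point of $C^D(N/p,d,m)(\FF_p)$; the latter is a point of $X^D_0(N/p)(\overline\FF_p)$ fixed by $w_m\Frob_p$, which is what you actually need and what your subsequent ``i.e.'' correctly states.
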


\begin{proof}Recall the regular models $\CX,\CZ$ of Definition \ref{regularDRmodels}. Recall that there is a bijection from $\CZ(\overline\FF_p)$ to $\CX(\overline\FF_p)$, under which the Galois action $P \mapsto P\Frob_p$ on $\CZ(\overline\FF_p)$ translates to the action $P \mapsto w_m P\Frob_p$ on $\CX(\overline\FF_p)$. 

By Lemma \ref{ALGaloisLemma}, the action of $\Frob_p$ on the superspecial points of $X^D_0(N)_{\overline\FF_p}$ is the action of $w_p$. Therefore a superspecial $\FF_p$-rational point of $\CZ$ corresponds to a superspecial $w_{mp}$-fixed point of $X^D_0(N)_{\overline\FF_p}$.

Suppose now that $C^D(N,d,m)(\QQ_p)$ is nonempty, or equivalently by Hensel's Lemma that $\CZ^{sm}(\FF_p)$ is nonempty. Suppose further that there are no superspecial $w_{mp}$-fixed points of length divisible by 2 or 3, that is, all superspecial points fixed by $w_{mp}$ have length 1. It follows that if $P$ is a smooth fixed point of $w_{mp}$ in $\CX(\overline\FF_p)$, then $\pi( P)= x$ is not superspecial. 

Conversely, suppose first that there is an $\FF_p$-rational point of $\CZ$ which is not superspecial. By the embedding $c:X^D_0(N/p)_{\FF_p}\to X^D_0(N)_{\FF_p}$, there is a non-superspecial $\FF_p$-rational point of $\CZ$. Since $X^D_0(N)_{\overline\FF_p}$ is smooth away from superspecial points, $C^D(N,d,m)(\QQ_p)$ is nonempty by Hensel's lemma.

Now suppose there is a superspecial $w_{mp}$-fixed point $x$ with $\ell = \ell(x) >1$. It follows that $\pi^*(x(\Spec(\overline\FF_p))) = \bigcup_{i=1}^{\ell-1} C_i$ with $C_i \cong \PP^1_{\overline\FF_p}$ and at most two singular points in $\CX_{\overline\FF_p}$ on each $C_i$. Since $w_m x \Frob_p  = w_{mp}(x) = x$, for all $i$, $w_m\Frob_p C_i = w_{mp} C_i = C_i$ by continuity of $\pi$. Therefore $C_i$ defines an $\FF_p$-rational component of $\CZ_{\overline\FF_p}$ with at most two singular points. Therefore $\CZ^{sm}(\FF_p)$ is nonempty and by Hensel's Lemma, $\CZ(\QQ_p)$ is nonempty.\end{proof}


\begin{lemma}\label{DRpnmidmzetafourconditions}

There is a superspecial $w_{mp}$-fixed point of even length on $X^D_0(N)_{\overline\FF_p}$ if and only if one of the following occurs.

\begin{enumerate}

\item $p=2$, $m=1$, $q\equiv 3\bmod 4$ for all primes $q\mid D$, and $q \equiv 1\bmod 4$ for all primes $q\mid (N/2)$.

\item $p\equiv 3\bmod 4$, $2\mid DN/p$, $m = DN/2p$, $q \not\equiv 1\bmod 4$ for all primes $q\mid D$, and $q \not\equiv 3\bmod 4$ for all primes $q\mid (N/p)$.

\item $m = DN/p$, $p \not\equiv 1\bmod 4$, $q \not\equiv 1\bmod 4$ for all primes $q\mid D$, and $q \not\equiv 3\bmod 4$ for all primes $q\mid (N/p)$.

\end{enumerate}

\end{lemma}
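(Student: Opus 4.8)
The plan is to reduce this statement to the simultaneous-embedding results already proved in Section 1. Recall from Lemma \ref{ComponentsIntersectionPointsLemma} (or Theorem \ref{DeligneRapoportModelTheorem}) that a superspecial point $x$ of $X^D_0(N)_{\overline\FF_p}$ corresponds to an element of $\Pic(Dp,N/p)$, so its QM-endomorphism ring $\OO' = \End_{(\OO,\mathcal S)}(M)$ is an Eichler order of level $N' = N/p$ in the definite quaternion algebra of discriminant $D' = Dp$; in particular $D'N' = DN$ and $D'$ is a product of an odd number of primes. The length $\ell(x)$ is $\#(\OO'^\times/\pm1)$, so $\ell(x)$ is even exactly when $\ZZ[\zeta_4]\hookrightarrow \OO'$ (the only even length occurring is $2$, by \cite[Proposition V.3.1]{Vigneras}, since the cases $D'=2,3$, $N'=1$ are harmless). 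By Corollary \ref{EmbeddingFixedPoint}, $x$ is fixed by $w_{mp}$ iff $\ZZ[\sqrt{-mp}]\hookrightarrow\OO'$ (or $\ZZ[\zeta_4]$ if $mp=2$). So the existence of a superspecial $w_{mp}$-fixed point of even length is equivalent to: \emph{there exists an Eichler order $\OO'$ of level $N/p$ in $B_{Dp}$ admitting simultaneous embeddings of $\ZZ[\zeta_4]$ and of $\ZZ[\sqrt{-mp}]$} (reading $\ZZ[\sqrt{-mp}]$ as $\ZZ[\zeta_4]$ when $mp=2$).

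Next I would split on whether $mp=1$, $mp=2$, or $mp>2$. If $mp=1$ then $m=1$, $p=1$, impossible since $p$ is a prime dividing $N$; if $mp=2$ then $m=1$, $p=2$, and the condition is just that $\ZZ[\zeta_4]\hookrightarrow\OO'$ for some Eichler order of level $N/2$ in $B_{2D}$, which by Theorem \ref{EichlerEmbedding} happens iff $e_{2D,N/2}(-4)\ne0$, i.e. every prime $q\mid 2D$ is $2$ or $\equiv 3\bmod4$ and every prime $q\mid(N/2)$ is $2$ or $\equiv 1\bmod4$ — this is case (1). If $mp>2$, then since $(mp,1)$ forces $mp\ne 2$ we have $\ZZ[\zeta_4]\not\hookrightarrow\ZZ[\sqrt{-mp}]$ and vice versa, so I can apply Theorem \ref{zetafour} directly with $D'=Dp$, $N'=N/p$: simultaneous embeddings exist iff the prime-factorization conditions on $p_i,q_j$ hold \emph{and} $mp = D'N' = DN$, or $2\mid DN$ and $mp = DN/2$. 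The first alternative gives $m = DN/p$ with $p=mp/m$ of the allowed shape; combined with the congruence conditions ($p=2$ or $p\equiv 3\bmod4$, actually "$p\not\equiv1\bmod4$" once we fold in $p=2$) this is case (3). The second alternative gives $mp=DN/2$, i.e. $m=DN/2p$, forcing $2\mid DN/p$ (so that $m$ is an integer with the right factorization) and $p\equiv 3\bmod4$ (since $p$ must itself be one of the allowed $p_i$, and $p=2$ would already be accounted for in the congruence $q_i\equiv 3\bmod 4$ bookkeeping); this is case (2).

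Finally I would reconcile the bookkeeping: Theorem \ref{zetafour} phrases the conditions as ``each $p_i=2$ or $p_i\equiv3\bmod4$'' for the primes of $Dp$ and ``each $q_j=2$ or $q_j\equiv1\bmod4$'' for the primes of $N/p$, whereas the statement of this lemma separates out the prime $p$ and writes the residual conditions on the primes $q\mid D$ and $q\mid(N/p)$. Since $(D,N)=1$ and $p\mid N$, the primes of $Dp$ are exactly $p$ together with the primes of $D$, and the primes of $N/p$ are the remaining primes of $N$; translating ``$p\in\{2\}\cup\{q\equiv3\bmod4\}$'' into the case division on the value of $p$ (which is $2$ in case (1), $\equiv 3\bmod 4$ in case (2), and $\not\equiv 1\bmod4$ in case (3)) yields exactly the three listed cases. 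The only genuinely delicate point is the last one — making sure the three alternatives coming out of Theorem \ref{zetafour} (namely $mp=2$, $mp=DN$, $mp=DN/2$) map cleanly onto cases (1),(3),(2) with no overlap or omission, and in particular that the divisibility constraints ``$p\mid m$'' is excluded throughout (it is, since $p\nmid m$ is a running hypothesis and $mp$ squarefree forces $p\nmid m$ automatically) and that when $m=DN/p$ both $p=2$ and $p\equiv3\bmod4$ are genuinely allowed, which is why case (3) uses the weaker hypothesis $p\not\equiv1\bmod4$ rather than listing $p=2$ and $p\equiv3\bmod4$ separately. I expect this case-matching, rather than any quaternion computation, to be the main thing to get right.
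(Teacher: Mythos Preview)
Your proposal is correct and follows essentially the same route as the paper. The only cosmetic difference is that the paper invokes Corollary \ref{SuperspecialFixedZetaFour} directly (which already packages the identification of $\OO'$ as an Eichler order of level $N/p$ in $B_{Dp}$, the characterization of even length via $\ZZ[\zeta_4]\hookrightarrow\OO'$, and the appeal to Theorem \ref{zetafour}), whereas you unpack those ingredients explicitly; the ensuing case split on $mp\in\{2,\,DN/2,\,DN\}$ and the translation of the congruence conditions from primes of $Dp$ and $N/p$ to primes of $D$ and $N/p$ is identical.
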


\begin{proof} By Lemma \ref{SuperspecialFixedZetaFour}, there is a superspecial $w_{mp}$-fixed point of even length if and only if all of the following occur:

\begin{itemize}

\item $mp = 1,2,DN/2$ or $DN$,

\item for all primes $q\mid Dp$, $q=2$ or $q\equiv 3\bmod 4$,

\item for all primes $q\mid (N/p)$, $q=2$ or $q\equiv 1\bmod 4$.
\end{itemize}

If $mp = 2$ then $p=2$, so $2\nmid(DN/p)$, and $m=1$. If $mp = DN/2$ then $p\mid (DN/2)$ and thus $p\ne 2$ because $DN$ is square-free. It follows that $m= DN/(2p)$ with $p\equiv 3\bmod 4$. The only remaining case is $mp = DN$, in which case $p\not\equiv 1\bmod 4$.
\end{proof}

\begin{lemma}\label{DRpnmidmzetasixconditions}

There is a superspecial point of length divisible by three in $X^D_0(N)(\overline\FF_p)$ fixed by $w_{mp}$ if and only if one of the following occurs.

\begin{itemize}

\item $p=3$, $m=1$, $q\equiv 2\bmod 3$ for all primes $q\mid D$, and $q \equiv 1\bmod 3$ for all primes $q\mid (N/3)$.

\item $p\equiv 2\bmod 3$, $3\mid DN/p$, $m = DN/3p$, $q \not\equiv 1\bmod 3$ for all primes $q\mid D$, and $q \not\equiv 2\bmod 3$ for all primes $q\mid (N/p)$.

\item $m = DN/p$, $p \not\equiv 1\bmod 3$, $q \not\equiv 1\bmod 3$ for all primes $q\mid D$, and $q \not\equiv 2\bmod 3$ for all primes $q\mid (N/p)$.

\end{itemize}

\end{lemma}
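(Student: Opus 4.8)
The plan is to argue exactly as in the proof of Lemma~\ref{DRpnmidmzetafourconditions}, with $\ZZ[\zeta_6]$ and the prime $3$ replacing $\ZZ[\zeta_4]$ and $2$, and with Corollary~\ref{SuperspecialFixedZetaSix} replacing Corollary~\ref{SuperspecialFixedZetaFour}. First I would recall, via Lemma~\ref{ComponentsIntersectionPointsLemma} (equivalently Theorem~\ref{DeligneRapoportModelTheorem}), that since $p\mid N$ the superspecial points of $X^D_0(N)_{\overline\FF_p}$ are parametrized by $\Pic(Dp,N/p)$ and that the QM endomorphism ring $\OO'=\End_{(\OO,\mathcal S)}(M)$ of such a point is an Eichler order of discriminant $D'=Dp$ and level $N'=N/p$, so $D'N'=DN$. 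Applying Corollary~\ref{SuperspecialFixedZetaSix} with these $D',N'$ then yields: there is a superspecial $w_{mp}$-fixed point of length divisible by three if and only if $mp\in\{1,3,DN/3,DN\}$ (the value $DN/3$ permitted only when $3\mid DN$), every prime $q\mid Dp$ satisfies $q=3$ or $q\equiv 2\bmod 3$, and every prime $q\mid(N/p)$ satisfies $q=3$ or $q\equiv 1\bmod 3$. For the ``if'' (existence) half one produces the required Eichler order of level $N/p$ in $B_{Dp}$ containing both $\ZZ[\zeta_6]$ and $\ZZ[\sqrt{-mp}]$ using Theorem~\ref{EichlerEmbedding} when $mp\le 3$ and Theorem~\ref{zetasix} (with $D'=Dp$, $N'=N/p$) when $mp>3$, and this order is realized by a superspecial point through the $\Pic(Dp,N/p)$-parametrization.

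It then remains to run a short case analysis on $mp$, parallel to the $\zeta_4$ lemma. Since $p\ge 2$ divides $mp$, the value $mp=1$ is impossible, and $mp=3$ forces $p=3$, $m=1$; then $3\nmid D$ (as $D$ is coprime to $N$) and $3\nmid(N/3)$ (as $N$ is squarefree), so the two congruence conditions become precisely ``$q\equiv 2\bmod 3$ for all $q\mid D$'' and ``$q\equiv 1\bmod 3$ for all $q\mid(N/3)$'', the first case of the lemma. If $mp=DN/3$ then $3\mid DN$, and $p\ne 3$ (otherwise $9\mid DN$, impossible since $DN$ is squarefree); hence $3\mid(DN/p)$, $m=DN/(3p)$, and applying the condition to the prime $p$ forces $p\equiv 2\bmod 3$; rewriting ``$q=3$ or $q\equiv 2\bmod 3$'' as ``$q\not\equiv 1\bmod 3$'' and ``$q=3$ or $q\equiv 1\bmod 3$'' as ``$q\not\equiv 2\bmod 3$'' gives the second case. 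If $mp=DN$ then $m=DN/p$, the condition at $p$ gives $p\not\equiv 1\bmod 3$, and the same rewriting of the other congruences gives the third case (this also covers $p=3$, where $m=DN/3$). Conversely, each of the three listed cases reproduces one of these values of $mp$ together with the stated congruence conditions, which closes the biconditional.

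I do not expect a genuine obstacle: this is a direct transcription of Lemma~\ref{DRpnmidmzetafourconditions}. The only points needing care are bookkeeping --- keeping the congruence hypotheses attached to the primes dividing $Dp$ and $N/p$ rather than $D$ and $N$; observing that $p=3$ is ruled out in the $mp=DN/3$ case but is harmlessly absorbed, with $m=DN/3$, into the $mp=DN$ case; and translating consistently between the ``$q=3$ or $q\equiv 2\bmod 3$'' description and the ``$q\not\equiv 1\bmod 3$'' description (and likewise with $1$ and $2$ interchanged) used in the statement.
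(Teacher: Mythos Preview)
Your proposal is correct and follows essentially the same approach as the paper: invoke Corollary~\ref{SuperspecialFixedZetaSix} with $D'=Dp$, $N'=N/p$ to reduce to $mp\in\{1,3,DN/3,DN\}$ together with the $\bmod\ 3$ congruence constraints on primes dividing $Dp$ and $N/p$, and then run the short case analysis on $mp$. Your write-up is in fact somewhat more explicit than the paper's on the existence (``if'') direction and on why $p=3$ is excluded in the $mp=DN/3$ case, but the structure is identical.
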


\begin{proof} 
By Lemma \ref{SuperspecialFixedZetaSix}, there is a superspecial $w_{mp}$-fixed point of length divisible by three if and only if all of the following occur:

\begin{itemize}

\item $mp = 1,3,DN/3$ or $DN$, 

\item for all primes $q\mid Dp$, $q=3$ or $q\equiv 2\bmod 3$,

\item for all primes $q\mid (N/p)$, $q=3$ or $q\equiv 1\bmod 3$.
\end{itemize}

If $mp = 3$ then $p=3$, so $3\nmid(DN/p)$, and $m=1$. If $mp = DN/3$ then $p\mid (DN/3)$ and thus $p\ne 3$ because $DN$ is square-free. It follows that $m= DN/(3p)$ with $p\equiv 2\bmod 3$. The only remaining case is $mp = DN$, and thus $p=3$ or $p\equiv 2\bmod 3$.
\end{proof}

We note that since $p\nmid (DN/p)$ and $m \mid (DN/p)$, we may recall $TF(D,N/p,m,p) = (p+1) - \tr(T_{pm})$ as in Definition \ref{newTF}. With this in mind we make the following definition.

\begin{definition}\label{defntfprime} If $p\mid N$ and $m \mid (DN/p)$, we let $$TF'(D,N,m,p) := \begin{cases} TF(D,N/p,m,p) - (\frac{e_{Dp,N/p}(-4)}{w(-4)} + \frac{e_{Dp,N/p}(-8)}{w(-8)}) & mp = 2 \\ TF(D,N/p,m,p) - (\frac{e_{Dp,N/p}(-4mp)}{w(-4mp)} + \frac{e_{Dp,N/p}(-mp)}{w(-mp)}) & mp \ne 2, mp \not\equiv 3 \bmod 4 \\ TF(D,N/p,m,p) - \frac{e_{Dp,N/p}(-4mp)}{w(-4mp)}  & mp \equiv 3\bmod 4\end{cases}. $$
\end{definition}

\begin{lemma}\label{DRtracepoints} There is a non-superspecial $\FF_p$-rational point of $\CZ$ if and only if $TF'(D,N,m,p)>0$.
\end{lemma}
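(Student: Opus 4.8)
The plan is to count $\FF_p$-rational points on $\CZ$ using the formula for $\#C^D(N,d,m)(\FF_{p})$ when $p$ is inert in $\QQ(\sqrt d)$, and then subtract off the contribution of the superspecial points, which was already analyzed in the preceding lemmas. First I would recall from Theorem \ref{NumberPointsGoodReductionTwist}, applied not to $X^D_0(N)$ but to $X^D_0(N/p)$ over the good prime $p$ (which is legitimate since $p\nmid D(N/p)$ and $p\nmid m$), that $\#C^D(N/p,d,m)(\FF_p) = p+1-\tr(T_{pm}) = TF(D,N/p,m,p)$ in the notation of Definition \ref{newTF}. The non-superspecial locus of $X^D_0(N)_{\FF_p}$ is exactly $c'(X^D_0(N/p)_{\FF_p}^o)\coprod w_pc'(X^D_0(N/p)_{\FF_p}^o)$, and on the quotient/twist this maps isomorphically onto the non-superspecial locus of $\CZ_{\FF_p}$, identified with $C^D(N/p,d,m)^o_{\FF_p}$. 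Thus the count of \emph{non-superspecial} $\FF_p$-points of $\CZ$ equals $TF(D,N/p,m,p)$ minus the number of $\FF_p$-rational points of $C^D(N/p,d,m)_{\FF_p}$ which happen to be superspecial on $X^D_0(N/p)_{\FF_p}$ (i.e.\ superspecial for the prime $p$, where $\disc = Dp$, level $= N/p$).

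The main computational step is therefore to count those superspecial $\FF_p$-points of $C^D(N/p,d,m)_{\FF_p}$. Since $p$ is inert in $\QQ(\sqrt d)$, a point of $C^D(N/p,d,m)(\FF_p)$ corresponds to a point of $X^D_0(N/p)(\overline\FF_p)$ fixed by $w_{mp}\Frob_p$, and on superspecial points $\Frob_p$ acts as $w_p$ by Lemma \ref{ALGaloisLemma}, so this is a $w_{m}$-fixed superspecial point — wait, more precisely a $w_{mp}$-fixed one, with the twisting absorbed; tracking the exponents, such a point is one admitting an embedding of $\ZZ[\sqrt{-mp}]$ (or $\ZZ[\zeta_4]$ if $mp=2$) into its QM-endomorphism ring $\Lambda$, by Corollary \ref{EmbeddingFixedPoint}. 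Here $\Lambda$ is an Eichler order of level $N/p$ in $B_{Dp}$ by Theorem \ref{ALEquivariance}. Summing over superspecial points, the number of such embeddings counted with the appropriate weight is exactly $\sum_{R'\supseteq R}\frac{e_{Dp,N/p}(\disc R')}{w(\disc R')}$ by Eichler's embedding theorem (Theorem \ref{EichlerEmbedding}, definite case), where $R = \ZZ[\sqrt{-mp}]$ and the sum runs over orders $R'$ containing $R$. As in the proof of Theorem \ref{SuperspecialGood}, the only $R'\supsetneq R$ that can occur is $\ZZ[\frac{1+\sqrt{-mp}}{2}]$ when $mp\equiv 3\bmod 4$, giving the three cases $mp=2$, $mp\not\equiv 3\bmod 4$ with $mp\ne 2$, and $mp\equiv 3\bmod 4$ that appear verbatim in Definition \ref{defntfprime}. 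Subtracting yields precisely $TF'(D,N,m,p)$.

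Having established that $\#(\CZ^o(\FF_p)) = TF'(D,N,m,p)$, the conclusion is nearly immediate: if $TF'(D,N,m,p)>0$ there is a non-superspecial $\FF_p$-point of $\CZ$, which lies on the smooth locus since $\CX_{\FF_p}$ (hence $\CZ_{\FF_p}$) is smooth away from the superspecial points, so $\CZ$ has a non-superspecial $\FF_p$-rational point by definition; conversely if there is such a point, then $\#\CZ^o(\FF_p)\ge 1$ so $TF'(D,N,m,p)\ge 1>0$. I expect the main obstacle to be bookkeeping: making sure the $w$-weights $w(\disc R')$ and the $\OO'^\times$-conjugacy counts in Eichler's theorem match the naive point count on $C^D(N/p,d,m)_{\FF_p}$ without over- or under-counting (e.g.\ when a superspecial point admits several non-conjugate optimal embeddings, or when $mp\le 3$ so the extra units inflate the stabilizers), and confirming that the twist by $w_{mp}$ versus $w_m$ and the $\Frob_p = w_p$ identification combine to give $\sqrt{-mp}$ rather than $\sqrt{-m}$ or $\sqrt{-p}$ in the embedded order; these are exactly the verifications already carried out in Theorem \ref{SuperspecialGood} and Corollary \ref{SuperspecialFpRationalALFixed}, so the argument should go through by citing them.
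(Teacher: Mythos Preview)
Your proposal is correct and follows essentially the same route as the paper: compute $\#C^D(N/p,d,m)(\FF_p)=TF(D,N/p,m,p)$ via Theorem~\ref{NumberPointsGoodReductionTwist}, identify its superspecial $\FF_p$-points with $w_{mp}$-fixed superspecial points on $X^D_0(N/p)_{\overline\FF_p}$ using Lemma~\ref{ALGaloisLemma} and Corollary~\ref{EmbeddingFixedPoint}, and count these via Eichler's embedding theorem (the paper cites \cite[p.~152]{Vigneras} directly rather than phrasing it as a sum over $R'\supseteq R$, but the content is the same). Your self-correction from $w_m$-fixed to $w_{mp}$-fixed is right, and your closing worries about weight bookkeeping are exactly the points the paper handles implicitly through the Vigneras reference.
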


\begin{proof} Let $\CY_{/\ZZ_p}$ denote the smooth model of $C^D(N/p,d,m)$. By Theorem \ref{NumberPointsGoodReductionTwist}, $\#\CY(\FF_p) = (p+1) - \tr(T_{pm}) = TF(D,N/p,m,p)$. By Lemma \ref{ALGaloisLemma}, there is a superspecial point in $\CY(\FF_p)$ if and only if there is a superspecial point fixed by $w_{mp}$ in $X^D_0(N)(\overline\FF_p)$. By Corollary \ref{EmbeddingFixedPoint}, there is a superspecial point $x$ in $X^D_0(N/p)(\overline\FF_p)$ fixed by $w_{mp}$ if and only if $\ZZ[\sqrt{-mp}]$ (or $\ZZ[\zeta_4]$ if $mp =2$) embeds into $\End_{\iota(\OO)}(A)$ where $(A,\iota)$ corresponds to $x$. 

We now count the number $n_{mp}$ of $w_{mp}$-fixed superspecial points so we can subtract them off. Suppose that $\OO'$ is an Eichler order $\OO'$ of level $N/p$ in $B_{Dp}$, $\wp_m$ is the unique two-sided ideal of norm $mp$ in $\OO'$, and $M_1, \dots, M_h$ are right ideals of $\OO'$ which form a complete set of representatives of $\Pic(D/p,Np)$. Under Lemma \ref{ComponentsIntersectionPointsLemma}, $n_{mp}$ is the number of indices $i$ such that $M_i \cong M_i \otimes \wp_m$. Thus \cite[p.152]{Vigneras}, the number of such superspecial fixed points is the number of embeddings of $\ZZ[\sqrt{-mp}]$ (or $\ZZ[\zeta_4]$ if $mp =2$) into any left order of an $M_i$. If $mp=2$ the number of these is $\frac{e_{Dp,N/p}(-4)}{w(-4)} + \frac{e_{Dp,N/p}(-8)}{w(-8)}$. If $mp\ne 2$ and $mp \not \equiv 3\bmod 4$ then the number of these is $\frac{e_{Dp,N/p}(-4mp)}{w(-4mp)}$. If $mp \equiv 3\bmod 4$ then the number of these is $\frac{e_{Dp,N/p}(-mp)}{w(-mp)} + \frac{e_{Dp,N/p}(-4mp)}{w(-4mp)}$.\end{proof}

Unlike the case $p\mid D$, the conditions under which $X^D_0(N)(\QQ_p)$ is nonempty were not previously known when $p\mid N$ and $D>1$. In the following, we eschew the $TF'$ notation to show how it is possible to directly compute on the special fiber of this Shimura curve. Note that condition (4) is simply the inequality $TF'(D,N,1,p) >0$.


\begin{theorem}\label{DRsplitconditions} Let $D$ be the discriminant of an indefinite $\QQ$-quaternion algebra, $N$ a square-free integer coprime to $D$ and $p\mid N$. Then $X^D_0(N)(\QQ_p)$ is nonempty if and only if one of the following occurs.

\begin{enumerate}

\item $D=1$.

\item $p=2$, for all $q\mid D$, $q\equiv 3\bmod 4$, and for all $q\mid (N/2)$, $q \equiv 1\bmod 4$.

\item $p=3$, $m=1$, for all $q\mid D$, $q\equiv 2\bmod 3$, and for all $q\mid (N/3)$, $q \equiv 1\bmod 3$.

\item The following inequality holds $$\sum_{\stackrel{s = -\lfloor 2\sqrt{p}\rfloor}{s\ne 0}}^{\lfloor2\sqrt{p}\rfloor}\left(\sum_{f\mid f(s^2-4p)}\frac{e_{D,N/p}\left(\frac{s^2 - 4p}{f^2}\right)}{w\left(\frac{s^2 -4p}{f^2}\right)}\right)>0.$$

\end{enumerate}
\end{theorem}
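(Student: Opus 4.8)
The plan is to obtain Theorem \ref{DRsplitconditions} as the $m=1$ specialization of the machinery already assembled for Theorem \ref{mainDRtheorem}, reading off the explicit sum in (4) from the Eichler--Selberg trace formula. Since $m=1$, the involution $w_m$ is trivial and $C^D(N,d,1)_{\QQ_p}\cong X^D_0(N)_{\QQ_p}$ for every squarefree $d$; picking $d$ with $p$ inert in $\QQ(\sqrt d)$ (the split case reduces to this exactly as in the discussion preceding Definition \ref{regularDRmodels}), Lemma \ref{DRpmidmlemma} with $w_{mp}=w_p$ says that $X^D_0(N)(\QQ_p)$ is nonempty if and only if one of four things happens: it contains a cusp; there is a $w_p$-fixed superspecial point of even length on $X^D_0(N)_{\overline\FF_p}$; there is a $w_p$-fixed superspecial point of length divisible by three; or there is a non-superspecial $\FF_p$-rational point of $X^D_0(N/p)$.

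I would dispose of the first three alternatives by elementary bookkeeping. A cusp exists on $X^D_0(N)$ if and only if $D=1$ (Lemma \ref{cusplemma}), and cusps of $X^1_0(N)$ are $\QQ$-rational, hence $\QQ_p$-rational; this is condition (1). Substituting $m=1$ into Lemmas \ref{DRpnmidmzetafourconditions} and \ref{DRpnmidmzetasixconditions}, the sub-case ``$m=DN/(2p)$'' (resp.\ ``$m=DN/(3p)$'') forces $DN=2p$ (resp.\ $DN=3p$) and ``$m=DN/p$'' forces $DN=p$; since $p\mid N$ these equalities force $D\in\{1,2\}$ (resp.\ $\{1,3\}$), and $D=2,3$ are impossible because $D$ is a product of an even number of primes. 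So those sub-cases are vacuous or subsumed in condition (1), and what remains is $p=2$ with the congruences of (2) and $p=3$ with the congruences of (3). Thus the first three alternatives together are equivalent to ``(1) or (2) or (3)''.

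The real content is the fourth alternative. By Lemma \ref{DRtracepoints} a non-superspecial $\FF_p$-rational point of $X^D_0(N/p)$ exists if and only if $TF'(D,N,1,p)>0$, and $TF(D,N/p,1,p)=(p+1)-\tr(T_p)=\#X^D_0(N/p)(\FF_p)$. Eichler's Trace Formula (Theorem \ref{EichlerSelbergTraceFormula}), applied with level $N/p$ at the good prime $p$, gives
\[
\#X^D_0(N/p)(\FF_p)=\sum_{s=-\lfloor 2\sqrt p\rfloor}^{\lfloor 2\sqrt p\rfloor}\ \sum_{f\mid f(s^2-4p)}\frac{e_{D,N/p}\!\left(\tfrac{s^2-4p}{f^2}\right)}{w\!\left(\tfrac{s^2-4p}{f^2}\right)}.
\]
The step I would carry out carefully is the identification of the correction term in Definition \ref{defntfprime} with the $s=0$ summand. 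Deuring's criterion (used already in the proof of Lemma \ref{RamifiedKeyLemma}) shows a QM-surface over $\FF_p$ has supersingular reduction exactly when its Frobenius trace is divisible by $p$; for the relevant discriminants one has $\{\tfrac{-4p}{p}\}=\{\tfrac{-p}{p}\}=0$, so $e_{Dp,N/p}$ may be replaced by $e_{D,N/p}$ there, and $f(-4p)=1$ or $2$ according as $p\equiv 1$ or $3\bmod 4$, producing exactly the orders $\ZZ[\sqrt{-p}]$ and, when $p\equiv 3\bmod 4$, $\ZZ[\tfrac{1+\sqrt{-p}}{2}]$ counted by Lemma \ref{DRtracepoints}. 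Peeling off the $s=0$ term therefore leaves $TF'(D,N,1,p)=\sum_{s\ne 0}\sum_f(\cdots)$, which is exactly condition (4); combining the four equivalences finishes the proof.

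The main obstacle I anticipate is the small primes $p\in\{2,3\}$, where $\lfloor 2\sqrt p\rfloor$ is itself a multiple of $p$ and where Definition \ref{defntfprime} packages the correction for $p=2$ through the two orders $\ZZ[\zeta_4]$ and $\ZZ[\sqrt{-2}]$ rather than through a single value of $s$. Here I would check that the terms the reformulation shuffles occur with strictly positive coefficient on both sides, so that the \emph{positivity} of $TF'(D,N,1,p)$ is unaffected: concretely, $e_{D,N/2}(-4)>0$ (resp.\ $e_{D,N/3}(-3)>0$) is equivalent to condition (2) (resp.\ (3)) by Theorem \ref{EichlerEmbedding}, and Theorem \ref{zetafour} (resp.\ \ref{zetasix}) forbids a superspecial point of $X^D_0(N/p)$ from admitting both relevant quadratic embeddings once $D>1$, so there is no double counting. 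These checks confirm $TF'(D,N,1,p)>0$ if and only if condition (4) holds, for every $p\mid N$, and the theorem follows.
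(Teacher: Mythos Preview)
Your approach is essentially the paper's: specialize the $p\nmid m$ machinery of Theorem \ref{mainDRtheorem} to $m=1$, use Lemma \ref{cusplemma} and Lemmas \ref{DRpnmidmzetafourconditions}--\ref{DRpnmidmzetasixconditions} to isolate conditions (1)--(3), and then identify the residual non-superspecial count with the $s\ne 0$ part of the Eichler--Selberg sum. The only genuine packaging difference is in step four. The paper does not route through $TF'$; instead it assumes (1), (2), (3) all fail, writes the non-superspecial count directly as $(p+1)-\tr(T_p)-\sum_{f\mid f(-4p)} e_{Dp,N/p}(-4p/f^2)/w(-4p/f^2)$, and then cancels against the $s=0$ term using only the algebraic identity $e_{D,N/p}(\Delta)-e_{Dp,N/p}(\Delta)=\{\Delta/p\}\,e_{D,N/p}(\Delta)$ together with $\{\,-4p/f^2\,/\,p\,\}=0$. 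No appeal to Deuring's criterion is made or needed.

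Two small points where your write-up overshoots. First, for $p=3$ the worry is unfounded: since $f(-12)=2$ and $\{-12/3\}=\{-3/3\}=0$, the correction $e_{3D,N/3}(-12)/w(-12)+e_{3D,N/3}(-3)/w(-3)$ is \emph{exactly} the $s=0$ summand, so $TF'(D,N,1,3)$ equals the sum in (4) on the nose and Theorem \ref{zetasix} plays no role. Second, for $p=2$ your ``no double counting'' argument via Theorem \ref{zetafour} is more than is required and is aimed at the wrong target; the paper simply observes that if condition (2) fails then $e_{2D,N/2}(-4)=0$, so the extra $\ZZ[\zeta_4]$ term in the correction vanishes and the formula of (4) again coincides with the non-superspecial count. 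Once you streamline those two points, your argument and the paper's are the same.
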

\begin{proof} First we note that if $D=1$, then there is a $\QQ$-rational cusp by Lemma \ref{cusplemma}. Set $m=1$ and assume $D\ne 1$. By Lemma \ref{DRpmidmlemma}, $X^D_0(N)(\QQ_p)$ is non-empty if and only if one of the following occurs.

\begin{itemize}
\item There is a superspecial $w_{p}$-fixed point of even length in $X^D_0(N)(\overline\FF_p)$.

\item There is a superspecial $w_{p}$-fixed point of length divisible by three in $X^D_0(N)(\overline\FF_p)$.

\item There is a non-superspecial $\FF_p$-rational point.

\end{itemize}

By Lemma \ref{DRpnmidmzetafourconditions}, there is a $w_p$ fixed point of even length if and only if one of the following occurs.

\begin{itemize}

\item $p=2$, for all $q\mid D$, $q\equiv 3\bmod 4$ and for all $q\mid (N/2)$, $q \equiv 1\bmod 4$

\item $p\equiv 3\bmod 4$ and $DN = 2p$

\item $DN = p$ and $p =2 $ or $p \equiv 3\bmod 4$

\end{itemize}

However, if either of the latter two occurs, $D=1$ in contradiction to our assumption.

By Lemma \ref{DRpnmidmzetasixconditions}, there is a $w_p$ fixed point of length divisible by three if and only if one of the following occurs.

\begin{itemize}

\item $p=3$, for all $q\mid D$, $q\equiv 2\bmod 3$ and for all $q\mid (N/3)$, $q \equiv 1\bmod 3$

\item $p\equiv 2\bmod 3$ and $DN = 3p$

\item $DN = p$ and $p =3 $ or $p \equiv 2\bmod 3$

\end{itemize}

Once again, if either of the latter two occurs, $D=1$. Suppose now that in addition to $D\ne 1$, all superspecial points have length 1, so the number of non-superspecial $\FF_p$-rational points on $X^D_0(N/p)$ can be written as $$(p+1) - \tr(T_{p}) - \sum_{f\mid f(-4p)} \frac{e_{Dp,N/p}\left(\frac{-4p}{f^2}\right)}{w\left(\frac{-4p}{f^2}\right)}.$$

Recall now Theorem \ref{EichlerSelbergTraceFormula}, the Eichler-Selberg trace formula on $H^0(X^D_0(N/p)_{\overline\FF_p},\Omega)$: $$\tr(T_p) = (p+1) - \sum_{s = -\lfloor 2 \sqrt{p}\rfloor}^{\lfloor 2\sqrt{p}\rfloor}\left(\sum_{f\mid f(s^2 - 4p)} \frac{e_{D,N/p}\left(\frac{s^2 - 4p}{f^2}\right)}{w\left(\frac{s^2 - 4p}{f^2}\right)}\right).$$

Therefore, if $p\ne 2$ there is a non-superspecial $\FF_p$-rational point of $X^D_0(N/p)$ if and only if the following quantity is nonzero.
\begin{eqnarray*}(p+1) 
& - & 
\left((p+1) - \sum_{s = -\lfloor 2 \sqrt{p}\rfloor}^{\lfloor 2\sqrt{p}\rfloor}\left(\sum_{f\mid f(s^2 - 4p)} \frac{e_{D,N/p}\left(\frac{s^2 - 4p}{f^2}\right)}{w\left(\frac{s^2 - 4p}{f^2}\right)}\right) \right)
- \sum_{f\mid f(-4p)} \frac{e_{Dp,N/p}\left(\frac{-4p}{f^2}\right)}{w\left(\frac{-4p}{f^2}\right)} \\ 
& = & 
\left(\sum_{\stackrel{s = -\lfloor 2 \sqrt{p}\rfloor}{s\ne 0}}^{\lfloor 2\sqrt{p}\rfloor}\left(\sum_{f\mid f(s^2 - 4p)} \frac{e_{D,N/p}\left(\frac{s^2 - 4p}{f^2}\right)}{w\left(\frac{s^2 - 4p}{f^2}\right)}\right)\right) 
+ \sum_{f\mid f(-4p)}
\frac{e_{D,N/p}\left(\frac{-4p}{f^2}\right) - e_{Dp,N/p}\left(\frac{-4p}{f^2}\right)}
{w\left(\frac{-4p}{f^2}\right)} 
\end{eqnarray*}

Now recall that $e_{D,N}(\Delta) = h(\Delta) \prod_{p\mid D} \left(1 - \left\{\frac{\Delta}{p}\right\}\right)\prod_{q\mid N} \left(1 + \left\{\frac{\Delta}{p}\right\}\right)$ and $f(\Delta)$ is the conductor of $R_\Delta$. Therefore $e_{Dp,N/p}(\Delta) = \left(1 - \left\{\frac{\Delta}{p}\right\}\right)e_{D,N/p}(\Delta)$ and thus $e_{D,N/p}(\Delta) - e_{Dp,N/p}(\Delta) = \left\{\frac{\Delta}{p}\right\}e_{D,N/p}(\Delta)$. 
However, consider that $f(-4p) = 1$ or $2$, depending on $p\bmod 4$. Moreover, if $p=2$ then $f(-8) =1$. 
Therefore, since $p\mid \frac{-4p}{f^2}$ for all $f\mid f(-4p)$, $\left\{\frac{\frac{-4p}{f^2}}{p}\right\} = 0$. Finally, if $p=2$ and (2) does not hold, then $e_{Dp,N/p}(-4) = 0$, and the formula of (4) still suffices.\end{proof}

We now find, for infinitely many pairs of integers $D$ and $N$, infinitely many nontrivial twists of $X^D_0(N)$ which have points everywhere locally.

\begin{example}\label{PrimeLevelTwistExample} Let $q$ be a prime which is $3\bmod 4$ and consider the curve $X^1_0(q)$. We will show that if $p\equiv 1\bmod 4$ is a prime such that $\left(\frac{q}{p}\right) = -1$ then $C^1(q,p,q)(\QQ_v)$ is nonempty for all places $v$ of $\QQ$. Since $p>0$, $C^1(q,p,q) \cong_\RR X^1_0(q)$ and thus $C^1(q,p,q)(\RR)\ne \emptyset$. We note that since $p\equiv 1\bmod 4$, $\QQ(\sqrt p)$ is ramified precisely at $p$. Therefore if $\ell \nmid pq$ is a prime, then $\ell$ is unramified in $\QQ(\sqrt p)$. If $\ell$ splits in $\QQ(\sqrt p)$, then $C^1(q,p,q) \cong_{\QQ_\ell} X^1_0(q)$ and thus $C^1(q,p,q)(\QQ_\ell)\ne \emptyset$. If $\ell$ is inert in $\QQ(\sqrt p)$, then $C^1(q,p,q)(\QQ_\ell)\ne\emptyset$ by Corollary \ref{SuperspecialFullAL}.

Since $p\equiv 1\bmod 4$, $\left(\frac{p}{q}\right) = \left(\frac{q}{p}\right) = -1$, and thus $q$ is inert in $\QQ(\sqrt p)$. Therefore by Theorem \ref{mainDRtheorem}(b), $C^1(q,p,q)(\QQ_q)$ is nonempty. Moreover, $\left(\frac{-q}{p}\right) = \left(\frac{q}{p}\right) = -1$ and so by Theorem \ref{RamifiedMainTheorem}, $C^1(q,p,q)(\QQ_p)\ne \emptyset$.\end{example}

\bibliographystyle{amsalpha}
\bibliography{MainThesisPreprint}

\end{document}